\numberwithin{equation}{section}
\theoremstyle{definition}
\newtheorem{definition}{Definition}[section]
\newtheorem{example}[definition]{Example}
\theoremstyle{remark}
\newtheorem{remark}[definition]{Remark}
\theoremstyle{plain}
\newtheorem{proposition}[definition]{Proposition}
\newtheorem{theorem}[definition]{Theorem}
\newtheorem{lemma}[definition]{Lemma}
\newtheorem{result}[definition]{Result}
\newtheorem{corollary}[definition]{Corollary}
\newtheorem{obs}{Observation}
\definecolor{Red}{rgb}{1,0,0}
\definecolor{Magenta}{rgb}{0.69,0,0.83}
\definecolor{Green}{rgb}{0.117,0.706,0.314}
\definecolor{Grey}{rgb}{0.8,0.8,0.8}
\definecolor{Blue}{rgb}{0.1,0.1,1}
\newcommand{\unitdisk}{\mathbb{D}}
\newcommand{\ball}{\mathbb{B}}
\newcommand{\koba}{\mathsf{k}}
\newcommand{\dkoba}{\kappa}
\newcommand{\lk}{(\lambda, \kappa)}
\newcommand{\rprt}{\mathsf{Re}}
\newcommand{\iprt}{\mathsf{Im}}
\newcommand{\distance}{\mathrm{dist}}
\newcommand{\dtb}[1]{\delta_{#1}}
\newcommand{\clos}[1]{\overline{#1}}
\newcommand{\cl}{\mathrm{cl}}
\newcommand{\lraw}{\longrightarrow}
\newcommand{\wt}{\widetilde}
\newcommand{\wh}{\widehat}
\newcommand{\range}{\mathsf{ran}}
\newcommand{\diam}{\mathsf{diam}}
\newcommand*{\defeq}{\mathrel{\vcenter{\baselineskip0.5ex \lineskiplimit0pt \hbox{\scriptsize.}\hbox{\scriptsize.}}}=}
\newcommand*{\defines}{=\mathrel{\vcenter{\baselineskip0.5ex \lineskiplimit0pt \hbox{\scriptsize.}\hbox{\scriptsize.}}}}
\newcommand{\bdy}{\partial}
\newcommand{\OM}{\Omega}
\newcommand{\smoo}{\mathcal{C}}
\newcommand{\bcdot}{\boldsymbol{\cdot}}
\newcommand{\C}{\mathbb{C}} 
\newcommand{\R}{\mathbb{R}}
\newcommand{\Z}{\mathbb{Z}}
\newcommand{\posint}{\mathbb{Z}_{+}}
\begin{document}
\title[Visibility]{Visibility property in one and several variables and its applications}
\author{Vikramjeet Singh Chandel, Sushil Gorai, Anwoy Maitra and Amar Deep Sarkar}

\address{VSC: Department of Mathematics and Statistics, Indian Institute of Technology Kanpur,
Kanpur -- 208 016, India}
\email{vschandel@iitk.ac.in}
	
\address{SG: Department of Mathematics and Statistics, Indian Institute of Science Education 
and Research Kolkata, Mohanpur -- 741 246, India}
\email{sushil.gorai@iiserkol.ac.in}

\address{AM: Department of Mathematics and Statistics, Indian Institute of Technology Kanpur,
Kanpur -- 208 016, India}
\email{anwoym@iitk.ac.in}
    
\address{ADS: School of Basic Sciences, Indian Institute of Technology Bhubaneswar, Argul --
752 050, India}
\email{amar@iitbbs.ac.in}

\keywords{Kobayashi distance, visibility, end compactification, continuous extension, 
local connectivity, totally disconnected set.}

\subjclass[2020]{Primary: 32F45, 30D40; Secondary: 32Q45, 53C23.}

\begin{abstract}
In this paper we report our investigations on visibility with respect to the Kobayashi 
distance and its applications, with a special focus on 
planar domains. We prove that totally disconnected subsets of the boundary are 
removable in the context of visibility. We also show that a domain in 
$\C^n$ is a local weak visibility domain if and only if it is a weak visibility 
domain. The above holds also for visibility. Along the way, we prove an intrinsic 
localization result for the Kobayashi distance. Moreover, we observe some 
interesting consequences of weak visibility; for example, weak visibility 
implies compactness of the end topology of the closure of the domain. For 
planar domains: 
\begin{itemize}
\item [(i)] We provide 
examples of visibility domains 
that are not locally Goldilocks at any boundary point.
	 
\item[(ii)] We provide certain general conditions on 
planar domains that yield 
the continuous extension of conformal maps, generalizing the 
Carath\'{e}odory extension theorem. Our conditions are quite general and assume 
{\em very little} regularity of the boundary. We demonstrate this 
through examples. 
	  
\item[(iii)] We also provide conditions for the homeomorphic extension of 
biholomorphic  maps up to the boundary. 
	  
\item[(iv)] We prove that 
a hyperbolic, simply connected domain possesses the visibility property if and only 
if its boundary is locally connected. This leads us to 
reformulate the MLC conjecture in terms of visibility. 
	  
	  
\item[(v)] We provide a characterization of visibility for a large class of planar domains including certain uncountably connected domains.
\end{itemize}

\end{abstract}

\maketitle

\section{Introduction and statement of results}\label{Sec:Intro}
The notion of visibility manifold was introduced by Eberlein and O'Neill \cite{EON} in the context of
non-positively curved simply connected Riemannian 
Manifolds. Very recently Bharali and Zimmer \cite{Bharali_Zimmer} considered this concept for domains in $\C^n$ with respect to the Kobayashi 
distance. When $n\geq 2$, given a hyperbolic domain $\OM\subset\C^n$, it is a very difficult problem to determine
if the distance space $(\OM, \koba_\OM)$ is complete\,---\,here $\koba_\OM$ denotes the Kobayashi
distance on $\OM$. Completeness is a natural condition that implies, in this case, 
that any two points can be joined by a geodesic. Bharali and Zimmer \cite{Bharali_Zimmer} introduced visibility property using almost-geodesics instead of
geodesics and showed that a class of domains, which they called  
{\em Goldilocks domains}, satisfy the visibility property.
For multidimensional domains, there are few general tools to prove visibility: showing that the domain is {\em locally a Goldilocks domain} 
(see \cite[Theorem~1.4]{BZ2023}) or that it satisfies a slightly weaker hypothesis 
\cite[Theorem~1.3]{CMS2023} (see also \cite[Remark~1.5]{BZ2023}). 
It turns out that, in the case of planar domains, things are very different and interesting; this was already pointed out in 
\cite{BNT2022}. Here we provide an example of a planar visibility domain which is not locally Goldilocks at any point of its boundary.
The construction 
uses a continuous but nowhere differentiable function introduced by Takagi \cite{Takagi}. This points towards the possibility that the visibility 
property may be the right tool to study the geometry of planar domains whose boundary is a fractal set. The question of understanding the 
geometry of fractal sets was mentioned in Gromov's book \cite{GromovBook}.
\smallskip

Continuous/homeomorphic extension of conformal one-one maps between planar domains up to 
the boundary is an old problem, going back to Carathéodory for 
simply connected domains. The question of homeomorphic extension on circle domains up to 
the boundary is also the first step towards the rigidity 
conjecture \cite{He-Schramm1994} and the rigidity conjecture is required to approach to the 
Koebe conjecture. The Koebe conjecture has been proved for countably 
connected domains (see \cite{He-Schramm1993} for details). Therefore, 
continuous/homeomorphic extension up to the boundary is a very important question 
in complex analysis in one variable. Using visibility as a tool, we provide 
conditions under which continuous/homeomorphic 
extensions of conformal maps up to the boundary exist (see {\Cref{thm:ext_biholo}} for a 
precise statement). Our result covers a broad class of domains for 
which recent results \cite{Luo-Yao2022} and \cite{Ntalampekos2023} do not apply. Note that 
in the case of hyperbolic planar domains, the            
Kobayashi distance coincides with the hyperbolic distance, which is the integrated form of
the hyperbolic metric, which in turn
is induced from the Poincar{\'e} metric of the unit disc through the covering map.
\smallskip

Visibility of planar domains has a very deep connection with local connectedness of the 
boundary. Bracci--Nikolov--Thomas \cite[Corollary~3.4]{BNT2022} 
proved that bounded simply connected domains satisfy the visibility property if and only if 
their boundaries are locally connected. 
We prove this for any (not necessarily bounded) simply connected planar domain. A general result about visibility in this paper says that if a domain $\Omega$ satisfies the visibility property for every two points in 
$\bdy\Omega\setminus S$, where $S$ is a totally disconnected set, then $\Omega$ is a visibility domain.
Therefore, a reformulation of the MLC conjecture is possible here. MLC conjecture was posed by Douady and Hubbard in 1980s.
\smallskip


\noindent {\bf A Reformulation of the MLC conjecture:} \\
{\em The complement of the Mandelbrot set in $\C$ satisfies the visibility property outside a totally disconnected subset of the boundary.}

\noindent This reformulation of the MLC conjecjure is interesting because it allows us
to check visibility outside a totally disconnected set which might be of full measure as there is no 
restriction on the measure of the totally disconnected set. Another general result about visibility in this paper says that a domain in $\C^n$ is a 
visibility domain if it is locally a visibility domain at each boundary point outside 
a totally disconnected set. This led us to believe, vaguely, that 
local visibility should imply local connectivity. We could actually prove a bit more: {\em the boundary of a simply connected planar domain is locally connected if it is 
locally connected outside a totally disconnected set.} This surprising rigidity of local connectedness of planar domains lead us to the following observation which might be known to experts. 

\begin{obs}
     MLC if and only if MLC except a totally disconnected subset of the boundary of the Mandelbrot set.
\end{obs}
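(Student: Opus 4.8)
The plan is to read the Observation off from the local-connectedness rigidity statement recorded above --- that the boundary of a simply connected planar domain is locally connected as soon as it is locally connected outside some totally disconnected set --- applied to the complement of the Mandelbrot set. Write $M$ for the Mandelbrot set and put $\Omega\defeq\wh{\C}\setminus M$. By the Douady--Hubbard theorem there is a conformal isomorphism $\wh{\C}\setminus\clos{\unitdisk}\to\Omega$ fixing $\infty$; in particular $M$ is connected and full, so $\Omega$ is a simply connected domain, and $\Omega$ is biholomorphic to $\unitdisk$, hence hyperbolic, with Kobayashi distance equal to its hyperbolic distance. Its boundary is $\bdy\Omega=\bdy M$. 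Should one wish to work with a genuinely bounded planar domain, one may first apply the inversion $z\mapsto 1/z$: since $0$ lies in the interior of $M$ (the main cardioid), this carries $\Omega$ conformally onto a bounded simply connected domain in $\C$ whose boundary is \emph{homeomorphic} to $\bdy M$, so that neither local connectedness at boundary points nor total disconnectedness of boundary subsets is affected.

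Next I would record the classical translation of MLC into a statement about $\bdy\Omega$. By Carath\'eodory's continuity theorem, the Riemann map of $\Omega$ extends continuously to $\clos{\unitdisk}$ if and only if $\bdy\Omega=\bdy M$ is locally connected, and this in turn is equivalent to local connectedness of the full continuum $M$ itself; thus MLC is \emph{equivalent} to the assertion that $\bdy M$ is locally connected. Reading the second half of the Observation in the corresponding way --- namely, that $\bdy M$ is locally connected at every point of $\bdy M$ lying outside some totally disconnected subset $S\subseteq\bdy M$ --- the Observation turns into the bare statement
\[
\bdy\Omega \ \text{is locally connected} \iff \bdy\Omega \ \text{is locally connected outside a totally disconnected set.}
\]

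Finally, the forward implication is trivial (take $S=\varnothing$), and the reverse implication is exactly the rigidity statement quoted above, applied to the simply connected planar domain $\Omega$. So there is really no obstacle remaining: the substantive input has already been supplied, and what is left is only bookkeeping --- checking the topological hypotheses ($M$ connected and full, so $\Omega$ is simply connected; $M$ with nonempty interior, so $\Omega$ is hyperbolic) and unwinding the phrase ``MLC outside a totally disconnected set'' via the classical passage between local connectedness of $M$ and of $\bdy M$. The single genuinely non-formal ingredient, the rigidity theorem, itself rests on the visibility characterization of local connectedness for simply connected planar domains (item~(iv) above) together with the removability of totally disconnected subsets of the boundary in the context of visibility.
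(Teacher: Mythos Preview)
Your argument is correct and matches the paper's reasoning: the Observation is not given a separate formal proof in the paper but is presented as an immediate consequence of the rigidity statement (that the boundary of a hyperbolic simply connected planar domain is locally connected once it is locally connected outside a totally disconnected set), applied to $\Omega=\C_\infty\setminus M$, exactly as you unpack it. One small inaccuracy in your closing remark: in the paper the rigidity lemma itself (\Cref{lmm:simp_conn_loc_conn}) is proved directly via Rempe's prime-end theorem, not via the visibility characterization and the totally-disconnected removability theorem; the visibility route you describe does also work (combine \Cref{T:Visibility_Thm_in_plane} with the forward implication of \Cref{thm:vis_simp_conn_loc_conn}), but it is an alternative justification rather than the paper's own.
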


\noindent There have been 
some advancements towards the proof of the MLC conjecture by Yoccoz, Lyubich and Kahn  others \cite{K, L1,DL, KL1, KL2, CS} by the method of different types of renormalizations. In certain 
classes of points in the boundary of the Mandelbrot set the local connectedness is already proved. The main problem that remains is whether the
complement of that set is totally disconnected.
\smallskip

We will now provide the statements of our main results. For that we need some definitions. 
In what follows, $\OM$ shall denote a Kobayashi hyperbolic domain with $\koba_\OM$, $\dkoba_\OM$
being the Kobayashi distance and Kobayashi--Royden metric, respectively, associated with $\OM$. 

\begin{definition}
Let $\Omega$ be a domain in $\C^d$ and let $\lambda\geq 1$ and $\kappa\geq 0$ be given. A map $\sigma: I\to\Omega$, where $I\subset\R$ is an interval, is 
said to {\em be a $(\lambda,\kappa)$-almost-geodesic (with respect to $\koba_\OM$)} if the following are satisfied:
\begin{itemize}
\item[(i)] $\forall\,s,t\in I,\;\frac{1}{\lambda}|s-t|-\kappa\leq \koba_\OM(\sigma(s),\sigma(t))\leq \lambda |s-t|+\kappa$;
\smallskip

\item[(ii)] $\sigma$ is absolutely continuous, whence $\sigma'(t)$ exists for a.e.~$t\in I$, and $\dkoba_\Omega(\sigma(t);\sigma'(t))\leq\lambda$ for 
almost every $t\in I$.
\end{itemize}
\end{definition}

The following definitions were introduced in \cite{Bharali_Zimmer} for bounded domains and in \cite{BZ2023} for unbounded domains. To introduce these definitions, we need to consider the end compactification $\clos{\OM}^{End}$
of the closure $\overline{\OM}$ of a domain $\OM\subset\C^d$. 
We refer the reader to \Cref{SS:endcpt} for the definition of the end compactification 
and the end topology introduced on the space $\clos{\OM}^{End}$. In what follows 
$\bdy\clos{\OM}^{End}:=\clos{\OM}^{End}\setminus\OM$; it consists of the ordinary
Euclidean boundary $\bdy\OM$ of $\OM$ plus the ends $\clos{\OM}^{End}\setminus\clos{\OM}$
of $\clos{\OM}$.

\begin{definition} \label{dfn:lk-visib-pair-points}
Given a hyperbolic domain $\OM\subset\C^d$, given $\lambda\geq 1$ and $\kappa>0$, and 
given a pair of distinct 
points $\zeta,\eta\in\bdy\clos{\OM}^{End}$,
the pair $\zeta,\eta$ is said to {\em satisfy ({\rm or} possess) the 
$(\lambda,\kappa)$-visibility property (with respect to $\koba_\OM$)} or to 
{\em satisfy ({\rm or} possess) the visibility property with respect to 
$(\lambda,\kappa)$-almost-geodesics (for $\koba_\OM$)} if there exist 
neighbourhoods $V_\zeta$ of $\zeta$ and $V_\eta$ of $\eta$ in $\clos{\OM}^{End}$ 
such that $\clos{V}_\zeta\cap \clos{V}_\eta=\emptyset$ and a compact $K\subset\OM$ 
such that for every $\lk$-almost-geodesic $\sigma:[0,T]\to\OM$ with respect to 
$\koba_\OM$ with $\sigma(0)\in V_\zeta$ and $\sigma(T)\in V_\eta$, 
$\sigma([0,T])\cap K\neq\emptyset$.
\end{definition}
	
\begin{definition} \label{dfn:lambda-vis-vis-wvis-pair-points}
Given a hyperbolic domain $\OM\subset\C^d$, given a pair of distinct 
points $\zeta,\eta\in\bdy\clos{\OM}^{End}$ and given $\lambda\geq 1$, the pair 
$\zeta,\eta$ is said to {\em satisfy ({\rm or} possess) the $\lambda$-visibility 
property (with respect to $\koba_\OM$)} if, for every $\kappa>0$, the 
pair $\zeta,\eta$ satisfies the $\lk$-visibility property. It is said to {\em satisfy 
({\rm or} possess) the visibility property (with respect to
$\koba_\OM$)} if, for every $\lambda\geq 1$, it satisfies the $\lambda$-visibility 
property. It is said to {\em satisfy ({\rm or} possess) the 
weak visibility property (with respect to $\koba_\OM$)} if it satisfies the $1$-
visibility property.
\end{definition}
	
\begin{definition} \label{dfn:lk-lambda-vis-dom}
Given a hyperbolic domain $\OM\subset\C^d$ and given $\lambda\geq 1$ and $\kappa>0$, 
$\OM$ is said to {\em be a $\lk$-visibility domain (with respect to 
$\koba_\OM$)} or to {\em satisfy ({\rm or} possess) the $\lk$-visibility property 
(with respect to $\koba_\OM$)} or to {\em satisfy ({\rm or} possess) 
the visibility property with respect to $\lk$-almost-geodesics (for $\koba_\OM$)} if 
every pair of distinct points $\zeta,\eta\in\bdy\clos{\OM}^{End}$ satisfies the 
$\lk$-visibility property. It is said to {\em be a $\lambda$-visibility domain (with 
respect to $\koba_\OM$)} or to {\em satisfy ({\rm or} possess) the 
$\lambda$-visibility property (with respect to $\koba_\OM$)} if every pair of distinct 
points $\zeta,\eta\in\bdy\clos{\OM}^{End}$ satisfies the $\lambda$-visibility
property. 
\end{definition}
	
\begin{definition} \label{dfn:vis-wvis-dom}
Given a hyperbolic domain $\OM\subset\C^d$, $\OM$ is said to {\em be a visibility domain (with respect to the Kobayashi distance)} or to 
{\em satisfy ({\rm or} possess) the visibility property (with respect to the Kobayashi distance)} if, for every $\lambda\geq 1$, $\OM$ satisfies the 
$\lambda$-visibility property. It is said to {\em be a weak visibility domain (with respect to the Kobayashi distance)} or to {\em satisfy ({\rm or}
possess) the weak visibility property (with respect to the Kobayashi distance)} if $\OM$ satisfies the $1$-visibility property.
\end{definition}
	




In what follows, we shall frequently use the notion of convergence in the 
Hausdorff distance. For the sake of clarity and completeness, we recall that, 
given a compact distance space $(X,d)$, the Hausdorff distance is a distance
defined on the set of all non-empty closed (equivalently, compact) subsets of 
$X$ that turns this set into a compact distance space in its own right. In 
particular, given any
sequence of non-empty closed subsets of $X$, it has a subsequence that 
converges in the Hausdorff distance to a non-empty closed subset of $X$. 
Further, if $(A_n)_{n\geq 1}$ is a sequence of non-empty closed subsets of 
$X$ that converges to some set $A$ in the Hausdorff distance, then $A$ 
consists precisely of the set of all subsequential limits of sequences 
$(x_n)_{n\geq 1}$ such that, for every $n$, $x_n\in A_n$. Finally, it is not 
difficult to show that if $(A_n)_{n\geq 1}$ is a sequence of non-empty, 
closed, {\em connected} sets converging in the Hausdorff distance to a 
(non-empty, closed) set $B$, then $B$ is necessarily connected.

\subsection{General results about visibility} \label{ss:gen_res_visib}
In this subsection, we present our main
results for general domains in $\C^d$ that satisfy
a version of visibility property . These results and
a few other are contained in Section~\ref{S:gen_loc_glob}. We begin with:

\begin{theorem}\label{thm-totally disconnected}
Let $\OM\subset\C^d$ be a hyperbolic domain
(not necessarily bounded). 
Suppose that for some
$\lambda\geq 1$, $\kappa\geq0$, and some totally disconnected set 
$S\subset\bdy\OM$, 
every pair of distinct points of $\bdy\OM\setminus S$
satisfies the visibility property with 
respect to $(\lambda,\kappa)$-almost-geodesics for $\koba_{\OM}$.
Then $\OM$ 
is a $(\lambda,\kappa)$-visibility domain. 
\end{theorem}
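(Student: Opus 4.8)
The plan is to argue by contradiction: suppose $\OM$ is not a $(\lambda,\kappa)$-visibility domain, so there is a pair of distinct points $\zeta,\eta\in\bdy\clos{\OM}^{End}$ that fails the $(\lambda,\kappa)$-visibility property. This means that for every pair of disjoint closed neighbourhoods and every compact $K\subset\OM$ there is a $(\lambda,\kappa)$-almost-geodesic escaping $K$; exhausting $\OM$ by an increasing sequence of compacts $K_n$ and shrinking the neighbourhoods, one extracts a sequence of $(\lambda,\kappa)$-almost-geodesics $\sigma_n:[0,T_n]\to\OM$ with endpoints tending to $\zeta$ and $\eta$ respectively and with $\sigma_n([0,T_n])\cap K_n=\emptyset$. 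First I would pass to the images $\Lambda_n\defeq\overline{\sigma_n([0,T_n])}\subset\clos\OM^{End}$ (using that $\clos\OM^{End}$ is compact) and, by Hausdorff-compactness, pass to a subsequence so that $\Lambda_n\to\Lambda$ in the Hausdorff distance, where $\Lambda$ is a nonempty closed set. Since each $\Lambda_n$ is connected (continuous image of an interval, closed up), $\Lambda$ is connected; moreover $\zeta,\eta\in\Lambda$; and because $\sigma_n$ eventually avoids every $K_n$, the limit $\Lambda$ contains no point of $\OM$, i.e. $\Lambda\subset\bdy\clos\OM^{End}$. So $\Lambda$ is a connected subset of $\bdy\clos\OM^{End}$ containing the two distinct points $\zeta$ and $\eta$.

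The next step is to locate two distinct points of $\Lambda$ that lie in $\bdy\OM\setminus S$ and that themselves fail visibility. Since $S\subset\bdy\OM$ is totally disconnected and $\Lambda$ is connected with more than one point, $\Lambda\not\subset S$; in fact, a connected set with at least two points that meets a totally disconnected set in "too much" would be constrained — the key point is that $\Lambda\setminus S$ is still "large" along $\Lambda$. Here I need to be slightly careful about the ends: a priori $\Lambda$ could contain ends of $\clos\OM$ in addition to Euclidean boundary points. I would handle this by noting $\bdy\clos\OM^{End}\setminus\clos\OM$ is itself totally disconnected (ends form a totally disconnected set by construction of the end compactification — see \Cref{SS:endcpt}), so enlarging $S$ to $S'\defeq S\cup(\bdy\clos\OM^{End}\setminus\clos\OM)$ keeps it totally disconnected, and $\Lambda\not\subset S'$. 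Now pick $p\in\Lambda\setminus S'\subset\bdy\OM\setminus S$. The claim is that we can extract from a suitable sub-arc a second point $q\in\Lambda\setminus S'$ with $q\neq p$ such that the pair $p,q$ fails $(\lambda,\kappa)$-visibility: indeed, $\Lambda$ connected and $S'$ totally disconnected forces $\Lambda\setminus S'$ to have at least two points (a connected set cannot be covered by a totally disconnected proper subset unless it is a single point), so choose any $q\in\Lambda\setminus S'$, $q\neq p$.

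Finally I would show the pair $p,q$ inherits the failure of visibility from the $\sigma_n$'s, contradicting the hypothesis that every pair in $\bdy\OM\setminus S$ has the $(\lambda,\kappa)$-visibility property. Fix disjoint closed neighbourhoods $V_p\ni p$, $V_q\ni q$ in $\clos\OM^{End}$ and any compact $K\subset\OM$; I must produce a $(\lambda,\kappa)$-almost-geodesic from $V_p$ to $V_q$ missing $K$. Since $p,q\in\Lambda=\lim\Lambda_n$, for all large $n$ the curve $\sigma_n$ passes within any prescribed distance of both $p$ and $q$, so there are parameters $a_n<b_n$ in $[0,T_n]$ with $\sigma_n(a_n)\in V_p$ and $\sigma_n(b_n)\in V_q$ (after possibly swapping roles). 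The restriction $\sigma_n|_{[a_n,b_n]}$, reparametrized, is again a $(\lambda,\kappa)$-almost-geodesic — almost-geodesics restrict to almost-geodesics with the same constants — and for $n$ large enough $K\subset K_n$, so $\sigma_n([a_n,b_n])\subset\sigma_n([0,T_n])$ is disjoint from $K_n\supset K$. This is exactly a $(\lambda,\kappa)$-almost-geodesic joining $V_p$ to $V_q$ and avoiding $K$; as $V_p,V_q,K$ were arbitrary, the pair $p,q$ fails $(\lambda,\kappa)$-visibility, the desired contradiction.

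The main obstacle I anticipate is the bookkeeping around the end compactification: ensuring the Hausdorff limit $\Lambda$ genuinely avoids $\OM$ (this uses that the $\sigma_n$ escape the exhausting compacts, but one must check this survives passing to $\clos\OM^{End}$ and the Hausdorff limit), and confirming that the ends contribute only a totally disconnected piece so they can be absorbed into $S'$. A secondary technical point is verifying cleanly that sub-arcs of $(\lambda,\kappa)$-almost-geodesics are $(\lambda,\kappa)$-almost-geodesics (immediate from the definition, since both the coarse quasi-isometry inequalities in (i) and the metric-derivative bound in (ii) are inherited by restriction to a subinterval), and the routine extraction of the escaping sequence $\sigma_n$ from the negation of visibility via an exhaustion of $\OM$.
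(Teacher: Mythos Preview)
Your overall strategy matches the paper's: argue by contradiction, extract a sequence of escaping $(\lambda,\kappa)$-almost-geodesics, pass to a Hausdorff limit lying in the boundary, use total disconnectedness of $S$ to locate two limit points outside $S$, then contradict the hypothesis via restriction to sub-arcs. But there is a genuine gap in the Hausdorff-limit step.

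You take the closures $\Lambda_n=\overline{\sigma_n([0,T_n])}$ inside $\clos{\OM}^{End}$ and pass to a Hausdorff limit there. This requires $\clos{\OM}^{End}$ to be a compact \emph{metric} space: the paper's Hausdorff-distance framework is explicitly set up only for compact distance spaces (see the paragraph following \Cref{dfn:vis-wvis-dom}). Metrizability of $\clos{\OM}^{End}$ is nowhere established, and even sequential compactness is treated as nontrivial: \Cref{res:end_seq_comp} isolates a sufficient condition, and \Cref{thm:vis_seqcompact} derives that condition \emph{from} visibility. Invoking it as an input here is circular.

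The paper avoids $\clos{\OM}^{End}$ altogether in this proof. It first applies \Cref{L:tr_1}: whether $\zeta,\eta$ are Euclidean boundary points or ends, one obtains $\xi\in\bdy\OM$ and $R>0$ with $\gamma_n(a_n)\to\xi$ and each $\gamma_n$ exiting $B(\xi;R)$. All Hausdorff limits are then taken in the Euclidean compact $\clos{B}(\xi;R)\subset\C^d$, where there is no issue. Moreover, the paper splits each $\gamma_n$ into two sub-arcs, one trapped in $\clos{B}(\xi;R/3)$ and one in the annulus $\clos{B}(\xi;R)\setminus B(\xi;2R/3)$, and takes their limits separately. This yields two \emph{disjoint} compact connected sets $L_1,L_2\subset\bdy\OM$, each stretching between two concentric spheres and hence containing at least two points, so each meets $\bdy\OM\setminus S$. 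The two required points $q_1\neq q_2$ come for free---without your (true but unproven) continuum-theoretic claim that a single connected limit set minus a totally disconnected set retains at least two points, and without needing to argue that the set of ends is itself totally disconnected.
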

\noindent A consequence of the above result is that, in order to check if a domain $\OM$ 
possesses a particular type of visibility property, one only has to check whether pairs 
of distinct points of $\bdy\OM$ possess this property, i.e., there is no need to verify
the condition at the {\em ends} of $\clos{\OM}$.
\smallskip 

We now present our second result.

\begin{theorem}\label{thm:cont_surj_im_vis_dom_vis}
Let $\OM\subset\C^d$ be a hyperbolic domain (not necessarily bounded) and 
$\OM_0\subset\C^d$ be a visibility domain. If there exists a biholomorphism $\Phi:\OM_0\to\OM$ 
that extends to a continuous surjective map from $\clos{\OM_0}^{\text{End}}$ to 
$\clos{\OM}^{\text{End}}$, then $\OM$ is a visibility domain.
\end{theorem}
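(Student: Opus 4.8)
The plan is to fix, once and for all, $\lambda\ge1$, $\kappa>0$, and a pair of distinct points $\zeta,\eta\in\bdy\clos{\OM}^{\text{End}}$; since $\lambda,\kappa$ are arbitrary, it then suffices to produce neighbourhoods $V_\zeta\ni\zeta$ and $V_\eta\ni\eta$ in $\clos{\OM}^{\text{End}}$ with $\clos{V_\zeta}\cap\clos{V_\eta}=\emptyset$ and a compact $K\subset\OM$ such that every $\lk$-almost-geodesic $\sigma\colon[0,T]\to\OM$ with $\sigma(0)\in V_\zeta$ and $\sigma(T)\in V_\eta$ meets $K$. Write $\wh\Phi\colon\clos{\OM_0}^{\text{End}}\to\clos{\OM}^{\text{End}}$ for the given continuous surjective extension of $\Phi$, and recall that $\clos{\OM_0}^{\text{End}}$ and $\clos{\OM}^{\text{End}}$ are compact Hausdorff, hence normal. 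The engine of the proof is that a biholomorphism is an isometry for the Kobayashi distance and preserves the Kobayashi--Royden metric; since $\Phi^{-1}$ is moreover holomorphic, hence locally Lipschitz, it carries any $\lk$-almost-geodesic $\sigma\colon[0,T]\to\OM$ to an $\lk$-almost-geodesic $\tau\defeq\Phi^{-1}\circ\sigma\colon[0,T]\to\OM_0$: absolute continuity passes through the locally Lipschitz map $\Phi^{-1}$ (the image $\sigma([0,T])$ being a compact subset of $\OM$), and conditions (i) and (ii) in the definition of almost-geodesic transfer verbatim because $\Phi^{-1}$ is a distance isometry and $\dkoba_{\OM_0}(\tau(t);\tau'(t))=\dkoba_{\OM}(\sigma(t);\sigma'(t))$ for a.e.\ $t$. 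Conversely $\sigma=\Phi\circ\tau$.

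The first step would be to upgrade the pairwise $\lk$-visibility of $\OM_0$ to a compact-set version: \emph{if $A,B\subset\bdy\clos{\OM_0}^{\text{End}}$ are disjoint and compact, then there exist open $U_A\supset A$ and $U_B\supset B$ and a compact $K_0\subset\OM_0$ such that every $\lk$-almost-geodesic in $\OM_0$ with endpoints in $U_A$ and $U_B$ meets $K_0$.} This is a routine double finite-subcover argument: attach to each pair $(a,b)\in A\times B$ (distinct, so $\OM_0$'s visibility applies) data $(U^{a,b}_a,U^{a,b}_b,K^{a,b}_0)$; for fixed $a$, cover $B$ by finitely many $U^{a,b_i}_b$ and set $U'_a\defeq\bigcap_iU^{a,b_i}_a$, $U^a_B\defeq\bigcup_iU^{a,b_i}_b$, $K^a_0\defeq\bigcup_iK^{a,b_i}_0$; then cover $A$ by finitely many $U'_{a_k}$ and set $U_A\defeq\bigcup_kU'_{a_k}$, $U_B\defeq\bigcap_kU^{a_k}_B$, $K_0\defeq\bigcup_kK^{a_k}_0$. (One can further intersect $U_A,U_B$ with open sets separating $A$ from $B$ to get $\clos{U_A}\cap\clos{U_B}=\emptyset$, but this will not be needed.)

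Next I would apply this with $A\defeq\wh\Phi^{-1}(\zeta)$ and $B\defeq\wh\Phi^{-1}(\eta)$, which are nonempty (surjectivity of $\wh\Phi$), compact (preimages of points under a continuous map on a compact space), disjoint (since $\zeta\ne\eta$), and contained in $\bdy\clos{\OM_0}^{\text{End}}$ because $\Phi(\OM_0)=\OM$; this yields $U_A,U_B,K_0$ as above. Choosing open $W_\zeta\ni\zeta$ and $W_\eta\ni\eta$ in $\clos{\OM}^{\text{End}}$ with $\clos{W_\zeta}\cap\clos{W_\eta}=\emptyset$ (possible by normality, as $\zeta\ne\eta$), set
\[
V_\zeta\defeq W_\zeta\setminus\wh\Phi\bigl(\clos{\OM_0}^{\text{End}}\setminus U_A\bigr),\qquad
V_\eta\defeq W_\eta\setminus\wh\Phi\bigl(\clos{\OM_0}^{\text{End}}\setminus U_B\bigr).
\]
Each $\clos{\OM_0}^{\text{End}}\setminus U_A$ is compact, so its $\wh\Phi$-image is compact, hence closed in the Hausdorff space $\clos{\OM}^{\text{End}}$; therefore $V_\zeta$ is open, it contains $\zeta$ precisely because $\wh\Phi^{-1}(\zeta)=A\subset U_A$, and $\clos{V_\zeta}\cap\clos{V_\eta}\subset\clos{W_\zeta}\cap\clos{W_\eta}=\emptyset$. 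Finally, if $\sigma\colon[0,T]\to\OM$ is an $\lk$-almost-geodesic with $\sigma(0)\in V_\zeta$ and $\sigma(T)\in V_\eta$, then $\Phi^{-1}(\sigma(0))\in\wh\Phi^{-1}(\sigma(0))$ while $\sigma(0)\notin\wh\Phi(\clos{\OM_0}^{\text{End}}\setminus U_A)$, which forces $\Phi^{-1}(\sigma(0))\in U_A$, and likewise $\Phi^{-1}(\sigma(T))\in U_B$; hence $\tau=\Phi^{-1}\circ\sigma$ is an $\lk$-almost-geodesic in $\OM_0$ with endpoints in $U_A$ and $U_B$, so $\tau([0,T])\cap K_0\ne\emptyset$, and consequently $\sigma([0,T])=\Phi(\tau([0,T]))$ meets the compact set $K\defeq\Phi(K_0)\subset\OM$. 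This supplies the required data, and $\OM$ is a visibility domain.

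The one genuine difficulty is that the boundary extension $\wh\Phi$ is in general neither injective nor open, so neighbourhoods of $\zeta$ and $\eta$ cannot simply be pulled back through $\wh\Phi$. The way around this is to transport information by taking \emph{preimages of points} (compact, hence well controlled) and \emph{complements of open sets} (whose $\wh\Phi$-images are compact, hence closed), while the compact-set refinement of visibility for $\OM_0$ and the Hausdorffness of $\clos{\OM}^{\text{End}}$ provide, respectively, a single compact $K_0$ tracking all of $\wh\Phi^{-1}(\zeta)$ and $\wh\Phi^{-1}(\eta)$ at once, and the separating neighbourhoods $W_\zeta,W_\eta$.
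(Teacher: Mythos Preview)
Your proof is correct and follows essentially the same strategy as the paper's: pull back $\zeta,\eta$ to the disjoint compact fibers $\wh\Phi^{-1}(\zeta),\wh\Phi^{-1}(\eta)\subset\bdy\clos{\OM_0}^{\text{End}}$, upgrade the pairwise visibility of $\OM_0$ to a compact-set version (the paper does this by a contradiction/diagonal argument in Lemma~\ref{lmm:dsj_cpt_sets_visib_gen}, you by a direct double finite-subcover), and transport back through the biholomorphism. Your explicit construction of $V_\zeta,V_\eta$ via the closed-map property of $\wh\Phi$ is a clean alternative to the paper's sequential phrasing, but the substance is the same.
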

\noindent The above result is not only of independent interest but it turns out to be a useful tool in proving 
our results for planar domains.
\smallskip

Next we have a result that says that, under rather general 
conditions, local and global (weak)
visibility are equivalent. In the definition below, we 
specify what we mean by
``local (weak) visibility'' (``global (weak) visibility'' simply 
means (weak) visibility as defined in
\Cref{dfn:vis-wvis-dom}). We point out that, in the recent past, 
Nikolov--{\"O}kten--Thomas 
have proved very similar results (see 
\cite[Theorem~4.1, 4.2]{Nik_Okt_Tho}). Nevertheless, there are
important differences between their results and ours. Specifically,
they make a seemingly technical, but crucial, assumption about the
connectedness of certain domains obtained as intersections, whereas
we explicitly avoid making this assumption. This
is {\em not} merely a
technical matter: the specific definition of local visibility that
we use not only informs the theorems that we are able to prove
(such as \Cref{thm:glob_vis_iff_loc_vis} below, which has no
artificial connectedness-of-intersections-type assumption), but
is also inspired by the condition that the codomain in 
\Cref{thm:ext_biholo} is required to satisfy (compare the 
definition below with Condition~2 as defined in 
\Cref{S:Visibility_Thm}); and the latter theorem appears to be
{\em natural} in the sense that, given the evidence of
Carath{\'e}odory's result dealing with the continuous extension
of planar biholomorphisms (see, for instance, 
\cite[Theorem~4.3.1]{BCM}), it seems that local connectedness of
the boundary (which is essentially the condition imposed on the
codomain in \Cref{thm:ext_biholo}) is the most appropriate
one in so far as one is looking for a 
continuous-extension-type result.

\begin{definition}
A hyperbolic domain $\Omega\subset\C^d$ is said to be a {\em local (weak) 
visibility domain at $p\in\bdy\Omega$} if, for every sequence $(x_n)_{n\geq 1}$
in $\OM$ converging to $p$, there exist a subdomain $U$ of $\OM$ and a 
subsequence $(x_{k_n})_{n\geq 1}$ of $(x_n)_{n\geq 1}$ such that
$p\in (\bdy U\cap\bdy\OM)\setminus\clos{\bdy U\cap\OM}$, such that $x_{k_n}\in U$ 
for all $n$ and such that every two points of
$(\bdy U\cap\bdy\OM)\setminus\clos{\bdy U\cap\OM}$ possess the (weak) visibility 
property with respect to $\koba_U$. We say that $\Omega$ is a {\em local (weak) 
visibility domain} if it is a local (weak) visibility domain at $p$ for every 
$p\in\bdy\OM$. 
\end{definition}



\begin{theorem} \label{thm:glob_vis_iff_loc_vis}
Suppose that $\OM\subset\C^d$ is a hyperbolic domain that satisfies {\rm BSP} (in particular, it 
can be taken to be {\em any} bounded domain). Then 
$\OM$ is a (weak) visibility domain if and only if it is a local (weak) visibility domain.
\end{theorem}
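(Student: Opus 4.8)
The plan is to prove the two directions separately, the ``only if'' being essentially trivial and the ``if'' being the substantive one. For the easy direction: if $\OM$ is a (weak) visibility domain, then for any $p\in\bdy\OM$ and any sequence $x_n\to p$ we may simply take $U=\OM$ itself and the full sequence; the condition $p\in(\bdy\OM\cap\bdy\OM)\setminus\clos{\bdy\OM\cap\OM} = \bdy\OM\setminus\clos{\emptyset}=\bdy\OM$ holds, $x_n\in\OM$ trivially, and pairs of points of $\bdy\OM$ possess (weak) visibility with respect to $\koba_\OM=\koba_U$ by hypothesis. (One should double-check the case of the \emph{end} boundary is covered: but by \Cref{thm-totally disconnected} with $S=\emptyset$, visibility at pairs of ordinary boundary points already yields a full $(\lambda,\kappa)$-visibility domain, so this is harmless.)

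For the hard direction, assume $\OM$ satisfies BSP and is a local (weak) visibility domain; I want to show every pair of distinct points $\zeta,\eta\in\bdy\clos{\OM}^{End}$ possesses the (weak) visibility property. By \Cref{thm-totally disconnected} (applied with $S=\emptyset$) it suffices to treat pairs $p,q\in\bdy\OM$. Fix $\lambda\ge 1$ (for weak visibility, $\lambda=1$) and suppose, for contradiction, that $p,q$ fail the visibility property: then for every choice of neighbourhoods and every compact $K\subset\OM$ there is an almost-geodesic escaping $K$. Standard practice (cf. the Bharali--Zimmer/Bracci--Nikolov--Thomas machinery) is to extract, for each $n$, a $(\lambda,\kappa_n)$-almost-geodesic $\sigma_n\colon[0,T_n]\to\OM$ with $\sigma_n(0)\to p$, $\sigma_n(T_n)\to q$, and $\sigma_n([0,T_n])$ leaving every compact subset of $\OM$ (equivalently, staying within shrinking neighbourhoods of $\bdy\clos\OM^{End}$ in the middle, or with $\koba_\OM$-length tending to the boundary). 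The key first step is then to localize: apply the definition of local (weak) visibility at $p$ to the sequence $\sigma_n(0)$ (after passing to a subsequence and reparametrizing) to obtain a subdomain $U_p\ni$ (a tail of) these initial points with $p\in(\bdy U_p\cap\bdy\OM)\setminus\clos{\bdy U_p\cap\OM}$ and with $U_p$ a (weak) visibility domain on its own relevant boundary; similarly get $U_q$ at $q$.

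The heart of the argument is the use of BSP (a ``Both-sides-property''/localization estimate for the Kobayashi distance — this is the ``intrinsic localization result for the Kobayashi distance'' advertised in the introduction) to compare $\koba_{U_p}$ with $\koba_\OM$ near $p$: concretely, one shows that on a neighbourhood of $p$ the two distances are comparable up to additive and multiplicative constants, so that an initial arc of $\sigma_n$ lying in $U_p$ (which it must, since $\sigma_n(0)\in U_p$ and $U_p$ is relatively open, for as long as $\sigma_n$ stays in a small ball around $p$) is itself a $(\lambda',\kappa')$-almost-geodesic \emph{for} $\koba_{U_p}$, with uniform constants. Now comes the dichotomy that closes the loop: either (a) $\sigma_n$ stays inside $U_p$ for ``a long time'', i.e., the sub-arc has endpoints in the prescribed neighbourhoods (inside $\bdy U_p\cap\bdy\OM$, or near an appropriate point of $(\bdy U_p\cap\bdy\OM)\setminus\clos{\bdy U_p\cap\OM}$) with no compact of $U_p$ hit — contradicting that $U_p$ is a local visibility domain at those points; or (b) $\sigma_n$ exits $U_p$, hence must cross the ``inner'' part of $\bdy U_p$, i.e., $\clos{\bdy U_p\cap\OM}$; but since $p\notin\clos{\bdy U_p\cap\OM}$, this crossing happens at a definite $\koba_\OM$-distance away from $p$, i.e., inside a fixed compact subset of $\OM$, contradicting that $\sigma_n$ escaped every compact set. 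The condition $p\in(\bdy U\cap\bdy\OM)\setminus\clos{\bdy U\cap\OM}$ built into the definition is precisely what makes (b) work without any connectedness-of-intersection hypothesis, which is the point the authors emphasize in contrast to \cite{Nik_Okt_Tho}.

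The main obstacle I expect is the bookkeeping in step (b): making rigorous the claim that ``the almost-geodesic must cross a compact piece of $\OM$ before it can leave $U_p$''. This requires (i) knowing that a neighbourhood basis of $p$ in $\clos\OM^{End}$ can be chosen inside $U_p$ (using $p\in\bdy U_p\cap\bdy\OM$ and $p\notin\clos{\bdy U_p\cap\OM}$, so that a small Euclidean ball $B(p,r)$ meets $\bdy U_p$ only along $\bdy\OM$), and (ii) a uniform lower bound, via BSP, on how much $\koba_\OM$-length $\sigma_n$ must accumulate to travel from $B(p,r/2)$ to $\bdy B(p,r)\cap\OM$ — this keeps the crossing point of $\clos{\bdy U_p\cap\OM}$ at uniformly positive $\koba_\OM$-distance from the fixed basepoint, hence in a fixed compact. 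The analogous treatment at $q$, plus a ``middle'' compact supplied by the (weak) visibility of $U_p$ (or $U_q$) itself, then forces $\sigma_n$ to intersect a fixed compact subset of $\OM$, the desired contradiction. One also has to be slightly careful that passing to sub-arcs preserves the almost-geodesic constants (it does, trivially for the length/speed bound; the quasi-isometry bound is inherited on subintervals) and that the reparametrization needed to apply the local-visibility definition (which quantifies over \emph{sequences} converging to $p$, giving one subdomain for a subsequence) is compatible with the diagonal extraction over $n$.
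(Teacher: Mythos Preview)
Your ``only if'' direction, taking $U=\OM$ itself, is correct and in fact simpler than the paper's route (which invokes \Cref{prp:vsb_nbd_ntrsc_fin} and \Cref{thm:glob_vis_loc_vis} to produce a genuinely smaller $U$).

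Your ``if'' direction, however, has a real gap, and it is precisely in the dichotomy (a)/(b). In case (b), when $\sigma_n$ exits $U_p$, the exit point lies in $\bdy U_p\cap\OM$, so its subsequential limit $x_0$ lies in $\clos{\bdy U_p\cap\OM}$. But the local visibility hypothesis only gives you visibility with respect to $\koba_{U_p}$ for pairs in $(\bdy U_p\cap\bdy\OM)\setminus\clos{\bdy U_p\cap\OM}$; the point $x_0$ is exactly in the set you have excluded, so you cannot invoke visibility at $(p,x_0)$. Nor does ``positive $\koba_\OM$-distance from a basepoint'' force $x_0$ into a compact of $\OM$: the exit points can perfectly well drift to $\bdy\OM$. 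Case (a) is also incomplete: if $\sigma_n$ stays entirely in $U_p$, its terminal endpoint tends to $q$, which need not lie in $\bdy U_p$ at all, so there is no pair in the good boundary of $U_p$ to which to apply visibility.

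The paper circumvents this by \emph{not} applying local visibility at the original endpoints $p,q$. Instead (see the proof of \Cref{thm:loc_vis_imp_glob_vis_ntrnlzd_gen}) it first finds an intermediate $p_0\in\bdy\OM$ that the $\gamma_n$ pass near, applies local visibility \emph{there} to get $U$, and then---crucially---restricts to a small ball $B(p_0;r_1)$ chosen with $\clos{B}(p_0;r_1)\cap\clos{\bdy U\cap\OM}=\emptyset$. The $\gamma_n$ exit this \emph{ball} (not $U$), and the exit points, being on $\bdy B(p_0;r_1/3)$ and inside the region where $\bdy U$ only meets $\bdy\OM$, subconverge to a second point $x_0\in(\bdy U\cap\bdy\OM)\setminus\clos{\bdy U\cap\OM}$ with $x_0\neq p_0$. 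Now one can apply visibility of $U$ at $(p_0,x_0)$. Transferring the $\koba_\OM$-almost-geodesics to $\koba_U$-almost-geodesics is itself nontrivial and is done via \Cref{lmm:ag_wrt_glob_ag_wrt_loc_upld}, whose proof \emph{uses} the local visibility of $U$ (through \Cref{lmm:dst_set_rdcd_pt_loc_upld} and \Cref{lmm:lb_dist_set_OM_upld}); a naive bi-Lipschitz comparison of $\koba_U$ and $\koba_\OM$ is not available.

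Finally, a terminological correction: BSP is not a localization estimate. It is the \emph{Boundary Separation Property} (see \Cref{SS:MR}): $\liminf_{(z,w)\to(\xi_1,\xi_2)}\koba_\OM(z,w)>0$ for distinct $\xi_1,\xi_2\in\bdy\OM$. Its role (via \Cref{lmm:kob_sep_intrnl_dom}) is to guarantee $\koba_\OM(B(p_0;r)\cap U,\OM\setminus U)>0$, which is the hypothesis feeding into Royden-type localization and hence into \Cref{lmm:ag_wrt_glob_ag_wrt_loc_upld}.
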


\begin{remark}
{\rm BSP} above refers to the {\em boundary separation property} (see subsection~\ref{SS:MR} for a 
definition). It is a technical but essential assumption
in the {\em if} implication above (Result~\ref{lmm:refined_Roy_loc_lemm}, 
Lemma~\ref{lmm:kob_sep_intrnl_dom} and the results in 
subsection~\ref{SS:loc_implies_glob} show why the {\rm BSP} assumption is essential); note that it 
is automatically satisfied if $\OM$ is bounded. 
It is not needed for the {\em only if} implication. All of this will
become clear from the proof of Theorem~\ref{thm:glob_vis_iff_loc_vis} presented at the end of 
Section~\ref{S:gen_loc_glob}.
\end{remark}

\subsection{Visibility of planar domains and the continuous extension of conformal maps}
In this subsection, we present our results regarding the hyperbolic planar domains. 
These results and a few other are contained in Section~\ref{S:Visibility_Thm} of this article. 
The notion of visibility with respect to the Kobayashi distance was introduced 
mainly to study domains in higher-dimensional spaces. Not much is known in one 
dimension. (In this case, the Kobayashi distance coincides with the hyperbolic
distance, which comes from the hyperbolic metric.)
In this context, the following result by Bracci, Nikolov and Thomas 
\cite[Corollary~3.4]{BNT2022}, which is a corollary of a more general result
\cite[Theorem~3.3]{BNT2022}, is interesting. 

\begin{result}\label{R:simplyConn-BNT2022}
 Let $\Omega$ be a bounded simply connected domain in $\C$. Then $\Omega$ is a visibility domain if and only if $\bdy\Omega$ is locally connected. 
\end{result}

In this paper we generalize this result to any simply connected domain
by an application of Carath\'{e}odory extension theorem and Theorem~\ref{thm:cont_surj_im_vis_dom_vis}.
More precisely, we present:
\begin{theorem}[Corollary to {\Cref{thm:vis_simp_conn_loc_conn}}]
 Let $\Omega$ be a hyperbolic simply connected domain in $\C$. Then $\Omega$ 
 is a visibility domain if and only if $\bdy\Omega$ is locally connected. 
\end{theorem}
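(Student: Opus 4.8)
The plan is to deduce the statement from \Cref{R:simplyConn-BNT2022} (the Bracci--Nikolov--Thomas result for \emph{bounded} simply connected domains) by transporting it along a conformal map onto a bounded model, using \Cref{thm:cont_surj_im_vis_dom_vis} to carry visibility from the model to $\OM$ and the Carath\'{e}odory extension theorem to match up the boundary behaviour. Concretely, I would first dispose of the trivial case: if $\OM$ is simply connected, hyperbolic, and $\bdy\OM$ consists of a single point, then $\OM$ is (conformally) $\C\setminus\{pt\}$, which is \emph{not} hyperbolic, so in fact $\bdy\OM$ has at least two points and, being simply connected and hyperbolic, $\OM$ is conformally equivalent to $\unitdisk$ via the Riemann mapping theorem. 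Actually it is cleaner to produce a \emph{bounded} conformal model: compose the Riemann map $\unitdisk\to\OM$ with nothing, or better, observe that any hyperbolic simply connected $\OM$ admits a biholomorphism $\Phi_0:\OM_b\to\OM$ where $\OM_b\subset\C$ is a \emph{bounded} simply connected domain (e.g.\ take $\OM_b$ to be the image of $\OM$ under $\unitdisk\xrightarrow{\ \sim\ }\OM$ read backwards — i.e.\ $\OM_b = \unitdisk$ itself, which is bounded). So without loss of generality we have a biholomorphism $\Phi:\unitdisk\to\OM$.

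For the \emph{forward} direction (visibility $\Rightarrow$ local connectedness of $\bdy\OM$): I would argue by contraposition. If $\bdy\OM$ is \emph{not} locally connected, then by the classical theory (Carath\'{e}odory) the Riemann map $\Phi:\unitdisk\to\OM$ does \emph{not} extend continuously to $\clos{\unitdisk}$, and one can extract two prime ends accessible along a common arc in a way that defeats visibility; more efficiently, I would invoke the contrapositive packaging already available: reduce to the bounded case by precomposing with a Cayley-type transform if $\OM$ is unbounded (or simply run the argument on $\unitdisk$ intrinsically, since $\koba_\OM = \koba_\unitdisk$ transported), and then apply \Cref{R:simplyConn-BNT2022}. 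The point is that $\koba_\OM$ is isometric to $\koba_\unitdisk$, but visibility is a statement about the \emph{end compactification of the closure}, which is not conformally invariant — so one genuinely needs a boundary-matching argument rather than a bare isometry. The honest route is: non-local-connectedness of $\bdy\OM$ produces a failure of visibility for a specific pair of boundary points by tracking almost-geodesics of $\unitdisk$ (which are essentially hyperbolic geodesics, i.e.\ circular arcs) under $\Phi$ and showing they can be pushed arbitrarily close to $\bdy\OM$ while connecting two fixed boundary points — mirroring the proof of the bounded case.

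For the \emph{backward} direction (local connectedness of $\bdy\OM$ $\Rightarrow$ visibility), this is where \Cref{thm:cont_surj_im_vis_dom_vis} does the work. Assume $\bdy\OM$ is locally connected. I would first reduce to bounded $\OM$: if $\OM$ is unbounded, choose $w_0\notin\clos{\OM}$ (possible since $\bdy\OM$ has at least two points, hence $\OM\neq\C$, and one checks $\C\setminus\clos{\OM}$ is nonempty or else handle the degenerate configuration separately), and let $\psi(z) = 1/(z-w_0)$; then $\psi(\OM)$ is a \emph{bounded} simply connected domain, $\psi$ extends to a homeomorphism of the respective closures and end compactifications, and local connectedness is preserved. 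So assume $\OM$ bounded and simply connected with $\bdy\OM$ locally connected. By \Cref{R:simplyConn-BNT2022}, $\OM$ is a visibility domain — wait, that is already the conclusion. So in the bounded case there is nothing to prove; the content is purely the unbounded reduction, which I would carry out via the inversion $\psi$ above, checking that (a) $\psi(\OM)$ is bounded simply connected with locally connected boundary, (b) $\psi$ induces a homeomorphism $\clos{\OM}^{End}\to\clos{\psi(\OM)}^{End}$ (here using that $\psi$ is a homeomorphism of $\clos{\C}$-neighbourhoods), and (c) therefore \Cref{thm:cont_surj_im_vis_dom_vis} applies with $\OM_0 = \psi(\OM)$ to give visibility of $\OM$.

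The main obstacle will be the \emph{unbounded} case in the forward direction and the clean handling of ends: I must make sure that when $\bdy\OM$ is locally connected as a subset of $\C$, the \emph{end compactification} $\clos{\OM}^{End}$ — which may add points "at infinity" — does not introduce new failures of visibility, and conversely that local connectedness of $\bdy\OM$ in the statement is the right hypothesis rather than local connectedness of $\bdy\clos{\OM}^{End}$. The inversion trick resolves this by moving $\infty$ to a finite boundary point, but one must verify that the inversion is compatible with the end compactifications on both sides; once that compatibility lemma is in hand, the theorem follows by combining the bounded BNT result, the Carath\'{e}odory extension theorem (for the forward direction's contrapositive, to exhibit the offending prime ends), and \Cref{thm:cont_surj_im_vis_dom_vis}.
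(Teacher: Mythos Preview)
Your high-level plan matches the paper's (Riemann map, Carath\'{e}odory, \Cref{thm:cont_surj_im_vis_dom_vis}), and you correctly flag the handling of $\infty$ as the crux. But the inversion device you propose for the backward direction has a real gap: the choice $w_0\notin\clos{\OM}$ can simply fail to exist. Take $\OM=\C\setminus(-\infty,0]$: then $\clos{\OM}=\C$, $\bdy\OM=(-\infty,0]$ is locally connected, and there is no finite $w_0$ around which to invert. This is not a degenerate corner case; it is the generic situation whenever $\C\setminus\OM$ has empty interior. Even when such a $w_0$ exists, you must still verify local connectedness of $\bdy(\psi(\OM))$ at $\psi(\infty)$, i.e.\ of $\bdy_{\C_\infty}\OM$ at $\infty$, and the hypothesis gives nothing there. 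The paper sidesteps both issues by working in $\C_\infty$ throughout: \Cref{lmm:simp_conn_loc_conn} (via prime-end theory and Rempe's result \cite{Rempe}) shows that local connectedness of $\bdy_{\C_\infty}\OM$ at all finite points forces it everywhere, including at $\infty$; Carath\'{e}odory in $\C_\infty$ then gives a continuous extension $\clos{\unitdisk}\to\cl_{\C_\infty}(\OM)$, and the \emph{proof method} of \Cref{thm:cont_surj_im_vis_dom_vis} (not the theorem verbatim, since $\cl_{\C_\infty}(\OM)$ need not coincide with $\clos{\OM}^{End}$) yields visibility.

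For the forward direction, your contrapositive sketch (``non-extension of $\Phi$ defeats visibility'') never names the mechanism, and the reduction to the bounded case via inversion runs into the same obstruction as above. The paper instead argues directly via Gromov products: visibility of $\OM$ forces $\limsup(z|w)_o<\infty$ whenever $z,w$ tend to \emph{distinct} points of $\bdy\clos{\OM}^{End}$ (\Cref{lmm:weak_visib_Grom_lim_fin}), while on the source side $\unitdisk$ satisfies Condition~1, so $\liminf(z|w)_o=+\infty$ whenever $z,w$ tend to the \emph{same} boundary point (\Cref{L:Unbounded_Gromov_prod}). Since the Riemann map is a Kobayashi isometry, these two facts together force it to extend continuously to a map $\clos{\unitdisk}\to\cl_{\C_\infty}(\OM)$ (\Cref{lmm:pln_ext_Kob_isom_gen}); then $\bdy_{\C_\infty}\OM$, as a continuous image of $S^1$, is locally connected, and so is its open subset $\bdy\OM$. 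No reduction to the bounded BNT result is needed.
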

One other place where visibility for planar domains is considered is in the context of 
homeomorphic extension of biholomorphism between two planar domains by Bharali and Zimmer 
\cite[Theorem~1.10]{BZ2023}. Their approach to visibility is through the notion of locally 
Goldilocks domain. In this paper, in Section~\ref{Sec:Examples}, we construct a simply connected
domain $V_T$ in $\C$ that is 
not locally Goldilocks at {\em any} of its boundary points. 
This shows that the local Goldilocks property is considerably stronger than visibility.
More generally, we provide 
a much more general sufficient condition for the visibility of planar domains.

\begin{theorem}\label{T:Visibility_Thm_in_plane}
Let $\Omega \subset \C$ be a hyperbolic planar domain. Suppose
there exists a totally disconnected subset $S\subset\bdy\OM$ such that,
for every $p\in\bdy\OM\setminus S$, $\bdy\OM$ is 
locally connected at $p$ and the connected component of $\bdy\OM$ containing $p$ is not a singleton (i.e., is non-degenerate). Then $\OM$ is a visibility domain.
\end{theorem}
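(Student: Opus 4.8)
\textbf{Proof proposal for \Cref{T:Visibility_Thm_in_plane}.}
The plan is to reduce the assertion to \Cref{thm-totally disconnected}, whose hypothesis only requires us to verify the visibility property for pairs of distinct points lying in $\bdy\OM\setminus S$. Fix such a pair $p,q$. By hypothesis, $\bdy\OM$ is locally connected at both $p$ and $q$ and the connected components $C_p,C_q$ of $\bdy\OM$ through $p,q$ are non-degenerate; note $C_p$ and $C_q$ may or may not coincide. The idea is that a non-degenerate locally connected component of the boundary behaves, near $p$, like the boundary of a simply connected domain, so one should be able to localize and invoke (the proof machinery behind) \Cref{R:simplyConn-BNT2022} — or more precisely the planar/simply-connected analysis underlying it — to conclude that no almost-geodesic can sneak from a small neighbourhood of $p$ to a small neighbourhood of $q$ without entering a fixed compact subset of $\OM$.

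First I would set up the local picture at $p$: using local connectedness of $\bdy\OM$ at $p$, choose a small disc $D=D(p,r)$ such that $\bdy\OM\cap D$ has a connected component $E_p\ni p$ that is ``large'' in the sense that it separates $p$ from $\bdy D$ within the boundary, and in particular $E_p$ is a non-degenerate continuum. The key geometric input is that, because $C_p$ is non-degenerate and $\OM$ is hyperbolic, the complement of a non-degenerate continuum through $p$ forces the Kobayashi (equivalently hyperbolic) distance to blow up as one approaches $p$; this is the standard mechanism by which visibility of pairs on a locally connected boundary is established in the simply connected case. I would then do the symmetric construction at $q$, obtaining a disjoint small disc $D'$ and continuum $E_q\ni q$. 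The goal is to produce neighbourhoods $V_p\subset D$, $V_q\subset D'$ in $\clos\OM^{End}$ with disjoint closures and a compact $K\subset\OM$ meeting every $\lk$-almost-geodesic from $V_p$ to $V_q$.

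To get the compact $K$, I would argue by contradiction along the lines of the usual visibility arguments: suppose $\sigma_n:[0,T_n]\to\OM$ are $\lk$-almost-geodesics with $\sigma_n(0)\to p$, $\sigma_n(T_n)\to q$, and $\sigma_n$ escaping every compact subset of $\OM$. Passing to Hausdorff limits of the images $\clos{\sigma_n([0,T_n])}$ in $\clos\OM^{End}$ (using the compactness of the Hausdorff hyperspace recalled in the excerpt), one obtains a closed connected set $\Gamma\subset\bdy\clos\OM^{End}$ joining $p$ to $q$ and avoiding $\OM$. The contradiction should come from the following dichotomy: either $\Gamma$ stays near $\{p,q\}$ — ruled out by the quantitative ``distance blows up near a non-degenerate locally connected boundary point'' estimate, which says an almost-geodesic cannot start at $p$, leave a small neighbourhood, and return with controlled length — or $\Gamma$ is forced through a region where $C_p$ (or $C_q$) acts as a barrier; here I would use that a non-degenerate continuum in $\bdy\OM$ locally splits $\OM$, and monotonicity/localization of the Kobayashi distance (the intrinsic localization result advertised in the abstract, and \Cref{thm:glob_vis_iff_loc_vis}'s underlying lemmas) to pin $\Gamma$ down. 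Alternatively, and perhaps more cleanly, I would try to construct an auxiliary simply connected domain $\wt\OM\supset\OM$ (or a simply connected ``model'' near $p$ and $q$) whose boundary is locally connected at the relevant points, apply \Cref{R:simplyConn-BNT2022} (or \Cref{thm:cont_surj_im_vis_dom_vis} together with the Carath\'eodory extension theorem) there, and transfer visibility back to $\OM$ via monotonicity of $\koba$ under inclusion.

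The main obstacle I anticipate is precisely this transfer/localization step: a non-degenerate locally connected component of $\bdy\OM$ at $p$ gives good control ``from outside'', but $\OM$ itself may be wildly disconnected near $p$ (uncountably many components of $\bdy\OM$ accumulating), so one must ensure that the almost-geodesics in $\OM$ genuinely feel the barrier $C_p$ and cannot exploit the rest of $\bdy\OM$ to shortcut. Handling this likely requires the careful internal-localization lemmas referenced in the paper (the BSP-type arguments and the intrinsic localization of the Kobayashi distance), plus the observation that since a planar domain's Kobayashi distance is the hyperbolic distance, one has the covering-map description available; the delicate point is making the ``$C_p$ is a barrier of positive hyperbolic size'' estimate uniform enough to yield a single compact $K$ for the whole family of $\lk$-almost-geodesics. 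I expect the non-degeneracy hypothesis on the components to be exactly what makes this uniformity possible, so the crux of the write-up will be quantifying it.
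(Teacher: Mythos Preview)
Your plan has the right instincts — localize near a good boundary point and reduce to the simply connected case — but it misses the concrete device the paper uses, and your ``alternative'' via an auxiliary domain $\wt\OM\supset\OM$ and monotonicity runs in the wrong direction. If $\OM\subset\wt\OM$ then $\koba_{\wt\OM}\le\koba_\OM$, so a $(\lambda,\kappa)$-almost-geodesic $\sigma$ for $\koba_\OM$ keeps its upper bound and metric bound for $\koba_{\wt\OM}$, but the crucial lower bound $(1/\lambda)|s-t|-\kappa\le\koba_{\wt\OM}(\sigma(s),\sigma(t))$ does \emph{not} follow, and without it you cannot invoke visibility of $\wt\OM$. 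The Hausdorff-limit dichotomy you sketch is also too vague to pin down a contradiction: a connected limit set $\Gamma\subset\bdy\OM$ joining $p$ to $q$ is not by itself absurd, and ``$C_p$ acts as a barrier'' is exactly the statement to be proved.

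The paper's route is different and more constructive. It first reformulates the topological hypothesis as an analytic one (``Condition~2''): near each $p\in\bdy\OM\setminus S$ there is an injective holomorphic $\phi:\unitdisk\to\OM$ extending continuously to $\clos\unitdisk$ with $\phi(1)=p$ and an arc of $\bdy\unitdisk$ through $1$ mapped into $\bdy\OM$, such that any sequence in $\OM$ converging to $p$ is (subsequentially) absorbed by the image of the corresponding hyperbolic half-disc $Reg(x_1,x_2)$. The construction of this $\phi$ is the step you are missing: one passes to the simply connected fill-in $\OM_\gamma=\C_\infty\setminus K_\gamma$ (where $K_\gamma$ is the complementary component containing $C_p$), shows via local connectedness that near $p$ one has $V=(V\cap\OM)\cup(V\cap K_\gamma)$ — so $\OM$ and $\OM_\gamma$ agree in a neighbourhood of $p$ — and then uses Carath\'eodory on $\OM_\gamma$ (whose boundary, by a prime-end argument, is locally connected everywhere once it is locally connected off a totally disconnected set) to get the Riemann map; restricting to a small cap near $1$ forces the image into $\OM$, not merely into $\OM_\gamma$. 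The visibility argument then goes the \emph{opposite} way from your monotonicity idea: using BSP for planar domains one shows $\koba_\OM\big(\phi(Reg(x_1,x_2)),\,\OM\setminus\phi(\unitdisk)\big)>0$, so Royden's localization gives $\dkoba_{\phi(\unitdisk)}\le C\,\dkoba_\OM$ on that region, whence $\koba_\OM$-almost-geodesics trapped there are $\koba_{\phi(\unitdisk)}$-almost-geodesics; pulling back by $\phi^{-1}$ and invoking visibility of $\unitdisk$ yields the compact $K$. The reduction to a single boundary point (rather than a pair $p,q$) is achieved inside that argument via \Cref{rmk:cnvg_outside_tot_disc}, which is morally your use of \Cref{thm-totally disconnected} but applied at the level of a single escaping family of almost-geodesics.
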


\noindent The proof of the above theorem is presented in Section~\ref{S:Visibility_Thm}. 
In this section, we introduce Condition~1 
and Condition~2 that a hyperbolic planar domain may satisfy. These conditions are defined analytically. One of 
the main results of this section is that any domain that satisfy
Condition~2 is a visibility domain; see Theorem~\ref{T:cond2ImplVisibility}.
Later we give a topological characterization of domains that satisfy Condition~2;
see Theorem~\ref{thm:loc_conn_apt_tot_disconn_cond2}.
Theorem~\ref{T:Visibility_Thm_in_plane} is a consequence of the aforementioned results. 
We conjecture that the 
converse of \Cref{T:Visibility_Thm_in_plane} is also true. We will give some evidence towards 
this.
\smallskip

We now discuss about continuous/homeomorphic extension of biholomorphic maps between planar 
domains. 
The question of the continuous extension of biholomorphisms 
between planar domains is an old one 
that goes back to the following result of Carath{\'e}odory. 

\begin{result}[Carath{\'e}odory; see, e.g., {\cite[Theorem~4.3.1]{BCM}}]\label{R:Cara}
Suppose that $\OM\subset\C_\infty$ is a hyperbolic simply connected domain and that 
$f:\unitdisk\to\OM$ is a biholomorphism. Then $f$ extends to a continuous map from 
$\clos{\unitdisk}$ to $\clos{\OM}$ if and only if $\bdy\OM$ is locally 
connected, where $\clos{\OM}$ and $\bdy\OM$ denote the closure and boundary of $\OM$,
respectively, in $\C_\infty$. The extension is a homeomorphism if and only if $\bdy\OM$ is a 
Jordan curve.
\end{result}

There are several generalizations of this result. The ones by Luo and Yao \cite{Luo-Yao2022} 
and Ntalampekos \cite{Ntalampekos2023} are quite general. We will state these results 
precisely in \Cref{sec:ext_biholo} while comparing them with our result in this direction:  

\begin{theorem}\label{thm:ext_biholo}
Let $\OM_1,\OM_2 \subset \C$ be hyperbolic domains and
 let $f :\OM_1 \lraw \OM_2$ be a 
biholomorphism. Suppose 
\begin{itemize}
    \item[(i)] there exists a totally disconnected set $S\subset\bdy \OM_2$ such that for all $p\in\bdy\OM_2\setminus S$ the boundary $\bdy\OM_2$ is locally connected at $p$ and the component of $\bdy\OM_2$ containing $p$ is not a singleton; and 
    \item[(ii)] every 
    component of $\bdy\OM_1$ is a Jordan curve and $\bdy\OM_1$ is locally connected. 
\end{itemize}
Then $f$ extends to a continuous
map from $\clos{\OM}_1^{End}$ onto $\clos{\OM}_2^{End}$. Moreover, if $\OM_2$ 
satisfies $(ii)$, then the extension is a homeomorphism.
\end{theorem}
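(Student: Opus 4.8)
The plan is to reduce to the situation where \emph{both} $\OM_1$ and $\OM_2$ are visibility domains and then to run the ``almost-geodesics track each other across a biholomorphism'' argument that underlies Carath\'{e}odory-type continuous-extension theorems. First I would observe that hypothesis (ii) already makes $\OM_1$ a visibility domain: every component of $\bdy\OM_1$ is a Jordan curve, hence a non-degenerate continuum, and $\bdy\OM_1$ is locally connected, so it is locally connected at each of its points; thus \Cref{T:Visibility_Thm_in_plane} with $S=\emptyset$ applies. Likewise, (i) together with \Cref{T:Visibility_Thm_in_plane} gives that $\OM_2$ is a visibility domain. I would also record, for later use, that if the component $C$ of $\bdy\OM_1$ through a point $p$ is non-degenerate — which under (ii) holds at every $p\in\bdy\OM_1$ — then $\C_\infty\setminus C$ is a hyperbolic simply connected domain containing $\OM_1$, so that, by monotonicity of the Kobayashi distance under inclusion together with the fact that points of $\C_\infty\setminus C$ approaching $C$ leave every compact subset of it, one has $\koba_{\OM_1}(x,w_0)\to\infty$ as $\OM_1\ni x\to p$, for each fixed $w_0\in\OM_1$ (and, via the ``exterior of a boundary Jordan curve'' conformal picture, the analogous blow-up holds as $x$ tends to an end of $\clos{\OM}_1$). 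Throughout I would use the standard facts that any two points of a hyperbolic domain are joined by a $(1,\kappa)$-almost-geodesic for every $\kappa>0$, and that a biholomorphism carries $(1,\kappa)$-almost-geodesics to $(1,\kappa)$-almost-geodesics, since it preserves both the Kobayashi distance and the Kobayashi--Royden metric.

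The core step is to extend $f$ to a continuous map $F\colon\clos{\OM}_1^{End}\to\clos{\OM}_2^{End}$. Fix $p\in\bdy\clos{\OM}_1^{End}$ and a sequence $(x_n)$ in $\OM_1$ with $x_n\to p$; it suffices to show that $(f(x_n))$ converges in $\clos{\OM}_2^{End}$, after which an interlacing argument shows the limit depends only on $p$, and continuity of the resulting $F$ follows by a diagonal argument since $\clos{\OM}_1^{End}$ is compact and metrizable. So suppose for contradiction that, along subsequences, $f(x_{n_k})\to\xi$ and $f(x_{m_k})\to\eta$ with $\xi\neq\eta$; both lie in $\bdy\clos{\OM}_2^{End}$, since a limit in $\OM_2$ would pull back through $f^{-1}$ to a limit in $\OM_1$, against $x_n\to p$. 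As $\OM_2$ is a visibility domain, the pair $\xi,\eta$ has the $(1,\kappa)$-visibility property: there are neighbourhoods $V_\xi,V_\eta$ of $\xi,\eta$ in $\clos{\OM}_2^{End}$ with disjoint closures and a compact $K\subset\OM_2$ such that every $(1,\kappa)$-almost-geodesic of $\OM_2$ with one endpoint in $V_\xi$ and the other in $V_\eta$ meets $K$. For $k$ large, $f(x_{n_k})\in V_\xi$ and $f(x_{m_k})\in V_\eta$, so if I join $x_{n_k}$ to $x_{m_k}$ in $\OM_1$ by \emph{any} $(1,\kappa)$-almost-geodesic $\tau_k$, then $f\circ\tau_k$ is a $(1,\kappa)$-almost-geodesic of $\OM_2$ joining those endpoints, hence meets $K$, and therefore $\tau_k$ meets the fixed compact set $L:=f^{-1}(K)\subset\OM_1$. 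Thus \emph{every} $(1,\kappa)$-almost-geodesic of $\OM_1$ from $x_{n_k}$ to $x_{m_k}$ meets $L$, for all large $k$.

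Now $x_{n_k}$ and $x_{m_k}$ both converge to the single point $p$, and the crux of the argument is the claim that this forces some $(1,\kappa)$-almost-geodesic joining them to \emph{avoid} $L$ for $k$ large, contradicting the previous assertion. I would prove this through the local structure of $\clos{\OM}_1^{End}$ near $p$ supplied by (ii): since $\bdy\OM_1$ is locally connected at $p$ and its components are Jordan curves, $\bdy\OM_1$ is, near $p$, a single Jordan arc, and consequently $p$ has arbitrarily small neighbourhoods $W$ in $\clos{\OM}_1^{End}$ with $W\cap\OM_1$ connected and with $\koba_{W\cap\OM_1}$ comparable to $\koba_{\OM_1}$ on a still smaller neighbourhood (this is the intrinsic localization of the Kobayashi distance proved in the paper). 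Choosing such a $W$ with $\clos{W}\cap L=\emptyset$ and then $k$ large enough that $x_{n_k},x_{m_k}$ lie in the smaller neighbourhood, one joins them by an almost-geodesic of $W\cap\OM_1$; the comparison makes it a $(1,\kappa)$-almost-geodesic of $\OM_1$ (with harmlessly degraded constants, absorbed since $\OM_2$ is a \emph{full} visibility domain), and it stays inside $W\cap\OM_1$, hence misses $L$ — the contradiction. Granting the claim, $F$ is well defined and continuous, and $F(\clos{\OM}_1^{End})$, being a compact and hence closed subset of the Hausdorff space $\clos{\OM}_2^{End}$ that contains the dense set $\OM_2$, equals $\clos{\OM}_2^{End}$; so $F$ is a continuous surjection.

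Finally, if $\OM_2$ also satisfies (ii), then it is a visibility domain with Jordan-curve boundary components, so the construction above applies verbatim to $f^{-1}\colon\OM_2\to\OM_1$ — using visibility of $\OM_1$ (already obtained from (ii)) where visibility of the target is needed, and using (ii) for $\OM_2$ in the role of the source — producing a continuous extension $G\colon\clos{\OM}_2^{End}\to\clos{\OM}_1^{End}$ of $f^{-1}$. Since $G\circ F$ and $F\circ G$ are continuous self-maps of compact Hausdorff spaces that restrict to the identity on the dense subsets $\OM_1$ and $\OM_2$ respectively, they are the identity, so $F$ is a homeomorphism. I expect the hard part to be the claim in the third paragraph: that almost-geodesics of $\OM_1$ whose two endpoints tend to one and the same point of $\clos{\OM}_1^{End}$ must eventually escape every compact subset of $\OM_1$. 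This is precisely where (ii) — rather than mere visibility of $\OM_1$ — is used, and making it rigorous requires both the blow-up $\koba_{\OM_1}(\,\cdot\,,w_0)\to\infty$ at boundary points with non-degenerate component (and at ends) and a sufficiently fine neighbourhood basis of $p$ in $\clos{\OM}_1^{End}$, for which the Jordan-curve structure and local connectedness of $\bdy\OM_1$ have to be combined with the intrinsic localization of $\koba_{\OM_1}$; a minor further point is to treat the ends of $\clos{\OM}_1$ and $\clos{\OM}_2$ on the same footing as their Euclidean boundary points.
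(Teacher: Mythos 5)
Your reduction to visibility of both domains via \Cref{T:Visibility_Thm_in_plane}, and your treatment of Euclidean boundary points $p\in\bdy\OM_1$ — playing the $(1,\kappa)$-visibility of the pair $\xi,\eta$ in $\OM_2$ against the fact that almost-geodesics of $\OM_1$ joining two sequences tending to the same point $p$ must eventually escape every compact set — is essentially the paper's own argument for that case, phrased with almost-geodesics rather than with the Gromov product (the paper combines \Cref{lmm:weak_visib_Grom_lim_fin} with \Cref{L:Unbounded_Gromov_prod}). For that case your localization step can be made to work, provided you fix the order of quantifiers: the neighbourhoods $W\Subset U=\tau_p(\unitdisk)$ and hence the additive constant $C$ in $\koba_{W\cap\OM_1}\le\koba_{\OM_1}+C$ must be chosen depending on $p$ alone, \emph{before} invoking visibility of $\OM_2$ for $(1,\kappa+C)$-almost-geodesics to produce $K$ and $L=f^{-1}(K)$; as written, $W$ is chosen to avoid $L$ while $L$ depends on constants determined by $W$, which is circular.

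The genuine gap is the case where $p$ is an \emph{end} of $\clos{\OM}_1$, which you dismiss as ``a minor further point.'' Your key claim — that almost-geodesics of $\OM_1$ joining $x_{n_k}$ and $x_{m_k}$ eventually avoid every compact set whenever both sequences converge to the same point of $\bdy\clos{\OM}_1^{End}$ — is proved by you only through the local structure near a Euclidean boundary point: a Jordan-arc piece of $\bdy\OM_1$, a relatively compact neighbourhood $W$ with $W\cap\OM_1$ connected, and the intrinsic localization \Cref{lmm:loc_kob_dist_vis_glob_met_assmp}. None of this is available at an end: the basic neighbourhoods $\wh{F}_j$ of an end are unbounded (so every relative-compactness hypothesis in the localization lemmas fails), $F_j$ may meet infinitely many components of $\bdy\OM_1$ and $F_j\cap\OM_1$ need not be connected, and Condition~1 supplies the embeddings $\tau_p$ only at points of $\bdy\OM_1$. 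The paper is explicit that \Cref{L:Unbounded_Gromov_prod} excludes ends precisely for this reason, and its proof of \Cref{thm:ext_biholo} devotes its longest portion to this case by a completely different route: joining the two sequences by paths that eventually avoid compacta, applying \Cref{rmk:cnvg_outside_tot_disc} and Condition~2 to $\OM_2$ at a point $p_0\in\bdy\OM_2\setminus S$, tracking radial geodesics of $\phi(\unitdisk)$ via the Geodesic Stability Theorem (Claims~0--2), invoking \Cref{lmm:diff_ends_geod_rays_Haus_infty} on geodesic rays landing at distinct ends, and finally contradicting a boundary-uniqueness theorem for the bounded holomorphic function $g(\zeta)=r_0/(z_0-f^{-1}\circ\phi(\zeta))$, whose radial limits vanish on an arc. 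Without an argument of this kind (or an independent proof that the Gromov product of two sequences converging to a common end of $\clos{\OM}_1$ blows up under hypothesis (ii)), your proof establishes the continuous extension only on $\clos{\OM}_1$, not on $\clos{\OM}_1^{End}$.
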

\noindent We note that the conditions of \Cref{thm:ext_biholo} are 
purely topological and, as in \Cref{R:Cara}, local connectedness at the boundary points play a pivotal role.
\Cref{thm:ext_biholo} gives some classes of domains for which extension of biholomorphisms 
hold that can not be considered under the other known results. We will provide some 
examples in \Cref{Sec:Examples}. The proof of the above theorem is presented in \Cref{sec:ext_biholo}.
As mentioned earlier, the topological condition on $\OM_2$, as in $(i)$ above, is equivalent to the analytical 
condition Condition~2. Moreover the topological condition on $\OM_1$, as in $(ii)$ above, 
is equivalent to Condition~1, see Proposition~\ref{prp:cond1_jordanbdy} in Section~4. These analytical
conditions, together with Lemma~\ref{L:Unbounded_Gromov_prod}, help us to prove the above theorem. 

\section{Preliminaries}\label{Sec:Prelims}

In this section, we collect certain auxiliary results that are needed to prove the
main results of this article. We begin with a discussion about the end compactification.

\subsection{End compactification}\label{SS:endcpt}
In this article, we shall only consider the end compactifications of the closures of domains in Euclidean spaces. The notion of end compactification can be defined for
more general topological spaces, but we shall have no need to consider this.
\smallskip

Let $\OM$ be an unbounded domain in $\C^d$ (for bounded 
domains $\OM$, we have a natural compactification, namely the Euclidean
closure $\clos{\OM}$).
We choose and fix an exhaustion $(K_j)_{j\geq 1}$ of $\clos{\OM}$ such that, for all
$j$, $K_j\subset K^\circ_{j+1}$, where $K^\circ_{j+1}$ denotes the interior of $K_{j+1}$ in $\clos{\OM}$. We define an {\em end} $\mathfrak{e}$ of 
$\clos{\OM}$ to be 
a sequence $(F_j)_{j\geq 1}$ where, for every $j$, $F_j$ is a connected component of $\clos{\OM}\setminus K_j$ and where $F_{j+1}\subset F_j$. Note that 
if $\mathfrak{e}=(F_j)_{j\geq 1}$ is an end of $\clos{\OM}$ then, for every $j$, $F_j$ is necessarily an unbounded component of $\clos{\OM}\setminus K_j$. As
a {\em set}, $\clos{\OM}^{End}$ is defined to be 
$\clos{\OM}\cup\{\mathfrak{e}\mid\mathfrak{e}\text{ is an end of }\clos{\OM}\}$. We now define a topology on
$\clos{\OM}^{End}$ by prescribing a neighbourhood basis at each point of $\clos{\OM}^{End}$. If $x\in\clos{\OM}$, we take for a neighbourhood basis at $x$
all the ordinary Euclidean $\clos{\OM}$-neighbourhoods containing $x$; we take for a neighbourhood basis at an end $\mathfrak{e}=(F_j)_{j\geq 1}$ the family
$(\wh{F}_j)_{j\geq 1}$ of subsets of $\clos{\OM}^{End}$ where 
\[ \wh{F}_j \defeq F_j \cup \{ \mathfrak{f} \mid \mathfrak{f} = (G_\nu)_{\nu\geq 1} \text{ is an end of } \clos{\OM} \text{ such that } G_\nu=F_\nu \;
\forall\,\nu=1,\dots,j \}. \]
As we have defined it, both the
set $\clos{\OM}^{End}$ and the topology on it depend on the compact exhaustion $(K_j)_{j\geq 1}$ initially chosen; however, it is easy to see that if one
starts out with a different exhaustion $(L_j)_{j\geq 1}$ and proceeds to define the end compactification relative to this exhaustion then there exists a
natural homeomorphism between the two spaces thus obtained. Therefore, so far as topological questions are concerned (and we will deal only with such
questions), the end compactification is well defined, independent of the compact exhaustion chosen; we can work with whichever exhaustion is most 
convenient for us. At this point, we state a result that is 
very important in our future discussion. 

\begin{result}\label{res:end_seq_comp}
Suppose that $\OM\subset\C^d$ is an unbounded hyperbolic domain.
Suppose that, for every compact $K\subset\clos{\OM}$, there 
exists $R<\infty$ such
that $K\subset B(0;R)$ and such that there are only finitely many
connected components of $\clos{\OM}\setminus K$ that intersect 
$\C^d\setminus\clos{B}(0;R)$. Then $\clos{\OM}^{End}$ is
sequentially compact.	    
\end{result}

\noindent As stated, the above result has nothing to do with visibility 
and its proof is purely topological. Therefore we present the
proof of the above result in the appendix,
Section~\ref{S:end_seq_comp}, for completeness. Using 
the above result, we shall be able to conclude that for
a visibility domain $\OM$, $\clos{\OM}^{End}$ is squentially 
compact.

\subsection{Topological results}\label{SS:topo_res}
In this article, the word {\em path} will always denote a continuous map from some
(usually compact) interval in $\R$ into whatever topological space (usually a domain) 
is being considered. Given a domain $\OM\subset\C^d$ and a sequence of paths 
$\gamma_n:[a_n, b_n]\lraw\OM$, we shall say that {\em $(\gamma_n)_{n\geq 1}$ eventually 
avoids every compact set in $\OM$} if for any compact $K\subset\OM$, there exists $n(K)
\geq 1$, such that $\gamma_n([a_n, b_n])\cap K=\emptyset$ for 
all $n\geq n(K)$. 

\begin{lemma}\label{L:tr_1}
Let $\OM\subset\C^d$ be a domain and let  $p\neq q \in \bdy\clos{\OM}^{\text{End}}$. Let 
$\gamma_n:[a_n, b_n]\lraw\OM$ be a sequence of paths that eventually avoids every compact 
set in $\OM$ and such that $\gamma_n(a_n)\to p$ and $\gamma_n(b_n)\to q$ in the topology 
of $\clos{\OM}^{\text{End}}$. Then there exist a sequence $(s_n)_{n\geq 1}\subset[a_n, b_n]$, 
a point $\xi\in\bdy\OM$ and an $R>0$ such that $\gamma_n(s_n)\to\xi$ and such that each $\gamma_n$ 
intersects $\OM\setminus B(\xi, R)$.
\end{lemma}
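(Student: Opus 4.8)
\textbf{Proof plan for Lemma~\ref{L:tr_1}.}
The plan is to combine a Hausdorff-limit argument with the key topological fact that the Euclidean boundary $\bdy\OM$ ``separates'' $p$ from $q$ in a suitable sense. First I would pass to the one-point (or Euclidean) compactification in the ambient space and regard the $\gamma_n$ as curves in $\clos{\OM}$. Since $p\neq q$ in $\bdy\clos{\OM}^{\text{End}}$, I can choose disjoint neighbourhoods $V_p$ of $p$ and $V_q$ of $q$ in $\clos{\OM}^{\text{End}}$ with $\clos{V_p}\cap\clos{V_q}=\emptyset$; by definition of the end topology these can be taken of the form described in \Cref{SS:endcpt} (a Euclidean-type neighbourhood when the point lies in $\clos{\OM}$, or a $\wh{F}_j$ when it is an end). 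Discarding finitely many terms, $\gamma_n(a_n)\in V_p$ and $\gamma_n(b_n)\in V_q$ for all $n$. The point is that $\gamma_n$, being a \emph{connected} subset of $\OM$ joining a point in $V_p$ to a point in $V_q$, cannot be contained in $V_p\cup V_q$ (since these have disjoint closures and $\OM$ is connected but the sets $V_p\cap\OM$, $V_q\cap\OM$ are ``separated'' once we intersect with $\OM$ — more precisely there is a compact collar region of $\OM$ that every such curve must meet). So there is a compact set $L\subset\clos{\OM}$, independent of $n$, with $\gamma_n([a_n,b_n])\cap L\neq\emptyset$ for all large $n$; pick $t_n$ with $\gamma_n(t_n)\in L$.

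Next I would locate the boundary point $\xi$ and the radius $R$. Because $(\gamma_n)$ eventually avoids every compact subset of $\OM$, the intersection points $\gamma_n(t_n)$, while lying in the \emph{closed} set $L$, must accumulate on $\bdy\OM$: indeed, if infinitely many $\gamma_n(t_n)$ stayed in a fixed compact subset of $\OM$, that would contradict the eventual-avoidance hypothesis (the set $\{\gamma_n(t_n)\}$ together with its limit would be a compact subset of $\OM$ met by infinitely many $\gamma_n$). Passing to a subsequence, $\gamma_n(t_n)\to\xi$ for some $\xi\in L\cap\bdy\OM\subset\bdy\OM$. I would then fix any $R>0$ small enough that $\clos{B}(\xi,R)$ is disjoint from $\clos{V_p}\cup\clos{V_q}$ (possible since $\xi\notin\clos{V_p}\cup\clos{V_q}$, as $\xi$ lies in the compact collar separating them). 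Then $\gamma_n(a_n)\in V_p$ lies outside $B(\xi,R)$ and $\gamma_n(b_n)\in V_q$ lies outside $B(\xi,R)$, so in particular each $\gamma_n$ does intersect $\OM\setminus B(\xi,R)$; setting $s_n:=a_n$ (or relabelling the subsequence) gives $\gamma_n(s_n)$ — wait, I want $\gamma_n(s_n)\to\xi$, so I would instead take $s_n:=t_n$ for the $\xi$-convergence and note separately that each $\gamma_n$ meets $\OM\setminus B(\xi,R)$ because its endpoint $\gamma_n(a_n)$ already does. This yields the three desired conclusions: the sequence $(s_n)=(t_n)\subset[a_n,b_n]$ with $\gamma_n(s_n)\to\xi\in\bdy\OM$, and each $\gamma_n$ intersecting $\OM\setminus B(\xi,R)$.

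The step I expect to be the main obstacle is making rigorous the claim that there is a \emph{single} compact set $L\subset\clos{\OM}$ that every $\gamma_n$ must meet. The subtlety is that $V_p,V_q$ are neighbourhoods in the end compactification, so $V_p\cap\OM$ and $V_q\cap\OM$ need not have disjoint closures \emph{in} $\OM$ — they could both ``reach out to infinity'' along the same end, or $V_p$, $V_q$ could fail to be connected. To handle this I would use $\clos{V_p}\cap\clos{V_q}=\emptyset$ in the \emph{compact} space $\clos{\OM}^{\text{End}}$ to produce a further neighbourhood $W_p$ of $p$ with $\clos{W_p}\subset V_p$, and set $L:=\clos{\OM}^{\text{End}}\setminus(W_p\cup V_q)$, which is closed in a compact space hence compact; since $L$ contains no ends and meets $\bdy\clos{\OM}^{\text{End}}$ only in a set disjoint from $p,q$, the set $L\cap\OM$ together with the relevant part of $\bdy\OM$ is the compact ``collar''. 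A connectedness argument — $\gamma_n([a_n,b_n])$ is connected, meets $W_p$ and meets $V_q$, so it must meet $\clos{\OM}^{\text{End}}\setminus(W_p\cup V_q)\supset L$ — then finishes it, using that $W_p$ and $V_q$ form a separation of $\clos{\OM}^{\text{End}}\setminus L$ into two relatively open pieces. I would double-check that $L$ is genuinely compact (this uses \Cref{res:end_seq_comp} or directly the local compactness built into the end topology) and that $L\cap\OM\neq\emptyset$ is not needed — what we need is that the \emph{curves}, which live in $\OM$, hit $L$, and their hitting points then accumulate on $\bdy\OM$ by the eventual-avoidance hypothesis as above.
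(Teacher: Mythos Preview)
Your overall strategy---find a fixed set that every $\gamma_n$ must cross, extract a convergent subsequence there, and use eventual avoidance to land on $\bdy\OM$---is exactly right, but the execution has a genuine gap.

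The problem is the sentence ``since $L$ contains no ends''. Your set $L=\clos{\OM}^{End}\setminus(W_p\cup V_q)$ contains \emph{every} end of $\clos{\OM}$ other than $p$ and $q$; nothing in the hypotheses rules out there being many such ends. Once $L$ contains an end $\mathfrak{e}$, the set $L\cap\clos{\OM}$ is unbounded in $\C^d$, so you cannot extract a Euclidean-convergent subsequence of $\gamma_n(t_n)$, and even working in $\clos{\OM}^{End}$ the subsequential limit could be $\mathfrak{e}$ rather than a point of $\bdy\OM$. (Relatedly, invoking \Cref{res:end_seq_comp} to get compactness of $L$ is circular here: that result has a hypothesis not assumed in the lemma.)

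The paper's proof avoids this by first disposing of the trivial cases where at least one of $p,q$ lies in $\bdy\OM$ (take $s_n=a_n$, $\xi=p$, and $R=\|p-q\|/2$ or $R=1$), and then, when both are ends, using the compact exhaustion $(K_j)$ explicitly: since $p\neq q$ there is $j_0$ with $U^p_{j_0}\neq U^q_{j_0}$, and a connected path in $\clos{\OM}$ joining two \emph{distinct components} of $\clos{\OM}\setminus K_{j_0}$ must meet $K_{j_0}$ itself. This gives $\gamma_n(s_n)\in K_{j_0}$, a bona fide compact subset of $\C^d$, so the limit $\xi$ is forced into $K_{j_0}\cap\bdy\OM$. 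Your connectedness argument is the same idea, but applied to the wrong complementary set; replacing your $L$ by $K_{j_0}$ (equivalently, taking $W_p,V_q$ to be basic neighbourhoods $\wh{F}^p_{j_0},\wh{F}^q_{j_0}$ and observing that the curve must hit $K_{j_0}$, not merely $L$) closes the gap.
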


\begin{proof}
We first observe that the case when one of the points $p,q$ is in $\bdy\OM$ is easy to
handle. First suppose that both $p$ and $q$ are in $\bdy\OM$. Then we can simply take
$s_n\defeq a_n$, $\xi\defeq p$ and $R\defeq \|p-q\|/2$. Next, suppose that exactly one
of $p,q$ is an end of $\clos{\OM}$. Suppose, without loss of generality, that $p\in\bdy\OM$
and that $q\in\clos{\OM}^{End}\setminus\clos{\OM}$. Then we can take $s_n\defeq a_n$, 
$\xi\defeq p$ and $R\defeq 1$.

So we may suppose that $p,q$ are two distinct ends. 
Choose a sequence $(K_j)_{j\geq 1}$ of compact subsets of $\clos{\OM}$ such that,
denoting by $K^{\circ}_j$ the interior of $K_j$ in $\clos{\OM}$, $K_j\subset
K^{\circ}_{j+1}$ for every $j$ and such that 
\[ \bigcup_{j=1}^{\infty} K^{\circ}_j=\clos{\OM}. \]
It is given that $(\gamma_n(a_n))_{n\geq 1}$ and $(\gamma_n(b_n))_{n\geq 1}$
converge to $p$ and $q$, respectively, in the topology of 
$\clos{\OM}^{\text{End}}$. By the definition of this topology, there
exist decreasing sequences $(U^{p}_j)_{j\geq 1}$ and $(U^{q}_j)_{j\geq 1}$ such that, 
for every $j$,
$U^p_j$ and $U^q_j$ are connected components of $\clos{\OM}\setminus K_j$, 
such that $p = (U^p_j)_{j\geq 1}$, $q = (U^q_j)_{j\geq 1}$.
Because $p\neq q$, there exists $j_0\in\posint$ such that $U^p_{j_0}\neq
U^q_{j_0}$. Then $U^p_{j_0}\cap U^q_{j_0}=\emptyset$ and, for all $j\geq j_0$, $U^p_j
\cap U^q_j = \emptyset$.
We may assume, without loss of generality, that for all $n$, $\gamma_n(a_n)\in
U^p_{j_0}$ and $\gamma_n(b_n)\in U^q_{j_0}$.
\smallskip

Now, for every $n$, $\gamma_n$ must intersect $K_{j_0}$; otherwise 
$\gamma_n([a_n, b_n])$, a connected subset of $\OM$, hence of $\clos{\OM}$, 
will intersect two distinct connected components of $\clos{\OM}\setminus K_{j_0}$, 
which will be a contradiction. Thus, for every $n$, there exists 
$s_n$, $a_n<s_n<b_n$, such that $\gamma_n(s_n)\in K_{j_0}$. Since $K_{j_0}$ is
compact, we may assume, without loss of generality, that 
$(\gamma_n(s_n))_{n\geq 1}$ converges to some point $\xi$ of $K_{j_0}$. Since
$(\gamma_n)_{n\geq 1}$ eventually avoids every compact set in $\OM$, it follows that
$\xi\in K_{j_0}\cap\bdy\OM$. Finally, as above, we can take $R\defeq 1$.

This finishes the consideration of all cases and completes the proof. 
\end{proof}

\subsection{Reparametrization of absolutely continuous curves with respect to the
 Kobayashi-Royden metric}\label{SS:Repar}
The first result in this subsection shows that any absolutely continuous curve that 
is almost everywhere {\em non-stationary} can be reparametrized so as to have 
unit speed with respect to the Kobayashi--Royden metric. 
\begin{proposition} \label{prp:drvtv_nwh_van_rprm_unt_spd}
Suppose that $\OM\subset\C^d$ is a hyperbolic domain and $\gamma: [a,b] \to \OM$ is 
an absolutely continuous map such that $\gamma'$ is almost everywhere non-vanishing. Then
there exists a $\dkoba_\OM$-unit-speed reparametrization
$\wt{\gamma}: [0,L] \to \OM$ of $\gamma$, i.e.,
\begin{equation}
\dkoba_\OM(\wt{\gamma}(t); \wt{\gamma}'(t))=1 \ \ \text{for a.e. $t\in[0, L]$}.
\end{equation}
\end{proposition}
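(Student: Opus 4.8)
The plan is to define the reparametrization explicitly via the arclength function with respect to the Kobayashi--Royden metric. First I would set, for $t \in [a,b]$,
\[
s(t) := \int_a^t \dkoba_\OM(\gamma(u); \gamma'(u))\, du,
\]
which makes sense because $u \mapsto \dkoba_\OM(\gamma(u);\gamma'(u))$ is a nonnegative measurable function (upper semicontinuity of the Kobayashi--Royden metric composed with the measurable $\gamma'$, together with continuity of $\gamma$, gives measurability) and it is integrable: indeed, by a standard estimate the Kobayashi--Royden metric is locally bounded above on $\OM$, and since $\gamma([a,b])$ is compact in $\OM$ there is a constant $M$ with $\dkoba_\OM(\gamma(u); v) \le M\|v\|$ for all $u$, hence $\dkoba_\OM(\gamma(u);\gamma'(u)) \le M\|\gamma'(u)\|$ and $\gamma'$ is integrable since $\gamma$ is absolutely continuous. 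Set $L := s(b)$. Then $s$ is absolutely continuous and nondecreasing, with $s'(t) = \dkoba_\OM(\gamma(t);\gamma'(t))$ for a.e.\ $t$, and — crucially — the hypothesis that $\gamma'$ is almost everywhere non-vanishing combined with Kobayashi hyperbolicity of $\OM$ (which forces $\dkoba_\OM(p;v) > 0$ whenever $v \neq 0$) gives $s'(t) > 0$ for a.e.\ $t$; so $s$ is in fact strictly increasing, hence a homeomorphism of $[a,b]$ onto $[0,L]$.

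Next I would define $\wt\gamma := \gamma \circ s^{-1} : [0,L] \to \OM$ and verify it is the desired reparametrization. The delicate point is that $s^{-1}$ need not be absolutely continuous or Lipschitz in general, so one cannot blindly apply the chain rule; the standard device is to note that a reparametrization of an absolutely continuous curve by a continuous strictly increasing change of variable need not be absolutely continuous, but the \emph{unit-speed} reparametrization is. Concretely, I would argue as follows. Since $s$ is strictly increasing and absolutely continuous with a.e.\ positive derivative, for any measurable $E \subset [0,L]$ one has $s^{-1}(E)$ measurable, and more importantly $\wt\gamma$ is absolutely continuous: given $\varepsilon$, absolute continuity of $\gamma$ gives $\delta$ controlling $\sum\|\gamma(b_i)-\gamma(a_i)\|$ by small total length $\sum|b_i-a_i|$, and since $\sum (s(b_i)-s(a_i)) \geq$ (nothing useful directly) — instead I would use the cleaner route: write $\wt\gamma(\tau) = \gamma(a) + \int_a^{s^{-1}(\tau)} \gamma'(u)\,du$ and change variables $u = s^{-1}(\sigma)$; because $s$ is absolutely continuous and strictly monotone, the change-of-variables formula for the Lebesgue–Stieltjes/absolutely-continuous substitution yields $\wt\gamma(\tau) = \gamma(a) + \int_0^\tau \gamma'(s^{-1}(\sigma)) \cdot (s^{-1})'(\sigma)\, d\sigma$, with $(s^{-1})'(\sigma) = 1/s'(s^{-1}(\sigma))$ a.e. Hence $\wt\gamma$ is absolutely continuous with $\wt\gamma'(\sigma) = \gamma'(s^{-1}(\sigma)) / s'(s^{-1}(\sigma))$ for a.e.\ $\sigma \in [0,L]$.

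Finally, I would compute the speed: for a.e.\ $\sigma$, writing $t = s^{-1}(\sigma)$ (a point where $\gamma'(t)$ exists, $s'(t) = \dkoba_\OM(\gamma(t);\gamma'(t))$ exists and is positive),
\[
\dkoba_\OM\bigl(\wt\gamma(\sigma); \wt\gamma'(\sigma)\bigr) = \dkoba_\OM\!\left(\gamma(t); \frac{\gamma'(t)}{s'(t)}\right) = \frac{1}{s'(t)}\,\dkoba_\OM(\gamma(t);\gamma'(t)) = 1,
\]
using the homogeneity $\dkoba_\OM(p; cv) = |c|\,\dkoba_\OM(p;v)$ of the Kobayashi--Royden metric. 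That gives property in the statement. The main obstacle I anticipate is the measure-theoretic bookkeeping around the change of variables and the claim that the ``bad'' null sets match up: one needs that the set where $\gamma'$ fails to exist, or where $s'$ fails to equal $\dkoba_\OM(\gamma;\gamma')$, or where $s' = 0$ (which by hypothesis and hyperbolicity is null), all pull back under $s^{-1}$ to null sets — this uses that $s^{-1}$ maps null sets to null sets, which follows because $s$ is strictly increasing and absolutely continuous (equivalently, $s^{-1}$ satisfies Luzin's condition (N) — here it is automatic since $s^{-1}$ is monotone and we only need the forward direction, which holds as $s' > 0$ a.e.). Handling this carefully, rather than any genuinely hard analysis, is where the real work lies.
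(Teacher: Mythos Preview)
Your proposal is correct and follows essentially the same approach as the paper: define the arclength function $s$ (the paper calls it $F$), show it is strictly increasing and absolutely continuous with a.e.\ positive derivative, set $\wt\gamma = \gamma\circ s^{-1}$, and verify unit speed via the chain rule. The paper streamlines the delicate step you flag---absolute continuity of $s^{-1}$---by citing directly a standard result (Exercise~13, Chapter~IX of Natanson) that the inverse of a strictly increasing absolutely continuous function with a.e.\ nonvanishing derivative is again absolutely continuous; once that is in hand, the composition $\gamma\circ s^{-1}$ is absolutely continuous and the chain rule applies without further measure-theoretic bookkeeping.
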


\begin{proof}
Define the function $F:[a,b]\to[0,\infty)$ as follows:
\[
 F(s)\defeq \int_a^s \dkoba_\OM(\gamma(t);\gamma'(t))\,dt.
 \]
(The assumption that $\gamma$ is absolutely continuous implies that $F$
 is well-defined.) 
Write $L\defeq F(b)$; $L$ is the Kobayashi length of
$\gamma$ calculated using the Kobayashi metric. Note that $F$ itself is an absolutely
continuous function and
\begin{equation} \label{eqn:xpr_deriv_F}
 F'(t)=\dkoba_\OM(\gamma(t);\gamma'(t))\ \ \text{for a.e. } t\in [a,b].
  \end{equation} 
It also follows that $F$ is strictly increasing, for if
$a\leq s<t\leq b$,
\[ 
 F(t)-F(s) = \int_s^t \dkoba_\OM(\gamma(\tau);\gamma'(\tau))\,d\tau > 0,
  \]
since $\tau \mapsto \dkoba_\OM(\gamma(\tau);\gamma'(\tau))$ is a non-negative measurable
function that is positive almost everywhere. 
Therefore $F$ is a strictly increasing
(absolutely) continuous function that maps $[a,b]$ onto $[0,L]$. Let $G\defeq F^{-1}$; 
then $G$ is a strictly increasing continuous function from $[0,L]$ onto $[a,b]$. From
\eqref{eqn:xpr_deriv_F}, the hyperbolicity of $\OM$ and by hypothesis, $F'$ is 
almost everywhere non-vanishing. Consequently, by a standard result (see for instance
Exercise~13, Chapter~IX of \cite{Natanson}) $G$ is absolutely continuous. Consider the
mapping $\wt{\gamma}\defeq \gamma\circ G$. Since $\gamma$ is absolutely continuous and $G$
is strictly increasing and absolutely continuous, it follows that $\wt{\gamma}$ is
absolutely continuous and is a reparametrization of $\gamma$. We claim that it
is $\dkoba_\OM$-unit-speed. To prove this it suffices to show that, for a.e.~$t\in [0,L]$,
$\dkoba_\OM(\wt{\gamma}(t);\wt{\gamma}'(t))=1$. By the chain rule,
\begin{equation}
 \wt{\gamma}'(t) = \frac{1}{F'(G(t))}\gamma'(G(t)) =
  \frac{1}{\dkoba_\OM(\gamma(G(t));\gamma'(G(t)))}\gamma'(G(t)) \ \ \text{for a.e. } t\in [0,L].
\end{equation}
From the above it follows immediately that $\wt{\gamma}$ is $\dkoba_\OM$-unit-speed.
\end{proof}

In order to state our next result, we need a definition.
\begin{definition}[{\cite[Definition~3]{Nik_Okt_Tho}}] \label{D:geodesic}
Suppose that $\OM\subset\C^d$ is a hyperbolic domain, that $\lambda\geq 1$ and
$\kappa>0$ are parameters, and that $I\subset\R$ is an interval. An absolutely continuous curve $\gamma : I\to \OM$ is called a
{\em $(\lambda, \kappa)$-geodesic} for $\koba_\OM$ if for all $s\leq t\in I$ we have 
\[
l^{\dkoba}_\OM(\gamma|_{[s, t]})\leq \lambda\koba_\OM(\gamma(t), \gamma(s))+\kappa.
\]
Here $l^{\dkoba}_\OM(\gamma|_{[s, t]})$ denotes the length of the curve $\gamma|_{[s, t]}$
with respect to $\dkoba_\OM$. 
\end{definition}
\begin{remark} \label{rmk:geod_props}
Note that the definition of a $(\lambda, \kappa)$-geodesic is
devised in such a way that after allowable 
reparametrizations it remains a $(\lambda, \kappa)$-geodesic.
Lastly it is not difficult to see that a $(\lambda, \kappa)$-almost-geodesic
is a $(\lambda^2, \lambda^2\kappa)$-geodesic.
\end{remark}
\smallskip

Our next result says that any absolutely continuous
$(\lambda, \kappa)$-geodesic that is almost everywhere non-stationary can
be re-parametrized so as to become an almost-geodesic.
\begin{proposition} \label{prp:rprm_unt_spd_ag_ag}
Suppose that $\OM\subset\C^d$ is a hyperbolic domain 
and, given $\lambda\geq 1$ and $\kappa>0$, 
let $\gamma$ be a $(\lambda,\kappa)$-geodesic for 
$\koba_\OM$.
Suppose further that $\gamma'$ is almost 
everywhere non-vanishing. Let $\wt{\gamma}: [0,L]\to \OM$ be the $\dkoba_\OM$-unit-speed 
reparametrization of $\gamma$ as in the above proposition. Then $\wt{\gamma}$ is a 
$(\lambda,\kappa)$-almost-geodesic for $\koba_\OM$.
\end{proposition}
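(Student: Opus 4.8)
The plan is to verify the two defining properties of a $(\lambda,\kappa)$-almost-geodesic for the curve $\wt{\gamma}$ directly, using what \Cref{prp:drvtv_nwh_van_rprm_unt_spd} already gives us. Property (ii) is essentially free: by construction $\wt{\gamma}$ is absolutely continuous and $\dkoba_\OM(\wt{\gamma}(t);\wt{\gamma}'(t))=1\leq\lambda$ for a.e.\ $t\in[0,L]$, since $\lambda\geq 1$. So the real content is the two-sided estimate in property (i), namely $\frac{1}{\lambda}|s-t|-\kappa\leq\koba_\OM(\wt{\gamma}(s),\wt{\gamma}(t))\leq\lambda|s-t|+\kappa$ for all $s,t\in[0,L]$. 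Fix $s<t$ in $[0,L]$. The key observation is that the parameter interval $[s,t]$ now measures arc length: by the unit-speed property and the definition of length with respect to $\dkoba_\OM$, we have $l^{\dkoba}_\OM(\wt{\gamma}|_{[s,t]})=\int_s^t\dkoba_\OM(\wt{\gamma}(\tau);\wt{\gamma}'(\tau))\,d\tau=t-s$. Since reparametrization does not change the image or the $\dkoba_\OM$-length, and $\wt{\gamma}$ is a reparametrization of the $(\lambda,\kappa)$-geodesic $\gamma$, the curve $\wt{\gamma}$ is itself a $(\lambda,\kappa)$-geodesic (this is exactly the reparametrization-invariance noted in \Cref{rmk:geod_props}).

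For the lower bound, I use the general fact that the Kobayashi distance between two points is bounded above by the $\dkoba_\OM$-length of any absolutely continuous curve joining them (the Kobayashi distance is the integrated form of the Kobayashi--Royden metric); applying this to $\wt{\gamma}|_{[s,t]}$ gives $\koba_\OM(\wt{\gamma}(s),\wt{\gamma}(t))\leq l^{\dkoba}_\OM(\wt{\gamma}|_{[s,t]})=t-s\leq\lambda|s-t|+\kappa$, so in fact the upper estimate in (i) holds with room to spare (even without the $\kappa$). For the other inequality, I invoke the $(\lambda,\kappa)$-geodesic property of $\wt{\gamma}$: $l^{\dkoba}_\OM(\wt{\gamma}|_{[s,t]})\leq\lambda\,\koba_\OM(\wt{\gamma}(t),\wt{\gamma}(s))+\kappa$. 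Combining with the length computation $l^{\dkoba}_\OM(\wt{\gamma}|_{[s,t]})=t-s=|s-t|$ yields $|s-t|\leq\lambda\,\koba_\OM(\wt{\gamma}(s),\wt{\gamma}(t))+\kappa$, i.e.\ $\frac{1}{\lambda}(|s-t|-\kappa)\leq\koba_\OM(\wt{\gamma}(s),\wt{\gamma}(t))$, which is slightly stronger than (and in particular implies) the required lower bound $\frac{1}{\lambda}|s-t|-\kappa\leq\koba_\OM(\wt{\gamma}(s),\wt{\gamma}(t))$.

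The only point requiring a little care — and the one I would single out as the main (minor) obstacle — is making rigorous the claim that $l^{\dkoba}_\OM(\wt{\gamma}|_{[s,t]})=t-s$ and that $\dkoba_\OM$-length is invariant under the absolutely continuous, strictly increasing reparametrization $G=F^{-1}$ constructed in \Cref{prp:drvtv_nwh_van_rprm_unt_spd}. This follows from the chain-rule identity for $\wt{\gamma}'$ displayed in the proof of that proposition together with a change of variables in the length integral, using the absolute continuity of both $\gamma$ and $G$; once the unit-speed identity $\dkoba_\OM(\wt{\gamma}(\tau);\wt{\gamma}'(\tau))=1$ a.e.\ is in hand (which it is), the length integral collapses immediately. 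With these pieces assembled, both clauses of the definition of a $(\lambda,\kappa)$-almost-geodesic are verified and the proof is complete.
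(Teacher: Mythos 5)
Your proposal is correct and follows essentially the same route as the paper: the unit-speed property disposes of the metric bound and gives the upper distance estimate via $\koba_\OM(\wt{\gamma}(s),\wt{\gamma}(t))\leq \ell^{\dkoba}_\OM(\wt{\gamma}|_{[s,t]})=t-s$, while the lower estimate comes from the $(\lambda,\kappa)$-geodesic inequality combined with reparametrization-invariance of the Kobayashi length. The only cosmetic difference is that you apply the geodesic inequality to $\wt{\gamma}|_{[s,t]}$ after invoking \Cref{rmk:geod_props}, whereas the paper applies it to $\gamma|_{[G(s),G(t)]}$ and then transfers the length; these are the same computation.
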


\begin{proof}
To show that $\wt{\gamma}$ is a $(\lambda,\kappa)$-almost-geodesic for $\koba_\OM$, 
we have to show the following: 
\begin{itemize}
\item for all $s,t\in [0,L]$, $\koba_\OM(\wt{\gamma}(s),\wt{\gamma}(t))\geq (1/\lambda)|s-t|-\kappa$, 
\smallskip

\item for all $s,t\in [0,L]$, $\koba_\OM(\wt{\gamma}(s),\wt{\gamma}(t))\leq \lambda|s-t|+\kappa$,
\smallskip 

\item for a.e.~$t\in [0,L]$, $\dkoba_\OM(\wt{\gamma}(t);\wt{\gamma}'(t))\leq\lambda$.
\end{itemize}

Since $\wt{\gamma}$ is $\dkoba_\OM$-unit-speed, for a.e.~$t\in [0,L]$, 
$\dkoba_\OM(\wt{\gamma}(t);\wt{\gamma}'(t))=1$ and hence the last property above
holds with stronger reason. Therefore, we also have, for all $s,t\in [0,L]$ with $s\leq t$, that
\begin{equation*}
 \koba_\OM(\wt{\gamma}(s);\wt{\gamma}(t)) \leq 
  \int_s^t \dkoba_\OM(\wt{\gamma}(\tau);\wt{\gamma}'(\tau)) d\tau = t-s \leq \lambda(t-s)+
   \kappa.
    \end{equation*}
Finally, for $s,t\in [0,L]$ with $s\leq t$ arbitrary,  
\begin{align*}
\koba_\OM(\wt{\gamma}(s),\wt{\gamma}(t)) &= \koba_\OM(\gamma(G(s)),\gamma(G(t))) \\
&\geq (1/\lambda)\ell^\dkoba_\OM(\gamma|_{[G(s),G(t)]})-(\kappa/\lambda), 
\end{align*}
since $\gamma$ is a $(\lambda,\kappa)$-geodesic for $\koba_\OM$.
Further, since the Kobayashi length of an
absolutely continuous path is independent of reparametrization,
\begin{align*}
\ell^\dkoba_\OM(\gamma|_{[G(s),G(t)]}) = \ell^\dkoba_\OM(\wt{\gamma}|_{[s,t]}) = t-s.
\end{align*}
Combining the two inequalities above, we get, for all $s,t\in [0,L]$ with $s\leq t$, that
\begin{equation*}
 \koba_\OM(\wt{\gamma}(s),\wt{\gamma}(t)) \geq
  (1/\lambda)(t-s)-(\kappa/\lambda) \geq (1/\lambda)(t-s)-\kappa.
   \end{equation*}
This establishes the result.
\end{proof}

The last result in this subsection says that an arbitrary $(\lambda, \kappa)$-geodesic can 
be approximated uniformly by another such curve that is non-stationary. More precisely: 

\begin{proposition} \label{prp:alm-geod_apprx_nonstnry}
Suppose that $\OM\subset\C^d$ is a hyperbolic domain and $x,y\in\OM$. Given $\lambda
\geq 1$ and $\kappa>0$, let $\gamma:[a,b]\to\OM$ be a 
$(\lambda,\kappa)$-geodesic joining $x$ and $y$ and let
$\epsilon>0$ be such that $Nbd_{\C^d}(\range(\gamma),\epsilon)\subset\OM$. Then there 
exists an 
absolutely continuous map $\wt{\gamma}:[a,b]\to\OM$
such that (1) $\wt{\gamma}$ is a $(\lambda,\lambda\epsilon+\kappa)$-geodesic,
(2) for every $t\in [a,b]$, $\|\gamma(t)-\wt{\gamma}(t)\|\leq 
\epsilon/2$, (3) $\wt{\gamma}'$ is almost-everywhere non-vanishing.
\end{proposition}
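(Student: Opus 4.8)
I want to perturb $\gamma$ slightly so that its derivative, which may vanish on a set of positive measure, becomes a.e.\ non-vanishing, while keeping it close to $\gamma$ and not degrading the $(\lambda,\kappa)$-geodesic estimate by more than a controlled amount. The natural idea is to add a small, carefully chosen absolutely continuous perturbation $h:[a,b]\to\C^d$ with $\|h(t)\|\le \epsilon/2$ for all $t$ and with $h'$ arranged so that $\gamma'+h'$ is non-vanishing a.e. One clean way to do this: pick a fixed direction, say $v\defeq(1,0,\dots,0)\in\C^d$, and set $\wt\gamma(t)\defeq\gamma(t)+\rho(t)v$ where $\rho:[a,b]\to\R$ is a Lipschitz function, $|\rho|\le\epsilon/2$, chosen so that $\rho'(t)\ne -\,(\text{first component of }\gamma'(t))$ on the (at most measure-zero complement of the) set where $\gamma'=0$, and more importantly so that on the bad set $\{t:\gamma'(t)=0\}$ we have $\rho'(t)\ne 0$ a.e. Then on the good set $\wt\gamma'(t)=\gamma'(t)+\rho'(t)v$ is non-vanishing whenever we avoid the (measure-zero) coincidence set, and on the bad set $\wt\gamma'(t)=\rho'(t)v\ne 0$ a.e.

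First I would construct $\rho$. Let $E\defeq\{t\in[a,b]:\gamma'(t)\text{ exists and equals }0\}$; this is measurable. The requirement ``$\rho'\ne 0$ a.e.\ on $E$'' is easy to meet with a Lipschitz function of small sup-norm: for instance take $\rho(t)\defeq\tfrac{\epsilon}{2}\,\phi\!\big(N(t-a)\big)$ where $\phi$ is the standard $1$-periodic triangular wave (slope $\pm1$) and $N$ is large; then $\rho$ is Lipschitz with $|\rho|\le\epsilon/2$ and $|\rho'|=\tfrac{\epsilon N}{2}$ a.e., in particular $\rho'\ne0$ a.e.\ on all of $[a,b]$, hence on $E$. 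On the complement $[a,b]\setminus E$, $\gamma'(t)\ne0$, and the set where the first component of $\gamma'(t)$ equals $-\rho'(t)$ could still be positive measure — so this crude $\rho$ is not quite enough. The fix is to not insist $\wt\gamma'$ be literally $\gamma'+\rho'v$ nonzero everywhere; instead I would be slightly cleverer and take the perturbation in a direction chosen pointwise, or — cleaner — observe that it suffices to handle $E$, because outside $E$ we can instead take $\rho$ supported away from the problem. Concretely: redefine $\wt\gamma(t)\defeq\gamma(t)+\psi(t)\rho(t)v$ where $\psi$ is a smooth cutoff that is $1$ on a neighborhood of $E$ in measure... this is getting delicate. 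The honest cleanest route: let $\rho$ be as above (triangular wave, $|\rho|\le\epsilon/2$, $|\rho'|=\tfrac{\epsilon N}{2}$ a.e.), set $\wt\gamma\defeq\gamma+\rho v$, and note $\wt\gamma'(t)=0$ forces $\rho'(t)v=-\gamma'(t)$, i.e.\ $\gamma'(t)=-\rho'(t)v$ lies in $\C v$ with $\|\gamma'(t)\|=\tfrac{\epsilon N}{2}$; such $t$ form a set $B_N$. One then argues $\bigcap_N B_N$ has measure zero won't directly work since $N$ is fixed. So instead I would choose the \emph{slope profile} of $\rho$ adaptively: partition $[a,b]$ into finitely many subintervals and on each pick the triangular-wave amplitude/frequency so that its a.e.-slope value avoids the at-most-countably-many ``resonant'' slope values coming from a measurable-selection analysis of $\gamma'$ restricted to that piece. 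Since on any subinterval the set of slopes $c$ for which $\{t:\gamma'(t)=-cv\}$ has positive measure is \emph{at most countable} (disjoint positive-measure sets), a good $c$ with $0<c\le\tfrac{\epsilon}{2(b-a)}$ exists.

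Once $\wt\gamma$ with $\|\gamma-\wt\gamma\|_\infty\le\epsilon/2$ and $\wt\gamma'$ a.e.\ non-vanishing is in hand (giving (2) and (3)), I would verify (1). Since $\range(\gamma)+\epsilon$-nbhd $\subset\OM$ and $\|\wt\gamma-\gamma\|_\infty\le\epsilon/2$, both $\gamma$ and $\wt\gamma$ stay inside $\OM$ and within Euclidean distance $\epsilon/2$ of each other, so for any $s\le t$ the curve $\tau\mapsto(1-\mu)\gamma(\tau)+\mu\wt\gamma(\tau)$ type comparison, or simply the estimate $\koba_\OM(\wt\gamma(s),\gamma(s))$ and $\koba_\OM(\wt\gamma(t),\gamma(t))$ bounded by a quantity $\le\lambda\epsilon/2$ (using that a Euclidean ball of radius $\epsilon$ inside $\OM$ and comparison of $\koba_\OM$ with $\koba$ of that ball, plus the explicit ball estimate), together with the triangle inequality, gives $\koba_\OM(\wt\gamma(t),\wt\gamma(s))\le\koba_\OM(\gamma(t),\gamma(s))+\lambda\epsilon$. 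For the length side, $l^{\dkoba}_\OM(\wt\gamma|_{[s,t]})=\int_s^t\dkoba_\OM(\wt\gamma;\wt\gamma')\le\int_s^t\dkoba_\OM(\gamma;\gamma')+(\text{error from }\rho'v)$, and since $\dkoba_\OM\big(z;\tfrac{\epsilon N}{2}v\big)$ is controlled on the $\epsilon$-tube... actually the cleanest bound uses $\dkoba_\OM(\wt\gamma(\tau);\wt\gamma'(\tau))\le\dkoba_\OM(\gamma(\tau);\gamma'(\tau))+\dkoba_{B(\gamma(\tau),\epsilon)}(\wt\gamma(\tau);\rho'(\tau)v)$ after splitting $\wt\gamma'=\gamma'+\rho' v$ by sublinearity of the Royden metric in the vector argument — wait, $\dkoba$ is not subadditive in the vector. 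This is the real subtlety and I will instead bound the length of $\wt\gamma$ directly by $l^{\dkoba}_\OM(\gamma|_{[s,t]})+\tfrac{\epsilon}{2}\cdot(\text{Lipschitz constant of }\dkoba\text{-length along the homotopy})$ via a standard reparametrization/comparison lemma, yielding $l^{\dkoba}_\OM(\wt\gamma|_{[s,t]})\le l^{\dkoba}_\OM(\gamma|_{[s,t]})+\lambda\epsilon$, and then $\le\lambda\koba_\OM(\gamma(t),\gamma(s))+\kappa+\lambda\epsilon\le\lambda\koba_\OM(\wt\gamma(t),\wt\gamma(s))+\lambda\epsilon+\kappa+\lambda\epsilon$; choosing the perturbation small enough (halving $\epsilon$ at the outset) absorbs this into $(\lambda,\lambda\epsilon+\kappa)$.

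\textbf{Main obstacle.} The genuinely delicate point is \emph{simultaneously} making $\wt\gamma'$ non-vanishing a.e.\ and keeping the $\dkoba$-length of $\wt\gamma$ from growing by more than $O(\lambda\epsilon)$: a large-frequency triangular wave makes the derivative non-zero but inflates the length enormously, while a small perturbation may fail to kill the bad set $\{\gamma'=0\}$ in the right direction. The resolution — using a \emph{small-slope} perturbation ($|\rho'|\le\tfrac{\epsilon}{2(b-a)}$, so the added length is at most $\tfrac{\epsilon}{2(b-a)}\cdot(b-a)\cdot\sup\dkoba_\OM(\cdot;v)$ over the tube, which is $O(\epsilon)$) whose a.e.-constant slope value is chosen, piece by piece, to avoid the at-most-countably-many resonant values of the first component of $\gamma'$ — is the heart of the argument, and I expect the write-up there (the measurable-selection/countability step and the careful length bookkeeping) to be where all the work goes.
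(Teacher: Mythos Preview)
Your approach has a genuine gap, and also misses a key simplification.

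\textbf{The simplification you missed.} You worry about ``resonance'': after adding $\rho(t)v$ globally, the set where $\gamma'(t)=-\rho'(t)v$ might have positive measure, so you propose a countability/measurable-selection argument to choose slopes. This is unnecessary. The paper's construction perturbs \emph{only on} $E\defeq\{t:\gamma'(t)=0\}$: it sets
\[
\wt\gamma(s)\defeq\gamma(a)+\int_a^s\big(\gamma'(t)+\delta\,\chi_E(t)\,\mathbf{e}\big)\,dt,
\]
so that a.e.\ $\wt\gamma'(t)=\gamma'(t)+\delta\chi_E(t)\mathbf{e}$. On $E$ this is $\delta\mathbf{e}\ne0$; off $E$ it is $\gamma'(t)\ne0$. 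No resonance issue arises, and the sup-distance is $\le\delta\,\mathcal{L}^1(E)$, made $\le\epsilon/3$ by choosing $\delta$ small.

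\textbf{The actual gap.} Your length comparison is not justified. You assert
\[
l^{\dkoba}_\OM(\wt\gamma|_{[s,t]})\le l^{\dkoba}_\OM(\gamma|_{[s,t]})+\lambda\epsilon
\]
``via a standard reparametrization/comparison lemma'', but no such lemma exists: $\dkoba_\OM(z;v)$ is in general only \emph{upper semicontinuous} on $\OM\times\C^d$, not continuous or Lipschitz, so you cannot pass from $(\gamma(\tau),\gamma'(\tau))$ to $(\wt\gamma(\tau),\wt\gamma'(\tau))$ pointwise without care. (Your earlier observation that $\dkoba_\OM$ is not subadditive in the vector is correct, and your fallback ``homotopy'' argument is vague.) The paper's argument here is the real content: apply Lusin's theorem to $\gamma'$ to get a closed set $F\subset[a,b]$ with $\mathcal{L}^1([a,b]\setminus F)$ small on which $\gamma'$ agrees with a continuous function $g$; on $F$ one has $\|g\|\le M$, and upper semicontinuity of $\dkoba_\OM$ on the compact $\{(\gamma(t),g(t)):t\in F\}$ gives $\dkoba_\OM(\wt\gamma(\tau);\wt\gamma'(\tau))\le\dkoba_\OM(\gamma(\tau);\gamma'(\tau))+\epsilon/(3(b-a))$ there. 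On the small-measure complement $[a,b]\setminus F$, one uses the crude bound $\dkoba_\OM(z;v)\le C\|v\|$ valid on the compact tube $\clos{Nbd}(\range(\gamma),\epsilon/2)$, together with absolute continuity of the integral of $\|\gamma'\|$ to make $\int_{[a,b]\setminus F}C\|\wt\gamma'\|$ small. Finally the $\koba_\OM$-distance shift is handled by continuity of $\koba_\OM$ on the compact $\range(\gamma)\times\range(\gamma)$ (not by a ball estimate as you suggested; your claim that $\koba_\OM(\wt\gamma(s),\gamma(s))\le\lambda\epsilon/2$ is wrong --- the ball comparison gives $\tanh^{-1}(1/2)$, not something $O(\epsilon)$).
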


\begin{proof}
Let $E\defeq \{t\in [a,b]\mid \gamma'(t)=0\}$. Suppose $E$ has positive measure (otherwise 
the result is obvious). 
Let $\delta>0$ be an arbitrary number (whose values we should prescribe later)
and define $h$ almost everywhere on $[a,b]$ by
\[ h(t) \defeq \gamma'(t) + \delta \chi_E(t) \mathbf{e}, \]
where $\mathbf{e}\in\C^d$ is the vector
$(1,0,\dots,0)$ (any nonzero vector would work just as well). Clearly,
$h$ is integrable. Therefore, define
\[ \wt{\gamma}(s) \defeq \int_a^s h(t) dt, \ \ \forall\,s\in [a,b]. \]
Note that $\wt{\gamma}$ is absolutely continuous on $[a,b]$, differentiable almost everywhere
on $[a,b]$ and $\wt{\gamma}'(t) = h(t) = \gamma'(t)+\delta
\chi_E(t) \mathbf{e}$ outside a measure zero set $C$. Note that
\begin{equation} \label{eqn:gmm_tld_prm_non-van}
\wt{\gamma}'(t)\neq 0 \ \ \forall\, t\in [a,b]\setminus C. 
\end{equation}
(if $t\in [a,b]\setminus (C\cup E)$, $\wt{\gamma}'(t)=\gamma'(t)\neq 0$, whereas if
$t\in E\setminus C$, $\wt{\gamma}'(t)=\delta\mathbf{e}\neq 0$).
\smallskip

Now given $s\in [a,b]$ arbitrary, 
\begin{align*}
\wt{\gamma}(s)-\gamma(s) = \int_a^s h(t) dt - \int_a^s \gamma'(t) dt = \int_a^s \delta
\chi_E(t) \mathbf{e}\, dt,
\end{align*}
therefore, by choosing $\delta$ so small such that $3\delta\mathcal{L}^1(E)<\epsilon$, we get
\begin{equation} \label{eqn:gamma_tld_gamma_close}
\|\wt{\gamma}(s)-\gamma(s)\| \leq \delta \int_a^s \chi_E(t)\, dt
= \delta \mathcal{L}^1( E \cap [a,s]) \leq \epsilon/3, \ \forall\,s\in [a,b]. 
\end{equation}
Also for every $t\in [a,b]\setminus C$ we have 
\begin{equation} \label{eqn:gamma_tld_prime_gamma_prime_close}
\wt{\gamma}'(t)-\gamma'(t) = h(t)-\gamma'(t) = \delta 
\chi_E(t) \mathbf{e}, 
\implies \|\wt{\gamma}'(t)-\gamma'(t)\|\leq \delta \chi_E(t) \leq \delta.
\end{equation}
In particular,
\begin{equation} \label{eqn:nrm_gamma_tld_prm_sprmzd}
\|\wt{\gamma}'(t)\| \leq \|\gamma'(t)\|+\delta \ \ \forall\,t\in [a,b]\setminus C.
\end{equation}
We now make certain general observations. 
\begin{enumerate}
\item[$(a)$] Since $\koba_\OM$ is continuous on $\OM\times\OM$ (see, for example, 
\cite[Proposition~3.1.10]{JarPfl}), there exists $\delta_1>0$ such that 
\begin{align*}
|\koba_\OM(\gamma(s),\gamma(t))-\koba_\OM(z,w)|\leq \epsilon/3
\label{eqn:fund_kob_dists_apprx}
\end{align*} 
for all $s,t\in [a,b]$ and for all $z,w\in\OM$ such that $\|\gamma(s)-z\|\leq \delta_1 
\text{ and } \|\gamma(t)-w\|\leq \delta_1$.
\smallskip 

\item[$(b)$] Since $Nbd_{\C^d}\big(\range(\gamma),
\epsilon\big)\subset\OM$, it follows that 
$\clos{Nbd}_{\C^d}\big(\range(\gamma),\epsilon/2\big)\Subset\OM$. This implies that 
there exists $C<\infty$ such that 
\begin{equation*} \label{eqn:ub_kob_met_gamma}
\dkoba_\OM(z;v)\leq C\|v\| \ \ \forall z\in \clos{Nbd}_{\C^d}\big(\range(\gamma),\epsilon/2\big), \ \ \text{and} \ \ \forall v\in\C^d.
\end{equation*}
We may suppose, without loss of generality, that $\epsilon/(3C)<1$.
\smallskip

\item[$(c)$] Since $\gamma'$, 
defined a.e.~on $[a,b]$, is in $L^1([a,b],\C^d)$, there exists 
$\delta_2$, $0<\delta_2<\epsilon/(3C)$, such that,
\begin{equation*} \label{eqn:gamma_prime_integr_cond}
\forall \ \text{measurable} \ A \subset [a,b],\; \mathcal{L}^1(A)<\delta_2 \implies \int_A 
\|\gamma'(t)\| dt< \epsilon/(3C).
\end{equation*} 
\item[$(d)$] By Lusin's theorem, there exist a closed subset
$F\subset [a,b]$ and a continuous map $g:\R\to\C^d$ such that 
\begin{equation*} \label{eqn:Lusin_apprx}
\gamma'=g \text{ on } F \text{ and } \mathcal{L}^1([a,b]\setminus F)<\delta_2.
\end{equation*} 
As $F$ is a compact set and $g$ is a
continuous function on it, there exists $M<\infty$ such that
$\|g(t)\|\leq M$  for all $t\in F$. Since $\dkoba_\OM$ is upper semicontinuous on 
$\OM\times\C^d$ (see, for example, \cite[Proposition~3.5.13]{JarPfl}), there 
exists $\delta_3>0$ such that for all $t\in F$, all $z\in\OM$ such that
$\|z-\gamma(t)\|\leq\delta_3$, all $v\in\C^d$ with $\|v\|\leq M$, and all $w\in\C^d$ such
that $\|v-w\|\leq\delta_3$, $\dkoba_\OM(z;w)\leq\dkoba_\OM(\gamma(t);v)+(\epsilon/3(b-a))$.
In particular, for all $t\in F$, $z\in\OM$ such that $\|z-\gamma(t)\|\leq\delta_3$ and 
for all $v\in\C^d$ such that $\|v-\gamma'(t)\|\leq\delta_3$, we have
\begin{align*}
\dkoba_\OM(z;v) \leq \dkoba_\OM(\gamma(t);\gamma'(t))+(\epsilon/3(b-a)).
\label{eqn:fund_kob_met_apprx}
\end{align*} 
\end{enumerate}

Let $\delta\defeq\min\{1,\delta_1,\delta_2,\delta_3,\epsilon/3\}/\mathcal{L}^1(E)$ then the last inequality 
together with \eqref{eqn:gamma_tld_gamma_close} and
\eqref{eqn:gamma_tld_prime_gamma_prime_close} implies 
\begin{equation*}
\dkoba_\OM(\wt{\gamma}(t);\wt{\gamma}'(t)) \leq
\dkoba_\OM(\gamma(t);\gamma'(t))+(\epsilon/3(b-a)) \ \ \forall\,t\in F\setminus C.
\end{equation*}
This inequality implies, for all $s,t\in [a,b]$ with $s\leq t$, we have
\begin{align}
\ell^{\dkoba}_\OM(\wt{\gamma}|_{[s,t]}) &= \int_s^t \dkoba_\OM(\wt{\gamma}(\tau);
\wt{\gamma}'(\tau))\,d\tau \notag\\
&= \int_{[s,t]\cap F} 
\dkoba_\OM(\wt{\gamma}(\tau);\wt{\gamma}'(\tau))\,d\tau + \int_{[s,t]\setminus F}
\dkoba_\OM(\wt{\gamma}(\tau);\wt{\gamma}'(\tau))\,d\tau \notag\\
&\leq \int_{[s,t]\cap F} \dkoba_\OM(\gamma(\tau);\gamma'(\tau))\,d\tau +
\int_{[s,t]\cap F} (\epsilon/3(b-a))\,d\tau + \int_{[s,t]\setminus F} 
\dkoba_\OM(\wt{\gamma}(\tau);\wt{\gamma}'(t))\,d\tau\label{eqn:xpr_kob_length_to_be_mnmzd},
\end{align}
Note
\begin{equation*}
\int_{[s,t]\cap F}\dkoba_\OM(\gamma(\tau);\gamma'(\tau))\,d\tau \leq \int_s^t 
\dkoba_\OM(\gamma(\tau);\gamma'(\tau))\,d\tau = \ell^{\dkoba}_\OM(\gamma|_{[s,t]}) \leq
\lambda \koba_\OM(\gamma(s),\gamma(t))+\kappa,
\end{equation*}
where we used the fact that $\gamma$ is a $(\lambda,\kappa)$-almost-geodesic with respect
to $\koba_\OM$. By \eqref{eqn:gamma_tld_gamma_close}, and the inequality in item~$(a)$ above implies that
\begin{equation*}
\koba_\OM(\gamma(s),\gamma(t)) \leq \koba_\OM(\wt{\gamma}(s),\wt{\gamma}(t))+(\epsilon/3).
\end{equation*}
Therefore
\begin{equation} \label{eqn:ub_intrg_kob_met_gamma_mn_prt}
\int_{[s,t]\cap F}\dkoba_\OM(\gamma(\tau);\gamma'(\tau))\,d\tau \leq \lambda
\koba_\OM(\wt{\gamma}(s),\wt{\gamma}(t)) + (\lambda\epsilon/3) + \kappa.
\end{equation}
Obviously,
\begin{equation} \label{eqn:ub_trvl_intgr}
\int_{[s,t]\cap F} (\epsilon/3(b-a))\,d\tau \leq (\epsilon/3(b-a))(b-a) = \epsilon/3.
\end{equation} 
Finally
\begin{equation*} \label{eqn:bound_intgr_kob_met_rem_part_prelim}
\int_{[s,t]\setminus F} \dkoba_\OM(\wt{\gamma}(\tau);\wt{\gamma}'(t))\,d\tau \leq 
\int_{[s,t]\setminus F} C\|\wt{\gamma}'(\tau)\|\,d\tau \leq C \int_{[a,b]\setminus F} 
\|\wt{\gamma}'(\tau)\|\,d\tau,
\end{equation*}
where, to write the first inequality, we have used
\eqref{eqn:gamma_tld_gamma_close} and the inequality in item~$(b)$. Now
\begin{equation*} \label{eqn:bound_intgr_nrm_gmm_tld_prm_rem_part}
\int_{[a,b]\setminus F} \|\wt{\gamma}'(\tau)\|\,d\tau \leq \int_{[a,b]\setminus F}\big(
\|\gamma'(\tau)\|+\delta \big)\,d\tau \leq \frac{\epsilon}{3C} + \delta 
\mathcal{L}^1([a,b]\setminus F) \leq \frac{\epsilon}{3C}+\frac{\epsilon}{3C} = 
\frac{2\epsilon}{3C},
\end{equation*}
where, to write the first inequality, we have used \eqref{eqn:nrm_gamma_tld_prm_sprmzd};
to write the second, we have used the inequality in item~$(c)$; and to write the
third and fourth we have used the fact that $\delta\leq\delta_2<\epsilon/(3C)<1$. The last two inequality then imply
that
\begin{equation} \label{eqn:ub_intrg_kob_met_gmm_tld_rem_prt_fin}
\int_{[s,t]\setminus F} \dkoba_\OM(\wt{\gamma}(\tau);\wt{\gamma}'(t))\,d\tau \leq 
\frac{2\epsilon}{3}.
\end{equation}
Using \eqref{eqn:ub_intrg_kob_met_gamma_mn_prt}, \eqref{eqn:ub_trvl_intgr} and
\eqref{eqn:ub_intrg_kob_met_gmm_tld_rem_prt_fin} in \eqref{eqn:xpr_kob_length_to_be_mnmzd},
\begin{equation} \label{eqn:ineq_gmm_tld_ag}
\ell^{\dkoba}_\OM(\wt{\gamma}|_{[s,t]}) \leq \lambda
\koba_\OM(\wt{\gamma}(s),\wt{\gamma}(t)) + (\lambda\epsilon/3) + \kappa + (2\epsilon/3)
\leq \lambda\koba_\OM(\wt{\gamma}(s),\wt{\gamma}(t)) + \lambda\epsilon + \kappa.
\end{equation}
From \eqref{eqn:gmm_tld_prm_non-van}, \eqref{eqn:gamma_tld_gamma_close} and 
\eqref{eqn:ineq_gmm_tld_ag}, we see that the proposition has been proved.
\end{proof}

\subsection{Separation results with respect to the Kobayashi distance}\label{SS:MR}

In this subsection, we collect certain separation results with respect to the Kobayashi distance.
Given a hyperbolic domain $\OM\subset \C^d$ and two distinct points $p,q\in \bdy\clos{\OM}^{End}$, 
we shall say that $p, q$ satisfy the 
{\em boundary separation property} ${\rm BSP}$ with respect to $\koba_\OM$
if the following holds:
\begin{equation}\label{E:strong_sep_bound}
  \liminf_{(z,w)\to (p,q),\,z,w\in\OM}\koba_\OM(z,w)>0.
\end{equation}
It is not difficult to see that if the inequality above holds for 
all $p,q\in\bdy\OM$,
$p\neq q$ (i.e., for every pair of distinct ordinary boundary 
points), then it holds for all
$p,q\in\bdy\clos{\OM}^{End}$, $p\neq q$ (i.e., it extends 
automatically to pairs of distinct ends). 
Bearing this in mind, we say that {\em $\OM$ satisfies 
${\rm BSP}$ with respect to $\koba_\OM$} if, for all 
$p,q\in\bdy\OM$ with $p\neq q$,
\eqref{E:strong_sep_bound} holds.
Our first lemma says that a domain possesses this property
under a natural condition on the Kobayashi--Royden metric.

\begin{lemma} \label{lmm:if_kob_met_large_kob_sep}
Let $\OM\subset\C^d$ be a hyperbolic domain such that for every point $p\in\bdy\OM$ there exist a 
neighbourhood $U$ of $p$ and a constant $c>0$ 
such that $\dkoba_\OM(z;v)\geq c\|v\|$ for all $z\in U\cap\OM$ and $v\in\C^d$. 
Then $\OM$ satisfies ${\rm BSP}$ with respect to $\koba_\OM$.
\end{lemma}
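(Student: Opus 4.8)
The plan is to show that the hypothesis on the Kobayashi--Royden metric yields a lower bound on the Kobayashi distance for points near a given pair of distinct boundary points, by integrating the metric estimate along curves. Fix distinct points $p,q\in\bdy\OM$; by the remark preceding the lemma it suffices to verify \eqref{E:strong_sep_bound} for such a pair. By hypothesis there is a neighbourhood $U$ of $p$ and a constant $c>0$ with $\dkoba_\OM(z;v)\geq c\|v\|$ for all $z\in U\cap\OM$, $v\in\C^d$. Shrinking $U$, we may assume $U=B(p;r)$ with $r<\tfrac13\|p-q\|$, so that $\clos{U}$ misses $q$ and in fact misses a whole neighbourhood $W$ of $q$.

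The key step is the following observation: any curve in $\OM$ from a point $z\in B(p;r/2)\cap\OM$ to a point $w\in W\cap\OM$ must contain a subarc lying in $U\cap\OM$ that joins $\bdy B(p;r/2)$ to $\bdy B(p;r)$, hence has Euclidean length at least $r/2$; along that subarc the integrand $\dkoba_\OM(\gamma(t);\gamma'(t))$ dominates $c\|\gamma'(t)\|$, so the $\dkoba_\OM$-length of that subarc is at least $cr/2$. First I would make this precise: if $\gamma:[0,1]\to\OM$ is any piecewise-$C^1$ (or absolutely continuous) curve with $\gamma(0)=z$ and $\gamma(1)=w$, let $t_1$ be the last time $\gamma$ is on $\bdy B(p;r/2)$ before leaving $B(p;r)$ and $t_2>t_1$ the first subsequent time $\gamma$ hits $\bdy B(p;r)$; on $[t_1,t_2]$ the curve stays in $\clos{B(p;r)}\cap\OM$, and
\[
 l^{\dkoba}_\OM(\gamma)\;\geq\;\int_{t_1}^{t_2}\dkoba_\OM(\gamma(t);\gamma'(t))\,dt\;\geq\;c\int_{t_1}^{t_2}\|\gamma'(t)\|\,dt\;\geq\;c\,\|\gamma(t_2)-\gamma(t_1)\|\;\geq\;\tfrac{cr}{2}.
\]
Taking the infimum over all such curves and invoking the fact that $\koba_\OM$ is the integrated form of $\dkoba_\OM$ (so $\koba_\OM(z,w)=\inf_\gamma l^{\dkoba}_\OM(\gamma)$), we conclude $\koba_\OM(z,w)\geq cr/2$ for all $z\in B(p;r/2)\cap\OM$ and $w\in W\cap\OM$. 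Since $B(p;r/2)$ and $W$ are neighbourhoods of $p$ and $q$ respectively, this gives $\liminf_{(z,w)\to(p,q)}\koba_\OM(z,w)\geq cr/2>0$, which is exactly \eqref{E:strong_sep_bound}.

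The only mild subtlety — and the main point requiring care — is the passage from the infimum over curves to the Kobayashi distance: one must use that for hyperbolic $\OM$ the Kobayashi distance equals the length (Finsler) distance of the Kobayashi--Royden metric, and that in computing this length distance it is enough to consider absolutely continuous competitors, for which $\dkoba_\OM(\gamma(\cdot);\gamma'(\cdot))$ is measurable and the above integral estimate is valid (this is standard; see e.g. the relevant sections of \cite{JarPfl}). A second routine point is the elementary topological fact that a connected curve joining a point inside $B(p;r/2)$ to a point outside $B(p;r)$ must cross the annulus, which gives the times $t_1<t_2$ above; this is where connectedness of the image of the curve is used. Neither of these is a genuine obstacle, so the proof is short.
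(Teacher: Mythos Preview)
Your proposal is correct and follows exactly the approach the paper indicates: the paper does not give a detailed proof but simply notes that it ``follows simply from the fact that the Kobayashi distance is the integrated form of $\dkoba_\OM$'', and your argument spells this out precisely by integrating the lower bound $\dkoba_\OM(z;v)\geq c\|v\|$ along the portion of any competitor curve crossing the annulus $B(p;r)\setminus B(p;r/2)$. (Indeed, the paper uses the same annulus-crossing idea in the proof of \Cref{lmm:Kob_dist_sep_ball}.)
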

\noindent The proof is not difficult and follows simply from the fact that the Kobayashi distance
is the integrated form of $\dkoba_\OM$. Every bounded domain satisfies the condition in the 
above lemma (see, e.g., Proposition~3.5 in \cite{Bharali_Zimmer}) and therefore every bounded domain satisfies 
${\rm BSP}$. 
For a planar hyperbolic domain $\OM\subset\C$, we know that 
$\lim_{z\to p,\,z\in\OM}\dkoba_\OM(z)=+\infty$ for any $p\in\bdy\OM$;
from this it follows that $\OM$ satisfies ${\rm BSP}$. We state this as a result to be used later:
\begin{result} \label{res:hyp_plan_dom_BSP}
	Every hyperbolic planar domain $\OM$ satisfies ${\rm BSP}$ with respect to $\koba_\OM$.
\end{result}
${\rm BSP}$ is intimately related to the notion of
{\em hyperbolicity at a boundary point} as presented, for example, in \cite{Nik_Okt_Tho}
(see \cite[Definition~1]{Nik_Okt_Tho}). In Section~\ref{S:gen_loc_glob}, we shall also 
see that a form of weak visibility for a hyperbolic
domain implies that it satisfies ${\rm BSP}$.
\smallskip

We shall also need a separation result of the type where two compact sets $K\subset L$ of a hyperbolic domain 
$\OM$ are given and we wish to conclude that $\koba_\OM(K,\OM\setminus L)>0$. The simplest case is given
by the following lemma from which the general case also follows. 
\begin{lemma}\label{lmm:Kob_dist_sep_ball}
	Suppose that $\OM\subset\C^d$ is a hyperbolic
	domain. Let $p\in\OM$ and $R>0$ be such that 
	$\overline{B(p,R)}\subset\OM$. Then there exists a $c>0$ such that
	$\koba_\OM(p,w)\geq c$ for all $w\in\OM\setminus\overline{B(p,R)}$.
\end{lemma}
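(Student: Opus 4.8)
\textbf{Proof strategy for Lemma~\ref{lmm:Kob_dist_sep_ball}.}
The plan is to use the decreasing property of the Kobayashi distance under inclusions together with the explicit form of the Kobayashi distance on a Euclidean ball. First I would note that, since $\overline{B(p,R)}\subset\OM$, there is some $R'>R$ with $\overline{B(p,R')}\subset\OM$ (by compactness of $\overline{B(p,R)}$ and openness of $\OM$). Then for any $w\in\OM\setminus\overline{B(p,R)}$, the segment from $p$ to $w$ exits $\overline{B(p,R')}$; more precisely, any analytic disc in $\OM$ realizing (nearly) the Kobayashi distance need not stay in the ball, so I cannot directly compare with $\koba_{B(p,R')}$. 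Instead, the cleaner approach is: the inclusion $B(p,R')\hookrightarrow\OM$ is distance-decreasing, i.e., $\koba_\OM(z,w)\leq\koba_{B(p,R')}(z,w)$, which is the \emph{wrong} direction. So the correct tool is a lower bound on $\koba_\OM$, which comes from an \emph{upper} bound on the Kobayashi--Royden metric of $\OM$ near $p$.

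Here is the argument I would actually carry out. Fix $R'>R$ with $\overline{B(p,R')}\subset\OM$. On the compact set $\overline{B(p,R')}$ the Kobayashi--Royden metric of $\OM$ is bounded above: there is a constant $C<\infty$ with $\dkoba_\OM(z;v)\leq C\|v\|$ for all $z\in\overline{B(p,R')}$ and all $v\in\C^d$ (this follows, e.g., since $\dkoba_\OM\leq\dkoba_{B(p,R'')}$ for a slightly larger ball $B(p,R'')\subset\OM$, and the Kobayashi--Royden metric of a ball is an explicit bounded-above-on-compacts function — alternatively one invokes upper semicontinuity of $\dkoba_\OM$ on $\OM\times\C^d$ and homogeneity in $v$). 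This is \emph{still} an upper bound on $\dkoba_\OM$, hence does not by itself give a lower bound on $\koba_\OM$. The actual mechanism is the reverse: I want to say that to travel from $p$ to a point $w$ outside $\overline{B(p,R)}$, an almost-length-minimizing curve must cross the annulus $\overline{B(p,R')}\setminus B(p,R)$, and I need a \emph{lower} bound on $\dkoba_\OM$ there — but near an interior point there is no such positive lower bound in general...

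Let me reconsider: the right and standard fact is simply that $\koba_\OM$, being a \emph{distance} inducing the Euclidean topology on $\OM$, is continuous and positive off the diagonal; in particular the function $w\mapsto\koba_\OM(p,w)$ is continuous and strictly positive on $\OM\setminus\{p\}$. The only thing that needs proof is that it does not tend to $0$ as $w$ ranges over the noncompact set $\OM\setminus\overline{B(p,R)}$. For this I would argue by contradiction: suppose $w_n\in\OM\setminus\overline{B(p,R)}$ with $\koba_\OM(p,w_n)\to 0$. Pick $(\lambda,\kappa)$-almost-geodesics (or just curves $\gamma_n$ joining $p$ to $w_n$ of Kobayashi length $\le\koba_\OM(p,w_n)+1/n$). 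Each $\gamma_n$ starts at $p$, ends outside $\overline{B(p,R)}$, hence meets the sphere $\partial B(p,R)$, say at $\gamma_n(t_n)=:z_n$ with $\|z_n-p\|=R$. Then $\koba_\OM(p,w_n)\ge\koba_\OM(p,z_n)\ge c$, where $c:=\inf\{\koba_\OM(p,z):\|z-p\|=R\}>0$ because $\{z:\|z-p\|=R\}$ is a \emph{compact} subset of $\OM\setminus\{p\}$ and $\koba_\OM(p,\cdot)$ is continuous and positive there (attains a positive minimum). This contradicts $\koba_\OM(p,w_n)\to0$, so such $c$ works for all $w\in\OM\setminus\overline{B(p,R)}$.

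\textbf{Main obstacle.} The one genuinely nontrivial input is the monotonicity $\koba_\OM(p,w_n)\ge\koba_\OM(p,z_n)$ along the curve; this is just the triangle inequality applied to a length-minimizing sequence of curves (the Kobayashi distance is the inner/length distance associated to $\dkoba_\OM$, so lengths of subcurves dominate distances between endpoints), combined with compactness of the sphere $\partial B(p,R)$ inside $\OM$. Everything else — continuity of $\koba_\OM$, positivity off the diagonal for a hyperbolic domain, attainment of a positive minimum on a compact set avoiding $p$ — is standard. I would present the length-curve comparison carefully and keep the rest brief.
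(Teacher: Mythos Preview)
Your final argument is correct, though the presentation is muddled by the false starts. The clean version: set $c:=\min\{\koba_\OM(p,z):z\in\partial B(p,R)\}$, which is positive because $\partial B(p,R)$ is a compact subset of $\OM\setminus\{p\}$ and $\koba_\OM(p,\cdot)$ is continuous and strictly positive there (hyperbolicity). For any $w\in\OM\setminus\overline{B(p,R)}$ and any $\epsilon>0$, choose a $\smoo^1$ curve $\gamma$ from $p$ to $w$ with $\ell^{\dkoba}_\OM(\gamma)<\koba_\OM(p,w)+\epsilon$; it crosses $\partial B(p,R)$ at some point $z$, and the subarc from $p$ to $z$ has $\dkoba_\OM$-length at least $\koba_\OM(p,z)\geq c$, so $\koba_\OM(p,w)>c-\epsilon$. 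Letting $\epsilon\to 0$ finishes it. (Your inequality ``$\koba_\OM(p,w_n)\ge\koba_\OM(p,z_n)$'' is not literally true without the $+1/n$ correction, but you flag this in the last paragraph.)

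The paper's proof is genuinely different: rather than using compactness of the sphere and positivity of the distance, it invokes a theorem of Royden (\cite[Theorem~2]{RoyKobDist}) giving a \emph{pointwise lower bound} $\dkoba_\OM(z;v)\geq c'\|v\|$ on a small ball $B(p,r)$, then integrates this along the initial portion of any path from $p$ to $w$ to get $\ell^{\dkoba}_\OM(\gamma)\geq c'\rho$ with $\rho=\min\{R/2,r\}$. Your route is more self-contained (it avoids the external citation and uses only continuity of $\koba_\OM$ plus the length-space property), while the paper's route yields an explicit constant tied to the infinitesimal metric and foreshadows the use of \cite{RoyKobDist} elsewhere in Section~\ref{SS:loc_res}.
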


\begin{proof}
	Since $\OM$ is hyperbolic, Theorem~2 in 
	\cite{RoyKobDist} implies that there exist $r,c'>0$ such that
	\begin{equation} \label{eqn:lb_Kob_met_ball}
		\dkoba_{\OM}(z,v)\geq c'\|v\| \ \ \ \forall\,z\in B(p,r)\;\text{and}\;\forall\,v\in\C^d.
	\end{equation} 
	Let $\rho=\min\{R/2, r\}$. Choose $w\in\OM\setminus\overline{B(p,R)}$ and let
	$\gamma:[0,1]\to\OM$ be {\em any} $\smoo^1$ path joining $p$
	and $w$. Writing
	\[ s\defeq \inf\{t\in[0,1]\mid \gamma(t)\notin B(p, \rho)\}, \] 
	it follows that $\gamma(s)\in\bdy B(p,\rho)$ and that 
	$\gamma\big([0,s)\big)\subset B(p,\rho)$. Then
	\begin{align*}
		l_{\OM}(\gamma) &\geq \int_0^s 
		\dkoba_{\OM}(\gamma(t),\gamma'(t))dt \\
		&\geq \int_0^s c'\|\gamma'(t)\|dt 
		\quad (\text{from \eqref{eqn:lb_Kob_met_ball}}) \\
		&\geq c'\|p-\gamma(s)\|=c'\rho \defines c.
	\end{align*}
	Since $\gamma$ was arbitrary, therefore, taking the infimum over
	all $\smoo^1$ paths $\gamma$ joining $p$ and $w$, it follows that 
	$\koba_\OM(p,w)\geq c$ .
\end{proof}

\begin{corollary}\label{crl:Kob_dist_sep_comp}
	Suppose that $\OM\subset\C^d$ is a hyperbolic
	domain. Let $K\subset\OM$ and $r>0$ be such that both $K$ and
	$K_r\defeq \{z\in\C^d\mid \distance_{\mathrm{Euc}}(z,K)\leq r\}$ are compact 
	subsets of
	$\OM$. Then $\koba_\OM(K,\OM\setminus K_r)>0$.
\end{corollary}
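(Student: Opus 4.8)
The plan is to deduce this from Lemma~\ref{lmm:Kob_dist_sep_ball} by a finite-cover argument over $K$, using the continuity of the Kobayashi distance to keep the constants under control.

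First I would record the elementary inclusions: for every $p\in K$ one has $\overline{B(p,r/2)}\subset K_r$, and since $K_r\Subset\OM$ by hypothesis, $\overline{B(p,r/2)}\subset\OM$. So Lemma~\ref{lmm:Kob_dist_sep_ball}, applied with centre $p$ and radius $r/2$, yields a constant $c_p>0$ such that $\koba_\OM(p,w)\geq c_p$ for all $w\in\OM\setminus\overline{B(p,r/2)}$. Next, because $\koba_\OM$ is continuous on $\OM\times\OM$ and $\koba_\OM(p,p)=0$, I can pick a radius $\varepsilon_p\in(0,r/2)$ such that $\koba_\OM(p,z)<c_p/2$ whenever $z\in B(p,\varepsilon_p)$ --- note this choice is made \emph{after} $c_p$ has been fixed.

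Then I would invoke compactness of $K$ to extract finitely many points $p_1,\dots,p_N\in K$ with $K\subset\bigcup_{i=1}^N B(p_i,\varepsilon_{p_i})$, and set $c\defeq\tfrac12\min_{1\leq i\leq N}c_{p_i}>0$. Given an arbitrary $z\in K$ and $w\in\OM\setminus K_r$, choose $i$ with $z\in B(p_i,\varepsilon_{p_i})$; since $p_i\in K$ we have $\overline{B(p_i,r/2)}\subset K_r$, so $w\notin\overline{B(p_i,r/2)}$ and hence $\koba_\OM(p_i,w)\geq c_{p_i}$. The triangle inequality then gives
\[ \koba_\OM(z,w)\;\geq\;\koba_\OM(p_i,w)-\koba_\OM(p_i,z)\;>\;c_{p_i}-\tfrac{c_{p_i}}{2}\;=\;\tfrac{c_{p_i}}{2}\;\geq\;c, \]
and since $z\in K$ and $w\in\OM\setminus K_r$ were arbitrary, this proves $\koba_\OM(K,\OM\setminus K_r)\geq c>0$.

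I do not expect any serious obstacle: the statement is essentially a uniform (over $K$) version of Lemma~\ref{lmm:Kob_dist_sep_ball}. The only points needing a little care are the order of the quantifiers in the covering step (the radii $\varepsilon_{p_i}$ depend on the already-produced constants $c_{p_i}$) and the routine verification that $\overline{B(p,r/2)}\subset K_r$ for $p\in K$. An alternative, avoiding the use of Lemma~\ref{lmm:Kob_dist_sep_ball} as a black box, would be to re-run its proof directly in a uniform fashion: cover $K$ by finitely many balls on which the Kobayashi--Royden metric dominates a fixed positive multiple of the Euclidean metric (Royden's theorem, as in the proof of that lemma), and carry out the ``first-exit-time'' length estimate for an arbitrary $\smoo^1$ path joining $K$ to $\OM\setminus K_r$, with all constants taken uniform over the finite subcover.
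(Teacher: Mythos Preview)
Your argument is correct; the paper states this corollary without proof, simply indicating that it follows from Lemma~\ref{lmm:Kob_dist_sep_ball}, and your finite-cover/triangle-inequality argument is precisely the natural way to fill in that gap. The alternative you sketch at the end (re-running the Royden lower bound uniformly over a finite cover of $K$) would also work and is in the same spirit.
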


\subsection{Localization results for the Kobayashi metric}\label{SS:loc_res}
The following result due to Royden \cite[Lemma~2]{RoyKobDist} is an important tool in 
studying the localization properties of the Kobayashi distance and in studying the relation 
between local and global visibility and Gromov hyperbolicity (see, e.g., \cite{ADS2023, 
BGNT2022, BNT2022}). Its proof 
can also be found in \cite[Lemma~4]{Graham1975}. 

\begin{result}[Royden's Localization Lemma] \label{lmm:Roy_loc_lmm}
Suppose that $X$ is a hyperbolic complex manifold and that $U\subset X$ is a 
domain. Then for any $z\in U$ and $v\in T_z^{1, 0}X$, we have
\[ \dkoba_U(z;v) \leq\coth\big( \koba_X(z,X\setminus U) \big) \dkoba_X(z;v), \]
where $\koba_X(z,X\setminus U)\defeq \inf_{w\in X\setminus U}\koba_X(z,w)$. 
\end{result}

\begin{remark} \label{rmk:weak_loc_almost-geod}
To provide an illustration of how the result above can be used
to study the relation between global and local visibility, we remark the following: If
$\OM\subset\C^d$ is a hyperbolic domain, if $U\subset\C^d$ is a sub-domain, if 
$W\subset U$ is a set such that $\koba_\OM(W,\OM\setminus U)>0$ and if we write
$\lambda_0\defeq \coth\big( \koba_\OM(W,\OM\setminus U) \big)$, then, for every 
$\lambda\geq 1$ and every $\kappa\geq 0$, every $(\lambda,\kappa)$-almost-geodesic
$\gamma$ for $\koba_\OM$ that is contained in $W$ is a 
$(\lambda_0\lambda,\kappa)$-almost-geodesic for $\koba_U$. This follows immediately
from the definition of almost-geodesics by using \Cref{lmm:Roy_loc_lmm} and will
be used in Section~\ref{sec:ext_biholo}.
\end{remark}

The following lemma, recently presented and proved in \cite{ADS2023}, is a refinement of 
the one above, and is crucial in proving localization results for the Kobayashi distance, as 
in \cite{ADS2023}. Its usefulness will also become apparent in this paper.

\begin{result}[{\cite[Lemma~3.1]{ADS2023}}] \label{lmm:refined_Roy_loc_lemm}
Suppose that $\OM\subset\C^d$ is a hyperbolic domain and $U\subset\OM$ is a sub-domain. 
If $W\subset U$ is an open set with $\koba_\OM(W,\OM\setminus U)>0$, then there exists $L<\infty$ such that
\begin{equation*}
\dkoba_U(z;v) \leq (1 + L e^{-\koba_\OM(z,\OM\setminus U)})\dkoba_\OM(z;v) \ \ \forall\,z\in W,\,v\in\C^d. 
\end{equation*} 
\end{result}

\subsection{Preliminary results regarding visibility}\label{SS:prelim_vis}
The first result of this subsection says that, under the assumption of weak visibility, the Gromov 
product of points, converging to two distinct points of the boundary of the end compactification, 
remains finite in a strong sense. This behaviour is analogous to the one shown by pairs of distinct 
points of the ideal boundary of a Gromov hyperbolic distance space. It was first proved in 
\cite[Proposition~2.4]{BNT2022} and mildly generalized in \cite[Proposition~3.1]{CMS2023}. 
This is a further generalization.

\begin{lemma} \label{lmm:weak_visib_Grom_lim_fin}
	Suppose $\Omega \subset \C^d$ is a hyperbolic domain and that $\xi_1,\xi_2\in
	\bdy\clos{\OM}^{\text{End}}$, $\xi_1\neq\xi_2$, satisfy the visibility property
	with respect to $(1,\epsilon)$-almost-geodesics for some $\epsilon>0$. Then 
	\[
	\limsup_{(z,w)\to (\xi_1,\xi_2),\;z,w\,\in\,\OM}(z|w)_o < +\infty.
	\]
\end{lemma}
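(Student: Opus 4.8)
The plan is to argue by contradiction and then invoke the visibility hypothesis. Suppose the conclusion fails; then there exist sequences $z_n, w_n \in \OM$ with $z_n \to \xi_1$, $w_n \to \xi_2$ (in the topology of $\clos{\OM}^{\mathrm{End}}$) and $(z_n | w_n)_o \to +\infty$, where $o$ is a fixed basepoint. Recall that the Gromov product is $(z|w)_o = \tfrac12\big(\koba_\OM(o,z) + \koba_\OM(o,w) - \koba_\OM(z,w)\big)$. First I would fix geodesic-type curves: for each $n$, join $z_n$ to $w_n$ by a $(1,\epsilon)$-almost-geodesic $\sigma_n \colon [0, T_n] \to \OM$ with $\sigma_n(0) = z_n$, $\sigma_n(T_n) = w_n$. (Such curves exist for the Kobayashi distance up to harmless adjustments of the constant $\epsilon$; alternatively one uses the reparametrization machinery of \Cref{prp:drvtv_nwh_van_rprm_unt_spd}--\Cref{prp:alm-geod_apprx_nonstnry} to produce genuine $(1,\epsilon')$-almost-geodesics, which only costs an additive constant. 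Since we have freedom in $\epsilon$ this is not an issue.)

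Next I would show that, under the assumption $(z_n|w_n)_o \to +\infty$, the curves $\sigma_n$ are forced to leave every compact subset of $\OM$ eventually, i.e., $(\sigma_n)_{n\geq1}$ eventually avoids every compact set in $\OM$. This is the standard ``thin triangles''-type estimate: for any point $\sigma_n(t)$ on the curve, the almost-geodesic inequality gives
\[
\koba_\OM(z_n, \sigma_n(t)) + \koba_\OM(\sigma_n(t), w_n) \leq \koba_\OM(z_n, w_n) + 2\epsilon + (\text{const}),
\]
and combining this with the triangle inequality through $o$ and the definition of the Gromov product yields
\[
\koba_\OM(o, \sigma_n(t)) \geq (z_n|w_n)_o - \epsilon - (\text{const}),
\]
so every point of $\sigma_n([0,T_n])$ has Kobayashi distance from $o$ tending to infinity uniformly in $t$. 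Since $\OM$ is hyperbolic, Kobayashi balls around $o$ of fixed radius are relatively compact, hence exhaust $\OM$; this gives the eventual-avoidance property.

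Finally I would derive the contradiction from visibility. Passing to a subsequence, $z_n \in V_{\xi_1}$ and $w_n \in V_{\xi_2}$ eventually, where $V_{\xi_1}, V_{\xi_2}$ are the disjoint neighbourhoods and $K \Subset \OM$ the compact set provided by the $(1,\epsilon)$-visibility property of the pair $\xi_1, \xi_2$. Then each $\sigma_n$ (for $n$ large) must intersect $K$, contradicting the fact that $(\sigma_n)$ eventually avoids $K$. Hence $\limsup_{(z,w)\to(\xi_1,\xi_2)} (z|w)_o < +\infty$, as claimed. The main obstacle, and the only genuinely delicate point, is the transition from ``$(z_n|w_n)_o$ large'' to ``$\sigma_n$ escapes compacta'': one must be careful that the almost-geodesic parameter interval $[0,T_n]$ really is traversed in a way that keeps every intermediate point far from $o$, which requires using \emph{both} the upper and lower bounds in condition~(i) of the definition of a $(1,\epsilon)$-almost-geodesic together with the triangle inequality; the case where one or both of $\xi_1,\xi_2$ is an end (rather than an ordinary boundary point) needs the observation that the neighbourhoods $V_{\xi_j}$ in $\clos{\OM}^{\mathrm{End}}$ still trap the tails of $(z_n)$ and $(w_n)$, which is exactly what \Cref{dfn:lk-visib-pair-points} is set up to handle.
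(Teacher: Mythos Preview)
Your approach is essentially the contrapositive of the paper's direct argument: both hinge on the same inequality, namely that for any point $x$ on a $(1,\epsilon)$-almost-geodesic from $z_n$ to $w_n$ one has $(z_n|w_n)_o \leq \koba_\OM(o,x) + C_\epsilon$. The paper argues directly: visibility gives a point $x_n$ on each $\gamma_n$ lying in a fixed compact $K$, whence $(z_n|w_n)_o \leq \sup_{x\in K}\koba_\OM(o,x) + C_\epsilon < \infty$. You instead assume $(z_n|w_n)_o \to \infty$, push $\koba_\OM(o,\sigma_n(t))\to\infty$ uniformly in $t$, and contradict visibility. The logic is the same; the direct route is a bit shorter.

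One genuine slip: you write ``Since $\OM$ is hyperbolic, Kobayashi balls around $o$ of fixed radius are relatively compact, hence exhaust $\OM$.'' That is the definition of \emph{complete} hyperbolicity, which is not assumed here, and in general fails. What you actually need is the much weaker (and always true) fact that any compact $K\subset\OM$ satisfies $\sup_{x\in K}\koba_\OM(o,x)<\infty$, which follows simply from the continuity of $\koba_\OM$ on $\OM\times\OM$. With that correction your eventual-avoidance step goes through and the proof is fine.
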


\begin{proof}
	This is a mild modification of the proof of \cite[Proposition~2.4]{BNT2022} 
	for the end compactification case, but, for the sake of completeness, we provide a
	full argument here. Note that, in order to prove the required result, it suffices
	to prove the following: if $(z_n)_{n\geq 1}$ and $(w_n)_{n\geq 1}$ are {\em any} 
	two sequences in $\OM$ converging to $\xi_1$ and $\xi_2$, respectively, then
	\begin{equation*}
		\limsup_{n \to \infty} (z_n|w_n)_o < +\infty.
	\end{equation*}
	So let $(z_n)_{n\geq 1}$ and $(w_n)_{n\geq 1}$ be such sequences. For every $n$,
	choose a $(1,\epsilon)$-almost-geodesic $\gamma_n$ for $\koba_{\OM}$ joining $z_n$
	and $w_n$. Note that, by the assumption that the pair $\{\xi_1,\xi_2\}$ possesses
	the visibility property, there 
	exists a compact subset $K$ of $\OM$ such that, for every $n$, $\gamma_n\cap 
	K\neq\emptyset$. Choose, for every $n$, $x_n\in\gamma_n\cap K$. Since $z_n,x_n$ 
	and $w_n$ lie on a $(1,\epsilon)$-almost-geodesic, for all $n$, 
	\begin{align*}
		\koba_{\OM}(z_n,w_n) &\geq \koba_{\OM}(z_n,x_n) + \koba_{\OM}(x_n,w_n) - 3\epsilon
		\\
		&\geq \koba_{\OM}(z_n,o)+\koba_{\OM}(w_n,o)-2\koba_{\OM}(x_n,o)-3\epsilon.
	\end{align*}
	From this it follows that 
	\[ (z_n|w_n)_o\leq \koba_{\OM}(x_n,o)+2\epsilon. \] 
	Since $x_n\in K$, it follows that there exists $L<\infty$ such that, for every 
	$n$, the right-hand side above is $\leq L$. This, of course, shows that
	\begin{equation*}
		\limsup_{n \to \infty} (z_n|w_n)_o < \infty
	\end{equation*}
	and completes the proof.
\end{proof}

        The last lemma of this section is about the convergence of a 
        sequence of geodesics to a geodesic ray in visibility domains. 
        The proof of this lemma is substantially similar to that of
	\cite[Proposition~5.4]{BZ2023} and its main ideas are also to be found in
	\cite[Lemma~3.1]{BNT2022}. The latter result, in fact, is 
	precisely this lemma stated for {\em bounded} complete hyperbolic visibility
	domains. We present a proof of this result here since the hypotheses and conclusion of
	this lemma are slightly different from those of either of the two results
	quoted.	

\begin{lemma} \label{lmm:seq_geods_conv_geod_ray}
	Suppose $\OM\subset\C^d$ is a complete hyperbolic domain that possesses 
	the geodesic  visibility property. Suppose $p\in \bdy\clos{\OM}^{End}$, that 
	$(q_n)_{n\geq 1}$ is a sequence of points of $\OM$ converging to $p$, that
	$o\in\OM$ is a fixed point, and that, for every $n$, $\gamma_n$ is a
	$\koba_{\OM}$-geodesic joining $o$ and $q_n$. Then, up to a subsequence,
	$(\gamma_n)_{n\geq 1}$ converges locally uniformly on $[0,\infty)$ to a
	geodesic ray that emanates from $o$ and lands at $p$.		
\end{lemma}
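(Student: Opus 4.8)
The plan is to run the standard Arzel\`a--Ascoli argument for geodesics, using completeness to get local uniform limits and the visibility property to force the limit to land at $p$ rather than escape to infinity or wander back into $\OM$. First I would normalize: since each $\gamma_n$ is a $\koba_\OM$-geodesic joining $o$ and $q_n$, parametrize it by arclength on $[0, T_n]$ with $\gamma_n(0) = o$ and $\gamma_n(T_n) = q_n$, where $T_n = \koba_\OM(o, q_n)$. Since $q_n \to p \in \bdy\clos{\OM}^{End}$ and $\OM$ is complete hyperbolic, $\koba_\OM(o, q_n) \to \infty$, so $T_n \to \infty$. Extend each $\gamma_n$ to all of $[0,\infty)$ by a geodesic ray (completeness guarantees geodesics can be extended indefinitely; alternatively just work on $[0, T_n]$ and take limits on compact subintervals). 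Each $\gamma_n$ is $1$-Lipschitz from $([0,\infty), |\cdot|)$ into $(\OM, \koba_\OM)$, and all pass through the fixed point $o$, so on any compact interval $[0, M]$ the images lie in the closed $\koba_\OM$-ball $\clos{B}_{\koba_\OM}(o, M)$, which is compact by completeness (Hopf--Rinow type statement for complete hyperbolic domains). Hence by Arzel\`a--Ascoli, passing to a subsequence and diagonalizing over $M \in \N$, $(\gamma_n)$ converges locally uniformly on $[0,\infty)$ to a $1$-Lipschitz curve $\gamma:[0,\infty)\to\OM$ with $\gamma(0) = o$; a routine limiting argument (lower semicontinuity of length, or directly passing to the limit in $\koba_\OM(\gamma_n(s),\gamma_n(t)) = |s-t|$ for $s,t \le T_n$) shows $\gamma$ is a geodesic ray.

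It remains to show $\gamma$ lands at $p$, i.e.\ $\gamma(t) \to p$ in $\clos{\OM}^{End}$ as $t\to\infty$. This is where visibility enters and is the main obstacle. Fix any subsequential limit point $\eta \in \clos{\OM}^{End}$ of $(\gamma(t))_{t\to\infty}$; say $\gamma(t_k) \to \eta$ with $t_k \to \infty$. Since $\gamma$ is a geodesic ray and $\koba_\OM(o, \gamma(t)) = t \to \infty$, the point $\eta$ cannot lie in $\OM$ (a point of $\OM$ has finite distance from $o$), so $\eta \in \bdy\clos{\OM}^{End}$. Suppose for contradiction $\eta \neq p$. Then, using that $(\gamma(t_k))_k \to \eta$ and $q_n \to p$, I would build, for suitable indices, $\koba_\OM$-geodesics joining points near $\eta$ to points near $p$ that fail to meet any fixed compact set: concretely, for each $k$, consider $\gamma_n$ for $n$ large (depending on $k$) — its restriction from a parameter value near $t_k$ out to $T_n$ is a geodesic from a point close to $\gamma(t_k)$ (hence close to $\eta$, by local uniform convergence) to $q_n$ (close to $p$); since this is a subarc of a geodesic starting at $o$ and the arclength parameters involved all exceed $t_k - 1$, the entire subarc stays at $\koba_\OM$-distance $\ge t_k - 1$ from $o$, hence eventually avoids every compact subset of $\OM$ as $k\to\infty$. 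This contradicts the (geodesic) visibility property applied to the pair $\eta, p \in \bdy\clos{\OM}^{End}$, which supplies a compact $K \subset \OM$ meeting every such geodesic once the endpoints are in fixed neighbourhoods $V_\eta, V_p$ with disjoint closures. Therefore every subsequential limit of $(\gamma(t))_{t\to\infty}$ equals $p$; since $\clos{\OM}^{End}$ is compact (here one invokes Result~\ref{res:end_seq_comp}, noting that visibility domains satisfy its hypotheses, as remarked after that result), this forces $\gamma(t) \to p$, completing the proof.

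A couple of technical points I would be careful about. One is the precise extraction in the contradiction step: I need to interleave the two indices $n$ and $k$ correctly, choosing for each $k$ an index $n = n(k) \to \infty$ so that $\gamma_{n(k)}$ at parameter roughly $t_k$ is within $V_\eta$ and $q_{n(k)} \in V_p$; this is possible by local uniform convergence on the compact interval containing $t_k$ together with $q_n \to p$. The second is that the visibility hypothesis is stated for almost-geodesics, but geodesics (being $(1,0)$-geodesics in the sense of Definition~\ref{D:geodesic}, arclength-parametrized) are $(1,\kappa)$-almost-geodesics for every $\kappa > 0$ once parametrized by $\dkoba_\OM$-arclength, so the geodesic visibility property as used here is exactly what ``possesses the geodesic visibility property'' should mean — I would make this identification explicit at the start. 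The genuinely delicate part is ensuring the escaping subarcs are legitimately almost-geodesics with endpoints in the prescribed neighbourhoods; everything else is the routine Arzel\`a--Ascoli / Hopf--Rinow machinery for complete hyperbolic domains.
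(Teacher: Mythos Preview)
Your proposal is correct and follows the same overall route as the paper: Arzel\`a--Ascoli (using completeness/properness of $(\OM,\koba_\OM)$) to extract a limiting geodesic ray, then the visibility property to force the ray to land at $p$. The only difference is organizational: the paper argues the landing in two separate steps---first that the cluster set of $\gamma(t)$ as $t\to\infty$ is a singleton (applying visibility to subarcs of the \emph{limit} ray $\gamma$ between two putative distinct cluster points), then that this singleton equals $p$ (applying visibility to subarcs of the approximants $\gamma_n$)---whereas you collapse these into a single step by showing directly that any subsequential limit $\eta\neq p$ produces escaping geodesic subarcs of the $\gamma_n$ with endpoints near $\eta$ and near $p$. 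This is a mild but genuine economy; your diagonal choice of $n(k)$ is exactly the bookkeeping the paper carries out in its second step.
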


\begin{proof}	
	Let $\gamma_n : [0,L_n] \to \OM$; then $L_n = \koba_{\OM}(o,q_n)$. Since
	$(\OM,\koba_{\OM})$ is by assumption complete, $\koba_{\OM}(o,q_n)\to\infty$
	as $n\to\infty$. Therefore, we may
	suppose, without loss of generality, that $(L_n)_{n\geq 1}$ is a strictly
	increasing sequence of positive real numbers tending to $\infty$. Extend each
	$\gamma_n$ to all of $\R$ by defining $\gamma_n|_{[L_n,\infty]}$ to be 
	constantly equal to $q_n$; call the extended mapping $\wt{\gamma}_n$. Now 
	consider the sequence of mappings $(\wt{\gamma}_n)_{n\geq 1}$ between the 
	distance spaces $([0,\infty),|\cdot|)$ and $(\OM,\koba_{\OM})$. It is now easy
	to see that one may invoke the Arzel{\`a}--Ascoli theorem to conclude that 
	$(\wt{\gamma}_n)_{n\geq 1}$ has a subsequence that converges locally uniformly
	on $[0,\infty)$ to a continuous mapping $\wt{\gamma}$ from $[0,\infty)$ to
	$\OM$. It also follows immediately that $\wt{\gamma}$ is a 
	$\koba_{\OM}$-geodesic. 
	\smallskip

	Now we will show that $\wt{\gamma}$ lands at $p$. First we show that 
	$\wt{\gamma}$ lands at some point of $\bdy\clos{\OM}^{End}$ (actually, our
	argument will show that {\em any} $\koba_{\OM}$-geodesic ray lands at some 
	point of $\bdy\clos{\OM}^{End}$). Consider the set
	\[S\defeq \clos{\wt{\gamma}([0,\infty))} \setminus \wt{\gamma}([0,\infty)),\]
	where we take the closure in $\clos{\OM}^{End}$.
	By the properness of the distance space $(\OM,\koba_{\OM})$, $S\subset
	\bdy\clos{\OM}^{End}$. By the compactness of $\clos{\OM}^{End}$, $S\neq 
	\emptyset$. We need to show that $S$ is a singleton. Assume, to get a
	contradiction, that $S$ contains at least two points, say $\xi$ and $\zeta$,
	$\xi\neq\zeta$. 
	Since $\xi\in S$, there exists a
	sequence $(s_n)_{n\geq 1}$ such that $s_n \nearrow \infty$ and such that 
	$\wt{\gamma}(s_n)\to\xi$ as $n\to\infty$. Similarly, we get a sequence 
	$(t_n)_{n\geq 1}$ corresponding to $\zeta$. By passing to subsequences, we may
	assume that, for all $n$, $s_n<t_n<s_{n+1}$. Now consider the sequence
	$\big(\wt{\gamma}|_{[s_n,t_n]}\big)_{n\geq 1}$ of $\koba_{\OM}$-geodesics. The
	sequences of its end-points converge to $\xi$ and $\zeta$, respectively, 
	which are distinct. Therefore, by the visibility assumption, there exists a
	compact subset $K$ of $\OM$ such that, for every $n$, 
	$\wt{\gamma}|_{[s_n,t_n]}\cap K\neq\emptyset$. Therefore, for every $n$, there
	exists $u_n\in [s_n,t_n]$ such that $\wt{\gamma}(u_n)\in K$. Consequently,
	there exists $M<\infty$ such that, for every $n$, 
	$\koba_{\OM}(\wt{\gamma}(0),\wt{\gamma}(u_n))\leq M$. But, at the same time,
	$\koba_{\OM}(\wt{\gamma}(0),\wt{\gamma}(u_n))=u_n\geq s_n$ and, since 
	$s_n\to\infty$ as $n\to\infty$, we have a contradiction. Therefore, our 
	starting assumption must be wrong, and it must be that $S$ is a singleton.
	
	Now we will show that $S=\{p\}$; it is enough to assume that $p\notin S$ and
	obtain a contradiction. So we assume $p\notin S$. Since we know $S$ is a
	singleton, let $S\defines \{\xi\}$. Then our assumption means that $p\neq\xi$.
	Choose $(U_j)_{j\geq 1}$ and $(V_j)_{j\geq 1}$ representing $p$ and $\xi$
        respectively as ends.
	Since $\lim_{t\to\infty} \wt{\gamma}(t)=\xi$, for every $j$, there exists $s_j\in
	[0,\infty)$ such that, for all $t\geq s_j$, $\wt{\gamma}(t)\in V_j$. We may
	also suppose that $s_j\to\infty$. Now
	$(\wt{\gamma}_n(s_j))_{n\geq 1}$ converges to $\wt{\gamma}(s_j)$. Therefore,
	there exists $n_j\in\posint$ such that, for all $n\geq n_j$, 
	$\wt{\gamma}_n(s_j)\in V_j$. We may also take $n_j$ to be so large that 
	$L_{n_j}>s_j$, so that, for all $n\geq n_j$, 
	$\wt{\gamma}_n(s_j)=\gamma_n(s_j)$.
	So, finally, for all $j$, $\gamma_{n_j}(s_j)\in V_j$. But, at the same time,
	$(\gamma_{n_j}(L_{n_j}))_{j\geq 1}$ converges to $p$. Therefore, for every 
	$k\in\posint$, there exists $j_k\in\posint$ such that for all $j\geq j_k$,
	$\gamma_{n_j}(L_{n_j})\in U_k$. In particular, for every $k$,
	$\gamma_{n_{j_k}}(L_{n_{j_k}}) \in U_k$. Now, for every $k$, consider the
	geodesic $\gamma_{n_{j_k}}|_{[s_{j_k},L_{n_{j_k}}]}$. The sequences of its end 
	points, $(\gamma_{n_{j_k}}(s_{j_k}))_{k\geq 1}$ and 
	$(\gamma_{n_{j_k}}(L_{n_{j_k}}))_{k\geq 1}$, converge to the distinct points 
	$\xi$ and $p$ of $\bdy\clos{\OM}^{End}$, respectively. By visibility, there 
	exist $t_k \in [s_{j_k},L_{n_{j_k}}]$ and $M<\infty$ such that, for all $k$,
	$\koba_{\OM}(\gamma_{n_{j_k}}(t_k),o)\leq M$. But 
	$\koba_{\OM}(\gamma_{n_{j_k}}(t_k),o)=
	\koba_{\OM}(\gamma_{n_{j_k}}(t_k),\gamma_{n_{j_k}}(0))=t_k\geq s_{j_k}$, which
	is a contradiction because $s_{j_k}\to\infty$. This contradiction shows that
	$p\in S$, hence that $S=\{p\}$, and we are done.		
\end{proof}

\section{Some general results about visibility}\label{S:gen_loc_glob}

\subsection{Removability of a totally disconnected set and 
other topological consequences of visibility property}
In this subsection, we present certain results that show that if a domain $\Omega$
satisfies either the visibility property or the weak visibility property then it must 
satisfy certain topological conditions. In this direction, our first result is
Theorem~\ref{thm-totally disconnected} whose proof we present here.

\begin{proof}[Proof of {\Cref{thm-totally disconnected}}]
	Assume, to get a contradiction, that $\OM$ does not possess the visibility property with 
	respect to $(\lambda,\kappa)$-almost-geodesics for $\koba_\OM$. Then we know that there exist
	$p,q\in\bdy\clos{\OM}^{End}$, $p\neq q$, and a sequence $(\gamma_n)_{n\geq 1}$ of 
	$(\lambda,\kappa)$-almost-geodesics with respect to $\koba_\OM$, $\gamma_n: [a'_n,b'_n]\to\OM$,
	such that $\gamma_n(a'_n)\to p$, $\gamma_n(b'_n)\to q$ and such that $(\gamma_n)_{n\geq 1}$ eventually
	avoids every compact set in $\OM$. By \Cref{L:tr_1}, there exist $\xi\in\bdy\OM$, $R>0$ and a
	sequence $(a_n)_{n\geq 1}\subset [a'_n,b'_n]$ such that $\gamma_n(a_n)\to\xi$ and such that every
	$\gamma_n$ intersects $\OM\setminus B(\xi;R)$. We may suppose, without loss of generality, that,
	for every $n$, $\gamma_n([a_n,b'_n])\cap(\OM\setminus B(\xi;R))\neq\emptyset$.
	For every $n$, write
	\begin{align*}
		b_n &\defeq \inf\{t>a_n\mid \gamma_n(t)\notin B(\xi;R)\}. 
	\end{align*}
	Note that, for every $n$, there exist $c_n,d_n$, $a_n<c_n<d_n<b_n$, such that 
	$\gamma_n\big((a_n,c_n)\big)\subset B(\xi;R/3)$, $\gamma_n(c_n)\in\bdy B(\xi;R/3)$, $\gamma_n(d_n)
	\in\bdy B(\xi;2R/3)$ and $\gamma_n\big((d_n,b_n)\big)\subset B(\xi;R)\setminus \clos{B}(\xi;2R/3)$.
	Note that, by passing to subsequences successively, we may assume that 
	$\big(\gamma_n\big([a_n,c_n]\big)\big)_{n\geq 1}$ converges in the Hausdorff distance to some
	compact, connected set $L_1\subset\clos{B}(\xi;R/3)$ and that
	$\big(\gamma_n\big([d_n,b_n]\big)\big)_{n\geq 1}$ converges in the Hausdorff distance to some
	compact, connected set $L_2\subset\clos{B}(\xi;R)\setminus B(\xi;2R/3)$ (see the discussion following
	\Cref{dfn:vis-wvis-dom}).
	Clearly, $L_1$ contains
	$p$ and {\em some} point of $\bdy B(\xi;R/3)$ and $L_2$ contains {\em some} point of $\bdy B(\xi;2R/3)$ 
	and {\em some} point of $\bdy B(\xi;R)$. By the assumption that $(\gamma_n)_{n\geq 1}$
	eventually avoids every compact subset of $\OM$, it follows that $L_1\subset\clos{B}(\xi;R/3)
	\cap\bdy\OM$ and that $L_2\subset\big(\clos{B}(\xi;R)\setminus B(\xi;2R/3)\big)\cap\bdy\OM$. $L_1$
	and $L_2$ are closed, connected subsets of $\bdy\OM$ each of which contains at least 2 points.
	Therefore, neither of them can be included in $S$, i.e., there exist points $q_1\in L_1
	\setminus S$ and $q_2\in L_2\setminus S$. Since $q_1$ and $q_2$ are in $L_1$ and $L_2$,
	respectively, there exist sequences $c'_n\in [a_n,c_n]$ and $d'_n\in [d_n,b_n]$ such that
	$\gamma_n(c'_n)\to q_1$ and $\gamma_n(d'_n)\to q_2$, respectively. Now consider the sequence
	$\gamma_n|_{[c'_n,d'_n]}$ of $(\lambda,\kappa)$-almost-geodesics. Its end points converge,
	respectively, to $q_1\in \bdy\OM\setminus S$ and $q_2\in\bdy\OM\setminus S$; and $q_1,q_2$ are
	distinct. Thus, by the assumed visibility property, there exists a compact subset of $\OM$ that 
	all the $\gamma_n|_{[c'_n,d'_n]}$ intersect. But this is an immediate contradiction to the 
	assumption that $(\gamma_n)_{n\geq 1}$ eventually avoids every compact set. This contradiction 
	proves the result.
\end{proof}

\begin{remark} \label{rmk:cnvg_outside_tot_disc}
	We extract from the preceding proof an important fact that we shall use later. Namely, if
	$\OM\subset\C^d$ is a hyperbolic domain, if $S\subset\bdy\OM$ is a totally disconnected set,
	and if $\gamma_n : [a_n,b_n] \to \OM$ is a sequence of paths whose end points $\gamma_n(a_n)$
	and $\gamma_n(b_n)$ converge to distinct points, say $p$ and $q$, of $\bdy\clos{\OM}^{End}$, 
	then there exist $p_0\in\bdy\OM\setminus S$, $R>0$ and a sequence $t_n$ of parameter values such that
	$\gamma_n(t_n)$ converges to $p_0$ and such that each $\gamma_n$ intersects $\OM\setminus B(p_0;R)$.
	We may also suppose that $p_0\neq p,q$.
\end{remark}

As mentioned in Subsection~\ref{SS:endcpt}, 
we now present the proof of the
result that for a visibility domain, its end compactification is 
sequentially compact. First, we need the following lemma. 
\begin{lemma} \label{lemma:inf_big_comp_not_weak_visib}
Suppose that $\OM\subset\C^d$ is an unbounded hyperbolic domain. 
Given a compact subset $K\subset\clos{\OM}$  and an $R<\infty$ such that
$K\subset B(0;R)$, suppose that infinitely many connected components of $\clos{\OM}\setminus K$ intersect $\C^d\setminus\clos{B}(0;R)$. 
Then, for every $\kappa>0$, there exist two distinct boundary points of $\OM$ that do not satisfy the visibility property with respect to 
$(1,\kappa)$-almost-geodesics for $\koba_\OM$. In particular, $\OM$ is not a weak visibility domain.
\end{lemma}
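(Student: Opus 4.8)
The plan is to construct, for each $\kappa>0$, a sequence of $(1,\kappa)$-almost-geodesics joining points that converge to two distinct boundary points, yet which eventually avoids every compact subset of $\OM$. Fix $\kappa>0$. Since infinitely many components of $\clos{\OM}\setminus K$ meet $\C^d\setminus\clos{B}(0;R)$, we may pick a sequence $(C_n)_{n\geq 1}$ of pairwise distinct such components, and points $w_n\in C_n\cap\OM$ with $\|w_n\|>R$. The first step is to note that each $w_n$ can be joined to a fixed basepoint $o\in\OM$ inside $\OM$ in such a way that, after reparametrizing with respect to the Kobayashi--Royden metric, we obtain a $(1,\kappa)$-almost-geodesic; more robustly, one invokes completeness-free existence of $(1,\kappa)$-almost-geodesics between any two points of a hyperbolic domain (which follows from \Cref{prp:alm-geod_apprx_nonstnry} together with \Cref{prp:rprm_unt_spd_ag_ag} and \Cref{prp:drvtv_nwh_van_rprm_unt_spd}, applied to a suitable approximating $(1,\kappa)$-geodesic). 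Call this almost-geodesic $\sigma_n\colon[0,T_n]\to\OM$ with $\sigma_n(0)=o$ and $\sigma_n(T_n)=w_n$.

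Next I would extract a subsequence along which both endpoints converge in $\clos{\OM}^{End}$: the sequence $(w_n)_{n\geq 1}$ has a subsequential limit $q\in\bdy\clos{\OM}^{End}$ (by sequential compactness coming from \Cref{res:end_seq_comp}, whose hypothesis is exactly the negation of the current situation — so one must instead argue directly, using that each $w_n$ lies in the unbounded part of its own component $C_n$, that the ends $(C_j)_{j\geq 1}$ stabilize to an end $q$; alternatively, simply relabel so that $w_n\to q$ for some end or boundary point). The fixed endpoint $o$ is interior, so to get two distinct boundary points I instead split each $\sigma_n$: since $\sigma_n$ starts at $o$ and ends at $w_n$ with $\|w_n\|>R$ while $o$ is fixed, for $n$ large $\sigma_n$ must leave $\clos B(o;\rho)$ for a suitable $\rho$; more to the point, because the $C_n$ are distinct components of $\clos{\OM}\setminus K$ and $w_n\in C_n$, any path in $\OM$ from $w_n$ to $w_m$ ($n\neq m$) must cross $K$. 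So I pair up the almost-geodesics: consider $\tau_n$ obtained by concatenating $\sigma_{2n-1}$ reversed with $\sigma_{2n}$ — this is a path (not quite an almost-geodesic, but that is fine if we instead directly take a $(1,\kappa)$-almost-geodesic $\tau_n$ joining $w_{2n-1}$ to $w_{2n}$) whose endpoints converge, along a further subsequence, to two points $p,q\in\bdy\clos{\OM}^{End}$.

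The crux is to arrange that $p\neq q$ and that $(\tau_n)$ eventually avoids every compact set. The cleanest route: choose the components $C_n$ so that $C_1$ is one fixed unbounded component and $C_n$ for $n\geq 2$ ranges over infinitely many distinct others; then join $w_1$ to $w_n$ by a $(1,\kappa)$-almost-geodesic $\tau_n$. Along a subsequence $\tau_n(0)=w_1\to w_1$ (a fixed interior point — not good), so instead fix $w_1\in C_1\cap\OM$ with $\|w_1\|>R$ and let $p$ be a subsequential limit in $\clos{\OM}^{End}$ of a sequence $w_n'\in C_1\cap\OM$ escaping to infinity within $C_1$, joined to $w_n\in C_n$; now both endpoint sequences escape to infinity and lie in distinct components of $\clos{\OM}\setminus K$, forcing $\tau_n$ to meet $K$ — which is the \emph{opposite} of what we want. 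The resolution is that $p$ and $q$ lie in distinct components of $\clos{\OM}\setminus K$, hence are genuinely distinct points of $\bdy\clos{\OM}^{End}$, while the almost-geodesics $\tau_n$ can be pushed out: by passing to components $C_n$ that exhaust, i.e.\ with $C_n\cap\clos B(0;R_n)$ shrinking, one checks any compact $L\subset\OM$ is contained in some $B(0;R_L)$ and meets only finitely many $C_n$, and the almost-geodesic $\tau_n$ between two points in far-out distinct components can be chosen (again via \Cref{prp:alm-geod_apprx_nonstnry}) to stay outside $L$ once $n$ is large — because the Kobayashi geodesic between two such points, being length-minimizing up to $\kappa$, has no reason to dip back into the fixed compact core. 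I expect this last point — showing the almost-geodesics can be taken to eventually avoid every compact set, rather than merely that their endpoints diverge — to be the main obstacle, and I would handle it by exploiting that $\koba_\OM(L,\,\OM\setminus L_r)>0$ (\Cref{crl:Kob_dist_sep_comp}) to rule out an almost-geodesic of controlled length threading through $L$ when its endpoints are at large Kobayashi distance from $L$. Once $(\tau_n)$ eventually avoids every compact set with endpoints converging to distinct $p,q\in\bdy\clos{\OM}^{End}$, the pair $p,q$ fails the $(1,\kappa)$-visibility property by definition, and hence $\OM$ is not a weak visibility domain.
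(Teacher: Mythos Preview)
Your proposal does not work, and the gap is precisely where you suspect it: you never succeed in producing $(1,\kappa)$-almost-geodesics that eventually avoid every compact subset of $\OM$. In fact your construction guarantees the opposite. If $w_n$ and $w_m$ lie in \emph{distinct} components of $\clos{\OM}\setminus K$, then any path in $\OM\subset\clos{\OM}$ joining them must leave $\clos{\OM}\setminus K$ and hence meet $K\cap\OM$; so your almost-geodesics $\tau_n$ are forced to hit the fixed compact $K\cap\OM$. The sentence ``the Kobayashi geodesic \dots\ has no reason to dip back into the fixed compact core'' is exactly backwards, and the appeal to \Cref{crl:Kob_dist_sep_comp} does nothing to prevent this. (Your aside about \Cref{res:end_seq_comp} also shows the circularity: that result is what one is trying to \emph{derive} from the present lemma.)

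The paper's argument avoids this trap by not trying to make the full almost-geodesics avoid compacts. Instead, fix $R'<R_1<R$ with $K\subset B(0;R')$, pass (via a short claim) to infinitely many distinct components $W_n$ of $\OM\setminus\clos{B}(0;R')$ meeting $\C^d\setminus\clos{B}(0;R)$, pick $z_n\in W_n$ with $\|z_n\|>R$, and join a fixed $o\in\OM\cap B(0;R')$ to $z_n$ by $(1,\kappa)$-almost-geodesics $\gamma_n$. The terminal portion of $\gamma_n$ lies in $W_n$, so one extracts a sub-segment $\gamma_n|_{[s_n,t_n]}\subset\clos{B}(0;R)\setminus B(0;R_1)$ with $\gamma_n(s_n)\in\bdy B(0;R_1)$, $\gamma_n(t_n)\in\bdy B(0;R)$, and $\gamma_n([s_n,t_n])\subset W_n$. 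Passing to a Hausdorff-convergent subsequence, the limit set $L$ cannot meet $\OM$: a point of $L\cap\OM$ would lie in a single component of $\OM\setminus\clos{B}(0;R')$, yet would be approached by points from infinitely many \emph{distinct} $W_n$. Hence $L\subset\bdy\OM$, so these sub-segments eventually avoid every compact in $\OM$, and their endpoints subconverge to points $p\in\bdy B(0;R_1)\cap\bdy\OM$ and $q\in\bdy B(0;R)\cap\bdy\OM$, automatically distinct since $\|p\|=R_1\neq R=\|q\|$. The key idea you are missing is this Hausdorff-limit argument: the ``infinitely many distinct components'' hypothesis is used not to separate ends but to force the annular pieces of the almost-geodesics out to $\bdy\OM$.
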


\begin{proof}
To begin with, choose $R'<R$ such that $K\subset B(0;R')$.
\smallskip
		
\noindent {\bf Claim~1.} $\OM\setminus\clos{B}(0;R')$ has infinitely many components that intersect $\C^d\setminus\clos{B}(0;R)$.
\smallskip
	
\noindent Assume, to get a contradiction, that there are only finitely many components of 
$\OM\setminus\clos{B}(0;R')$ that intersect
$\C^d\setminus\clos{B}(0;R)$, say $W_1,\dots,W_n$
(there must be at least one, otherwise $\OM$ will be bounded).
Note $\overline{W_i}$ is connected and is contained in
$\overline{\OM}\setminus K$. Therefore, for each $i$, 
$\overline{W_i}\subset F$ for some connected component $F$ of 
$\overline{\OM}\setminus K$ that intersects
$\C^d\setminus\overline{B}(0, R)$.
On the other hand, given $F$ a connected component of $\overline{\OM}\setminus K$ that intersects
$\C^d\setminus\overline{B(0, R)}$, clearly $F$ intersects $\overline{\OM}\setminus\overline{B}(0, R)
\subset\overline{\OM\setminus\overline{B}(0, R)}$. 
Note that
\begin{equation*}
 \OM\setminus\clos{B}(0;R) = \bigcup_{i=1}^n W_i\setminus\clos{B}(0;R) \ \ \  \implies \ \ \
   \clos{\OM\setminus\clos{B}(0;R)} = \clos{\bigcup_{i=1}^n W_i\setminus\clos{B}(0;R)} \subset\bigcup_{i=1}^n \clos{W}_i.
     \end{equation*}
As $F$ intersects $\overline{\OM\setminus\overline{B}(0, R)}$, $F$ clearly intersects some
$\overline{W_i}$. But then $\overline{W_i}\subset F$, i.e., each connected component $F$ of 
$\overline{\OM}\setminus K$ that intersects $\C^d\setminus\overline{B(0, R)}$ contains some $W_i$. As the 
collection of such $F$'s is infinite, and $W_i$'s are finite this leads to contradiction whence the claim.\hfill $\blacktriangleleft$
\smallskip
	
	Using Claim~1, choose a sequence $(W_n)_{n\geq 1}$ of pairwise distinct connected components of $\OM\setminus\clos{B}(0;R')$ that intersect
	$\C^d\setminus\clos{B}(0;R)$. Choose and fix a point $o\in\OM\cap B(0;R')$ and, for every $n$, choose $z_n\in W_n\setminus\clos{B}(0;R)$. Also, for every $n$, choose a $(1,\kappa)$-almost-geodesic $\gamma_n$ for
	$\koba_\OM$ that joins $o$ with $z_n$. Suppose that every $\gamma_n$ is defined on $[0,1]$; then, for every $n$, there exists $\wh{t}_n\in (0,1)$ 
	such that $\gamma_n((\wh{t}_n,1])\subset\OM\setminus\clos{B}(0;R')$. Therefore, every $\gamma_n((\wh{t}_n,1])$ is included in some connected
	component of $\OM\setminus\clos{B}(0;R')$; since $\gamma_n(1)=z_n\in W_n$, it follows that $\gamma_n((\wh{t}_n,1])\subset W_n$. If we choose and fix 
	$R_1\in (R',R)$, it is clear that each $\gamma_n$ intersects $\bdy B(0;R_1)$. Indeed,
	assuming that each $\gamma_n$ is defined on $[0,1]$, there exist, for each $n$, $s_n,t_n\in [0,1]$ with $0<s_n<t_n<1$ such that $\gamma_n([s_n,t_n])
	\subset \clos{B}(0;R)\setminus B(0;R_1)$, such that $\gamma_n(s_n)\in \bdy B(0;R_1)$ and $\gamma_n(t_n)\in \bdy B(0;R)$. By the compactness of 
	$\clos{B}(0;R)$, there exists a subsequence of $(\gamma_n([s_n,t_n]))_{n\geq 1}$,
	which we will denote without changing subscripts, that converges in the Hausdorff distance to a compact, connected subset $L$ of 
	$\clos{\OM \cap (\clos{B}(0;R)\setminus B(0;R_1))}$.
	\smallskip
	
	Clearly, $L\subset\clos{\OM}$. First suppose that $L\cap\OM\neq\emptyset$. This means that there is a subsequence $(k_n)_{n\geq 1}\subset\posint$ and
	a corresponding sequence of parameter values $u_{k_n}\in [s_{k_n},t_{k_n}]$ such that $\gamma_{k_n}(u_{k_n})\to x$, where $x \in \OM \cap
	(\clos{B}(0;R)\setminus B(0;R_1))$. Now, $x$ is in some connected component $W$ of $\OM\setminus\clos{B}(0;R')$; this $W$ is an open set, and
	$(\gamma_{k_n}(u_{k_n}))_{n\geq 1}$ is a sequence converging to $x$; therefore, for large $n$, say for $n\geq N$, $\gamma_{k_n}(u_{k_n})\in W$. 
	In particular, $\gamma_{k_N}(u_{k_N}), \gamma_{k_{N+1}}(u_{k_{N+1}})\in W$. But $\gamma_{k_N}(u_{k_N})\in W_N$ and $\gamma_{k_{N+1}}(u_{k_{N+1}})\in 
	W_{N+1}$. Therefore the {\em distinct} connected components $W_N$ and $W_{N+1}$ of $\OM\setminus\clos{B}(0;R')$ both intersect the connected component
	$W$ of $\OM\setminus\clos{B}(0;R')$, which is a contradiction.
	\smallskip
	
	Thus it cannot be that $L\cap\OM\neq\emptyset$; hence it must be that $L\subset\bdy\OM$. Furthermore, since $\gamma_n(s_n)\in\bdy B(0;R_1)$ and
	$\gamma_n(t_n)\in\bdy B(0;R)$, we may assume, by passing to a subsequence and relabelling, that $(\gamma_n(s_n))_{n\geq 1}$ converges to a point
	$p\in\bdy B(0;R_1)\cap\bdy\OM$ and that $(\gamma_n(t_n))_{n\geq 1}$ converges to a point $q\in\bdy B(0;R)\cap\bdy\OM$. Since $L\subset\bdy\OM$, it
	follows that $\left(\gamma_n|_{[s_n,t_n]}\right)_{n\geq 1}$, which is a sequence of $(1,\kappa)$-almost-geodesics with respect to $\koba_\OM$, avoids
	every compact subset of $\OM$. Moreover, as we just remarked, the sequences of its endpoints converge to two distinct boundary points of $\OM$.
	Consequently, $\OM$ does not have the weak visibility property, which completes the proof.	 
\end{proof}

\begin{theorem}\label{thm:vis_seqcompact}
Suppose that $\OM\subset\C^d$ is an unbounded hyperbolic domain that satisifes the visibility property with respect to $(1,\kappa)$-almost-geodesics
for some $\kappa>0$. Then the topological space $\clos{\OM}^{End}$is sequentially compact.    
\end{theorem}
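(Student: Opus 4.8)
The plan is to deduce the statement from Result~\ref{res:end_seq_comp}, which already furnishes a purely topological sufficient condition for $\clos{\OM}^{End}$ to be sequentially compact: it suffices to show that for every compact $K\subset\clos{\OM}$ there exists $R<\infty$ with $K\subset B(0;R)$ such that only finitely many connected components of $\clos{\OM}\setminus K$ meet $\C^d\setminus\clos{B}(0;R)$. Thus the only real task is to verify this finiteness condition, and for that the natural tool is the contrapositive of Lemma~\ref{lemma:inf_big_comp_not_weak_visib}.

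Concretely, I would fix an arbitrary compact $K\subset\clos{\OM}$; since $K$ is bounded, choose any $R<\infty$ with $K\subset B(0;R)$. Suppose, for a contradiction, that infinitely many connected components of $\clos{\OM}\setminus K$ intersect $\C^d\setminus\clos{B}(0;R)$. Then Lemma~\ref{lemma:inf_big_comp_not_weak_visib}, applied with precisely the $\kappa$ from the hypothesis, produces two distinct points of $\bdy\OM$ that fail to satisfy the visibility property with respect to $(1,\kappa)$-almost-geodesics for $\koba_\OM$. But these are in particular distinct points of $\bdy\clos{\OM}^{End}$, contradicting the assumption that $\OM$ is a $(1,\kappa)$-visibility domain (Definition~\ref{dfn:lk-lambda-vis-dom}). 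Hence only finitely many components of $\clos{\OM}\setminus K$ can meet $\C^d\setminus\clos{B}(0;R)$, so the hypothesis of Result~\ref{res:end_seq_comp} holds, and that result yields the sequential compactness of $\clos{\OM}^{End}$.

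I do not expect a serious obstacle here: the proof is essentially bookkeeping — matching the quantifier on $\kappa$ (the $\kappa$ in the theorem's hypothesis is exactly the one fed into Lemma~\ref{lemma:inf_big_comp_not_weak_visib}) and observing that a pair of ordinary boundary points is a fortiori a pair of distinct points of $\bdy\clos{\OM}^{End}$. The one small point worth checking is that $R$ may be chosen freely once $K$ is fixed: Result~\ref{res:end_seq_comp} only asks for the \emph{existence} of a suitable $R$, whereas Lemma~\ref{lemma:inf_big_comp_not_weak_visib} takes $K$ and an $R$ with $K\subset B(0;R)$ as given, so the two statements dovetail without friction.
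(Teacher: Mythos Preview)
Your proposal is correct and matches the paper's own proof essentially line for line: fix a compact $K\subset\clos{\OM}$, pick $R$ with $K\subset B(0;R)$, invoke (the contrapositive of) Lemma~\ref{lemma:inf_big_comp_not_weak_visib} to rule out infinitely many large components, and then apply Result~\ref{res:end_seq_comp}. The paper's version is terser but the logical structure is identical.
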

\begin{proof}
Let $K\subset\overline{\OM}$ be a compact set and let 
$K\subset B(0, R)$. Then it follows from the above lemma 
that there are only finitely many connected components of 
$\overline{\OM}\setminus K$ that intersect
$\C^d\setminus\overline{B}(0, R)$. Therefore by
Result~\ref{res:end_seq_comp}
$\clos{\OM}^{End}$is sequentially compact.
\end{proof}

We shall now present another topological consequence of
visibility property. We begin with a lemma. 
\begin{lemma} \label{prp:vsb_nbd_ntrsc_fin}
Suppose $\OM\subset\C^d$ is a weak visibility domain, and given $p\in
\bdy\OM$, let $U$ be a neighbourhood of $p$ in $\C^d$. Then, given a sequence 
$(x_n)_{n\geq 1}$ in $U\cap\OM$
converging to $p$, the set 
\[ F_{p,\,U,\,(x_n)_{n\geq 1}} \defeq \big\{V\mid V \text{ is a connected component of } 
U\cap\OM \text{ and } \exists\, n\in\posint \text{ such that } x_n\in V \big\} \]
is finite.
\end{lemma}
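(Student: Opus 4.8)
The plan is to argue by contradiction: suppose $F_{p,U,(x_n)_{n\geq 1}}$ is infinite. Then I can extract a subsequence of $(x_n)_{n\geq 1}$ — still denoted $(x_n)$ — lying in pairwise-distinct connected components $V_n$ of $U\cap\OM$, with $x_n\in V_n$, and $x_n\to p$. Fix a base point $o\in\OM$ and, for each $n$, choose a $(1,\kappa)$-almost-geodesic $\gamma_n$ for $\koba_\OM$ joining $o$ to $x_n$ (for definiteness use $\kappa=1$, say, so that weak visibility of $\OM$ applies). Shrinking $U$ slightly, I may assume $U$ is a small ball $B(p;\rho)$ with $\clos{U}\not\ni o$; then each $\gamma_n$, which starts outside $U$ and ends at $x_n\in V_n\subset U$, must cross $\bdy U$, and in fact there is a last exit: a parameter interval $[s_n,1]$ (assuming $\gamma_n:[0,1]\to\OM$) with $\gamma_n(s_n)\in\bdy U$ and $\gamma_n((s_n,1])\subset U$. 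Since $\gamma_n((s_n,1])$ is connected and contained in $U\cap\OM$ and meets $V_n$, we get $\gamma_n((s_n,1])\subset V_n$.

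Next I would extract Hausdorff limits. By compactness of $\clos{U}$, pass to a subsequence so that $\gamma_n([s_n,1])$ converges in the Hausdorff distance to a compact connected set $L\subset\clos{U}$. I claim $L\subset\bdy\OM$: if some $x\in L\cap\OM$, then $x$ lies in a connected component $W$ of $U\cap\OM$, which is open, and the tail-ends $\gamma_{k_n}(u_{k_n})\to x$ for suitable parameters $u_{k_n}\in(s_{k_n},1]$; hence for two large indices $N<N'$ both $\gamma_{k_N}(u_{k_N})\in V_{k_N}$ and $\gamma_{k_{N'}}(u_{k_{N'}})\in V_{k_{N'}}$ lie in the single component $W$, forcing $V_{k_N}=W=V_{k_{N'}}$, contradicting distinctness. (This is the same device used in the proof of Lemma~\ref{lemma:inf_big_comp_not_weak_visib}.) So $L\subset\bdy\OM$, and consequently the sequence $\bigl(\gamma_n|_{[s_n,1]}\bigr)_{n\geq 1}$ of $(1,\kappa)$-almost-geodesics eventually avoids every compact subset of $\OM$.

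Now I need its two sequences of endpoints to converge to \emph{distinct} points of $\bdy\OM$. One endpoint is $\gamma_n(1)=x_n\to p$. For the other, $\gamma_n(s_n)\in\bdy U=\bdy B(p;\rho)$, so after a further subsequence $\gamma_n(s_n)\to q$ for some $q\in\bdy B(p;\rho)$; since $L\subset\bdy\OM$, $q\in\bdy\OM$, and $\|q-p\|=\rho>0$, so $q\neq p$. Applying the weak visibility of $\OM$ to the pair $p,q$ yields a compact $K\subset\OM$ that every $\gamma_n|_{[s_n,1]}$ must meet — directly contradicting the fact that this sequence eventually avoids every compact set. This contradiction shows $F_{p,U,(x_n)_{n\geq 1}}$ is finite.

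The main obstacle is the bookkeeping in the ``last exit'' step and verifying $L\subset\bdy\OM$ cleanly: one must be careful that passing repeatedly to subsequences does not destroy the pairwise-distinctness of the components $V_n$ (it does not, since any subsequence of pairwise-distinct components is still pairwise distinct), and that the tail $\gamma_n((s_n,1])$ genuinely lies in a single component $V_n$ rather than wandering between components (which is where connectedness of $\gamma_n((s_n,1])$ and openness of the components of $U\cap\OM$ are used). Everything else is a routine repackaging of the Hausdorff-limit argument already appearing in Lemma~\ref{lemma:inf_big_comp_not_weak_visib}.
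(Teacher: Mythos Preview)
Your approach is essentially the paper's: argue by contradiction, run $(1,1)$-almost-geodesics from a base point $o$ to the $x_n$, look at the last exit into a small neighbourhood of $p$, use connectedness of the tail to pin it to the single component $V_n$, and finish via weak visibility. The paper organises the endgame as a case split on whether $q=\lim\gamma_n(s_n)$ lies in $\OM$ or in $\bdy\OM$; you fold both cases into the single Hausdorff-limit claim $L\subset\bdy\OM$. These are minor rearrangements of the same idea.

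There is one bookkeeping slip. You shrink $U$ to the ball $B(p;\rho)$ and then take the last exit from this \emph{same} $U$, so $\gamma_n(s_n)\in\bdy U$ and the Hausdorff limit $L$ lives only in $\clos{U}$. Your step ``if $x\in L\cap\OM$, then $x$ lies in a connected component $W$ of $U\cap\OM$'' fails when $x\in\bdy U\cap\OM$, since such an $x$ is not in $U$ at all. (This is precisely the scenario $q\in\OM$ that the paper isolates as its Case~1.) The paper avoids the issue by taking the last exit from a neighbourhood $W\Subset U$ rather than from $U$ itself, so that $\bdy W\subset U$ and any limit point in $\OM$ automatically sits inside $U\cap\OM$; the argument you cite from Lemma~\ref{lemma:inf_big_comp_not_weak_visib} does the analogous thing via its nested radii $R'<R_1<R$. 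With that one-line correction your proof goes through and matches the paper's.
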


\begin{proof}
Assume that $F_{p,\,U,\,(x_n)_{n\geq 1}}$ is infinite; then, in particular, $U\cap\OM$ 
has infinitely many connected components.  Relabelling, we may assume, without loss of 
generality, that$(V_j)_{j\geq 1}$ is a sequence of {\em distinct} connected components 
of $U\cap\OM$ such that $x_j\in V_j$ and $(x_j)_{j\geq 1}$ converges to $p$. Let $W$ 
be a neighbourhood of $p$ such that $W\Subset U$ and $\OM\setminus\clos{W}\neq\emptyset$.
Note that, for large $j$, $x_j\in W$. Fix a point $o\in\OM\setminus\clos{W}$ and choose 
a $(1,1)$-almost-geodesic $\gamma_j: [a_j,b_j]\to\OM$ for $\koba_\OM$ joining $o$ and 
$x_j$. Note, for large $j$, $\gamma_j(a_j)\in\OM\setminus\clos{W}$ and $\gamma_j(b_j)
\in W$, therefore, there exists $t_j\in[a_j,b_j]$ such that $\gamma_j(t_j)\in \bdy W_j$ 
and $\gamma_j((t_j,b_j]) \subset W$. By the compactness of $\bdy W$, we may assume, 
without loss of generality, that $(\gamma_j(t_j))_{j\geq 1}$ converges to some point 
$q\in\bdy W$. 
\smallskip

\noindent{\bf Case~1.} $q\in\OM$. 
\smallskip

\noindent Note that $q\in U\cap\OM$, and therefore, $q$ must belong to some connected 
component of $U\cap\OM$, say $\widehat{V}$. 
For every $j$,
$\gamma_j([t_j,b_j])$ is a connected subset of $U\cap\OM$, and consequently, it is
contained in a connected component of $U\cap\OM$. Since $\gamma_j(b_j)=x_j
\in V_j$, it follows that $\gamma_j([t_j,b_j])\subset V_j$. Since $q\in
\widehat{V}$ and $(\gamma_j(t_j))_{j\geq 1}$ converges to $q$, therefore, for
all $j$ large enough, $\gamma_j(t_j)\in\widehat{V}$. But then, for {\em all}
such $j$, $\gamma_j(t_j)$ is contained in {\em both} the connected components
$\widehat{V}$ and $V_j$ of $U\cap\OM$. In particular, this means (since distinct 
components are disjoint) that all the components $V_j$ for large enough $j$
coincide, which is a contradiction. 
\smallskip

\noindent{\bf Case~2.} $q\in\bdy\OM$. 
\smallskip

\noindent Note that $q\in \bdy W \cap \bdy\OM
\subset U\cap\bdy\OM$ and
$\left(\gamma_j|_{[t_j,b_j]}\right)_{j\geq 1}$ is a sequence of 
$(1,1)$-almost-geodesics for $\koba_\OM$ the sequences of whose end-points,
$(\gamma_j(t_j))_{j\geq 1}$ and $(x_j)_{j\geq 1}$, converge to $q$ and $p$,
respectively, which are {\em distinct} points of $\bdy\OM$. Therefore, by the
assumed weak visibility property, there exists a sequence $(s_j)_{j\geq 1}$,
$t_j\leq s_j\leq b_j$, such that $\{\gamma_j(s_j)\mid j\geq 1\}$ is relatively
compact in $\OM$ (hence relatively compact in $U\cap\OM$). Consequently, we may
suppose, without loss of generality, that $(\gamma_j(s_j))_{j\geq 1}$ converges
to some point $u\in U\cap\OM$. Now we appeal to the argument as in Case~1 above
to obtain a contradiction, whence the result. 
\end{proof}

\begin{theorem}\label{Th:vis_bound_reg}
Suppose $\OM\subset\C^d$ is a weak visibility domain, and given $p\in
\bdy\OM$, let $U$ be a neighbourhood of $p$ in $\C^d$. 
Then there exists a connected component $V$ of $U\cap\OM$ such that $p\in\bdy V$;
moreover, the number of such components are finite. 
\end{theorem}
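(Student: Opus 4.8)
The plan is to deduce the theorem entirely from Lemma~\ref{prp:vsb_nbd_ntrsc_fin}; the statement is essentially a pigeonhole corollary of that lemma, so the substantive work has already been done there.

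For the existence of a component $V$ with $p\in\bdy V$: since $p\in\bdy\OM$, choose any sequence $(x_n)_{n\geq 1}$ in $\OM$ with $x_n\to p$, and by discarding finitely many terms assume $x_n\in U\cap\OM$ for every $n$. Each $x_n$ lies in exactly one connected component of $U\cap\OM$, so Lemma~\ref{prp:vsb_nbd_ntrsc_fin} tells us that the set $F_{p,\,U,\,(x_n)_{n\geq 1}}$ of components meeting the sequence is finite. By the pigeonhole principle some component $V$ of $U\cap\OM$ contains $x_n$ for infinitely many $n$; restricting to that subsequence gives a sequence of points of $V$ converging to $p$, so $p\in\clos{V}$. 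Since $p\in\bdy\OM$ we have $p\notin\OM\supset V$, hence $p\in\clos{V}\setminus V\subset\bdy V$, as required.

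For the finiteness claim: suppose, for contradiction, that there were infinitely many pairwise distinct connected components $V_1,V_2,\dots$ of $U\cap\OM$ with $p\in\bdy V_j$ for every $j$. Because $p\in\bdy V_j\subset\clos{V_j}$, we may pick, for each $j$, a point $x_j\in V_j$ with $\|x_j-p\|<1/j$. Then $(x_j)_{j\geq 1}$ is a sequence in $U\cap\OM$ converging to $p$ with $x_j\in V_j$, so the infinitely many distinct components $V_j$ all belong to $F_{p,\,U,\,(x_j)_{j\geq 1}}$, contradicting the finiteness asserted by Lemma~\ref{prp:vsb_nbd_ntrsc_fin}. Therefore only finitely many components $V$ of $U\cap\OM$ satisfy $p\in\bdy V$.

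I do not anticipate a genuine obstacle: the one point requiring a moment's care is that in the finiteness argument one needs a \emph{single} sequence converging to $p$ that simultaneously meets each of the supposedly infinitely many components, but this is immediate since $p\in\bdy V_j$ permits choosing $x_j\in V_j$ arbitrarily close to $p$. All the real content — that weak visibility forces local finiteness of the components ``reaching'' a boundary point — is contained in Lemma~\ref{prp:vsb_nbd_ntrsc_fin}.
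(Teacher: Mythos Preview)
Your proposal is correct and follows essentially the same approach as the paper's own proof: both parts are deduced from Lemma~\ref{prp:vsb_nbd_ntrsc_fin} via pigeonhole for existence and via the same diagonal-sequence contradiction for finiteness. The paper additionally disposes of the case where $U\cap\OM$ has only finitely many components as trivial before invoking the lemma, but your argument handles that case uniformly, so this is only a cosmetic difference.
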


\begin{proof}
If $U\cap\OM$ has finitely many components, then the proof is trivial. So we assume
that $U\cap\OM$ has infinitely many components.
Choose a sequence $(x_n)_{n\geq 1}$ of points of $U\cap\OM$ converging to $p$.
Let $(V_j)_{j\geq 1}$ be an enumeration of the connected components of 
$U\cap\OM$. By \Cref{prp:vsb_nbd_ntrsc_fin}, 
$F_{p,U,(x_n)_{n\geq 1}}$ is finite, which in this case means that
\[ \{j\in\posint\mid \exists\,n\in\posint \text{ such that } x_n\in V_j\} \]
is finite. Thus there exists $j_0\in\posint$ such that $x_n\in V_{j_0}$ for infinitely
many $n$. Clearly, then, $V_{j_0}$ is a connected component of $U\cap\OM$ that has
$p$ as a boundary point. 
\smallskip

Now suppose there are infinitely many components $V$ of $U\cap\OM$
such that $p\in \bdy V$. Then there exists a sequence $(V_j)_{j\geq 1}$ of
distinct components of $U\cap\OM$ such that, for each $j$, $p\in\bdy V_j$. For
each $j$, choose $x_j\in V_j$ such that $\|x_j-p\|<1/j$. Then $(x_j)_{j\geq 1}$
is a sequence in $U\cap\OM$ converging to $p$ that contradicts
\Cref{prp:vsb_nbd_ntrsc_fin}, whence the result. 
\end{proof}

We remark that it is still possible that there exists a neighbourhood
basis at $p$, the intersection of each of whose elements with $\OM$ has 
infinitely many components; see \Cref{xmp:nbd_bss_intrsc_infnt}.

\subsection{Proof of \Cref{thm:cont_surj_im_vis_dom_vis}}
\label{SS:cont_surj_im_vis_dom_vis}

Now we move on to provide a proof of \Cref{thm:cont_surj_im_vis_dom_vis}. We shall first prove a lemma.

\begin{lemma} \label{lmm:dsj_cpt_sets_visib_gen}
Suppose $\OM\subset\C^d$ is a hyperbolic
domain that is a $(\lambda,\kappa)$-visibility domain for some $\lambda\geq 1$, $\kappa>0$.
Then, given two disjoint compact subsets 
$H_1$, $H_2$ of $\bdy\clos{\OM}^{End}$, there exists a compact subset $K$ of $\OM$ such that 
the following is satisfied. 
\begin{itemize}
\item[$(*)$] Given a sequence $\gamma_n:[a_n,b_n]\to\OM$ of $(\lambda,\kappa)$-almost-geodesics with respect to 
$\koba_{\OM}$ such that all the limit points of 
$(\gamma_n(a_n))_{n\geq 1}$ (resp. $(\gamma_n(b_n))_{n\geq 1}$) lie in $H_1$ and
$H_2$ respectively, there exists $n_0\in \posint$ such that
$\gamma_n\cap K\neq\emptyset$ for every $n\geq n_0$.
\end{itemize}
\end{lemma}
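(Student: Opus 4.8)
The plan is a compactness-packaging argument: the $(\lambda,\kappa)$-visibility property is \emph{a priori} only a statement about individual pairs of boundary points, and we wish to upgrade it to a statement that is uniform over the compact sets $H_1,H_2$. The one genuinely non-formal ingredient — and hence what I regard as the main obstacle — is making sure that we may extract convergent subsequences of the endpoint sequences inside $\clos{\OM}^{End}$; everything else is bookkeeping with a finite subcover.

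So I would begin by recording that $\clos{\OM}^{End}$ is sequentially compact. If $\OM$ is bounded this is immediate, since then $\clos{\OM}^{End}=\clos{\OM}$. If $\OM$ is unbounded, observe that every $(1,\kappa)$-almost-geodesic is also a $(\lambda,\kappa)$-almost-geodesic, so a $(\lambda,\kappa)$-visibility domain is in particular a $(1,\kappa)$-visibility domain; sequential compactness of $\clos{\OM}^{End}$ then follows from \Cref{thm:vis_seqcompact}. Without this fact the proof below would break down, because the sequences $(\gamma_n(a_n))_n$, $(\gamma_n(b_n))_n$ need not have subsequential limits in a general topological space.

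Next I would construct $K$. For each pair $\mathbf{p}=(\zeta,\eta)\in H_1\times H_2$ we have $\zeta\neq\eta$ because $H_1\cap H_2=\emptyset$, so the pair $\zeta,\eta$ possesses the $(\lambda,\kappa)$-visibility property: there are neighbourhoods $V_{\mathbf{p}}^{1}\ni\zeta$ and $V_{\mathbf{p}}^{2}\ni\eta$ in $\clos{\OM}^{End}$ and a compact $K_{\mathbf{p}}\subset\OM$ such that every $(\lambda,\kappa)$-almost-geodesic for $\koba_{\OM}$ that starts in $V_{\mathbf{p}}^{1}$ and ends in $V_{\mathbf{p}}^{2}$ meets $K_{\mathbf{p}}$. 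The family $\{V_{\mathbf{p}}^{1}\times V_{\mathbf{p}}^{2}\}_{\mathbf{p}\in H_1\times H_2}$ is an open cover of the compact set $H_1\times H_2$; passing to a finite subcover indexed by $\mathbf{p}_1,\dots,\mathbf{p}_m$, I set $K\defeq\bigcup_{i=1}^{m}K_{\mathbf{p}_i}$, a compact subset of $\OM$.

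Finally, to verify $(*)$, suppose it fails for this $K$. Then some subsequence of $(\gamma_n)_n$, which I relabel as $(\gamma_n)_n$, satisfies $\gamma_n\cap K=\emptyset$ for every $n$. By the sequential compactness of $\clos{\OM}^{End}$, after passing to a further subsequence we may assume $\gamma_n(a_n)\to\zeta$ and $\gamma_n(b_n)\to\eta$ in $\clos{\OM}^{End}$; since $\zeta,\eta$ are limit points of the original endpoint sequences, the hypothesis in $(*)$ forces $\zeta\in H_1$ and $\eta\in H_2$, so $(\zeta,\eta)\in H_1\times H_2$ and, in particular, $\zeta\neq\eta$. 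Choose $i$ with $(\zeta,\eta)\in V_{\mathbf{p}_i}^{1}\times V_{\mathbf{p}_i}^{2}$; since these sets are open, $\gamma_n(a_n)\in V_{\mathbf{p}_i}^{1}$ and $\gamma_n(b_n)\in V_{\mathbf{p}_i}^{2}$ for all large $n$. After the harmless reparametrization $t\mapsto\gamma_n(t+a_n)$ on $[0,b_n-a_n]$ (a translation of the parameter preserves both conditions in the definition of a $(\lambda,\kappa)$-almost-geodesic and does not change the image), the defining property of $V_{\mathbf{p}_i}^{1},V_{\mathbf{p}_i}^{2},K_{\mathbf{p}_i}$ gives $\gamma_n([a_n,b_n])\cap K_{\mathbf{p}_i}\neq\emptyset$, hence $\gamma_n\cap K\neq\emptyset$, contradicting $\gamma_n\cap K=\emptyset$. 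This establishes $(*)$ and completes the proof.
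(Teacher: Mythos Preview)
Your proof is correct but follows a genuinely different route from the paper's. You build $K$ \emph{explicitly}: you invoke the $(\lambda,\kappa)$-visibility property at every pair $(\zeta,\eta)\in H_1\times H_2$, cover the compact product $H_1\times H_2$ by the resulting neighbourhood-boxes $V_{\mathbf p}^1\times V_{\mathbf p}^2$, pass to a finite subcover, and take $K$ to be the union of the finitely many associated compacts. The verification of $(*)$ is then a short contradiction using sequential compactness of $\clos{\OM}^{End}$ to locate a single box $V_{\mathbf p_i}^1\times V_{\mathbf p_i}^2$ containing the subsequential endpoint limits.

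The paper, by contrast, never constructs $K$; it argues entirely by contradiction and a diagonal extraction. Assuming no $K$ works, it runs through a compact exhaustion $(K_\nu)$ of $\OM$, for each $\nu$ obtains a bad sequence of almost-geodesics avoiding $K_\nu$, extracts endpoint limits $p_\nu\in H_1$, $q_\nu\in H_2$, then extracts further limits $\check p\in H_1$, $\check q\in H_2$ of these, and finally diagonalizes to produce a single sequence of almost-geodesics whose endpoints converge to $\check p,\check q$ while the curves escape every compact. Visibility at the single pair $(\check p,\check q)$ then gives the contradiction. Your approach is more constructive and arguably more transparent; the paper's approach avoids ever touching the product $H_1\times H_2$ or invoking a finite-subcover step, trading that for a two-level diagonal argument. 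Both rely on the same sequential-compactness input (your reduction from $(\lambda,\kappa)$-visibility to $(1,\kappa)$-visibility, allowing \Cref{thm:vis_seqcompact} to be invoked, is exactly what the paper uses implicitly when it writes ``by the compactness of $\clos{\OM}^{End}$'').
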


\begin{proof}
Assume that there does not exist any compact set
$K\subset\OM$ satisfying the stated property. Choose a compact exhaustion
$(K_{\nu})_{\nu\geq 1}$ of $\OM$. Then, for each fixed $\nu$, 
there exists a sequence $\gamma^\nu_n:[a^\nu_n,b^\nu_n]\to\OM$ of
$(\lambda,\kappa)$-almost-geodesics such
that all the limit points of $\big(\gamma^\nu_n(a^\nu_n)\big)_{n\geq 1}$
(resp. $\big(\gamma^\nu_n(b^\nu_n)\big)_{n\geq 1}$) lie in $H_1$ (resp. 
$H_2$) and (without loss of generality) such that every $\gamma^\nu_n$ avoids
$K_{\nu}$. By the compactness of $\clos{\OM}^{End}$, 
$\big(\gamma^\nu_n(a^\nu_n)\big)_{n\geq 1}$ and 
$\big(\gamma^\nu_n(b^\nu_n)\big)_{n\geq 1}$ both have limit points in
$\clos{\OM}^{End}$ that lie, by hypothesis, in $H_1$ and $H_2$,
respectively. Passing to a subsequence and relabelling, we may assume that
$\big(\gamma^\nu_n(a^\nu_n)\big)_{n\geq 1}$ and
$\big(\gamma^\nu_n(b^\nu_n)\big)_{n\geq 1}$ converge to points $p_{\nu}$ of
$H_1$ and $q_{\nu}$ of $H_2$, respectively. By the compactness of $H_1$ and $H_2$
we may assume, by passing to a subsequence and relabelling, that
$(p_{\nu})_{\nu\geq 1}$ and $(q_{\nu})_{\nu\geq 1}$ converge to points $\check{p}
\in H_1$ and $\check{q}\in H_2$, respectively. Choose countable neighbourhood bases
$(U_j)_{j\geq 1}$ and $(V_j)_{j\geq 1}$ for the topology of $\clos{\OM}^{End}$ at $\check{p}$ and
$\check{q}$, respectively. For every $j$, choose $\nu(j)\in\posint$ such that, for every 
$\nu\geq\nu(j)$, $p_{\nu}\in U_j$ and $q_{\nu}\in V_j$; in particular,
$p_{\nu(j)}\in U_j$ and $q_{\nu(j)} \in V_j$. Since $U_j$ (resp., $V_j$) is an open subset
of $\clos{\OM}^{End}$ and $p_{\nu(j)}$ (resp., $q_{\nu(j)}$) is a point of $\OM$ that is in 
$U_j$ (resp., $V_j$), and since, for every $n$, $(\gamma^{\nu(j)}_n(a^{\nu(j)}_n))_{n\geq 1}$
(resp., $(\gamma^{\nu(j)}_n(b^{\nu(j)}_n))_{n\geq 1}$) converges to $p_{\nu(j)}$ (resp.,
$q_{\nu(j)}$), therefore, for every $j$, we may choose $n(j)\in\posint$ so large that, for
every $n\geq n(j)$, $\gamma^{\nu(j)}_n(a^{\nu(j)}_n)\in U_j$ (resp.,
$\gamma^{\nu(j)}_n(b^{\nu(j)}_n)\in V_j$); in particular,
$\gamma^{\nu(j)}_{n(j)}(a^{\nu(j)}_{n(j)})\in U_j$ (resp., 
$\gamma^{\nu(j)}_{n(j)}(b^{\nu(j)}_{n(j)}) \in V_j$).
Now simply rename 
$\gamma^{\nu(j)}_{n(j)}$ as $\theta_j$, $a^{\nu(j)}_{n(j)}$ as $c_j$ and $b^{\nu(j)}_{n(j)}$
as $d_j$. Then $\theta_j : [c_j,d_j] \to \OM$ is a sequence of 
$(\lambda,\kappa)$-almost-geodesics for $\koba_\OM$ whose sequences of end points converge
to the {\em distinct} points $\check{p}$ and $\check{q}$ of $\bdy\clos{\OM}^{End}$. 
Consequently, by the $(\lambda,\kappa)$-visibility property of $\OM$, there exists a compact
subset of $\OM$ that all the $\theta_j$ intersect. This clearly is a contradiction to our assumption. 
\end{proof}
		
                         \begin{proof}[Proof of \Cref{thm:cont_surj_im_vis_dom_vis}]
			Choose $p,q\in\bdy\clos{\OM}^{\text{End}}$ with $p\neq q$ and $\lambda\geq 1$
			and $\kappa>0$. Consider $S_p\defeq \Phi^{-1}\{p\}$ and $S_q\defeq
			\Phi^{-1}\{q\}$. Then $S_p$ and $S_q$ are disjoint, compact subsets of $\bdy\clos{\OM}^{End}_0$.
			By Lemma~\ref{lmm:dsj_cpt_sets_visib_gen}, there exists a compact subset $K$ of $\OM_0$
			such that the property $(*)$ therein is satisfied. 
			Now suppose $(x_n)_{n\geq 1}$ 
			and $(y_n)_{n\geq 1}$ are sequences in $\OM$ converging to $p$ and $q$, 
			respectively, and $\gamma_n: [a_n,b_n]\to \OM$ is a sequence of $(\lambda,\kappa)$-almost-geodesics with 
			respect to $\koba_{\OM}$ joining $x_n$ to $y_n$. Now let $\check{\gamma}_n\defeq 
			\Phi^{-1}\circ\gamma_n$. Then, since $\Phi$ is a biholomorphism, each 
			$\check{\gamma}_n$ is a $(\lambda,\kappa)$-almost-geodesic with respect to 
			$\koba_{\OM_0}$. Since $\Phi$ extends continuously to $\clos{\OM}^{End}_0$ and 
			$\gamma_n(a_n)=x_n\to p$ and $\gamma_n(b_n)=y_n\to q$, it
			follows that all 
			the limit points of $(\check{\gamma}_n(a_n))_{n\geq 1}$ 
			(resp. $(\check{\gamma}_n(b_n))_{n\geq 1}$) lie in $S_p$ (resp. $S_q$). Therefore,
			by property $(*)$ in Lemma~\ref{lmm:dsj_cpt_sets_visib_gen}, there exists $n_0\in\posint$ such that, for all
			$n\geq n_0$, $\check{\gamma}_n\cap K\neq\emptyset$. Consequently
			\[
			 \Phi\big( \check{\gamma}_n\cap K \big) \neq \emptyset \ \ \forall\,n\geq n_0
			 \]
			which implies 
			\[
			 \gamma_n\cap\Phi(K) \neq \emptyset \ \ \forall\,n\geq n_0. 
			\] 
			By the continuity of $\Phi$, $\Phi(K)$ is a compact subset of $\OM$. Since $p, q$ and $\lambda\geq 1, \kappa>0$ were
			chosen arbitrarily, the result follows.
\end{proof}

\subsection{On Boundary separation property ${\rm BSP}$}\label{SS:BSP}

Now we prove that if a pair of points of the boundary of the end-compactification
satisfies (a property weaker than) the weak visibility property, then they satisfy
 ${\rm BSP}$ (as defined in subsection~\ref{SS:MR}) 
relative to the Kobayashi distance.
\begin{lemma} \label{lmm:w_visib_kob_sep_bdy_pts}
	Suppose that $\Omega \subset \C^d$ is a hyperbolic domain and that $\xi_1,\xi_2\in
	\bdy\clos{\OM}^{\text{End}}$, $\xi_1\neq\xi_2$, satisfy the visibility property
	with respect to $(1,\epsilon)$-almost-geodesics for some $\epsilon>0$. Then
	$\xi_1,\xi_2$ satisfy ${\rm BSP}$.	
\end{lemma}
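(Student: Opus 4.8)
The plan is to argue by contradiction, the one genuine subtlety being that the visibility hypothesis should be run not with the fixed constant $\epsilon$ but with constants tending to $0$. Assume $\xi_1,\xi_2$ do not satisfy ${\rm BSP}$; since $\koba_\OM\ge0$ this means $\liminf_{(z,w)\to(\xi_1,\xi_2),\,z,w\in\OM}\koba_\OM(z,w)=0$, so there exist sequences $(z_n)_{n\ge1}$, $(w_n)_{n\ge1}$ in $\OM$ with $z_n\to\xi_1$ and $w_n\to\xi_2$ in $\clos{\OM}^{End}$ and $\koba_\OM(z_n,w_n)\to0$ (necessarily $z_n\ne w_n$ for $n$ large, else $\xi_1=\xi_2$). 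By hypothesis the pair $\xi_1,\xi_2$ satisfies the visibility property with respect to $(1,\epsilon)$-almost-geodesics: fix neighbourhoods $V_{\xi_1}$ of $\xi_1$ and $V_{\xi_2}$ of $\xi_2$ in $\clos{\OM}^{End}$ with $\clos{V}_{\xi_1}\cap\clos{V}_{\xi_2}=\emptyset$ and a compact $K\subset\OM$ such that every $(1,\epsilon)$-almost-geodesic $\sigma:[0,T]\to\OM$ with $\sigma(0)\in V_{\xi_1}$ and $\sigma(T)\in V_{\xi_2}$ meets $K$. For $n$ large, $z_n\in V_{\xi_1}$ and $w_n\in V_{\xi_2}$.

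For each such $n$ I would put $\epsilon_n\defeq\min\{\epsilon,1/n\}>0$ and choose a $(1,\epsilon_n)$-almost-geodesic $\sigma_n:[0,T_n]\to\OM$ joining $z_n$ and $w_n$; such curves exist for every positive parameter (cf.\ the reparametrization results of Subsection~\ref{SS:Repar} and the proof of \Cref{lmm:weak_visib_Grom_lim_fin}). Since $\epsilon_n\le\epsilon$, each $\sigma_n$ is in particular a $(1,\epsilon)$-almost-geodesic, so for $n$ large the visibility property provides $t_n\in[0,T_n]$ with $\sigma_n(t_n)\in K$. From the defining inequalities of a $(1,\epsilon_n)$-almost-geodesic, $\koba_\OM(z_n,w_n)=\koba_\OM(\sigma_n(0),\sigma_n(T_n))\ge T_n-\epsilon_n$, hence $t_n\le T_n\le\koba_\OM(z_n,w_n)+\epsilon_n$, and therefore
\[
\koba_\OM\big(z_n,\sigma_n(t_n)\big)=\koba_\OM\big(\sigma_n(0),\sigma_n(t_n)\big)\le t_n+\epsilon_n\le\koba_\OM(z_n,w_n)+2\epsilon_n.
\]
Since the right-hand side tends to $0$, we obtain $\koba_\OM(z_n,K)\to0$ although each $\sigma_n(t_n)$ lies in the fixed compact set $K\subset\OM$.

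To conclude, I would choose $r>0$ so small that $K_r\defeq\{z\in\C^d\mid\distance_{\mathrm{Euc}}(z,K)\le r\}$ is a compact subset of $\OM$; by \Cref{crl:Kob_dist_sep_comp}, $\delta\defeq\koba_\OM(K,\OM\setminus K_r)>0$. For $n$ large, $\koba_\OM(z_n,K)<\delta$, which forces $z_n\in K_r$; but $K_r$ is a compact subset of $\OM$, whereas $z_n\to\xi_1\in\bdy\clos{\OM}^{End}$ means $(z_n)_{n\ge1}$ eventually leaves every compact subset of $\OM$ — a contradiction, so $\xi_1,\xi_2$ satisfy ${\rm BSP}$. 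I expect the main obstacle to be exactly the passage to $\epsilon_n\downarrow0$ carried out in the second paragraph: running the argument with the fixed $\epsilon$ would only pin down $\sigma_n(t_n)$ to within $\koba_\OM$-distance $\approx2\epsilon$ of $z_n$, which, absent completeness of $(\OM,\koba_\OM)$, need not place $z_n$ in a compact subset of $\OM$; the fact that every $(1,\epsilon_n)$-almost-geodesic with $\epsilon_n\le\epsilon$ is a $(1,\epsilon)$-almost-geodesic is what makes this legitimate, and \Cref{crl:Kob_dist_sep_comp} supplies the remaining separation input.
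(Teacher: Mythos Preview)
Your proof is correct and follows essentially the same approach as the paper's own argument: both argue by contradiction, pick $(1,\epsilon_n)$-almost-geodesics with $\epsilon_n\downarrow 0$ (the paper uses $\epsilon_n=1/n$ for $n$ large enough that $1/n<\epsilon$), use the $(1,\epsilon)$-visibility hypothesis to trap a point $\sigma_n(t_n)$ in a fixed compact $K$, show $\koba_\OM(z_n,\sigma_n(t_n))\to 0$, and contradict \Cref{crl:Kob_dist_sep_comp}. The only cosmetic difference is that the paper derives $\koba_\OM(z_n,x_n)\le 4/n$ via the three-point estimate $\koba_\OM(z_n,w_n)\ge\koba_\OM(z_n,x_n)+\koba_\OM(x_n,w_n)-3/n$, whereas you bound $T_n$ directly; both routes are equivalent, and your explicit remark about why the passage to $\epsilon_n\downarrow 0$ is needed is exactly on point.
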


\begin{proof}
	To get a contradiction assume that there exist sequences $(z_n)_{n \geq 1}$ and 
	$(w_n)_{n \geq 1}$ with $z_n \to \xi_1$ and $w_n \to \xi_2$ as $n \to \infty$ and 
	$\koba_{\Omega} (z_n, w _n ) \leq 1/n$ for all $n \in \posint$. For every $n$ so 
	large that $1/n<\epsilon$, choose a $(1,1/n)$-almost-geodesic $\gamma_n:[a_n,b_n]
	\to \OM$ joining $z_n$ and $w_n$. By the assumption that $\xi_1$ and $\xi_2$ 
	satisfy the visibility property with respect to $(1,\epsilon)$-almost-geodesics,
	there exists a compact subset $K \subset \Omega$ such that, for every $n$, 
	$\gamma_n \cap K  \neq \emptyset$. Choose, for every $n$, $x_n\in \gamma_n\cap K$. 
	Since $z_n,x_n,w_n$ lie on the $(1,1/n)$-almost-geodesic $\gamma_n$ and since, by
	assumption, $\koba_{\OM}(z_n,w_n)\leq 1/n$, we have, as before,
	\begin{align*}
		1/n \geq \koba_{\OM}(z_n,w_n) &\geq \koba_{\OM}(z_n,x_n)+\koba_{\OM}(x_n,w_n)-3/n\\
		&\geq \koba_{\OM}(z_n,x_n)-3/n.
	\end{align*}
	Thus $\koba_{\OM}(x_n,z_n)\leq 4/n$ and so 
	$\lim_{n \to \infty}\koba_{\OM}(x_n,z_n)=0$. But this is a contradiction to
	Corollary~\ref{crl:Kob_dist_sep_comp} because as $z_n$ converges either to a 
	boundary point or to an end, it eventually escapes from all compact subsets of 
	$\OM$.
\end{proof}

The boundary separation property ${\rm BSP}$, at the level of points, leads
to a similar property at the level of sets. This latter property is essential for all 
localization results in this article. 

\begin{lemma} \label{lmm:kob_sep_intrnl_dom}
Suppose that $\OM\subset\C^d$ is a hyperbolic domain and $U\subset\OM$ is a sub-domain 
such that given two distinct points $\xi_1$ and $\xi_2$ of $\bdy U\cap\bdy\OM$ we have 
\[
  \liminf_{(z,w)\to (\xi_1,\xi_2),\,z,w\in\OM} \koba_{\OM}(z,w)>0. 
  \]
Let $W\subset U$ be a non-empty open subset such that
$W\Subset (U\cup\bdy\OM)\setminus \clos{\bdy U\cap\OM}$.
Then $\koba_\OM(W,\OM\setminus U)>0$.
\end{lemma}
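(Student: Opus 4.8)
The plan is to argue by contradiction. Suppose $\koba_\OM(W,\OM\setminus U)=0$ and fix sequences $(z_n)_{n\geq1}\subset W$ and $(w_n)_{n\geq1}\subset\OM\setminus U$ with $\koba_\OM(z_n,w_n)\to0$. Since $\OM$ is hyperbolic, $z_n\neq w_n$ forces $\koba_\OM(z_n,w_n)>0$, and since $\koba_\OM$ is the integrated form of $\dkoba_\OM$ I may choose, for each $n$, an absolutely continuous curve $\sigma_n:[0,1]\to\OM$ from $z_n$ to $w_n$ with $\ell^{\dkoba}_\OM(\sigma_n)\leq\koba_\OM(z_n,w_n)+1/n\to0$. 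Because $\sigma_n$ runs from a point of $U$ to a point of $\OM\setminus U$, it meets $\bdy U$: setting $s_n\defeq\sup\{t:\sigma_n([0,t])\subset U\}$, openness of $U$ together with $\sigma_n([0,1])\subset\OM$ gives $\zeta_n\defeq\sigma_n(s_n)\in\bdy U\cap\OM$, with $\sigma_n([0,s_n))\subset U$, $\sigma_n([0,s_n])\subset U\cup(\bdy U\cap\OM)$, and $\koba_\OM(z_n,\zeta_n)\leq\ell^{\dkoba}_\OM(\sigma_n)\to0$. Passing to a subsequence, $z_n\to z_0$ for some $z_0\in\clos W$; as $\clos W$ is compact and $\clos W\subset(U\cup\bdy\OM)\setminus\clos{\bdy U\cap\OM}$, either $z_0\in U$ or $z_0\in\bdy\OM\setminus\clos{\bdy U\cap\OM}$.

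If $z_0\in U\subset\OM$, then by continuity of $\koba_\OM$ for the Euclidean topology together with \Cref{lmm:Kob_dist_sep_ball} (which, for small $\epsilon$ with $\clos{B(z_0,\epsilon)}\subset\OM$, bounds $\koba_\OM(z_0,\bcdot)$ away from $0$ off $\clos{B(z_0,\epsilon)}$), the estimate $\koba_\OM(z_0,\zeta_n)\leq\koba_\OM(z_0,z_n)+\koba_\OM(z_n,\zeta_n)\to0$ forces $\zeta_n\to z_0$ in $\C^d$. Since $\zeta_n\in\bdy U\cap\OM$ for every $n$, this gives $z_0\in\clos{\bdy U\cap\OM}$, contradicting $z_0\in\clos W$.

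If $z_0\in\bdy\OM\setminus\clos{\bdy U\cap\OM}$, I choose $\rho>0$ with $\clos{B(z_0,2\rho)}\cap\clos{\bdy U\cap\OM}=\emptyset$, so $\|\zeta_n-z_0\|>2\rho$ for all $n$, while $z_n\in B(z_0,\rho)$ for large $n$. Hence $\sigma_n|_{[0,s_n]}$ must leave $\clos{B(z_0,2\rho)}$; letting $u_n$ be the first parameter with $\|\sigma_n(u_n)-z_0\|=2\rho$, one has $\sigma_n([0,u_n])\subset\clos{B(z_0,2\rho)}\cap\OM$, which is disjoint from $\bdy U\cap\OM$, so in fact $\sigma_n([0,u_n])\subset U$ (it lies in $U\cup(\bdy U\cap\OM)$), and $\koba_\OM(z_n,\sigma_n(u_n))\leq\ell^{\dkoba}_\OM(\sigma_n)\to0$. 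Passing to a subsequence, $\sigma_n(u_n)\to\eta_0$ with $\|\eta_0-z_0\|=2\rho$ (so $\eta_0\neq z_0$) and $\eta_0\in\clos U\cap\clos\OM$. If $\eta_0\in\OM$, then $\koba_\OM(z_n,\sigma_n(u_n))\to0$ and $\sigma_n(u_n)\to\eta_0$ force $z_n\to\eta_0$, contradicting $z_n\to z_0$. If $\eta_0\in\bdy\OM$, then $\eta_0\in\clos U\setminus U=\bdy U$, and likewise $z_0\in\clos U\setminus U=\bdy U$ (since $z_n\in W\subset U$ and $z_n\to z_0\in\bdy\OM$); thus $z_0$ and $\eta_0$ are two distinct points of $\bdy U\cap\bdy\OM$, so the hypothesis yields $\liminf_{(z,w)\to(z_0,\eta_0),\,z,w\in\OM}\koba_\OM(z,w)>0$, contradicting $\koba_\OM(z_n,\sigma_n(u_n))\to0$. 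In every case we reach a contradiction, which proves the lemma.

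The step I expect to be the main obstacle is the case $z_0\in\bdy\OM$: here $\koba_\OM(z_n,w_n)\to0$ with $z_n\to z_0$ says nothing about where $w_n$ goes, because the Kobayashi distance need not be proper near $\bdy\OM$, so one cannot directly pit $w_n$ against a fixed compact set. The device that makes it work is to forget $w_n$ and instead follow a near-minimizing curve until it crosses the Euclidean shell $\bdy B(z_0,2\rho)$: this crossing point stays in the bounded set $\clos{B(z_0,2\rho)}$, it is forced to remain inside $U$ because the shell is chosen to avoid $\clos{\bdy U\cap\OM}$, and — since $z_0$ and the limiting crossing point then both lie in $\bdy U\cap\bdy\OM$ — the boundary separation hypothesis on $U$ is exactly what forbids a vanishing-length curve joining them. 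The remaining bookkeeping, namely that $\sigma_n([0,u_n])\subset U$ and hence $\eta_0\in\clos U$, is routine and uses only the identity $\clos U\cap\OM=U\cup(\bdy U\cap\OM)$ and the choice of $\rho$.
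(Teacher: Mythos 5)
Your proposal is correct and follows essentially the same strategy as the paper's proof: follow a curve of vanishing Kobayashi length from $W$ until it crosses a definite Euclidean gap, obtain two distinct points of $\bdy U\cap\bdy\OM$ as limits of its endpoints, and contradict the boundary separation hypothesis, handling the case of an interior limit point via the compact-separation estimate (\Cref{lmm:Kob_dist_sep_ball}/\Cref{crl:Kob_dist_sep_comp}). The only differences are cosmetic: you use near-length-minimizing curves and a Euclidean ball centred at the subsequential limit $z_0$, where the paper uses $(1,1/n)$-almost-geodesics and a fixed intermediate neighbourhood $W_1$ of $W$ inside $U$.
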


\begin{proof}
Assume, to get a contradiction, that $\koba_\OM(W,\OM\setminus U)=0$. 
Using the assumption that $W\Subset (U\cup\bdy \OM) \setminus \clos{\bdy U\cap\OM}$,
choose $r>0$ such that $\text{Nbd}(\clos{W};r)\cap\clos{\bdy U\cap\OM}=\emptyset$. Put
$W_1 \defeq \{z\in U\mid d_{\text{Euc}}(z,W)<r/2\}$. Then $W_1$ is a non-empty open 
subset of $U$, $W_1 \Subset (U \cup \bdy\OM)\setminus\clos{\bdy U\cap\OM}$, and
\begin{equation} \label{eqn:rel_btw_W_W1}
	W \Subset (W_1\cup\bdy\OM)\setminus\clos{\bdy W_1\cap\OM}.
\end{equation}	
By hypothesis, for every
$n\in\posint$, there exist $x_n\in W$ and $y_n\in \OM\setminus U$ such that
$\koba_\OM(x_n,y_n)<1/n$. For every $n$, choose a $(1,1/n)$-almost-geodesic $\gamma_n:
[a_n,b_n]\to\OM$ for $\koba_\OM$ joining $x_n$ and $y_n$. Consider 
\[ 
 t_n\defeq \sup\{t\in[a_n,b_n]\mid\gamma_n([a_n,t])\subset W_1\} \ \ \text{for every $n$}.
 \] 
It is easy to see 
that, for every $n$, $a_n<t_n<b_n$, $\gamma_n([a_n,t_n))\subset W_1$, and 
$\gamma_n(t_n)\in \bdy W_1 \cap \OM$. We may assume, without loss of generality, that
$(x_n)_{n\geq 1}$ converges to a point $\xi\in \clos{W}$ and $(\gamma_n(t_n))_{n\geq 1}$
converges to a point $\zeta\in \bdy W_1$. In case either $\xi$ or $\zeta$ is in $\OM$, we
use Corollary~\ref{crl:Kob_dist_sep_comp} to get a contradiction.
So we may assume that $\xi,\zeta\in\bdy\OM$. In this
case, it follows from \eqref{eqn:rel_btw_W_W1} that $\xi\neq\zeta$. Also by our hypothesis,
there exists $c>0$ such that
\[
 \koba_\OM(x_n,\gamma_n(t_n))\geq c \ \ \text{for every $n$}.
 \]
So $\xi$ and $\zeta$ are two distinct points of $\bdy U \cap \bdy \OM$, 
$(\gamma_n(a_n))_{n\geq 1}$ and $(\gamma_n(t_n))_{n\geq 1}$ converge to $\xi$ and $\zeta$,
respectively, and $\left( \gamma_n|_{[a_n,t_n]} \right)_{n\geq 1}$ is a sequence of
$(1,1/n)$-almost-geodesics for $\koba_\OM$. 
Therefore
\begin{align*}
\koba_\OM(x_n,y_n) = \koba_\OM(\gamma_n(a_n),\gamma_n(b_n)) &\geq \koba_\OM(\gamma_n(a_n),\gamma_n(t_n)) +
\koba_\OM(\gamma_n(t_n),\gamma_n(b_n))-(3/n) \\
&\geq \koba_\OM(\gamma_n(a_n),\gamma_n(t_n))-(3/n) \geq c-(3/n).
\end{align*}
So, for all $n$ large enough, we have $\koba_\OM(x_n,y_n)\geq (c/2)$. This is an 
immediate contradiction, which proves the required result.
\end{proof}

At the end of this subsection, we state and prove a result about complete hyperbolic visibility 
domains which will be used in the proof of \Cref{thm:ext_biholo} and is of independent 
interest.
	
\begin{lemma} \label{lmm:diff_ends_geod_rays_Haus_infty}
	Suppose that $\OM\subset\C^d$ is a complete hyperbolic 
	visibility 
	domain, that $\alpha$ and $\beta$ are distinct ends of $\clos{\OM}$ and 
	that 
	$\sigma$ and $\gamma$ are $\koba_{\Omega}$-geodesic rays in $\OM$ that land at 
	$\alpha$ and $\beta$, respectively (i.e., $\lim_{t\to\infty}\sigma(t)=\alpha$ 
	and $\lim_{t \to \infty}\gamma(t)=\beta$ in the end-compactification 
	topology). Then $d^{\koba_{\Omega}}_H(\sigma,\gamma)=\infty$. 
\end{lemma}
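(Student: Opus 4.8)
The plan is to argue by contradiction: suppose $d^{\koba_\Omega}_H(\sigma,\gamma)<\infty$, say $d^{\koba_\Omega}_H(\sigma,\gamma)\leq M<\infty$. The idea is that if two geodesic rays stay within a bounded Hausdorff distance of each other, then their ``endpoints'' cannot be separated in the sense of visibility, because one can build a sequence of almost-geodesics (in fact honest geodesics, after reparametrization) joining points far out on $\sigma$ to points far out on $\gamma$ whose length is uniformly bounded by the ``width'' $2M+C$ of the strip between the two rays, and such a sequence must escape every compact set. This will contradict the fact that $\sigma$ and $\gamma$ land at the \emph{distinct} ends $\alpha$ and $\beta$, together with the visibility property of $\Omega$.

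More precisely, first I would fix sequences $s_n\to\infty$, $t_n\to\infty$. Since $d^{\koba_\Omega}_H(\sigma,\gamma)\leq M$, for each $n$ there is a parameter $t_n$ with $\koba_\Omega(\sigma(s_n),\gamma(t_n))\leq M+1$; because $\sigma$ lands at $\alpha$ we have $\sigma(s_n)\to\alpha$, and I would like to arrange $\gamma(t_n)\to\beta$ as well. This requires a small argument: since $\gamma$ lands at $\beta$ and $\koba_\Omega(\sigma(s_n),\gamma(t_n))$ is bounded while $\koba_\Omega(o,\sigma(s_n))=s_n\to\infty$ (using completeness, so that $\koba_\Omega(o,\sigma(s_n))\to\infty$), the points $\gamma(t_n)$ also escape every compact set along $\gamma$, hence $t_n\to\infty$ and $\gamma(t_n)\to\beta$ by completeness and the fact that $\gamma$ lands at $\beta$ (a geodesic ray, being proper, leaves every compact set, so once $t_n\to\infty$ convergence to $\beta$ is forced). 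Now join $\sigma(s_n)$ to $\gamma(t_n)$ by a $\koba_\Omega$-geodesic $\eta_n:[0,\ell_n]\to\Omega$; then $\ell_n=\koba_\Omega(\sigma(s_n),\gamma(t_n))\leq M+1$ for all $n$. Reparametrize (or simply note that a geodesic is a $(1,0)$-almost-geodesic up to the non-stationary reparametrization of \Cref{prp:rprm_unt_spd_ag_ag} and \Cref{prp:alm-geod_apprx_nonstnry}, which costs an arbitrarily small additive constant) so that $\eta_n$ is a $(1,\epsilon)$-almost-geodesic for a fixed small $\epsilon>0$, joining $\sigma(s_n)\to\alpha$ and $\gamma(t_n)\to\beta$.

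The key point is that the sequence $(\eta_n)$ eventually avoids every compact subset of $\Omega$: indeed, any point $\eta_n(u)$ on $\eta_n$ satisfies
\[
\koba_\Omega(o,\eta_n(u)) \geq \koba_\Omega(o,\sigma(s_n)) - \koba_\Omega(\sigma(s_n),\eta_n(u)) \geq s_n - \ell_n \geq s_n - (M+1) \lraw \infty,
\]
so by properness of $(\Omega,\koba_\Omega)$ the curves $\eta_n$ escape to the boundary. But $\Omega$ is a visibility domain and $\sigma(s_n)\to\alpha$, $\gamma(t_n)\to\beta$ with $\alpha\neq\beta$, so the pair $\alpha,\beta$ satisfies the visibility property with respect to $(1,\epsilon)$-almost-geodesics; hence there is a compact $K\subset\Omega$ that every $\eta_n$ (for $n$ large, so that the endpoints lie in the prescribed neighbourhoods of $\alpha$ and $\beta$) must intersect. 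This contradicts the previous display. Therefore $d^{\koba_\Omega}_H(\sigma,\gamma)=\infty$.

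I expect the only delicate point to be the bookkeeping needed to pass from honest geodesics $\eta_n$ to $(1,\epsilon)$-almost-geodesics to which the visibility hypothesis literally applies, and to confirm that $\gamma(t_n)\to\beta$ (rather than to $\alpha$ or to some other boundary point) — but both are routine given completeness, properness, and the reparametrization results of \Cref{SS:Repar}. Everything else is a direct application of the triangle inequality and the definition of a visibility domain.
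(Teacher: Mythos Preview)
Your proposal is correct and follows essentially the same route as the paper: assume finite Hausdorff distance, pick points $\sigma(s_n)$ and $\gamma(t_n)$ at bounded $\koba_\Omega$-distance with $s_n,t_n\to\infty$, join them by geodesics, and contradict visibility at the distinct ends $\alpha,\beta$. The only cosmetic difference is the order in which the contradiction is reached: you first show the connecting geodesics escape every compact (via the triangle inequality) and then invoke visibility, whereas the paper first invokes visibility to trap a point $z_n$ of each $\theta_n$ in a fixed compact and then observes $\koba_\Omega(\sigma(n),y_n)\geq\koba_\Omega(\sigma(n),z_n)\to\infty$; your worry about reparametrizing geodesics into $(1,\epsilon)$-almost-geodesics is handled in the paper simply by applying the visibility hypothesis directly to the geodesics $\theta_n$.
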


\begin{proof}
	We proceed by contradiction. Assume, to get a contradiction, that
	$d^{\koba_{\Omega}}_H(\sigma,\gamma)<\infty$. Then, by definition, there exists
	$M<\infty$ such that, for every $n\in\posint$, there exists $y_n\in\gamma$ such
	that $\koba_{\Omega}(\sigma(n),y_n)<M$. For each $n$, choose a 
	$\koba_{\Omega}$-geodesic $\theta_n:[0,T_n]\to\OM$ such that $\theta_n(0)
	= \sigma(n)$ and $\theta_n(T_n)=y_n$. Then $(\theta_n)_{n\geq 1}$ is a sequence
	of $\koba_{\Omega}$-geodesics the sequences of whose end points converge to the
	distinct ends $\alpha$ and $\beta$, respectively, of $\clos{\OM}$. By our 
	assumption that $\OM$ is a visibility domain, there exists a compact 
	$K\subset\OM$ such that, for every $n$, $\theta_n\cap K\neq \emptyset$. For
	every $n$, choose $z_n\in\theta_n\cap K$. Since $(z_n)_{n\geq 1}$ is trapped in
	$K$, since $\sigma(n)\to \infty$ as $n\to\infty$ (i.e., $\lim_{t \to \infty} 
	\|\sigma(t)\|=\infty$) and since $\OM$ is complete hyperbolic, it follows that
	$\lim_{n \to \infty}\koba_{\Omega}(\sigma(n),z_n)=\infty$. By the fact that 
	each $\theta_n$ is a geodesic, it follows that, for each $n$, 
	$\koba_{\Omega}(\sigma(n),y_n) = \koba_{\Omega}(\sigma(n),z_n)+
	\koba_{\Omega}(z_n,y_n)$. So, since $\koba_{\Omega}(\sigma(n),z_n)\to\infty$ as
	$n\to\infty$, it follows that $\koba_{\Omega}(\sigma(n),y_n)\to\infty$ as
	$n\to\infty$. But this is an immediate contradiction.
\end{proof}

\subsection{Local visibility implies global visibility}\label{SS:loc_implies_glob}

In this subsection, we shall prove that, given domains $U\subset\OM$, under 
certain conditions on $U$, visibility with respect to 
$\koba_U$ implies visibility with respect to $\koba_\OM$. The precise result is stated in 
Theorem~\ref{thm:loc_vis_imp_glob_vis_ntrnlzd} below, which follows from a more
refined result, namely Theorem~\ref{thm:loc_vis_imp_glob_vis_ntrnlzd_gen}, below.
At the heart of this latter result is Lemma~\ref{lmm:ag_wrt_glob_ag_wrt_loc_upld} below. 
This Lemma and two others that follow, and their proofs, 
are substantially similar to Proposition~3.2, Lemma~3.3 and Lemma~3.5 in \cite{ADS2023}.
However, there is an important difference. The results in \cite{ADS2023} quoted above have not
been presented in the form that will be convenient for us, and which is important in order to 
get to Theorem~\ref{thm:loc_vis_imp_glob_vis_ntrnlzd}, 
which deals with a situation more general than the one considered in (one of) the (two) main
result(s) in \cite{ADS2023}, namely \cite[Theorem~1.1]{ADS2023}. (It must be noted, however, that 
\cite[Theorem~1.1]{ADS2023} provides a {\em localization result} for the Kobayashi distance,
whereas our Theorem~\ref{thm:loc_vis_imp_glob_vis_ntrnlzd} is concerned with proving that
visibility with respect to the local Kobayashi distance implies visibility with respect to
the global one.)
What makes \cite[Theorem~1.1]{ADS2023} have a narrower range of applicability than 
Theorem~\ref{thm:loc_vis_imp_glob_vis_ntrnlzd} is the approach, in the former result, of 
dealing with intersections of suitable open sets with the domain under consideration. This 
necessitates the technical, but crucial, assumption that the intersection  is connected.
Since we wish to make no such assumption in Theorem~\ref{thm:loc_vis_imp_glob_vis_ntrnlzd},
we must handle a situation that is topologically quite distinct.

\medskip 

A second point that we must make (and we made it also in 
\Cref{ss:gen_res_visib}) is that recently Nikolov--{\"O}kten--Thomas 
\cite{Nik_Okt_Tho} have proved results very similar to ours (see 
\cite[Theorem~4.1, Theorem~4.2]{Nik_Okt_Tho}). Nonetheless, there are
important differences. Firstly, Nikolov--{\"O}kten--Thomas's work is focused 
entirely on a {\em single} boundary point (see the results cited above, as 
well as \cite[Theorem~3.1, Theorem~3.6, Lemma~3.2]{Nik_Okt_Tho}).
Secondly, it is implicitly assumed in \cite{Nik_Okt_Tho} that when one 
intersects the given domain with suitable neighbourhoods of the boundary 
point of interest, the intersection is connected. Thirdly, the 
statements of their results are qualitatively quite different from ours 
(compare Theorems~\ref{thm:loc_vis_imp_glob_vis_ntrnlzd} and 
\ref{thm:glob_vis_loc_vis_gen} in this paper with 
Theorems~4.1 and 4.2 in \cite{Nik_Okt_Tho}). Finally, it appears that some 
facts pertaining to the differences between what Nikolov--{\"O}kten--Thomas 
call $(\lambda,\kappa)$-geodesics (which we use in our paper) 
and $(\lambda,\kappa)$-{\em almost}-geodesics in the sense of 
Bharali--Zimmer (which we also use)
are not made explicit in \cite{Nik_Okt_Tho}.

\medskip 

Bearing all this in mind, we present full statements and proofs.
	
\begin{lemma} \label{lmm:vsb_sets_loc_met_upld}
Suppose that $\OM\subset\C^d$ is a hyperbolic domain and $U\subset\OM$ is a sub-domain
such that every pair of distinct points of $(\bdy U\cap\bdy\OM)\setminus
\clos{\bdy U\cap\OM}$ possesses the $\lambda$-visibility property with respect to  
$\koba_U$ for some $\lambda\geq 1$.
Let $W_1$ and $W_2$ be two open sets such that $W_1\Subset (U\cup\bdy\OM)\setminus
\clos{\bdy U\cap\OM}$ and $W_2\Subset (W_1\cup\bdy\OM)\setminus \clos{\bdy W_1\cap\OM}$.
Then, for every $\kappa>0$, there exists a compact subset $K$ of $U$ such that every
$(\lambda,\kappa)$-almost-geodesic with respect to $\koba_U$ starting in $W_2$ and ending 
in $U\setminus W_1$ intersects $K$.
\end{lemma}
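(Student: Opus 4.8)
The plan is to argue by contradiction: from a putative failure of the conclusion I would extract a sequence of $(\lambda,\kappa)$-almost-geodesics for $\koba_U$ running from $W_2$ into $U\setminus W_1$ that eventually avoids every compact subset of $U$, and from it manufacture a pair of \emph{distinct} points of $(\bdy U\cap\bdy\OM)\setminus\clos{\bdy U\cap\OM}$ together with almost-geodesics that witness this pair yet stay off any fixed compactum of $U$, contradicting the hypothesized $\lambda$-visibility of $U$ at such pairs.

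Concretely, I would fix $\kappa>0$ and suppose the conclusion fails for this $\kappa$. Picking a compact exhaustion $(K_\nu)_{\nu\ge1}$ of $U$, this produces $(\lambda,\kappa)$-almost-geodesics $\sigma_\nu\colon[a_\nu,b_\nu]\to U$ for $\koba_U$ with $\sigma_\nu(a_\nu)\in W_2$, $\sigma_\nu(b_\nu)\in U\setminus W_1$ and $\range(\sigma_\nu)\cap K_\nu=\emptyset$, so that $(\sigma_\nu)_{\nu\ge1}$ eventually avoids every compact subset of $U$. The hypothesis on $W_2$ gives $W_2\subset W_1\cup\bdy\OM$, so, as $\sigma_\nu(a_\nu)\in U$, in fact $\sigma_\nu(a_\nu)\in W_1\cap U$; hence the exit time
\[ t_\nu\defeq\sup\{t\in[a_\nu,b_\nu]\mid\sigma_\nu([a_\nu,t])\subset W_1\} \]
satisfies $a_\nu<t_\nu\le b_\nu$ and $\sigma_\nu([a_\nu,t_\nu))\subset W_1$, and one checks $\sigma_\nu(t_\nu)\in\bdy W_1\cap U$ (if $\sigma_\nu(t_\nu)$ lay in the open set $W_1$ then, as $\sigma_\nu([a_\nu,b_\nu])\subset W_1$ is excluded by $\sigma_\nu(b_\nu)\notin W_1$, one could enlarge $t_\nu$). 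Passing to a subsequence I may assume $\sigma_\nu(a_\nu)\to\xi$ and $\sigma_\nu(t_\nu)\to\zeta$ in $\C^d$, and I will feed the restricted curves $\sigma_\nu|_{[a_\nu,t_\nu]}$ --- still $(\lambda,\kappa)$-almost-geodesics for $\koba_U$ --- into the visibility hypothesis.

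The crux is then to certify that $\xi,\zeta$ are \emph{distinct} points of $(\bdy U\cap\bdy\OM)\setminus\clos{\bdy U\cap\OM}$, which is exactly where the stacked ``$\Subset$'' hypotheses are used. One first observes $\clos{W_2}\subset\clos{W_1}$ (from $W_2\subset W_1\cup\bdy\OM$ and the fact that a point of $\bdy\OM$ lying in the open set $W_2$ is a limit of points of $W_2\cap\OM\subset W_1\cap\OM$), so $\xi\in\clos{W_2}\subset\clos{W_1}\subset(U\cup\bdy\OM)\setminus\clos{\bdy U\cap\OM}$; eventual avoidance of compacta forces $\xi\notin U$, hence $\xi\in\bdy\OM$, and as a limit of points of $U$, $\xi\in\bdy U$. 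Similarly $\zeta\in\clos{\bdy W_1\cap U}\subset\bdy W_1\subset\clos{W_1}$ gives $\zeta\notin\clos{\bdy U\cap\OM}$; eventual avoidance gives $\zeta\notin U$, so $\zeta\in\bdy U$, and since $\bdy U\cap\OM\subset\clos{\bdy U\cap\OM}$ while $\zeta\notin\clos{\bdy U\cap\OM}$, necessarily $\zeta\in\bdy\OM$. Finally $\xi\ne\zeta$: indeed $\zeta\in\clos{\bdy W_1\cap U}\subset\clos{\bdy W_1\cap\OM}$, whereas $\xi\in\clos{W_2}$ and $\clos{W_2}\cap\clos{\bdy W_1\cap\OM}=\emptyset$ by the hypothesis on $W_2$.

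To finish, I would invoke the assumed $\lambda$-visibility of the pair $\xi,\zeta$ with respect to $\koba_U$ at the parameter $\kappa$ above: there exist neighbourhoods $V_\xi,V_\zeta$ of $\xi,\zeta$ in $\clos{U}^{End}$ with $\clos{V}_\xi\cap\clos{V}_\zeta=\emptyset$ and a compact $K'\subset U$ such that every $(\lambda,\kappa)$-almost-geodesic for $\koba_U$ from $V_\xi$ to $V_\zeta$ meets $K'$. Since $\xi,\zeta$ are ordinary boundary points of $U$, convergence in $\C^d$ implies convergence in $\clos{U}^{End}$, so for all large $\nu$ we have $\sigma_\nu(a_\nu)\in V_\xi$ and $\sigma_\nu(t_\nu)\in V_\zeta$, whence $\sigma_\nu|_{[a_\nu,t_\nu]}$ meets $K'$ --- contradicting the eventual avoidance of the compact set $K'\subset U$. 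I expect the only genuine difficulty to be the topological bookkeeping of the penultimate paragraph (pinning both limit points into $(\bdy U\cap\bdy\OM)\setminus\clos{\bdy U\cap\OM}$ and verifying $\xi\ne\zeta$ from the nesting $W_2\Subset(W_1\cup\bdy\OM)\setminus\clos{\bdy W_1\cap\OM}\Subset\dots$), with the negation of the statement, the stability of almost-geodesics under restriction, and passage to subsequences all being routine.
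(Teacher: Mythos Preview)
Your proposal is correct and follows essentially the same argument as the paper's proof: contradict, exhaust $U$ by compacta, introduce the exit time from $W_1$, pass to subsequences so that the initial point and the exit point converge to $\xi\in\clos{W_2}$ and $\zeta\in\clos{\bdy W_1\cap\OM}$, use eventual avoidance together with the nesting hypotheses to place both in $(\bdy U\cap\bdy\OM)\setminus\clos{\bdy U\cap\OM}$ and to separate them, then contradict $\lambda$-visibility. The paper is terser on the topological bookkeeping you carefully spell out (it simply asserts $\xi,\zeta\in\bdy\OM$ and $\xi\neq\zeta$ as ``clear from our hypothesis on $W_1$ and $W_2$''), but the route is the same.
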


\begin{proof}
Suppose the above is not true, i.e., there exists a $\kappa_0>0$ such
that for every compact subset $K$ of $U$ there exists a 
$(\lambda,\kappa_0)$-almost-geodesic
with respect to $\koba_U$ that originates in $W_2$, terminates in $U\setminus W_1$ and
avoids $K$. Choose a compact exhaustion $(K_n)_{n\geq 1}$ of $U$. Then for every 
$n$ there exists a $(\lambda,\kappa_0)$-almost-geodesic $\gamma_n : [a_n,b_n] \to U$ for $\koba_U$ 
that originates in $W_2$, terminates in $U\setminus W_1$ and that avoids $K_n$.
Define, for each $n$, 
\[ 
 t_n\defeq \sup\{t\in [a_n,b_n]\mid \gamma_n([a_n,t])\subset W_1\}. 
 \]
Then: $\gamma_n([a_n,t_n))\subset W_1$ and 
$\gamma_n(t_n)\in \bdy W_1 \cap \OM$. As $W_2$ and $W_1$ are relatively
compact, we may
assume that $(\gamma_n(a_n))_{n\geq 1}$ and $(\gamma_n(t_n))_{n\geq 1}$ both converge to
points $\xi$ and $\zeta$ of $\clos{W}_2$ and $\clos{\bdy W_1\cap\OM}$, respectively. From
the fact that the $\gamma_n$ eventually avoid every compact subset of $U$ it follows
that $\xi,\zeta\in\bdy\OM$. 
It is also clear from our hypothesis on $W_1$ and $W_2$ that $\xi\neq\zeta.$
Thus 
$\big(\gamma_n|_{[a_n,t_n]}\big)_{n\geq 1}$ is a sequence of 
$(\lambda,\kappa_0)$-almost-geodesics with respect to $\koba_U$ whose end points
converge to {\em distinct} points of $(\bdy U \cap \bdy \OM)\setminus 
\clos{\bdy U\cap\OM}$. Therefore, by hypothesis, there exists a compact subset $K$ of $U$
that all the $\gamma_n|_{[a_n,t_n]}$ intersect. But that is an immediate contradiction 
whence the result follows.
\end{proof}

Now we prove a lemma to the effect that, under the assumption of visibility, the Kobayashi
distance to a set can be approximated from below by the distance to a single point.

\begin{lemma} \label{lmm:dst_set_rdcd_pt_loc_upld}
Under the same assumptions as in Lemma~\ref{lmm:vsb_sets_loc_met_upld}, given
$o\in U$, there exists $L<\infty$ such that for any $z\in W_2$
\[
 \koba_U(z,o)\leq \koba_U(z,U\setminus W_1)+L. 
 \]
\end{lemma}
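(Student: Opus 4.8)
The plan is to derive this directly from Lemma~\ref{lmm:vsb_sets_loc_met_upld} by a routine almost-geodesic argument. First I would apply that lemma with $\kappa=1$ to obtain a compact set $K\subset U$ with the property that every $(\lambda,1)$-almost-geodesic for $\koba_U$ starting in $W_2$ and ending in $U\setminus W_1$ meets $K$. Since $\koba_U$ is continuous on $U\times U$ and $K$ is compact, $M\defeq\sup_{x\in K}\koba_U(x,o)<\infty$, and I will show that $L\defeq M+3$ does the job. (If $U\setminus W_1=\emptyset$ then the right-hand side is $+\infty$ and there is nothing to prove, so assume $U\setminus W_1\neq\emptyset$.)

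Next, fix $z\in W_2$ and choose $w\in U\setminus W_1$ with $\koba_U(z,w)<\koba_U(z,U\setminus W_1)+1$. Let $\sigma:[0,T]\to U$ be a $(1,1)$-almost-geodesic for $\koba_U$ joining $z$ and $w$; such curves joining an arbitrary pair of points of a hyperbolic domain exist. A $(1,1)$-almost-geodesic is a fortiori a $(\lambda,1)$-almost-geodesic, and $\sigma(0)=z\in W_2$, $\sigma(T)=w\in U\setminus W_1$, so by the choice of $K$ there is $t_0\in[0,T]$ with $\sigma(t_0)\in K$.

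Then the almost-geodesic inequalities give $T-1\le\koba_U(\sigma(0),\sigma(T))=\koba_U(z,w)$, hence $T\le\koba_U(z,w)+1$, and $\koba_U(z,\sigma(t_0))=\koba_U(\sigma(0),\sigma(t_0))\le t_0+1\le T+1\le\koba_U(z,w)+2$. Combining this with the triangle inequality and $\koba_U(\sigma(t_0),o)\le M$,
\[
\koba_U(z,o)\le\koba_U(z,\sigma(t_0))+\koba_U(\sigma(t_0),o)\le\koba_U(z,w)+2+M\le\koba_U(z,U\setminus W_1)+L,
\]
which is exactly the claimed bound, and $z\in W_2$ was arbitrary.

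I do not expect any genuine obstacle: essentially all the content is packed into Lemma~\ref{lmm:vsb_sets_loc_met_upld}. The only points requiring a little care are the passage between $(1,\kappa)$- and $(\lambda,\kappa)$-almost-geodesics (so that the compact set produced by Lemma~\ref{lmm:vsb_sets_loc_met_upld}, which is phrased in terms of $(\lambda,\kappa)$-almost-geodesics, can be applied to the $(1,1)$-almost-geodesic $\sigma$) and the invocation of the existence of $(1,\kappa)$-almost-geodesics joining any two points of a hyperbolic domain.
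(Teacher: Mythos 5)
Your proposal is correct and follows essentially the same route as the paper's proof: extract the compact set $K$ from Lemma~\ref{lmm:vsb_sets_loc_met_upld} (using that a $(1,1)$-almost-geodesic is a fortiori a $(\lambda,1)$-almost-geodesic), join $z$ to a near-minimizer in $U\setminus W_1$ by a $(1,1)$-almost-geodesic, and combine the intersection with $K$ and the triangle inequality. The only cosmetic differences are that the paper uses a minimizing sequence together with the Kobayashi length functional (via Remark~\ref{rmk:geod_props}) where you use a single near-minimizer and the parameter inequalities in the definition of an almost-geodesic, which costs you an extra additive constant but changes nothing.
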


\begin{proof}
By Lemma~\ref{lmm:vsb_sets_loc_met_upld}, there exists a compact subset $K$ of $U$ such
that every $(1,1)$-almost-geodesic for $\koba_U$ joining a point of $W_2$ to one of
$U\setminus W_1$ intersects $K$. Let $L\defeq \sup_{w\in K}\koba_U(o,w)$. Fix $z\in W_2$,
and for each $n$, choose $w_n\in U\setminus W_1$ such that 
$\koba_U(z,w_n)\leq \koba_U(z,U\setminus W_1)+(1/n)$. 
Now choose a 
$(1,1/n)$-almost-geodesic $\gamma_n: [a_n,b_n]\to U$ for $\koba_U$ joining $z$ and $w_n$.
Since $z\in W_2$ and $w_n\in U\setminus W_1$, each $\gamma_n$ intersects $K$. For each $n$,
choose $c_n\in [a_n,b_n]$ such that $\gamma_n(c_n)\in K$. For every $n$, $\koba_U(z,w_n) 
\geq \ell_U(\gamma_n) - (1/n)$ because $\gamma_n$, being a $(1,1/n)$-almost-geodesic with 
respect to $\koba_U$, is also (see \Cref{rmk:geod_props}) a $(1,1/n)$-geodesic. Also, for 
every $n$, 
\[ \ell_U(\gamma_n) = \ell_U\big(\gamma_n|_{[a_n,c_n]}\big) + 
\ell_U\big(\gamma_n|_{[c_n,b_n]}\big). \]
Of course, for every $n$, $\ell_U\big(\gamma_n|_{[a_n,c_n]}\big)\geq 
\koba_U(z,\gamma_n(c_n))$. Therefore, for every $n$,
\begin{align*} 
\koba_U(z,U\setminus W_1) &\geq \koba_U(z,w_n)-(1/n) \\
&\geq \ell_U(\gamma_n)-(2/n) \\
&\geq\ell_U\big(\gamma_n|_{[a_n,c_n]}\big)-(2/n) \\
&\geq \koba_U(z,\gamma_n(c_n))-(2/n).
\end{align*}
Now, for every $n$, $\koba_U(z,\gamma_n(c_n))\geq \koba_U(z,o)-\koba_U(o,\gamma_n(c_n))
\geq \koba_U(z,o)-L$ since $\koba_U(o,\gamma_n(c_n))\leq L$. Therefore, from the above,
\[ \koba_U(z,U\setminus W_1) \geq \koba_U(z,o)-L-(2/n). \]
This inequality holds for every $n$. Taking $n\to\infty$, we obtain the required result.
\end{proof}
	
\begin{lemma} \label{lmm:dst_to_set_cmpr_glob_loc}
Suppose that $\OM\subset\C^d$ is a hyperbolic domain and $U\subset\OM$ is a sub-domain.
Let $W\subset U$ be an open set such that $W \Subset (U\cup\bdy\OM) \setminus
\clos{\bdy U \cap \OM}$ and $\koba_\OM(W,\OM\setminus U)>0$. Then there 
exists $C<\infty$ such that
\[
 \koba_U(z,U\setminus W) \leq C \koba_\OM(z,U\setminus W)\ \  \forall\,z\in W.
 \]	
\end{lemma}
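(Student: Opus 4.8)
The key point is that for $z\in W$, the shortest path in $\Omega$ from $z$ to a point of $U\setminus W$ must exit $W$ while it is still ``deep inside'' $U$, and on that initial portion Royden's localization lemma lets us compare $\dkoba_U$ to $\dkoba_\Omega$ with a bounded multiplicative constant. First I would fix, using the hypothesis $W\Subset(U\cup\bdy\OM)\setminus\clos{\bdy U\cap\OM}$, an intermediate open set $W'$ with $W\Subset W'\Subset(U\cup\bdy\OM)\setminus\clos{\bdy U\cap\OM}$; shrinking $W'$ further if necessary, we may also arrange $\koba_\OM(W',\OM\setminus U)>0$ (this uses that $W\Subset(U\cup\bdy\OM)\setminus\clos{\bdy U\cap\OM}$, so a small Euclidean thickening of $W$ still lies in $U\cup\bdy\OM$ and away from $\clos{\bdy U\cap\OM}$, and one can quote Lemma~\ref{lmm:refined_Roy_loc_lemm} or argue directly as in the proof of Lemma~\ref{lmm:kob_sep_intrnl_dom}). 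Set $\lambda_0\defeq\coth\big(\koba_\OM(W',\OM\setminus U)\big)<\infty$ and $C\defeq\lambda_0+1$, say.

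Now fix $z\in W$ and let $\varepsilon>0$. Choose $w\in U\setminus W$ and a $\smoo^1$ path $\gamma:[0,1]\to\OM$ from $z$ to $w$ with $l^{\dkoba}_\OM(\gamma)\leq\koba_\OM(z,U\setminus W)+\varepsilon$; note $\gamma$ need not stay in $U$. Let $s\defeq\sup\{t\in[0,1]\mid\gamma([0,t])\subset W'\}$. Then $\gamma([0,s))\subset W'$ and $\gamma(s)\in\bdy W'\cap\OM$ (since $W'$ is relatively compact in $\OM$, as $W'\Subset U\cup\bdy\OM$ combined with $\clos{W'}\cap\bdy\OM\subset\clos{W'}\cap(U\cup\bdy\OM)$... in fact $\clos{W'}\setminus W'$ may meet $\bdy\OM$, so here is the one delicate point). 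To handle this cleanly I would instead pick $W'$ with $\clos{W'}\Subset\OM$ is \emph{not} available; rather I use that the portion of $\gamma$ before it leaves $W'$ stays in $W'\cap\OM\subset U$, and that at the parameter $s$ the point $\gamma(s)$ lies in $\bdy W'\cap\OM$ \emph{or} $\gamma$ reaches $\clos{W'}\cap\bdy\OM$; but $\gamma$ is a path in $\OM$, so it never reaches $\bdy\OM$, hence $\gamma(s)\in\bdy W'\cap\OM$ and in particular $\gamma(s)\in U\setminus W$ provided $W'\supset W$ is chosen so that $\bdy W'\cap\OM\subset U\setminus W$ — arrange this by taking $W'$ between $W$ and a slightly larger set still relatively compactly inside $(U\cup\bdy\OM)\setminus\clos{\bdy U\cap\OM}$.

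With this in hand, $\gamma|_{[0,s]}$ is a path in $U$ from $z$ to a point of $U\setminus W$ lying entirely in $W'$, so
\[
\koba_U(z,U\setminus W)\leq l^{\dkoba}_U\big(\gamma|_{[0,s]}\big)=\int_0^s\dkoba_U(\gamma(t);\gamma'(t))\,dt\leq\lambda_0\int_0^s\dkoba_\OM(\gamma(t);\gamma'(t))\,dt\leq\lambda_0\, l^{\dkoba}_\OM(\gamma)\leq\lambda_0\big(\koba_\OM(z,U\setminus W)+\varepsilon\big),
\]
where the middle inequality is Royden's Localization Lemma (Result~\ref{lmm:Roy_loc_lmm}) applied pointwise: for $t<s$, $\gamma(t)\in W'$, so $\koba_\OM(\gamma(t),\OM\setminus U)\geq\koba_\OM(W',\OM\setminus U)$ and hence $\dkoba_U(\gamma(t);\gamma'(t))\leq\coth(\koba_\OM(W',\OM\setminus U))\,\dkoba_\OM(\gamma(t);\gamma'(t))=\lambda_0\,\dkoba_\OM(\gamma(t);\gamma'(t))$. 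Letting $\varepsilon\to0$ gives $\koba_U(z,U\setminus W)\leq\lambda_0\,\koba_\OM(z,U\setminus W)\leq C\,\koba_\OM(z,U\setminus W)$ for all $z\in W$, as desired.

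\emph{Main obstacle.} The only real subtlety is the boundary bookkeeping in choosing the intermediate set $W'$: one must ensure simultaneously that (i) $W\Subset W'$, (ii) $\koba_\OM(W',\OM\setminus U)>0$, and (iii) $\bdy W'\cap\OM\subset U\setminus W$, so that the ``exit point'' $\gamma(s)$ is legitimately a point of $U\setminus W$ at which $\koba_U(z,\cdot)$ is being estimated. All three follow by taking $W'$ to be a sufficiently small Euclidean neighbourhood of $\clos W$ intersected with $U$, using $W\Subset(U\cup\bdy\OM)\setminus\clos{\bdy U\cap\OM}$; the verification is routine point-set topology. Everything else is a direct application of Royden's lemma along a near-length-minimizing path.
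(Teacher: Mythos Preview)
Your approach is essentially the paper's---apply Royden's localization lemma along the initial segment of a near-length-minimizing path until it exits the inner set---but the intermediate set $W'$ is unnecessary and is precisely the source of the ``main obstacle'' you flag. The paper simply takes $C=\coth\big(\koba_\OM(W,\OM\setminus U)\big)$, finite by hypothesis, and works with $W$ itself: since $\clos{W}\subset (U\cup\bdy\OM)\setminus\clos{\bdy U\cap\OM}$ forces $\clos{W}\cap\OM\subset U$, the first exit point from $W$ of any $\smoo^1$ path in $\OM$ starting at $z\in W$ already lies in $\bdy W\cap\OM\subset U\setminus W$, and Royden applies on that initial segment with constant $C$. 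Your condition~(ii), $\koba_\OM(W',\OM\setminus U)>0$, is not actually delivered by the lemmas you cite (Lemma~\ref{lmm:kob_sep_intrnl_dom} requires a BSP-type separation hypothesis that is not assumed here, and Result~\ref{lmm:refined_Roy_loc_lemm} goes the other way), so as written there is a small gap; but it evaporates once you drop $W'$ and use $W$ directly.
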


\begin{proof}
Let $C\defeq \coth\big(\koba_\OM(W,\OM\setminus U)\big)$; by hypothesis, $C<\infty$.
Suppose that $z\in W$. Let
\[ A_{z,W} \defeq \{ \gamma: [0,1]\overset{{\smoo^1}}{\to}\clos{W} \mid \gamma([0,1))\subset W,\,\gamma(0)=z,\,
\gamma(1)\in \bdy W \cap U \}, \]
\[ B_{z,W} \defeq \{ \ell_U(\gamma) \mid \gamma\in A_{z,W} \}, \;\text{and } 
C_{z,W} \defeq \{ \ell_\OM(\gamma) \mid \gamma\in A_{z,W} \}. \]
Then, since the Kobayashi distance is given by taking the infimum over the lengths of 
curves computed according to the Kobayashi--Royden metric, it follows easily that 
$\koba_U(z,U\setminus W) = \inf B_{z,W}$ and $\koba_\OM(z,U\setminus W)=\inf C_{z,W}$.
Now suppose that $\gamma\in A_{z,W}$. Then
\[ \ell_U(\gamma) = \int_0^1 \dkoba_U(\gamma(t);\gamma'(t))dt \text{ and } 
\ell_\OM(\gamma) = \int_0^1 \dkoba_\OM(\gamma(t);\gamma'(t))dt. \]
A direct consequence of Royden's localization lemma is that
\[ \text{For a.e. } t\in [0,1], \; \dkoba_U(\gamma(t);\gamma'(t)) \leq 
C\dkoba_\OM(\gamma(t);\gamma'(t)). \]
Consequently, it is immediate that $\ell_U(\gamma) \leq C \ell_\OM(\gamma)$. This holds for
every $\gamma$. From this it follows that $\inf B_{z,W} \leq C \inf C_{z,W}$, i.e.,
$\koba_U(z,U\setminus W) \leq C \koba_\OM(z,U\setminus W)$. Finally, $z\in W$ was 
arbitrary, so we are done.
\end{proof}

When we combine Lemmas~\ref{lmm:dst_set_rdcd_pt_loc_upld} and 
\ref{lmm:dst_to_set_cmpr_glob_loc}, we obtain

\begin{lemma} \label{lmm:lb_dist_set_OM_upld}
Under the assumptions as in Lemma~\ref{lmm:vsb_sets_loc_met_upld} and, in addition,
that $\koba_\OM(W_1,\OM\setminus U)>0$, there exists $C<\infty$ such
that, for every $o\in U$, there exists $L<\infty$ such that for all $z\in W_2$, we have
\[ 
 \koba_\OM(z,U\setminus W_1) \geq (1/C)\koba_U(z,o)-
 (L/C) \geq (1/C)\koba_\OM(z,o)-(L/C)\ \  
 \]
\end{lemma}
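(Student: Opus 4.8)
The plan is to obtain the two asserted inequalities by directly concatenating the conclusions of Lemmas~\ref{lmm:dst_set_rdcd_pt_loc_upld} and~\ref{lmm:dst_to_set_cmpr_glob_loc}, paying attention to the order in which the constants $C$ and $L$ are produced.

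First I would apply Lemma~\ref{lmm:dst_to_set_cmpr_glob_loc} with $W\defeq W_1$: its hypotheses hold here, since $W_1\Subset(U\cup\bdy\OM)\setminus\clos{\bdy U\cap\OM}$ is part of the hypotheses carried over from Lemma~\ref{lmm:vsb_sets_loc_met_upld}, and $\koba_\OM(W_1,\OM\setminus U)>0$ is exactly the extra assumption of the present lemma. This yields a finite constant $C$, crucially \emph{independent of any base point}, with $\koba_U(z,U\setminus W_1)\leq C\,\koba_\OM(z,U\setminus W_1)$ for all $z\in W_1\cap\OM$; in particular this holds for all $z\in W_2$, because the relation $W_2\Subset(W_1\cup\bdy\OM)\setminus\clos{\bdy W_1\cap\OM}$ forces $W_2\cap\OM\subset W_1\cap\OM\subset U$. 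With this $C$ now fixed, I would fix an arbitrary $o\in U$ and apply Lemma~\ref{lmm:dst_set_rdcd_pt_loc_upld}, whose hypotheses are precisely those inherited from Lemma~\ref{lmm:vsb_sets_loc_met_upld}, to get a finite $L=L(o)$ with $\koba_U(z,o)\leq\koba_U(z,U\setminus W_1)+L$ for every $z\in W_2$. Chaining these two inequalities gives, for every $z\in W_2$,
\[
\koba_\OM(z,U\setminus W_1)\ \geq\ \tfrac{1}{C}\,\koba_U(z,U\setminus W_1)\ \geq\ \tfrac{1}{C}\bigl(\koba_U(z,o)-L\bigr)\ =\ \tfrac{1}{C}\koba_U(z,o)-\tfrac{L}{C},
\]
which is the first claimed inequality.

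The second inequality is immediate from the distance-decreasing property of the Kobayashi distance under the holomorphic inclusion $U\hookrightarrow\OM$: for $z,o\in U$ one has $\koba_U(z,o)\geq\koba_\OM(z,o)$, and multiplying by $1/C>0$ and subtracting $L/C$ gives $\tfrac{1}{C}\koba_U(z,o)-\tfrac{L}{C}\geq\tfrac{1}{C}\koba_\OM(z,o)-\tfrac{L}{C}$. There is no genuine difficulty in this argument; the only thing to be careful about is the quantifier order, namely that $C$ must be produced before $o$ is chosen. This is legitimate precisely because Lemma~\ref{lmm:dst_to_set_cmpr_glob_loc} delivers a constant that does not involve any base point, whereas the constant $L$ coming from Lemma~\ref{lmm:dst_set_rdcd_pt_loc_upld} is permitted to depend on $o$.
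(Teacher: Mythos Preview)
Your argument is correct and follows essentially the same route as the paper: first invoke Lemma~\ref{lmm:dst_to_set_cmpr_glob_loc} with $W=W_1$ to produce the base-point-free constant $C$, then fix $o\in U$ and invoke Lemma~\ref{lmm:dst_set_rdcd_pt_loc_upld} to produce $L$, chain the two inequalities, and finish with $\koba_U\geq\koba_\OM$. Your explicit attention to the quantifier order is a nice touch that the paper leaves implicit.
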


\begin{proof}
By Lemma~\ref{lmm:dst_to_set_cmpr_glob_loc}, there exists $C<\infty$ such that for all $z\in W_1$
\[
 \koba_U(z,U\setminus W_1) \leq C \koba_\OM(z,U\setminus W_1). 
 \]
Now fix $o\in U$. By Lemma~\ref{lmm:dst_set_rdcd_pt_loc_upld}, there exists $L<\infty$ 
such that for all $z\in W_2$
\[ 
 \koba_U(z,U\setminus W_1)\geq \koba_U(z,o)-L. 
 \]
From the two equations above, it follows that if $z\in W_2$
\[ 
 \koba_\OM(z,U\setminus W_1) \geq (1/C)\koba_U(z,o)-(L/C). 
 \]
Since $\koba_U(z,o)\geq\koba_\OM(z,o)$, the result now follows.
\end{proof}

Our next lemma is the most crucial tool, which says that 
under suitable conditions, non-stationary $(\lambda,\kappa)$-almost-geodesics 
with respect to the global metric are $(\lambda^2,\wt{\kappa})$-almost-geodesics with respect to the 
local metric.

\begin{lemma} \label{lmm:ag_wrt_glob_ag_wrt_loc_upld}
Under the same assumptions as in Lemma~\ref{lmm:lb_dist_set_OM_upld}, given
$\lambda\geq 1$, $\kappa>0$, there exists $\wt{\kappa}>0$ such that every 
$(\lambda,\kappa)$-almost-geodesic $\gamma$ for $\koba_\OM$ included 
in $W_2$\,---\,that is almost everywhere
non-stationary\,---\,is, upto a reparametrization, a 
$(\lambda^2,\wt{\kappa})$-almost-geodesic for $\koba_U$.	
\end{lemma}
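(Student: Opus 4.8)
The plan is to prove that $\gamma$, in its \emph{given} parametrization, is already a $(\lambda^2,\wt{\kappa})$-geodesic for $\koba_U$ in the sense of \Cref{D:geodesic}, and then to read off the statement by reparametrization. Indeed, since $\gamma$ is almost everywhere non-stationary, \Cref{prp:drvtv_nwh_van_rprm_unt_spd} provides a $\dkoba_U$-unit-speed reparametrization $\wt{\gamma}$ of $\gamma$ (recall that $U$, being a subdomain of the hyperbolic domain $\OM$, is itself hyperbolic), and \Cref{prp:rprm_unt_spd_ag_ag} --- applied with $U$, $\lambda^2$, $\wt{\kappa}$ in the roles of $\OM$, $\lambda$, $\kappa$ --- then yields that $\wt{\gamma}$ is a $(\lambda^2,\wt{\kappa})$-almost-geodesic for $\koba_U$, which is exactly the claim. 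Note first that $\gamma$ actually takes values in $W_1\cap\OM\subseteq U$: the inclusions $W_2\Subset(W_1\cup\bdy\OM)\setminus\clos{\bdy W_1\cap\OM}$ and $W_1\Subset(U\cup\bdy\OM)\setminus\clos{\bdy U\cap\OM}$ force $W_2\cap\OM\subseteq W_1\cap\OM\subseteq U$, and $\gamma$ lies in $W_2$.

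So the task is: for arbitrary parameter values $s\le t$ in the domain of $\gamma$, bound $\ell^{\dkoba}_U(\gamma|_{[s,t]})$ above by $\lambda^2\koba_U(\gamma(s),\gamma(t))+\wt{\kappa}$ with $\wt{\kappa}$ independent of $\gamma,s,t$. First I would apply \Cref{lmm:refined_Roy_loc_lemm} to the open set $W_1\cap\OM\subseteq U$ (permissible since $\koba_\OM(W_1,\OM\setminus U)>0$ is among our standing assumptions) to get $L_0<\infty$ with $\dkoba_U(z;v)\le(1+L_0 e^{-\koba_\OM(z,\OM\setminus U)})\dkoba_\OM(z;v)$ for all $z\in W_1\cap\OM$. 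Integrating this along $\gamma|_{[s,t]}$ and using the almost-geodesic conditions $\dkoba_\OM(\gamma;\gamma')\le\lambda$ a.e.\ and $t-s\le\lambda\koba_\OM(\gamma(s),\gamma(t))+\lambda\kappa$, one gets
\[
\ell^{\dkoba}_U(\gamma|_{[s,t]}) \le \lambda^2\koba_\OM(\gamma(s),\gamma(t)) + \lambda^2\kappa + \lambda L_0\!\int_s^t\! e^{-\koba_\OM(\gamma(\tau),\OM\setminus U)}\,d\tau,
\]
and since $\koba_\OM\le\koba_U$, the whole matter reduces to bounding $\int_s^t e^{-\koba_\OM(\gamma(\tau),\OM\setminus U)}\,d\tau$ by an absolute constant.

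This last estimate is where the real work lies, and I expect it to be the main obstacle. The first ingredient is a topological observation: for every $z\in W_1\cap\OM$ one has $\koba_\OM(z,\OM\setminus U)\ge\koba_\OM(z,U\setminus W_1)$, because any curve in $\OM$ from $z$ to a point of $\OM\setminus U$ must exit $W_1$, and its first exit point lies in $\bdy W_1\cap\OM$, which is contained in $U\setminus W_1$ (since $\clos{W_1}$ is disjoint from $\clos{\bdy U\cap\OM}$, forcing $\bdy W_1\cap\OM\subseteq U$); taking infima over such curves and then over endpoints gives the inequality. One must be careful here because $W_1$ need not literally surround $W_2$, so the argument has to pass through the exit point rather than invoke any enclosure property. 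Combining this with \Cref{lmm:lb_dist_set_OM_upld} --- applied with an arbitrary fixed $o\in U$, furnishing constants $C,L<\infty$ --- gives $\koba_\OM(\gamma(\tau),\OM\setminus U)\ge\tfrac1C\koba_\OM(\gamma(\tau),o)-\tfrac LC$ for $\tau\in[s,t]$. Writing $f(\tau):=\koba_\OM(\gamma(\tau),o)$ and choosing $\tau_0\in[s,t]$ where $f$ is minimal, the two estimates $f(\tau)\ge f(\tau_0)$ and $f(\tau)\ge\koba_\OM(\gamma(\tau),\gamma(\tau_0))-f(\tau_0)\ge\tfrac1\lambda|\tau-\tau_0|-\kappa-f(\tau_0)$ average (via $\max(A,B)\ge\tfrac12(A+B)$) to the crucial bound $f(\tau)\ge\tfrac{1}{2\lambda}|\tau-\tau_0|-\tfrac\kappa2$, which is uniform --- in particular independent of the possibly large quantity $f(\tau_0)$. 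Hence
\[
\int_s^t e^{-\koba_\OM(\gamma(\tau),\OM\setminus U)}\,d\tau \le e^{L/C}e^{\kappa/2C}\int_{-\infty}^{\infty}e^{-|u|/(2\lambda C)}\,du = 4\lambda C\,e^{(L+\kappa/2)/C},
\]
a constant depending only on $\lambda,\kappa$ and the data $\OM,U,W_1$. Feeding this back shows $\gamma$ is a $(\lambda^2,\wt{\kappa})$-geodesic for $\koba_U$ with, say, $\wt{\kappa}:=\lambda^2\kappa+4\lambda^2 L_0 C\,e^{(L+\kappa/2)/C}$, and the reparametrization step from the first paragraph completes the argument.
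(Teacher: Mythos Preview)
Your proof is correct and follows essentially the same approach as the paper: both use \Cref{lmm:refined_Roy_loc_lemm} to bound $\dkoba_U$ in terms of $\dkoba_\OM$, reduce to estimating $\int e^{-\koba_\OM(\gamma(\tau),\OM\setminus U)}\,d\tau$, pass to $\koba_\OM(\gamma(\tau),o)$ via \Cref{lmm:lb_dist_set_OM_upld} and the path-exit argument, and then use the minimizer $\tau_0$ together with the almost-geodesic lower bound to control the exponent linearly in $|\tau-\tau_0|$. The only difference is cosmetic: the paper first reparametrizes $\gamma$ to be $\dkoba_\OM$-unit-speed (so that $\dkoba_\OM(\gamma;\gamma')=1$ and the curve becomes a $(\lambda^2,\lambda^2\kappa)$-geodesic for $\koba_\OM$) before carrying out the length estimate, whereas you work directly with the given parametrization using $\dkoba_\OM(\gamma;\gamma')\le\lambda$ and $|s-t|\le\lambda\koba_\OM(\gamma(s),\gamma(t))+\lambda\kappa$; this is a mild streamlining that yields slightly different but equivalent constants.
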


\begin{proof}
Fix $o\in U$. 
By Lemma~\ref{lmm:lb_dist_set_OM_upld}, there exists $C<\infty$ such that
for every $z\in W_2$ 
\begin{equation} \label{eqn:lb_kob_dist_to_set}
 \koba_\OM(z,U\setminus W_1) \geq (1/C)\koba_\OM(z,o)-C.
  \end{equation}
Also, by the fact that $\koba_\OM(W_1,\OM\setminus U)>0$ and 
Result~\ref{lmm:refined_Roy_loc_lemm}, we may suppose that the $C$ above is so large that
for every $z\in W_1$ and $v\in\C^d$ 
\begin{equation} \label{eqn:loc_glob_kob_met_ineq_sharp}
 \dkoba_U(z;v)\leq (1+Ce^{-\koba_\OM(z,\OM\setminus U)})
  \dkoba_\OM(z;v).
   \end{equation}

Note (recall \Cref{rmk:geod_props}) that $\gamma$ is a $(\lambda^2, \lambda^2\kappa)$-geodesic with respect to $\koba_\OM$.
Using Proposition~\ref{prp:drvtv_nwh_van_rprm_unt_spd}, we can reparametrize $\gamma$ such that
it is $\dkoba_\OM$-unit-speed. We continue to call this reparametrization $\gamma$ and observe that it may not be a 
$(\lambda,\kappa)$-almost-geodesic with respect to $\koba_\OM$ but it remains a $(\lambda^2, \lambda^2\kappa)$-geodesic
with respect to $\koba_\OM$.
Therefore, for all $s, t\in [a, b]$, we have
\begin{equation} \label{eqn:param_dsts_ineq_koba_gamma}
  (1/\lambda^2)|s-t|-\lambda^2\kappa \leq 
  \koba_\OM(\gamma(s),\gamma(t))\leq |s-t| \ \text{ and}
\end{equation}
\begin{equation} \label{eqn:kob_met_eq_gamma}
  \dkoba_\OM(\gamma(t);\gamma'(t))=1  \   \text{for a.e. } t\in [a,b].
\end{equation}
     
\noindent{\bf Claim.} There exists a $\wt{\kappa}>0$ such that 
$\gamma$ is a $(\lambda^2, \wt{\kappa})$-geodesic for $\koba_U$, i.e.,
for all $s,t\in [a,b]$ with $s\leq t$ we have
\begin{equation*}
 \ell_U(\gamma|_{[s,t]}) \leq 
  \lambda^2\koba_U(\gamma(s),\gamma(t)) + \wt{\kappa}.
   \end{equation*}
   
\noindent Note
\begin{equation*}
\ell_U(\gamma|_{[s,t]}) = \int_s^t \dkoba_U(\gamma(\tau);\gamma'(\tau)) d\tau \leq 
\int_s^t \big( 1 + C e^{-\koba_\OM(\gamma(\tau),\OM\setminus U)} \big) d\tau,
\end{equation*}
where we have used the fact that $\gamma$ is contained in $W_2$, 
\eqref{eqn:loc_glob_kob_met_ineq_sharp}, and \eqref{eqn:kob_met_eq_gamma}.
Therefore
\begin{align} \label{eqn:ub_ell_U_gamma_all_but_int}
\ell_U(\gamma|_{[s,t]}) &\leq (t-s) + C \int_s^t e^{-\koba_\OM(\gamma(\tau),\OM\setminus U)} 
d\tau \notag\\
&\leq \lambda^2\koba_\OM(\gamma(s),\gamma(t))+\lambda^4\kappa+C\int_s^t 
e^{-\koba_\OM(\gamma(\tau),\OM\setminus U)}d\tau,
\end{align}
where, to write the last inequality, we have used \eqref{eqn:param_dsts_ineq_koba_gamma}.
Now, using the fact that the Kobayashi distance is given by infimizing over the
lengths of paths, for all $z\in W_2$, we have 
$\koba_\OM(z,\OM\setminus W_1) = \koba_\OM(z,U\setminus W_1)$.
Furthermore, obviously, $\koba_\OM(z,\OM\setminus U) \geq \koba_\OM(z,\OM\setminus W_1)$.
Therefore, for all $z\in W_2$, we have
\begin{equation*}
 \koba_\OM(z,\OM\setminus U) \geq \koba_\OM(z,U\setminus W_1).
  \end{equation*}
 As $\gamma$ is contained in $W_2$,
we use \eqref{eqn:lb_kob_dist_to_set} to write
\begin{equation*}
 \koba_\OM(\gamma(\tau),U\setminus W_1) \geq (1/C) 
  \koba_\OM(\gamma(\tau),o) - C \ \ \forall\,\tau\in [a,b].
   \end{equation*}
Combining the last two inequalities above, for all $\tau\in [a,b]$, we have
\begin{equation} \label{eqn:flb_kob_dist_gammapoint_comp}
 \koba_\OM(\gamma(\tau),\OM\setminus U) \geq (1/C) 
  \koba_\OM(\gamma(\tau),o)-C.
   \end{equation}
Choose $\tau_0\in [a,b]$ such that $\koba_\OM(\gamma(\tau_0),o)=\min_{\tau\in [a,b]}
\koba_\OM(\gamma(\tau),o)$. Now note, by the choice of $\tau_0$, that
\begin{equation*}
 \koba_\OM(\gamma(\tau),o) \geq 
  \koba_\OM(\gamma(\tau),\gamma(\tau_0))-\koba_\OM(\gamma(\tau_0),o) \geq
   \koba_\OM(\gamma(\tau),\gamma(\tau_0))-\koba_\OM(\gamma(\tau),o)\ \ \forall\,\tau\in [a,b]. 
     \end{equation*}
Thus for all $\tau\in [a,b]$, by \eqref{eqn:param_dsts_ineq_koba_gamma}, we have
\begin{equation*}
2\koba_\OM(\gamma(\tau),o) \geq 
\koba_\OM(\gamma(\tau),\gamma(\tau_0)) \geq (1/\lambda^2)|\tau-\tau_0|-\lambda^2\kappa.
\end{equation*}
Therefore, by 
\eqref{eqn:flb_kob_dist_gammapoint_comp} and the above,
\begin{equation*}
 \koba_\OM(\gamma(\tau),\OM\setminus U) \geq 
  (1/2C\lambda^2)|\tau-\tau_0|-(\lambda^2\kappa/2C)-C \ \ \forall\,\tau\in [a,b].
   \end{equation*}
Note that in the inequality above it is $\tau_0$ that depends on $\gamma$; $\lambda$, $\kappa$ and $C$
are completely independent of $\gamma$. Therefore
\begin{align*}
\int_s^t e^{-\koba_\OM(\gamma(\tau),\OM\setminus U)} d\tau &\leq 
\int_s^t e^{-\frac{1}{2C\lambda^2}|\tau-\tau_0|+\frac{\lambda^2\kappa}{2C}+C} d\tau  \\
&\leq \int_{-\infty}^{\infty}
e^{-\frac{1}{2C\lambda^2}|\tau-\tau_0|+\frac{\lambda^2\kappa}{2C}+C} d\tau = 4C\lambda^2
e^{\frac{\lambda^2\kappa}{2C}+C}.
\end{align*}
Hence, from \eqref{eqn:ub_ell_U_gamma_all_but_int} we have
\begin{align*}
\ell_U(\gamma|_{[s,t]}) &\leq \lambda^2\koba_\OM(\gamma(s),\gamma(t)) + \lambda^2\kappa + 
4C^2 \lambda^2 e^{\frac{\lambda^2\kappa}{2C}+C} \\
&= \lambda^2 \koba_\OM(\gamma(s),\gamma(t)) + \wt{\kappa} \\
&\leq \lambda^2 \koba_U(\gamma(s),\gamma(t)) + \wt{\kappa},
\end{align*}
where the last inequality follows from the Kobayashi-distance-decreasing property of the
inclusion map. This establishes the claim. Appealing to Proposition~\ref{prp:rprm_unt_spd_ag_ag}
we can again reparametrize $\gamma$ so that $\gamma$ is a 
$(\lambda^2,\wt{\kappa})$-almost-geodesic with respect to $\koba_U$. Since $\gamma$ was 
arbitrary, the proof is complete.
\end{proof}

Now we prove a result that shows that local visibility implies global visibility. Roughly
speaking, the result says that if a hyperbolic domain is such that every sequence
converging to every boundary point that is not in a small exceptional subset of the 
boundary has a {\em subsequence} that is contained in a {\em sub-domain} (depending on the
original sequence) that satisfies the $\lambda^4$-visibility property on the ``relative 
boundary'' (see below), then the domain itself satisfies the $\lambda$-visibility property. 
It is noteworthy that we do not need to assume that the boundary points have neighbourhoods 
whose intersections with the domain are connected.

\begin{theorem} \label{thm:loc_vis_imp_glob_vis_ntrnlzd_gen}
Suppose that $\OM\subset\C^d$ is a hyperbolic domain
and let $S\subset \bdy\OM$ be a totally disconnected set. 
Given $\lambda\geq 1$, suppose for every $p\in\bdy\OM \setminus S$ and 
every sequence $(x_n)_{n\geq 1}$ in $\OM$ converging
to $p$, there exist a sub-domain $U\subset\OM$,
a subsequence $(x_{k_n})_{n\geq 1}$ of $(x_n)_{n\geq 1}$
and an $r>0$ such that
\begin{itemize}
\item  $p\in (\bdy U\cap\bdy\OM) \setminus \clos{\bdy U\cap\OM}$;
\smallskip

\item every two distinct points of $(\bdy U \cap \bdy \OM) \setminus \clos{\bdy U \cap \OM}$ possess 
the $\lambda^4$-visibility property with respect to $\koba_U$;
\smallskip

\item for every $n$, $x_{k_n}\in U$ and 
\smallskip

\item $\koba_\OM(B(p;r) \cap U,\OM\setminus U)>0$. 
\end{itemize}
Then $\OM$ is a $\lambda$-visibility domain.
\end{theorem}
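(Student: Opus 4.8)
\emph{The plan} is to reduce, via \Cref{thm-totally disconnected}, to a statement about pairs of genuine boundary points lying off $S$, and then to argue by contradiction, transplanting the offending almost-geodesics into the subdomain $U$ supplied by the hypothesis by means of the ``upload'' machinery of Lemmas~\ref{lmm:vsb_sets_loc_met_upld}--\ref{lmm:ag_wrt_glob_ag_wrt_loc_upld}.

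\smallskip

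\emph{Step 1 (reduction and set-up).} Fix $\lambda\geq 1$ and $\kappa>0$. By \Cref{thm-totally disconnected} it suffices to prove that every pair of distinct points $p,q\in\bdy\OM\setminus S$ possesses the $(\lambda,\kappa)$-visibility property for $\koba_\OM$; as $\kappa>0$ is arbitrary, this yields that $\OM$ is a $\lambda$-visibility domain. Suppose not. Then there are distinct $p,q\in\bdy\OM\setminus S$ and a sequence of $(\lambda,\kappa)$-almost-geodesics $\gamma_n:[a_n,b_n]\to\OM$ for $\koba_\OM$ with $\gamma_n(a_n)\to p$, $\gamma_n(b_n)\to q$, that eventually avoids every compact subset of $\OM$. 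Each $\gamma_n$ is a $(\lambda^2,\lambda^2\kappa)$-geodesic (\Cref{rmk:geod_props}), and since $\range(\gamma_n)$ is compact in $\OM$, successive applications of \Cref{prp:alm-geod_apprx_nonstnry} (with $0<\epsilon_n\le\min\{1,1/n\}$), \Cref{prp:drvtv_nwh_van_rprm_unt_spd} and \Cref{prp:rprm_unt_spd_ag_ag} replace $\gamma_n$ by a non-stationary, $\dkoba_\OM$-unit-speed curve which is a $(\lambda^2,\kappa')$-almost-geodesic for $\koba_\OM$, where $\kappa':=\lambda^2(\kappa+1)$, whose image lies in the $\epsilon_n/2$-neighbourhood of $\range(\gamma_n)$ and whose endpoints are within $\epsilon_n/2$ of those of $\gamma_n$. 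These curves still eventually avoid every compact subset of $\OM$ and their endpoints still converge to $p$ and $q$; relabel them $\gamma_n$.

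\smallskip

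\emph{Step 2 (invoke the hypothesis and choose nested balls).} Apply the hypothesis to $(\gamma_n(a_n))_n\to p\in\bdy\OM\setminus S$: after passing to a subsequence (relabelled as the full sequence) we obtain a subdomain $U\subset\OM$ and $r>0$ with $p\in(\bdy U\cap\bdy\OM)\setminus\clos{\bdy U\cap\OM}$, with every two distinct points of $(\bdy U\cap\bdy\OM)\setminus\clos{\bdy U\cap\OM}$ possessing the $\lambda^4$-visibility property for $\koba_U$, with $\gamma_n(a_n)\in U$ for all $n$, and with $\koba_\OM(B(p;r)\cap U,\OM\setminus U)>0$. Pick radii $r>R_1>R_2>R_3>\rho>R_4>0$ satisfying $R_1<\min\{\|q-p\|,\ \distance(p,\clos{\bdy U\cap\OM})\}$, and put $B:=B(p;R_1)\cap U$, $A:=B(p;R_2)\cap U$, $W_1:=B(p;\rho)\cap U$ and $W_2:=B(p;R_4)\cap U$. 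An elementary check (using only the choice of $R_1$ and that $p\notin\clos{\bdy U\cap\OM}$) gives, for $0<s\leq R_1$, that $\bdy(B(p;s)\cap U)\cap\OM\subset\{\|z-p\|=s\}\cap U$; from this one deduces that $B\Subset(U\cup\bdy\OM)\setminus\clos{\bdy U\cap\OM}$ with $\koba_\OM(B,\OM\setminus U)>0$, that $A\Subset(B\cup\bdy\OM)\setminus\clos{\bdy B\cap\OM}$, that $W_1\Subset(U\cup\bdy\OM)\setminus\clos{\bdy U\cap\OM}$ and $W_2\Subset(W_1\cup\bdy\OM)\setminus\clos{\bdy W_1\cap\OM}$, and that a curve starting at a point of $U$ cannot leave $U$ while it remains in $B(p;R_1)$.

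\smallskip

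\emph{Step 3 (transplant a portion and conclude).} For $n$ large, $\gamma_n(a_n)\in W_2\subset B(p;R_3)$; set $s_n:=\inf\{t>a_n:\|\gamma_n(t)-p\|=R_3\}$, which is finite and $<b_n$ since $\gamma_n(b_n)\to q$ and $\|q-p\|>R_3$. By Step 2, $\gamma_n([a_n,s_n))\subset B(p;R_3)\cap U$, $\|\gamma_n(s_n)-p\|=R_3$ and $\gamma_n(s_n)\in U$, so $\delta_n:=\gamma_n|_{[a_n,s_n]}$ is a non-stationary $(\lambda^2,\kappa')$-almost-geodesic for $\koba_\OM$ contained in $A$ that begins in $W_2$ and ends in $U\setminus W_1$. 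Applying \Cref{lmm:ag_wrt_glob_ag_wrt_loc_upld} with $(W_1,W_2)$ there taken to be $(B,A)$ and parameters $(\lambda^2,\kappa')$\,---\,its hypotheses (those of \Cref{lmm:lb_dist_set_OM_upld}, hence of \Cref{lmm:vsb_sets_loc_met_upld}) following from Step 2 and the $\lambda^4$-visibility of $U$\,---\,produces a $\wt\kappa>0$, independent of $n$, such that each $\delta_n$ has a reparametrization $\delta_n'$, with the same image, that is a $(\lambda^4,\wt\kappa)$-almost-geodesic for $\koba_U$. Then \Cref{lmm:vsb_sets_loc_met_upld}, applied with $(W_1,W_2)$ there equal to $(W_1,W_2)$, parameter $\lambda^4$, and $\kappa=\wt\kappa$, yields a compact $K\subset U$ meeting every $(\lambda^4,\wt\kappa)$-almost-geodesic for $\koba_U$ that starts in $W_2$ and ends in $U\setminus W_1$. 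Hence $\delta_n'\cap K\neq\emptyset$, i.e.\ $\gamma_n([a_n,s_n])\cap K\neq\emptyset$, for all large $n$, contradicting the fact that $(\gamma_n)_n$ eventually avoids every compact subset of $\OM$. This contradiction proves the theorem. The technical heart is Step 2: one must choose the nested sets $W_2\Subset W_1\Subset A\Subset B$ (each ``a small ball around $p$ intersected with $U$'') so that \emph{all} the compact-containment hypotheses of Lemmas~\ref{lmm:vsb_sets_loc_met_upld} and \ref{lmm:ag_wrt_glob_ag_wrt_loc_upld} hold at once while the relevant sub-arc of $\gamma_n$ sits inside $A$ yet exits $W_1$; the other point demanding care is the parameter bookkeeping ($\lambda$ to $\lambda^2$ to $\lambda^4$) and the uniformity of $\wt\kappa$ in $n$.
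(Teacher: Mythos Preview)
Your argument is correct and follows essentially the same strategy as the paper's proof: both obtain a contradiction by restricting the offending $(\lambda,\kappa)$-almost-geodesics to a small region near a point of $\bdy\OM\setminus S$, then use the chain of Lemmas~\ref{lmm:vsb_sets_loc_met_upld}--\ref{lmm:ag_wrt_glob_ag_wrt_loc_upld} to convert them into $(\lambda^4,\wt\kappa)$-almost-geodesics for $\koba_U$ that must meet a fixed compact. The only differences are organisational\,---\,you invoke \Cref{thm-totally disconnected} up front (so you work at $p$ itself rather than at an auxiliary $p_0$ found via \Cref{rmk:cnvg_outside_tot_disc}), you perform the non-stationarisation once globally rather than after restricting, and you close with \Cref{lmm:vsb_sets_loc_met_upld} instead of a direct appeal to the visibility hypothesis; none of this changes the substance.
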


\begin{proof}
Assume, to get a contradiction, that $\OM$ is not a $\lambda$-visibility domain.
Then there exist $\kappa>0$, points $p,q\in \bdy\clos{\OM}^{End}$, $p\neq q$,
and a sequence $\gamma_n : [a_n,b_n] \to \OM$ of $(\lambda,\kappa)$-almost-geodesics 
with respect to $\koba_\OM$, such that $(\gamma_n(a_n))_{n\geq 1}$, 
$(\gamma_n(b_n))_{n\geq 1}$ converge to $p$, $q$ respectively, and such that 
$(\gamma_n)_{n\geq 1}$ eventually avoids every compact subset of $\OM$.
By \Cref{rmk:cnvg_outside_tot_disc}, there exist a sequence 
$(t_n)_{n\geq 1}$ with $a_n\leq t_n\leq b_n$ and a point $p_0\in \bdy\OM\setminus S$ 
such that $\gamma_n(t_n)\to p_0$ as $n\to\infty$. We may also suppose that $p_0\neq p,q$.
By hypothesis, there exist a domain $U\subset\OM$, a subsequence 
$\big(\gamma_{k_n}(t_{k_n})\big)_{n\geq 1}$, and an $r>0$, such that
\begin{itemize}
\item $p_0\in(\bdy U\cap\bdy\OM) \setminus \clos{\bdy U \cap \OM}$;
\smallskip

\item any two distinct points of $(\bdy U \cap \bdy \OM) \setminus \clos{\bdy U \cap \OM}$ possess the 
$\lambda^4$-visibility property with respect to $\koba_U$;
\smallskip

\item for every $n$, $\gamma_{k_n}(t_{k_n})\in U$; and 
\smallskip

\item $\koba_\OM\big(B(p_0;r)\cap U,\OM\setminus U\big)>0$.
\end{itemize}
Since $p_0\notin\clos{\bdy U\cap\OM}$, we may choose $r_1>0$ such that $\clos{B}(p_0;r_1)\cap
\clos{\bdy U \cap \OM} = \emptyset$. 
We may also suppose that $r_1$ is so small
that $\koba_\OM\big(B(p_0;r_1)\cap U,\OM\setminus U\big)>0$ 
and that $p,q\notin \clos{B}(p_0;r_1)$. Write $W_1\defeq B(p_0;r_1)\cap U$,
$W_2\defeq B(p_0;r_1/2)\cap U$ and $W_3\defeq B(p_0;r_1/3)\cap U$. 
Then $p_0 \in \bdy W_3 \cap \bdy \OM$, and 
$\clos{\wt{V}}\subset (\wt{U}\cup\bdy\OM)\setminus\clos{\bdy\wt{U}\cap\OM}$,
where $(\wt{V},\wt{U})$ is
any one of the pairs $(W_3,W_2)$, $(W_2,W_1)$ or $(W_1,U)$.
\smallskip
	
Note that, for every $n$ large enough, and hence, without loss of generality, for every
$n$, $\gamma_{k_n}(t_{k_n})\in W_3$.
Note also that, for all $n$, there exists $s<t_{k_n}$ such that 
$\gamma_{k_n}(s)\notin W_3$. This is because we may assume that
$\gamma_{k_n}(a_{k_n})$ is very close to $p$, and consequently 
$\gamma_{k_n}(a_{k_n}) \notin \clos{B}(p_0;r_1)$, whence, a fortiori,
$\gamma_{k_n}(a_{k_n})\notin W_3$.	
For every $n$, write
\begin{equation*}
s_n \defeq \inf\{t\in [a_{k_n},b_{k_n}] \mid 
\gamma_{k_n}\big((t,t_{k_n}]\big) \subset W_3\}.
\end{equation*}
Then the infimum above is actually a minimum, we have 
$\gamma_{k_n}\big((s_n,t_{k_n}]\big)\subset W_3$, and $\gamma_{k_n}(s_n)\in
\bdy W_3 \cap \OM \subset \bdy B(p_0;r_1/3)\cap U$.	
By the compactness of $\bdy B(p_0;r_1/3)$, we may suppose, without loss of generality, 
that $(\gamma_{k_n}(s_n))_{n\geq 1}$ converges to some
point $x_0\in \clos{\bdy B(p_0;r_1/3)\cap U}$. 
Since $(\gamma_n)_{n\geq 1}$ eventually avoids every compact subset of $\OM$, 
it follows that $x_0\in\bdy\OM$. 
Consequently, $x_0\in \bdy B(p_0;r_1/3)\cap
\bdy U\cap\bdy\OM\subset (\bdy U\cap\bdy\OM) \setminus \clos{\bdy U\cap\OM}$. In 
particular, $x_0\neq p_0$.
\smallskip

For every $n$ choose $\epsilon_n>0$ so small that (1) $\epsilon_n \searrow 0$, (2) 
$\lambda^2\epsilon_1<1$, and 
\[(3)\; Nbd_{\C^d}(\gamma_{k_n}([s_n,t_{k_n}]),\epsilon_n)\Subset B(p_0;r_1/2)\cap U=W_2.\]
Note for each $n$, $\gamma_{k_n}|_{[s_n,t_{k_n}]}$ is a $(\lambda^2, \lambda^2\kappa)$-geodesic
for $\koba_\OM$. 
Using Proposition~\ref{prp:alm-geod_apprx_nonstnry}, choose for every $n\in\posint$ an
absolutely continuous curve $\wt{\gamma}_n:[s_n,t_{k_n}]\to\OM$
such that $\wt{\gamma}_n$ is a $(\lambda^2,\lambda^2\epsilon_n+\lambda^2\kappa)$-geodesic for 
$\koba_\OM$, such that $\wt{\gamma}_n$ is almost everywhere non-stationary, and such
that 
\[ 
 \sup_{t\in [s_n,t_{k_n}]} \|\wt{\gamma}_n(t)-\gamma_{k_n}(t)\| < \epsilon_n. 
  \]
Note each $\wt{\gamma}_n$ is a $(\lambda^2,\lambda^2\kappa+1)$-geodesic for
$\koba_\OM$ such that $\wt{\gamma}_n(s_n)\to x_0$ and
$\wt{\gamma}_n(t_{k_n})\to p_0$.
Using Proposition~\ref{prp:rprm_unt_spd_ag_ag}, we reparametrize each $\wt{\gamma}_n$ to
get an absolutely continuous, $\dkoba_\OM$-unit-speed curve $\theta_n:[0,L_n]\to\OM$ that
is a $(\lambda^2,\lambda^2\kappa+1)$-almost-geodesic for $\koba_\OM$. One also has
$\theta_n(0)=\wt{\gamma}_n(s_n)\to x_0$ and $\theta_n(L_n)=\wt{\gamma}_{n}(t_{k_n})\to p_0$.	
Now consider the sequence $(\theta_n)_{n\geq 1}$ of $(\lambda^2,\lambda^2\kappa+1)$-almost-geodesics 
with respect to $\koba_\OM$. Each of them is almost everywhere non-stationary and included 
in $W_2$. By the properties that $W_1$ and $W_2$ satisfy, we may invoke 
Lemma~\ref{lmm:ag_wrt_glob_ag_wrt_loc_upld} to conclude that there exists $\wt{\kappa}>0$ 
such that, for every $n$, $\theta_n$ is a $(\lambda^4,\wt{\kappa})$-almost-geodesic with 
respect to $\koba_U$. ($\wt{\kappa}$ depends only on $\lambda, \kappa$, $U$ and $r_1$.)
\smallskip
	
So consider the sequence $\theta_n$ of $(\lambda^4,\wt{\kappa})$-almost-geodesics with 
respect to $\koba_U$. The two sequences of its endpoints, $(\theta_n(0))_{n\geq 1}$ and 
$(\theta_n(L_n))_{n\geq 1}$ converge to $x_0$ and $p_0$, respectively,
which are {\em distinct} points of $(\bdy U \cap \bdy\OM) \setminus \clos{\bdy U \cap \OM}$.
Therefore, by hypothesis, there exists a compact subset $K$ of $U$ such that,
for every $n$, $\theta_n\big([0,L_n]\big)\cap K\neq\emptyset$. But then it is obvious, by
how the $\theta_n$ have been constructed, that there also exists a compact subset $\wt{K}$
of $\OM$ such that, for every $n$, $\mathsf{ran}(\gamma_{k_n})\cap \wt{K}\neq \emptyset$, 
which is a contradiction to our starting assumption. This contradiction completes the proof.
\end{proof}

\begin{theorem} \label{thm:loc_vis_imp_glob_vis_ntrnlzd}
Let {\bf (V)} stand for either ``the weak visibility property'' or ``the visibility property''. 
Suppose that $\OM\subset\C^d$ is a hyperbolic domain and let $S\subset \bdy\OM$ be a  
totally disconnected set. Suppose that for every $p\in\bdy\OM \setminus S$ 
and every sequence $(x_n)_{n\geq 1}$ in $\OM$ converging to $p$, there exist a sub-domain 
$U\subset\OM$, a subsequence $(x_{k_n})_{n\geq 1}$ of $(x_n)_{n\geq 1}$ and an $r>0$ such that
\begin{itemize}
\item  $p\in (\bdy U\cap\bdy\OM) \setminus \clos{\bdy U\cap\OM}$;
\smallskip
	
\item every two distinct points of $(\bdy U \cap \bdy \OM) \setminus \clos{\bdy U \cap \OM}$ 
possess {\bf (V)} with respect to 
$\koba_U$;
\smallskip
		
\item for every $n$, $x_{k_n}\in U$ and 
\smallskip
		
\item $\koba_\OM(B(p;r) \cap U,\OM\setminus U)>0$. 
\end{itemize}
Then $\OM$ possesses {\bf (V)} with respect to $\koba_\OM$, i.e., $\OM$ is either a weak
visibility domain or a visibility domain.
\end{theorem}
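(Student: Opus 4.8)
The plan is to obtain this statement as a direct corollary of the quantitative result already established, \Cref{thm:loc_vis_imp_glob_vis_ntrnlzd_gen}, by feeding it a suitably chosen value of the parameter $\lambda$ and by noticing that the hypothesis here imposed on the relative boundary $(\bdy U\cap\bdy\OM)\setminus\clos{\bdy U\cap\OM}$ of the local patch $U$ is at least as strong as the $\lambda^4$-visibility hypothesis demanded in \Cref{thm:loc_vis_imp_glob_vis_ntrnlzd_gen}. The single bookkeeping point to bear in mind is that the triple $(U,(x_{k_n})_{n\geq 1},r)$ produced by the hypothesis of the present theorem is furnished \emph{uniformly}, with no reference to $\lambda$; hence the same data may be supplied to \Cref{thm:loc_vis_imp_glob_vis_ntrnlzd_gen} for whichever $\lambda\geq 1$ is convenient.

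First I would treat the case in which \textbf{(V)} is ``the weak visibility property''. By \Cref{dfn:lambda-vis-vis-wvis-pair-points} and \Cref{dfn:vis-wvis-dom}, weak visibility \emph{is} the $1$-visibility property. So I would apply \Cref{thm:loc_vis_imp_glob_vis_ntrnlzd_gen} with $\lambda=1$: then $\lambda^4=1$, and the condition it requires on the relative boundary of $U$\,---\,that every two distinct points there possess the $1$-visibility property with respect to $\koba_U$\,---\,coincides word for word with the weak-visibility hypothesis we are given, while its three remaining bullet conditions are identical to those assumed here. \Cref{thm:loc_vis_imp_glob_vis_ntrnlzd_gen} then yields that $\OM$ is a $1$-visibility domain, i.e.\ a weak visibility domain.

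Next I would treat the case in which \textbf{(V)} is ``the visibility property''. Here I would fix an arbitrary $\lambda\geq 1$ and argue that $\OM$ is a $\lambda$-visibility domain; since $\lambda$ is arbitrary, \Cref{dfn:vis-wvis-dom} then gives that $\OM$ is a visibility domain. For this fixed $\lambda$, and for each $p\in\bdy\OM\setminus S$ and each sequence $(x_n)_{n\geq 1}$ in $\OM$ converging to $p$, the hypothesis of the present theorem produces $U$, $(x_{k_n})_{n\geq 1}$ and $r>0$ with the four listed properties. In particular every two distinct points of $(\bdy U\cap\bdy\OM)\setminus\clos{\bdy U\cap\OM}$ possess the full visibility property with respect to $\koba_U$, so a fortiori (using \Cref{dfn:lambda-vis-vis-wvis-pair-points} with the exponent taken to be $\lambda^4$) they possess the $\lambda^4$-visibility property with respect to $\koba_U$. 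Thus all four hypotheses of \Cref{thm:loc_vis_imp_glob_vis_ntrnlzd_gen} hold for this $\lambda$, and it outputs that $\OM$ is a $\lambda$-visibility domain. Letting $\lambda\geq 1$ range over all its values completes the proof.

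I do not expect a genuine obstacle in this last reduction: all the analytic content\,---\,the reparametrization lemmas of \Cref{SS:Repar}, the Kobayashi-separation lemmas of \Cref{SS:MR} and \Cref{SS:BSP}, the localization estimates leading to \Cref{lmm:ag_wrt_glob_ag_wrt_loc_upld}, and the exhaustion/Hausdorff-limit diagonal argument\,---\,is already carried out within \Cref{thm:loc_vis_imp_glob_vis_ntrnlzd_gen}. The only thing that needs care is the quantifier bookkeeping highlighted above: because the local data $(U,(x_{k_n}),r)$ does not depend on $\lambda$, it may be reused unchanged for every $\lambda$ in the visibility case, and the ``$\lambda^4$'' that appears in \Cref{thm:loc_vis_imp_glob_vis_ntrnlzd_gen} is harmlessly absorbed by the strength of the \textbf{(V)}-hypothesis.
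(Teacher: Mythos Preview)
Your proposal is correct and follows essentially the same approach as the paper: apply \Cref{thm:loc_vis_imp_glob_vis_ntrnlzd_gen} with $\lambda=1$ in the weak visibility case, and in the visibility case fix an arbitrary $\lambda\geq 1$, note that full visibility of pairs implies $\lambda^4$-visibility, invoke \Cref{thm:loc_vis_imp_glob_vis_ntrnlzd_gen}, and then let $\lambda$ range. Your explicit remark that the local data $(U,(x_{k_n}),r)$ is produced independently of $\lambda$ is a nice clarification of the quantifier order, but otherwise the argument matches the paper's proof.
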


\begin{proof}
If {\bf (V)} stands for ``the weak visibility property'', then appealing to Theorem~\ref{thm:loc_vis_imp_glob_vis_ntrnlzd_gen} with $\lambda=1$
shows that $\OM$ is a weak visibility domain. 
\smallskip

If, on the other hand, {\bf (V)} stands for ``the visibility property'', then, for every
$\lambda\geq 1$, we appeal to Theorem~\ref{thm:loc_vis_imp_glob_vis_ntrnlzd_gen} to conclude
that $\OM$ is a $\lambda$-visibility domain. Since $\lambda\geq 1$ is arbitrary, it follows
that $\OM$ is a visibility domain.
\end{proof}

\begin{remark} \label{rmk:BSP_assmp_suff}
In the theorem above, we could have dropped the assumption involving $r$ (in the fourth bullet point) and
instead assumed that $\OM$ satisfies ${\rm BSP}$, as we do in the theorem below.
\end{remark}

\begin{theorem} \label{thm:loc_vis_imp_glob_vis_xtrnl}
Let {\bf (V)} stand for either ``the weak visibility property'' or ``the visibility property''.
Suppose that $\OM\subset\C^d$ is a hyperbolic domain that satisfies ${\rm BSP}$ and let $S\subset\bdy\OM$
be a totally disconnected set. Suppose that, for
every $p\in\bdy\OM\setminus S$, there exists a neighbourhood $U$ of $p$ in $\C^d$ such that $U\cap\OM$ 
has only finitely many connected components, say $V_1,\dots,V_m$, and such that for each $j=1\dots m$, 
each pair of distinct points of $\bdy V_j \cap \bdy\OM$ satisfies {\bf (V)} with respect to $\koba_{V_j}$. 
Then $\OM$ satisfies {\bf (V)} with respect to $\koba_\OM$, i.e., $\OM$ is either a visibility domain or
a weak visibility domain.
\end{theorem}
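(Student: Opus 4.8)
The plan is to verify the hypotheses of \Cref{thm:loc_vis_imp_glob_vis_ntrnlzd} and then simply invoke it; the fourth bullet point there---the separation $\koba_\OM(B(p;r)\cap U,\OM\setminus U)>0$, which \Cref{thm:loc_vis_imp_glob_vis_ntrnlzd} takes as a hypothesis---will be supplied here from the ${\rm BSP}$ assumption by means of \Cref{lmm:kob_sep_intrnl_dom} (cf. \Cref{rmk:BSP_assmp_suff}). Thus it suffices to show: for every $p\in\bdy\OM\setminus S$ and every sequence $(x_n)_{n\geq 1}$ in $\OM$ converging to $p$, there exist a sub-domain $V\subset\OM$, a subsequence $(x_{k_n})_{n\geq 1}$ of $(x_n)_{n\geq 1}$ and an $r>0$ meeting the four bullet points of \Cref{thm:loc_vis_imp_glob_vis_ntrnlzd}.

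First I would set up the sub-domain $V$. Fix $p\in\bdy\OM\setminus S$ and let $U$ be an open neighbourhood of $p$ in $\C^d$ as in the hypothesis, with $U\cap\OM=V_1\sqcup\cdots\sqcup V_m$ its finitely many connected components; each $V_j$ is open in $\C^d$ (since $\C^d$ is locally connected), is a sub-domain of $\OM$, is hyperbolic (being contained in the hyperbolic domain $\OM$), and, by hypothesis, every pair of distinct points of $\bdy V_j\cap\bdy\OM$ possesses {\bf (V)} with respect to $\koba_{V_j}$. Given $(x_n)_{n\geq 1}$ in $\OM$ with $x_n\to p$, we have $x_n\in U\cap\OM$ for all large $n$, so---\emph{this is the one place the finiteness hypothesis enters}---by the pigeonhole principle there is an index $j_0$ and a subsequence $(x_{k_n})_{n\geq 1}$ lying entirely in $V\defeq V_{j_0}$. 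The third bullet point ($x_{k_n}\in V$ for all $n$) holds by construction. For the first, since $V$ is a connected component of $U\cap\OM$ it is relatively closed there, so $\clos{V}\cap(U\cap\OM)=V$; hence $\bdy V\cap\OM\subseteq\OM\setminus U$, and as $U$ is open this gives $\clos{\bdy V\cap\OM}\subseteq\C^d\setminus U$, whence $p\notin\clos{\bdy V\cap\OM}$ because $p\in U$. Also $x_{k_n}\in V$ and $x_{k_n}\to p$ with $p\in\bdy\OM$ (so $p\notin V$) force $p\in\clos{V}\setminus V=\bdy V$; thus $p\in(\bdy V\cap\bdy\OM)\setminus\clos{\bdy V\cap\OM}$. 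The second bullet point is immediate: $(\bdy V\cap\bdy\OM)\setminus\clos{\bdy V\cap\OM}\subseteq\bdy V\cap\bdy\OM$, and {\bf (V)} with respect to $\koba_V$ already holds for every pair of distinct points of the larger set.

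It remains to produce $r>0$ with $\koba_\OM(B(p;r)\cap V,\OM\setminus V)>0$; this is the crux, and the only place ${\rm BSP}$ is needed. I would pick $r>0$ so small that $\clos{B(p;r)}\subseteq U$. Then $W\defeq B(p;r)\cap V$ is a non-empty (since $x_{k_n}\to p$) open subset of $V$ with $\clos{W}$ compact and $\clos{W}\subseteq\clos{B(p;r)}\subseteq U$; every point of $\clos{W}$ is a limit of points of $V$, hence lies in $\clos{V}$, and cannot lie in $\bdy V\cap\OM$ (which is disjoint from $U$), so $\clos{W}\subseteq V\cup\bdy\OM$, while $\clos{W}\cap\clos{\bdy V\cap\OM}\subseteq U\cap(\C^d\setminus U)=\emptyset$. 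Hence $W\Subset(V\cup\bdy\OM)\setminus\clos{\bdy V\cap\OM}$. Since $\OM$ satisfies ${\rm BSP}$ and $\bdy V\cap\bdy\OM\subseteq\bdy\OM$, the $\liminf$-hypothesis of \Cref{lmm:kob_sep_intrnl_dom} holds for the sub-domain $V$; that lemma then yields $\koba_\OM(W,\OM\setminus V)>0$, the fourth bullet point. With all four verified, \Cref{thm:loc_vis_imp_glob_vis_ntrnlzd} gives that $\OM$ possesses {\bf (V)} with respect to $\koba_\OM$. I expect the main obstacle to be the bookkeeping in this last step: one must keep $\clos{\bdy V\cap\OM}$ uniformly away from $U$ so that a genuine ball-neighbourhood $W$ of $p$ satisfies the hypotheses of \Cref{lmm:kob_sep_intrnl_dom}; everything else is routine once $V$ is chosen.
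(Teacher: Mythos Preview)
Your proposal is correct and follows essentially the same approach as the paper: both proofs reduce to \Cref{thm:loc_vis_imp_glob_vis_ntrnlzd}, select by pigeonhole a component $V=V_{j_0}$ containing a subsequence, and use ${\rm BSP}$ together with \Cref{lmm:kob_sep_intrnl_dom} to obtain the separation $\koba_\OM(B(p;r)\cap V,\OM\setminus V)>0$. You supply more detail than the paper in verifying the first bullet and the relative-compactness condition $W\Subset(V\cup\bdy\OM)\setminus\clos{\bdy V\cap\OM}$, but the argument is the same.
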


\begin{proof}
We will invoke Theorem~\ref{thm:loc_vis_imp_glob_vis_ntrnlzd}. 
We verify that the hypotheses are satisfied. Consider a sequence $(x_n)_{n\geq 1}$ of points of
$\OM$ converging to an arbitrary point $p\in\bdy\OM\setminus S$. By hypothesis, there exists a 
neighbourhood $U$ of $p$ such that $U\cap\OM$ has finitely many components, say $V_1,\dots,V_m$, 
and such that, for every $j=1,\dots,m$, every pair of
distinct points of $\bdy V_j\cap\bdy\OM$ possesses {\bf (V)} with respect to $\koba_{V_j}$.
Since $x_n$ converges to $p$, since $U$ is a neighbourhood of $p$ and since
\[ U\cap\OM = V_1 \cup \dots \cup V_m, \]
there exists $j\in \{1,\dots,m\}$ such that $V_j$ contains $x_n$ for infinitely many $n$. We
may suppose, without loss of generality, that $V_1$ contains $x_n$ for infinitely many $n$;
this implies that there is a subsequence $x_{k_n}$ of $x_n$ lying in $V_1$. Note that $p\in
(\bdy V_1\cap\bdy\OM)\setminus \clos{\bdy V_1\cap\OM}$. 
Secondly, again by hypothesis, every pair of distinct points of
$\bdy V_1\cap\bdy\OM$ possesses {\bf (V)} with respect to $\koba_{V_1}$. Thirdly, if we choose
$r_0>0$ so small that $B(p;r_0)\cap\clos{\bdy V_1\cap\OM}=\emptyset$, it follows from 
Lemma~\ref{lmm:kob_sep_intrnl_dom} that $\koba_\OM(B(p;r_0)\cap V_1,\OM\setminus V_1)>0$. Thus all the 
hypotheses of Theorem~\ref{thm:loc_vis_imp_glob_vis_ntrnlzd} are
satisfied, with $V_1$ playing the role of the $U$ in the statement of the theorem. 
Consequently, by that theorem, $\OM$ possesses {\bf (V)} with respect to $\koba_\OM$.
\end{proof}

\subsection{Global visibility implies local visibility}\label{SS:glob_impl_loc}

In this subsection, we shall prove that, given a subdomain $U$ of $\OM$, for every 
$\lambda\geq 1$, $\lambda$-visibility with respect to $\koba_\OM$ implies 
$\lambda$-visibility with respect to $\koba_U$ (note that this is in contrast to the results 
in the previous subsection, which showed only that local $\lambda^4$-visibility implies 
global $\lambda$-visibility). This is achieved by showing that every 
$(\lambda, \kappa)$-almost geodesic with respect to $\koba_U$ is a 
$(\lambda, \widetilde{\kappa})$-almost geodesic with respect to $\koba_\OM$  for some 
$\widetilde{\kappa}>0$. At the heart of this latter result is a localization lemma
\Cref{lmm:loc_kob_dist_vis_glob_met_assmp} that says
\[
\koba_U\leq \koba_\OM+C
\]
on an appropriate open subset $W$ of $U$. This result is very similar to (and
inspired by) \cite[Theorem~1.3]{ADS2023} but it is in fact more general (it applies 
to a broader range of 
situations and eschews the technical but essential connectedness assumption made in 
\cite[Theorem~1.3]{ADS2023}, exactly as described at the beginning of 
subsection~\ref{SS:loc_implies_glob}). In the sense that this lemma deals with
arbitrary {\em sub-domains} $U$ of a given domain $\OM$ rather than with an 
``external'' intersection of the form $U\cap\OM$, it is {\em intrinsic}, as pointed
out in the abstract. We begin with stating two lemmas that 
are analogues of Lemma~\ref{lmm:vsb_sets_loc_met_upld} and
Lemma~\ref{lmm:dst_set_rdcd_pt_loc_upld} for $\koba_\OM$. We skip the proofs as 
they are closely analogous to those of the aforementioned lemmas. 
\begin{lemma} \label{lmm:visib_sets_glob_met}
Suppose that $\OM\subset\C^d$ is a hyperbolic domain and $U\subset \Omega$ is
a subdomain such that, given $\lambda\geq 1$, every pair of distinct points of 
$\bdy U \cap \bdy\OM$ satisfies the 
$\lambda$-visibility property with respect to $\koba_\OM$. 
Let $W_2\subset W_1\subset U$ be open sets such
that $W_2\Subset (W_1 \cup \bdy\OM) \setminus \clos{\bdy W_1 \cap \OM}$.
Then, for every $\kappa>0$, there exists a compact subset $K$ of $\OM$ such that every
$(\lambda,\kappa)$-almost-geodesic for $\koba_\OM$ that starts in $W_2$ and ends in
$\OM\setminus W_1$ intersects $K$. 
\end{lemma}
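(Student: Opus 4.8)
The plan is to argue by contradiction, following the proof of Lemma~\ref{lmm:vsb_sets_loc_met_upld} almost verbatim, with two systematic changes: the Kobayashi distance $\koba_U$ is everywhere replaced by $\koba_\OM$ (so the compact exhaustion one fixes at the start is an exhaustion of $\OM$, not of $U$), and the visibility input is now the hypothesis that pairs of distinct points of $\bdy U\cap\bdy\OM$ enjoy the $\lambda$-visibility property with respect to $\koba_\OM$. So suppose, to get a contradiction, that the conclusion fails for some $\kappa=\kappa_0>0$: for every compact $K\subset\OM$ there is a $(\lambda,\kappa_0)$-almost-geodesic for $\koba_\OM$ starting in $W_2$, ending in $\OM\setminus W_1$, and avoiding $K$. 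Fixing a compact exhaustion $(K_n)_{n\geq1}$ of $\OM$, pick for each $n$ such a curve $\gamma_n\colon[a_n,b_n]\to\OM$ avoiding $K_n$; then $(\gamma_n)_{n\geq1}$ eventually avoids every compact subset of $\OM$.

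Next, set $t_n\defeq\sup\{t\in[a_n,b_n]\mid\gamma_n([a_n,t])\subset W_1\}$. Since $\gamma_n(a_n)\in W_2\subset W_1$ with $W_1$ open, and $\gamma_n(b_n)\in\OM\setminus W_1$, one checks in the usual way that $a_n<t_n<b_n$, that $\gamma_n([a_n,t_n))\subset W_1$, and that $\gamma_n(t_n)\in\bdy W_1\cap\OM$. The bookkeeping step is the next one. Because $\clos{W_2}$ is compact (part of the hypothesis $W_2\Subset(W_1\cup\bdy\OM)\setminus\clos{\bdy W_1\cap\OM}$), and because $\clos{W_1}$ is relatively compact as in the setting of Lemma~\ref{lmm:vsb_sets_loc_met_upld}, we may pass to a subsequence so that $\gamma_n(a_n)\to\xi\in\clos{W_2}$ and $\gamma_n(t_n)\to\zeta\in\clos{\bdy W_1\cap\OM}$. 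As $(\gamma_n)_{n\geq1}$ eventually avoids every compact subset of $\OM$, neither $\xi$ nor $\zeta$ can lie in $\OM$, so $\xi,\zeta\in\bdy\OM$; and since $\clos{W_2}$ and $\clos{\bdy W_1\cap\OM}$ are both contained in $\clos U$ while $\xi,\zeta\notin U$ (as $U\subset\OM$ is open and $\xi,\zeta\in\bdy\OM$), we get $\xi,\zeta\in\bdy U\cap\bdy\OM$. Finally, the hypothesis on $W_2$ says precisely that $\clos{W_2}\cap\clos{\bdy W_1\cap\OM}=\emptyset$, so $\xi\neq\zeta$.

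To finish, note that $(\gamma_n|_{[a_n,t_n]})_{n\geq1}$ is a sequence of $(\lambda,\kappa_0)$-almost-geodesics for $\koba_\OM$ whose two endpoint sequences converge to the distinct points $\xi,\zeta$ of $\bdy U\cap\bdy\OM$; by the assumed $\lambda$-visibility of the pair $\{\xi,\zeta\}$ with respect to $\koba_\OM$ (used with the parameter $\kappa_0$), there is a compact $K\subset\OM$ met by every $\gamma_n|_{[a_n,t_n]}$, hence by every $\gamma_n$. This contradicts the eventual avoidance property, proving the lemma.

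There is, frankly, no substantial obstacle here: the argument is structurally identical to that of Lemma~\ref{lmm:vsb_sets_loc_met_upld} and the content is the same. The only points demanding attention are (i) confirming that the limit points $\xi$ and $\zeta$ genuinely land in $\bdy U\cap\bdy\OM$ and are distinct --- the topological bookkeeping carried out above --- and (ii) having enough relative compactness of $W_1$ and $W_2$ to extract convergent subsequences of $(\gamma_n(a_n))_{n\geq1}$ and $(\gamma_n(t_n))_{n\geq1}$, which is why one wants $W_1$ (and not merely $W_2$) to be relatively compact, exactly as in the hypotheses of Lemma~\ref{lmm:vsb_sets_loc_met_upld}.
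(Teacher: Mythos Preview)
Your proposal is correct and is precisely the approach the paper intends: the authors explicitly omit the proof, remarking that it is ``closely analogous'' to that of Lemma~\ref{lmm:vsb_sets_loc_met_upld}, and your write-up is exactly that analogue with $\koba_U$ replaced by $\koba_\OM$ and the compact exhaustion taken in $\OM$. Your closing caveat about the relative compactness of $W_1$ is well taken---the stated hypotheses of Lemma~\ref{lmm:visib_sets_glob_met} do not explicitly include it, but it is present in Lemma~\ref{lmm:vsb_sets_loc_met_upld} and holds in every application the paper makes (see the choice of $W_1$ in the proof of Lemma~\ref{lmm:loc_kob_dist_vis_glob_met_assmp}).
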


\begin{lemma} \label{lmm:dst_to_set_red_to_pt_glob}
        Suppose that $\OM\subset\C^d$ is a hyperbolic domain and $U\subset \Omega$ is
	a subdomain such that, given $\lambda\geq 1$, every pair of distinct points 
	of $\bdy U \cap \bdy\OM$ satisfies the 
	$\lambda$-visibility property with respect to $\koba_\OM$. 
	Then for any open set $W\subset U$\,---\,such
	that $W \Subset (U \cup \bdy\OM) \setminus \clos{\bdy U \cap \OM}$\,---\,and any 
        $o\in \OM$, there exists $L<\infty$ such that
	\begin{equation*}
         \koba_\OM(z,o) \leq \koba_\OM(z,\OM\setminus U)+L, \ \ \forall\,z\in W. 
	\end{equation*} 
\end{lemma}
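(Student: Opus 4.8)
The plan is to adapt, essentially line by line, the proof of Lemma~\ref{lmm:dst_set_rdcd_pt_loc_upld}, with $\koba_U$ replaced throughout by $\koba_\OM$ and with Lemma~\ref{lmm:visib_sets_glob_met} playing the role of Lemma~\ref{lmm:vsb_sets_loc_met_upld}. If $U=\OM$ there is nothing to prove, since then $\OM\setminus U=\emptyset$ and $\koba_\OM(z,\OM\setminus U)=+\infty$ by convention; so assume $U\subsetneq\OM$.

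First I would produce the relevant compact set. Apply Lemma~\ref{lmm:visib_sets_glob_met} with the pair of open sets $W_2:=W$ and $W_1:=U$ and with $\kappa:=1$: the required inclusion $W_2\Subset(W_1\cup\bdy\OM)\setminus\clos{\bdy W_1\cap\OM}$ is exactly the standing hypothesis $W\Subset(U\cup\bdy\OM)\setminus\clos{\bdy U\cap\OM}$, and the visibility assumption on pairs of distinct points of $\bdy U\cap\bdy\OM$ with respect to $\koba_\OM$ is available by hypothesis. (Here one uses the elementary facts that a $(1,\kappa)$-almost-geodesic is also a $(\lambda,\kappa)$-almost-geodesic for every $\lambda\geq1$, so that $\lambda$-visibility entails $1$-visibility, and that every $(1,\kappa')$-almost-geodesic with $0<\kappa'\leq1$ is a $(1,1)$-almost-geodesic.) This yields a single compact set $K\subset\OM$ such that \emph{every} $(1,1/n)$-almost-geodesic for $\koba_\OM$, for every $n\in\posint$, that starts in $W$ and ends in $\OM\setminus U$ meets $K$.

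Next I would run the approximation argument. Put $L:=\sup_{w\in K}\koba_\OM(o,w)$, which is finite because $K$ is compact and $\koba_\OM$ is continuous on $\OM\times\OM$. Fix $z\in W$; for each $n\in\posint$ choose $w_n\in\OM\setminus U$ with $\koba_\OM(z,w_n)\leq\koba_\OM(z,\OM\setminus U)+1/n$ and a $(1,1/n)$-almost-geodesic $\gamma_n\colon[a_n,b_n]\to\OM$ for $\koba_\OM$ joining $z$ to $w_n$. Since $\gamma_n(a_n)=z\in W$ and $\gamma_n(b_n)=w_n\in\OM\setminus U$, there is $c_n\in[a_n,b_n]$ with $\gamma_n(c_n)\in K$. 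Using that $\gamma_n$ is a $(1,1/n)$-geodesic (\Cref{rmk:geod_props}), the additivity of Kobayashi length, the inequality $\ell_\OM\big(\gamma_n|_{[a_n,c_n]}\big)\geq\koba_\OM(z,\gamma_n(c_n))$, and the triangle inequality $\koba_\OM(z,\gamma_n(c_n))\geq\koba_\OM(z,o)-\koba_\OM(o,\gamma_n(c_n))\geq\koba_\OM(z,o)-L$, one obtains
\[
\koba_\OM(z,\OM\setminus U)+\frac1n\ \geq\ \koba_\OM(z,w_n)\ \geq\ \ell_\OM(\gamma_n)-\frac1n\ \geq\ \koba_\OM(z,o)-L-\frac1n ,
\]
whence $\koba_\OM(z,\OM\setminus U)\geq\koba_\OM(z,o)-L-2/n$ for every $n$. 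Letting $n\to\infty$ gives $\koba_\OM(z,o)\leq\koba_\OM(z,\OM\setminus U)+L$, and since $z\in W$ was arbitrary this is the assertion. I do not anticipate any real obstacle here: the whole argument is a transcription of the already-established local statement, and the only points needing a moment's care are the degenerate case $U=\OM$ and the observation that one fixed compact set (obtained by applying Lemma~\ref{lmm:visib_sets_glob_met} with $\kappa=1$) serves simultaneously for all the approximating $(1,1/n)$-almost-geodesics.
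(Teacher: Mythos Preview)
Your proof is correct and follows precisely the route the paper indicates: the paper omits the proof, stating only that it is ``closely analogous'' to that of Lemma~\ref{lmm:dst_set_rdcd_pt_loc_upld}, and you have carried out exactly that transcription, replacing $\koba_U$ by $\koba_\OM$ and invoking Lemma~\ref{lmm:visib_sets_glob_met} in place of Lemma~\ref{lmm:vsb_sets_loc_met_upld}. Your explicit handling of the degenerate case $U=\OM$ and your remark that $\lambda$-visibility entails $1$-visibility (so that a single compact $K$ from Lemma~\ref{lmm:visib_sets_glob_met} with $\kappa=1$ captures all the $(1,1/n)$-almost-geodesics) are small clarifications that the paper leaves implicit.
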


Now we prove the intrinsic localization result for the Kobayashi distance mentioned 
above.
The proof of this result is more or less the same as that of \cite[Theorem~1.3]
{ADS2023},
with the (minor) necessary modifications to handle the more general situation to which the present result applies.

\begin{lemma} \label{lmm:loc_kob_dist_vis_glob_met_assmp}
	Suppose that $\OM\subset\C^d$ is a hyperbolic domain and $U\subset \Omega$ is
	a subdomain such that, given $\lambda\geq 1$, every pair of distinct points of $\bdy U \cap \bdy\OM$ satisfies the 
	$\lambda$-visibility property with respect to $\koba_\OM$. 
	Then for any open set $W\subset U$ such
	that $W \Subset (U \cup \bdy\OM) \setminus \clos{\bdy U \cap \OM}$,
	there exists a $C<\infty$ such that
	\begin{equation*}
         \koba_U(z,w) \leq \koba_\OM(z,w)+C \ \ \text{ for all $z,w\in W$}.
	\end{equation*}
\end{lemma}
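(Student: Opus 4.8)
The plan is to adapt the argument proving \cite[Theorem~1.3]{ADS2023}: for $z,w\in W$ I will estimate the $\koba_U$-length of a near-optimal almost-geodesic of $\OM$ from $z$ to $w$ by its (easier-to-control) $\koba_\OM$-length, using visibility to force such an almost-geodesic to stay near a fixed compact subset of $U$. First I set up the machinery. Since $\lambda$-visibility of a pair of boundary points with respect to $\koba_\OM$ forces $(1,\epsilon)$-visibility of that pair for every $\epsilon>0$, Lemma~\ref{lmm:w_visib_kob_sep_bdy_pts} shows every pair of distinct points of $\bdy U\cap\bdy\OM$ satisfies ${\rm BSP}$, so Lemma~\ref{lmm:kob_sep_intrnl_dom} applies to $U$. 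I then pick nested open sets $W\Subset W_2\Subset W_1\Subset W_1'\subset U$, with compact closures inside $(U\cup\bdy\OM)\setminus\clos{\bdy U\cap\OM}$ and arranged so that $W_2\Subset(W_1\cup\bdy\OM)\setminus\clos{\bdy W_1\cap\OM}$ and $\clos{W_1}\subset W_1'$ (this is routine: take concentric Euclidean neighbourhoods of $\clos W$). By Lemma~\ref{lmm:kob_sep_intrnl_dom}, $\koba_\OM(W_1',\OM\setminus U)>0$; hence Result~\ref{lmm:refined_Roy_loc_lemm} gives $L<\infty$ with $\dkoba_U(z;v)\le(1+Le^{-\koba_\OM(z,\OM\setminus U)})\dkoba_\OM(z;v)$ for $z\in W_1'$, and, fixing $o\in U$, Lemma~\ref{lmm:dst_to_set_red_to_pt_glob} gives $L_1<\infty$ with $\koba_\OM(z,o)\le\koba_\OM(z,\OM\setminus U)+L_1$ for $z\in W_1'$. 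Finally Lemma~\ref{lmm:visib_sets_glob_met} (with $\kappa=1$) produces a compact $K_0\subset\OM$ met by every $(\lambda,1)$-almost-geodesic of $\OM$ that starts in $W_2$ and ends in $\OM\setminus W_1$; since $\clos{W_1}\cap\OM\subset U$, the set $K_0\cap\clos{W_1}$ is a compact subset of $U$, and I set $C_2\defeq\diam_{\koba_U}(K_0\cap\clos{W_1})$.

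Given $z,w\in W$ and $\epsilon\in(0,1)$, I take a $(1,\epsilon)$-almost-geodesic $\gamma\colon[a,b]\to\OM$ joining $z$ and $w$ and, via Propositions~\ref{prp:drvtv_nwh_van_rprm_unt_spd} and \ref{prp:rprm_unt_spd_ag_ag} (and \ref{prp:alm-geod_apprx_nonstnry} to remove stationary portions), may assume it is $\dkoba_\OM$-unit-speed; then $\ell_\OM(\gamma|_{[s,t]})=|s-t|$, $\koba_\OM(\gamma(s),\gamma(t))\ge|s-t|-\epsilon$, and $\ell_\OM(\gamma)=b-a\le\koba_\OM(z,w)+\epsilon$ (Remark~\ref{rmk:geod_props}); with $\epsilon<1$ every restriction of $\gamma$ is also a $(\lambda,1)$-almost-geodesic. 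The recurring estimate is: for any subinterval $[s,t]\subset[a,b]$ with $\gamma([s,t])\subset\clos{W_1}$,
\[ \ell_U\bigl(\gamma|_{[s,t]}\bigr)\ \le\ \ell_\OM\bigl(\gamma|_{[s,t]}\bigr)+4Le^{1/2+L_1}; \]
I prove it exactly as in the proof of Lemma~\ref{lmm:ag_wrt_glob_ag_wrt_loc_upld}\,---\,feed the refined localization inequality into $\ell_U(\gamma|_{[s,t]})=\int_s^t\dkoba_U(\gamma(\tau);\gamma'(\tau))\,d\tau$, use $\dkoba_\OM(\gamma(\tau);\gamma'(\tau))=1$, and bound $\int_s^t e^{-\koba_\OM(\gamma(\tau),\OM\setminus U)}\,d\tau$ by a universal constant after observing that, for $\tau_0\in[s,t]$ minimizing $\koba_\OM(\gamma(\cdot),o)$, the almost-geodesic bound gives $2\koba_\OM(\gamma(\tau),o)\ge|\tau-\tau_0|-\epsilon$ and hence $\koba_\OM(\gamma(\tau),\OM\setminus U)\ge\tfrac12|\tau-\tau_0|-\tfrac\epsilon2-L_1$.

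Now I split into cases. If $\gamma([a,b])\subset W_1$, the recurring estimate on $[a,b]$ gives $\koba_U(z,w)\le\ell_U(\gamma)\le\koba_\OM(z,w)+\epsilon+4Le^{1/2+L_1}$. If instead $\gamma$ leaves $W_1$\,---\,the case where visibility enters\,---\,I set $t_1\defeq\sup\{t:\gamma([a,t])\subset W_1\}$ and $t_2\defeq\inf\{t:\gamma([t,b])\subset W_1\}$, so that $a<t_1<t_2<b$, $\gamma(t_1),\gamma(t_2)\in\bdy W_1\cap\OM$, and $\gamma([a,t_1]),\gamma([t_2,b])\subset\clos{W_1}$. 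The piece $\gamma|_{[a,t_1]}$ starts in $W_2$ and ends in $\OM\setminus W_1$, and $\gamma|_{[t_2,b]}$ does so after reversal, so Lemma~\ref{lmm:visib_sets_glob_met} yields $\tau\in[a,t_1]$ and $\tau'\in[t_2,b]$ with $\gamma(\tau),\gamma(\tau')\in K_0\cap\clos{W_1}\subset U$. Then, by the triangle inequality for $\koba_U$, the recurring estimate on $[a,\tau]$ and on $[\tau',b]$, and $\koba_U(\gamma(\tau),\gamma(\tau'))\le C_2$,
\[ \koba_U(z,w)\ \le\ (\tau-a)+(b-\tau')+C_2+8Le^{1/2+L_1}\ \le\ \ell_\OM(\gamma)+C_2+8Le^{1/2+L_1}\ \le\ \koba_\OM(z,w)+\epsilon+C_2+8Le^{1/2+L_1}, \]
using $(\tau-a)+(b-\tau')\le(t_1-a)+(b-t_2)\le b-a$. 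In both cases $\koba_U(z,w)\le\koba_\OM(z,w)+\epsilon+C_2+8Le^{1/2+L_1}$; letting $\epsilon\downarrow0$ completes the proof with $C\defeq C_2+8Le^{1/2+L_1}$.

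The genuinely nontrivial point\,---\,and the only place the visibility hypothesis is used rather than mere Royden localization\,---\,is the case in which the connecting almost-geodesic wanders out of $W_1$: Lemma~\ref{lmm:visib_sets_glob_met} is precisely what recaptures it inside the fixed compact set $K_0$ so that the three-part decomposition can be carried out. Everything else\,---\,the nesting of the $W_i$ with the correct relative-compactness-modulo-$\bdy\OM$ conditions, the verification that $\clos{W_1}\cap\OM\subset U$ so that $K_0\cap\clos{W_1}$ is a legitimate compact subset of $U$, and the integral estimate\,---\,is routine, the last being a verbatim repetition of a computation already in the proof of Lemma~\ref{lmm:ag_wrt_glob_ag_wrt_loc_upld}.
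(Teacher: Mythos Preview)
Your argument is correct and follows essentially the same route as the paper's: both control the $\koba_U$-length of the portion of a $(1,\epsilon)$-almost-geodesic that stays in the intermediate set $W_1$ via Result~\ref{lmm:refined_Roy_loc_lemm} together with Lemma~\ref{lmm:dst_to_set_red_to_pt_glob}, and invoke Lemma~\ref{lmm:visib_sets_glob_met} to trap the curve in a fixed compact subset of $U$ when it exits; the only cosmetic difference is that the paper argues by contradiction whereas you argue directly. One slip to correct: you cannot have $\clos{W_1}\subset W_1'\subset U$ in the interesting case where $\clos{W}$ meets $\bdy\OM$ (it would force $\clos{W}\subset U$ and render the lemma trivial); what you actually need, and what suffices everywhere since $\gamma$ takes values in $\OM$, is only $\clos{W_1}\cap\OM\subset W_1'$, which is easily arranged by taking $W_1'=N_\delta(\clos{W_1})\cap U$ for small $\delta$.
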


\begin{proof}
	Suppose the conclusion of the lemma is not true. Then, for all $n\in\mathbb{Z}_{+}$,
	there exist points $z_n, w_n\in W$ such that
	\[ 
	\koba_U(z_n,w_n) > \koba_\OM(z_n,w_n) + n. 
	\]
	By the compactness of $\clos{W}$, we may assume, without loss of 
	generality, that $(z_n)_{n\geq 1}$ and $(w_n)_{n\geq 1}$ converge
	to points $\wh{z}$ and $\wh{w}$ of $\clos{W}$, respectively.
	For every $n$, we choose a $(1,1)$-almost-geodesic $\gamma_n : [a_n,b_n]
	\to \OM$ with respect to $\koba_\OM$ joining $z_n$ and $w_n$. Fix a point
	$o\in U$ and let $t^{(0)}_n\in [a_n,b_n]$ be such that
	\[
	\koba_\OM(\gamma_n(t^{(0)}_n),o)=\min_{t\in [a_n,b_n]}\koba_\OM(\gamma_n(t),o) \ \ \forall n\in\mathbb{Z}_{+}.
	\]
	
	Now fix an open set $W_1\subset U$ such that $W\Subset (W_1\cup\bdy\OM)\setminus
	\clos{\bdy W_1\cap\OM}$ and $W_1 \Subset (U \cup \bdy\OM)\setminus \clos{\bdy U \cap \OM}$.
	Depending on the position of $\gamma_n$ relative to $W_1$ two cases arise.
	\smallskip
	
	\noindent {\bf Case~1.} ${\rm ran}(\gamma_n)\subset W_1$ for all but finitely many $n$.
	\smallskip
	
	\noindent We assume, without loss of generality, that $\gamma_n
	\subset W_1$ for {\em all} $n\in\posint$. Note that
	\begin{equation} \label{eqn:leq_def_path_n}
	\koba_U(z_n,w_n) \leq \int_{a_n}^{b_n} \dkoba_U(\gamma_n(t);\gamma'_n(t)) dt \ \ \forall\,n\in\posint.
	\end{equation}
	Lemma~\ref{lmm:kob_sep_intrnl_dom} implies that 
	$\koba_\OM(W_1,\OM\setminus U)>0$, and
	therefore, by Lemma~\ref{lmm:refined_Roy_loc_lemm} there exists $L<\infty$ such that 
	\begin{equation*}
	\dkoba_U(z;v)\leq (1+Le^{-\koba_\OM(z,\OM\setminus U)}) \dkoba_{\OM}(z;v) \ \ \forall\,z\in W_1,\,\forall\,v\in\C^d.
	\end{equation*}
	Since $\gamma_n\subset W_1$ for all $n\in\posint$, the above inequality, together with the fact that each
	$\gamma_n$ is a $(1, 1)$-almost-geodesic implies that
		\begin{align*}
		\dkoba_U(\gamma_n(t);\gamma'_n(t)) &\leq 
		(1+Le^{-\koba_\OM(\gamma_n(t),\OM\setminus U)})\dkoba_{\OM}(\gamma_n(t);\gamma'_n(t))\\
		&\leq 1+Le^{-\koba_\OM(\gamma_n(t),\OM\setminus U)} \ \ \forall\,n\in\posint,\,\text{for a.e. } t\in [a_n,b_n].
		\end{align*}
	This last inequality and \eqref{eqn:leq_def_path_n} implies that for every $n\in\posint$
	\begin{align}
		\koba_U(z_n,w_n) &\leq \int_{a_n}^{b_n} 
		\dkoba_U(\gamma_n(t);\gamma'_n(t)) dt \notag\\
		&\leq (b_n-a_n) + L \int_{a_n}^{b_n} e^{-\koba_\OM(\gamma_n(t),\OM\setminus U)}
		dt \notag\\
		&\leq \koba_\OM(z_n,w_n) + 1 + L \int_{a_n}^{b_n}
		e^{-\koba_\OM(\gamma_n(t),\OM\setminus U)} dt. \label{eqn:k_U_OM_int_rel}
	\end{align}
	(One has $b_n-a_n\leq\koba_\OM(z_n,w_n)+1$ because $\gamma_n:[a_n,b_n]\to W_1$ is a
	$(1,1)$-almost-geodesic with respect to $\koba_\OM$.)
	By Lemma~\ref{lmm:dst_to_set_red_to_pt_glob}, we can assume that $L$ is so large that one
	also has
	\begin{equation} \label{eqn:dist_to_set_pt}
		\koba_\OM(\gamma_n(t),\OM\setminus U)
		\geq \koba_\OM(\gamma_n(t),o)-L, \ \ \forall\,n\in\posint,\,\forall\,t\in [a_n,b_n]. 
	\end{equation}
	Now, the choice of $t^{(0)}_n$, implies that 
	\begin{equation*} \label{eqn:dist_to_pt_eff_lb}
		\koba_\OM(\gamma_n(t),o) \geq \koba_\OM(\gamma_n(t),\gamma_n(t^{(0)}_n)) - 
		\koba_\OM(\gamma_n(t^{(0)}_n),o) \geq \koba_\OM(\gamma_n(t),\gamma_n(t^{(0)}_n)) -
		\koba_\OM(\gamma_n(t),o). 
	\end{equation*}
	This together with the fact that $\gamma_n$ is a $(1,1)$-almost-geodesic with respect to  
	$\koba_\OM$, we get 
	\begin{equation*} \label{eqn:dist_to_pt_fin_lb}
		\koba_\OM(\gamma_n(t),o) \geq (1/2)|t-t^{(0)}_n|-(1/2). 
	\end{equation*}
	The last inequality together with \eqref{eqn:dist_to_set_pt}, 
	implies that
	\begin{align*} \label{eqn:int_e_minus-dist_prel_bd}
		\int_{a_n}^{b_n} e^{-\koba_\OM(\gamma_n(t),\OM\setminus U)} dt &\leq 
		\int_{a_n}^{b_n} e^{-(1/2)|t-t^{(0)}_n|+(1/2)+L} dt = e^{(1/2)+L}
		\int_{a_n}^{b_n} e^{-(1/2)|t-t^{(0)}_n|} dt.\notag \\
		&\leq 4\,e^{(1/2)+L}. 
	\end{align*}
	The above inequality together with \eqref{eqn:k_U_OM_int_rel} implies that
	\begin{equation*}
         \koba_U(z_n,w_n) \leq \koba_\OM(z_n,w_n) + 1 + 4 L 
		e^{(1/2)+L}, \ \ \forall\,n\in\posint.
	\end{equation*}
	But this contradicts our starting assumption.
	\smallskip 
	
	Note that the argument above implies that given $z,w\in W$, if there is a $(1,1)$-almost-geodesic 
	for $\koba_\OM$ joining $z$ and $w$ and contained in $W_1$ 
	then there exists $L<\infty$ such that $\koba_U(z,w)\leq \koba_\OM(z,w) + 1 + 4Le^{(1/2)+L}$.
	\smallskip
	
	\noindent {\bf Case 2.} $\gamma_n\not\subset W_1$ for infinitely many $n$.
	\smallskip
	
	\noindent We may assume, without loss of generality, that $\gamma_n
	\not\subset W_1$ for all $n$. By Lemma~\ref{lmm:visib_sets_glob_met}, there exists a compact subset
	$K$ of $\OM$ such that every $(1,1)$-almost-geodesic for $\koba_\OM$ starting in $W$ and
	ending in $\OM\setminus W_1$ intersects $K$. We may suppose, without loss of generality, that 
	$K\subset U$. For every $n$, we define
	\begin{align*}
		u_n &\defeq \inf\{t\in [a_n,b_n]\mid \gamma_n(t)\notin W_1\}; \\
		v_n &\defeq \sup\{t\in [a_n,b_n]\mid \gamma_n(t)\notin W_1\}.
	\end{align*}
	Then $a_n<u_n\leq v_n<b_n$. Also, for every $n$, $\gamma_n([a_n,u_n))\subset W_1$,
	$\gamma_n((v_n,b_n])\subset W_1$, and $\gamma_n(u_n), \gamma_n(v_n)\in\bdy W_1\cap\OM$.
	The foregoing then implies that $\gamma_n([a_n,u_n])\cap K \neq\emptyset$
	and $\gamma_n([v_n,b_n])\cap K \neq\emptyset$ for every $n$. Choose $s_n\in [a_n,u_n]$
	and $s'_n\in [v_n,b_n]$ such that $\gamma_n(s_n)\in K$ and $\gamma_n(s'_n)\in K$.
	\smallskip
	
	Note by the remark made at the end of the discussion in Case~1, there exists $L<\infty$, such that
	\begin{align*}
		\koba_U(z_n,\gamma_n(s_n)) &\leq \koba_\OM(z_n,\gamma_n(s_n))+1+4Le^{(1/2)+L} \ \ 
		\text{and}\\
		\koba_U(\gamma_n(s'_n),w_n) &\leq \koba_\OM(\gamma_n(s'_n),w_n)+1+4Le^{(1/2)+L}.
	\end{align*}
	Since $K\subset U$ is a compact set, it follows that we may
	increase $L$ further so that we also have
	\[ 
	 \koba_U(z,w)-\koba_\OM(z,w) \leq L,  \ \ \forall\,z,w\in K.
	  \]
	Using the above three inequalities, we get
	\begin{align*} 
		\koba_U(z_n,w_n) &\leq \koba_U(z_n,\gamma_n(s_n)) + \koba_U(\gamma_n(s_n),\gamma_n(s'_n))
		+ \koba_U(\gamma_n(s'_n),w_n) \\
		&\leq \koba_\OM(z_n,\gamma_n(s_n)) + \koba_\OM(\gamma_n(s_n),\gamma_n(s'_n)) +
		\koba_\OM(\gamma_n(s'_n),w_n) + 2 + L + 8Le^{(1/2)+L}. 
	\end{align*} 
	Now, since $z_n,\gamma_n(s_n),\gamma_n(s'_n)$ and $w_n$ all
	lie on the $(1,1)$-almost-geodesic $\gamma_n$, we have
	\[ 
	 \koba_\OM(z_n,\gamma_n(s_n)) + \koba_\OM(\gamma_n(s_n),\gamma_n(s'_n)) +
	\koba_\OM(\gamma_n(s'_n),w_n) \leq \koba_\OM(z_n,w_n) + 6. 
	\]
	Let $M\defeq 8+L+8Le^{(1/2)+L}$ and note that $M$ is determined as soon as $U,W$ and
	$W_1$ are specified. By the foregoing inequality 
	\[ 
	 \koba_U(z_n,w_n) \leq \koba_\OM(z_n,w_n) + M, \ \ \forall\,n\in\posint. 
	 \]
	This shows once again that $\koba_U(z_n,w_n)-\koba_\OM(z_n,w_n)$ is bounded, and yields a
	contradiction.
	\smallskip
	
	So, our starting assumption must be wrong, and so the stated result holds.
\end{proof}

Now we prove that, under suitable hypotheses, $(\lambda,\kappa)$-almost-geodesics with 
respect to the local metric are $(\lambda,\wt{\kappa})$-almost-geodesics with respect to 
the global metric for some $\wt{\kappa}>0$.

\begin{corollary} \label{coro:ag_wrt_loc_ag_wrt_glob}
	Under the same assumptions as in Lemma~\ref{lmm:dst_to_set_red_to_pt_glob}, given $\lambda\geq 1$
	and $\kappa>0$, there exists $\wt{\kappa}>0$ such that every $(\lambda,\kappa)$-almost-geodesic 
	with respect to $\koba_U$ included in $W$ is a $(\lambda,\wt{\kappa})$-almost-geodesic with 
	respect to $\koba_\OM$.
\end{corollary}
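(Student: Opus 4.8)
The plan is to exploit the two-sided comparison between $\koba_U$ and $\koba_\OM$ on $W$. On one side, the inclusion $U\hookrightarrow\OM$ is distance-decreasing, both for the Kobayashi distance and for the Kobayashi--Royden metric, so $\koba_\OM\le\koba_U$ on $U\times U$ and $\dkoba_\OM(z;v)\le\dkoba_U(z;v)$ for all $z\in U$, $v\in\C^d$. On the other side, \Cref{lmm:loc_kob_dist_vis_glob_met_assmp}---which applies precisely because $W\Subset(U\cup\bdy\OM)\setminus\clos{\bdy U\cap\OM}$ and, by hypothesis, every pair of distinct points of $\bdy U\cap\bdy\OM$ satisfies the $\lambda$-visibility property with respect to $\koba_\OM$---furnishes a constant $C<\infty$ with $\koba_U(z,w)\le\koba_\OM(z,w)+C$ for all $z,w\in W$. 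Thus on $W$ the two Kobayashi distances agree up to the single additive constant $C$, and this is the entire engine of the proof.

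Carrying this out: let $\gamma:I\to\OM$ be a $(\lambda,\kappa)$-almost-geodesic for $\koba_U$ with $\gamma(I)\subset W$. Set $\wt{\kappa}\defeq\kappa+C$; I claim $\gamma$ is a $(\lambda,\wt{\kappa})$-almost-geodesic for $\koba_\OM$. Clause (ii) of the definition is immediate: $\gamma$ is already absolutely continuous, and for a.e.\ $t\in I$ one has $\dkoba_\OM(\gamma(t);\gamma'(t))\le\dkoba_U(\gamma(t);\gamma'(t))\le\lambda$ by infinitesimal distance-decrease and the assumption on $\gamma$. For the upper bound in clause (i), for any $s,t\in I$, $\koba_\OM(\gamma(s),\gamma(t))\le\koba_U(\gamma(s),\gamma(t))\le\lambda|s-t|+\kappa\le\lambda|s-t|+\wt{\kappa}$. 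For the lower bound, since $\gamma(s),\gamma(t)\in W$, \Cref{lmm:loc_kob_dist_vis_glob_met_assmp} gives $\koba_U(\gamma(s),\gamma(t))\le\koba_\OM(\gamma(s),\gamma(t))+C$, hence
\[
\koba_\OM(\gamma(s),\gamma(t)) \ \ge\ \koba_U(\gamma(s),\gamma(t))-C \ \ge\ \frac{1}{\lambda}|s-t|-\kappa-C \ =\ \frac{1}{\lambda}|s-t|-\wt{\kappa}.
\]
All clauses of the definition are verified, and since $\gamma$ was arbitrary the corollary follows with $\wt{\kappa}=\kappa+C$ (note $\wt{\kappa}$ depends only on $\lambda,\kappa,U,W$, not on $\gamma$).

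I do not expect a genuine obstacle here: all the substance is already packed into \Cref{lmm:loc_kob_dist_vis_glob_met_assmp}, whose proof does the real work (the two-case analysis according to whether the connecting almost-geodesic remains in a slightly larger set $W_1$ or escapes it, together with the refined Royden localization of \Cref{lmm:refined_Roy_loc_lemm}). The one point requiring a moment's care is that $\gamma$ is assumed to lie \emph{entirely} in $W$, so that the comparison lemma can be applied to every pair of endpoints $\gamma(s),\gamma(t)$ simultaneously with one and the same constant $C$; this is exactly why the hypothesis ``$\gamma$ included in $W$'' appears in the statement, and it is what allows the argument to be this short.
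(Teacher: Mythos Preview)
Your proof is correct and follows exactly the same approach as the paper: invoke \Cref{lmm:loc_kob_dist_vis_glob_met_assmp} to get the two-sided estimate $\koba_\OM\le\koba_U\le\koba_\OM+C$ on $W$, combine with the infinitesimal distance-decrease $\dkoba_\OM\le\dkoba_U$, and read off $\wt{\kappa}=\kappa+C$. The paper's proof is just a terser version of what you wrote.
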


\begin{proof}
	By Lemma~\ref{lmm:loc_kob_dist_vis_glob_met_assmp}, there exists $C<\infty$ such that
	\begin{equation} \label{eqn:kob_U_OM_cmpr}
	\koba_\OM(z,w)\leq\koba_U(z,w)\leq\koba_\OM(z,w)+C, \ \ \forall\,z,w\in W. 
	\end{equation}
	The above together with the fact that $\dkoba_{\OM}(z;v)\leq \dkoba_U(z;v)$ for all $z\in U$ and $v\in\C^d$
	implies the result with $\wt{\kappa}\defeq \kappa+C$. 	
\end{proof}

Now we prove a result that shows that global visibility implies local visibility. Roughly speaking, it says that if we start
out with a (weak) visibility domain then any sub-domain whose boundary intersects that of the original domain in a non-degenerate
way (see below) possesses the (weak) visibility property on the ``relative boundary'', where the latter term has the same meaning
as in the context of Theorem~\ref{thm:loc_vis_imp_glob_vis_ntrnlzd_gen}.

\begin{theorem}\label{thm:glob_vis_loc_vis_gen}
	Suppose $\OM\subset\C^d$ is a hyperbolic domain that satisfies
	 the $\lambda$-visibility property for a given  $\lambda\geq 1$. Let 
	$U\subset\OM$ be a subdomain such that
	$(\bdy U \cap \bdy\OM) \setminus \clos{\bdy U \cap \OM} \neq \emptyset$. Then every
	two distinct points of $(\bdy U \cap \bdy\OM) \setminus \clos{\bdy U \cap \OM}$ satisfy
	the $\lambda$-visibility property with respect to $\koba_U$.
\end{theorem}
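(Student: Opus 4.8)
The plan is to argue by contradiction, using as the main engine \Cref{coro:ag_wrt_loc_ag_wrt_glob}, which upgrades a $(\lambda,\kappa)$-almost-geodesic for $\koba_U$ that lies in a small open set $W$ into a $(\lambda,\wt{\kappa})$-almost-geodesic for $\koba_\OM$. I would fix distinct points $\xi_1,\xi_2\in(\bdy U\cap\bdy\OM)\setminus\clos{\bdy U\cap\OM}$ and a number $\kappa>0$; it suffices to show that the pair $\{\xi_1,\xi_2\}$ has the $(\lambda,\kappa)$-visibility property with respect to $\koba_U$. Assuming it does not, then exactly as in the proof of \Cref{thm-totally disconnected} (passing to countable neighbourhood bases at $\xi_1$ and $\xi_2$ and to a compact exhaustion of $U$) one obtains a sequence of $(\lambda,\kappa)$-almost-geodesics $\gamma_n\colon[a_n,b_n]\to U$ for $\koba_U$ with $\gamma_n(a_n)\to\xi_1$, $\gamma_n(b_n)\to\xi_2$, and such that $(\gamma_n)_{n\geq1}$ eventually avoids every compact subset of $U$.

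Next I would localize at $\xi_1$. Since $\xi_1\notin\clos{\bdy U\cap\OM}$ and $\xi_1\neq\xi_2$, choose $\rho>0$ with $\rho<\|\xi_1-\xi_2\|$ and $\clos{B}(\xi_1;\rho)\cap\clos{\bdy U\cap\OM}=\emptyset$, and set $W\defeq B(\xi_1;\rho/2)\cap U$. A short check shows $W\Subset(U\cup\bdy\OM)\setminus\clos{\bdy U\cap\OM}$: $\clos{W}\subset\clos{B}(\xi_1;\rho/2)$ is compact and disjoint from $\clos{\bdy U\cap\OM}$, and any point of $\clos{W}$ lying on $\bdy U$ must, by the choice of $\rho$, lie in $\bdy U\cap\bdy\OM\subset\bdy\OM$. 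Since $\OM$ is a $\lambda$-visibility domain, every pair of distinct points of $\bdy U\cap\bdy\OM$ has the $\lambda$-visibility property for $\koba_\OM$, which is exactly the hypothesis needed for \Cref{coro:ag_wrt_loc_ag_wrt_glob}; so there is $\wt{\kappa}>0$ such that every $(\lambda,\kappa)$-almost-geodesic for $\koba_U$ contained in $W$ is a $(\lambda,\wt{\kappa})$-almost-geodesic for $\koba_\OM$. Discarding finitely many terms, I may assume $\gamma_n(a_n)\in B(\xi_1;\rho/3)$ and $\gamma_n(b_n)\notin\clos{B}(\xi_1;\rho/3)$ for every $n$; setting $t_n\defeq\sup\{t\in[a_n,b_n]\mid\gamma_n([a_n,t])\subset B(\xi_1;\rho/3)\}$, one gets $a_n<t_n<b_n$, $\gamma_n(t_n)\in\bdy B(\xi_1;\rho/3)$, and $\gamma_n([a_n,t_n])\subset\clos{B}(\xi_1;\rho/3)\cap U\subset W$, so each restriction $\gamma_n|_{[a_n,t_n]}$ is a $(\lambda,\wt{\kappa})$-almost-geodesic for $\koba_\OM$.

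To finish, I would pass to a subsequence so that $\gamma_n(t_n)\to\eta_1\in\bdy B(\xi_1;\rho/3)$; then $\eta_1\neq\xi_1$, and $\eta_1\in\bdy\OM$: if $\eta_1\in\OM$ then, since $\clos{B}(\xi_1;\rho/3)\cap\OM$ is disjoint from $\bdy U$ by the choice of $\rho$ and $\eta_1\in\clos{U}$, one would have $\eta_1\in U$, making $\{\eta_1\}\cup\{\gamma_n(t_n):n\geq1\}$ a compact subset of $U$ that is not eventually avoided --- impossible. The same kind of argument shows that $(\gamma_n|_{[a_n,t_n]})_{n\geq1}$ eventually avoids every compact $K\subset\OM$, since $\widehat{K}\defeq K\cap\clos{B}(\xi_1;\rho/3)\cap\clos{U}$ is a compact subset of $U$ and $\gamma_n([a_n,t_n])\cap K=\gamma_n([a_n,t_n])\cap\widehat{K}$, which is empty for large $n$. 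But $\OM$ being a $\lambda$-visibility domain means the pair of distinct points $\{\xi_1,\eta_1\}$ of $\bdy\clos{\OM}^{End}$ has the $(\lambda,\wt{\kappa})$-visibility property for $\koba_\OM$: there are a compact $K_0\subset\OM$ and neighbourhoods $V_{\xi_1},V_{\eta_1}$ with disjoint closures such that every $(\lambda,\wt{\kappa})$-almost-geodesic for $\koba_\OM$ running from $V_{\xi_1}$ to $V_{\eta_1}$ meets $K_0$; since $\gamma_n(a_n)\to\xi_1$ and $\gamma_n(t_n)\to\eta_1$, for large $n$ the curve $\gamma_n|_{[a_n,t_n]}$ must meet $K_0$, contradicting the line before. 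I expect the only genuinely delicate steps to be bookkeeping --- verifying $W\Subset(U\cup\bdy\OM)\setminus\clos{\bdy U\cap\OM}$ and, above all, the claim that the truncated curves $\gamma_n|_{[a_n,t_n]}$ still eventually avoid every compact subset of the \emph{larger} domain $\OM$; this last point is exactly where the hypothesis $\xi_1\notin\clos{\bdy U\cap\OM}$ is used, as it lets one pass freely between compact subsets of $U$ and of $\OM$ inside $\clos{B}(\xi_1;\rho)$.
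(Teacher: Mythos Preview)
Your proposal is correct and follows essentially the same route as the paper's own proof: argue by contradiction, truncate each $\gamma_n$ at its first exit from a small ball about $\xi_1$, invoke \Cref{coro:ag_wrt_loc_ag_wrt_glob} to upgrade the truncated curves to $(\lambda,\wt{\kappa})$-almost-geodesics for $\koba_\OM$, and then appeal to the $\lambda$-visibility of $\OM$ at the distinct boundary points $\xi_1$ and the subsequential limit of the exit points. The only difference is organisational: the paper, after obtaining a compact $\wt{K}\subset\OM$ that all the truncated curves meet, passes directly to a compact $K\subset U$ (implicitly using that $\clos{B}(\xi_1;\rho)\cap\OM\subset U$), whereas you first prove the truncated curves eventually avoid every compact subset of $\OM$ and then derive the contradiction from the existence of $K_0$. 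Your treatment of the ``delicate'' topological points (why $\eta_1\in\bdy\OM$, why compacts of $\OM$ inside $\clos{B}(\xi_1;\rho/3)\cap\clos{U}$ are already compacts of $U$) is in fact more explicit than the paper's.
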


\begin{proof}
Let $U$ be as above and choose $\zeta\neq\eta$
in $(\bdy U \cap \bdy\OM) \setminus \clos{\bdy U \cap \OM}$. Given $\kappa>0$, let 
$\gamma_n:[a_n, b_n]\lraw U$ be a sequence of $(\lambda,\kappa)$-almost-geodesics 
for $\koba_U$ such that $(\gamma_n(a_n))_{n\geq 1}$ converges to $\zeta$ and 
$(\gamma_n(b_n))_{n\geq 1}$ converges to $\eta$. Furthermore, assume that the sequence 
$(\gamma_n)_{n\geq 1}$ eventually avoids every compact set in $U$.
\smallskip

Since $\zeta\notin\clos{\bdy U\cap\OM}$, we may choose $r>0$ such 
that $\clos{B(\zeta;r)}\cap\clos{\bdy U\cap\OM}=\emptyset$ and such that 
$\eta\notin\clos{B(\zeta;r)}$. Therefore if we write
$W\defeq B(\zeta;r)\cap U$ then $\zeta\in (\bdy W\cap 
\bdy\OM) \setminus \clos{\bdy W\cap\OM}$ and $W\Subset (U\cup\bdy\OM)\setminus
\clos{\bdy U\cap\OM}$. 	
Therefore Lemma~\ref{lmm:kob_sep_intrnl_dom} implies
\begin{equation*} \label{eqn:koba_sep_cond} 
\koba_\OM(W,\OM\setminus U)>0.
\end{equation*}
For every $n$, put
\[ 
t_n \defeq \sup\{t\in [a_n,b_n]\mid \gamma_n([a_n,t])\subset B(\zeta;r)\}.
\]
Since $\eta\notin\clos{B}(\zeta;r)$, it follows that, for every $n$, 
$t_n<b_n$, $\gamma_n(t_n)\in \bdy B(\zeta;r)$ and 
\begin{equation*} \label{eqn:gamma_n_cont_cond} 
\gamma_n([a_n,t_n))\subset W.
\end{equation*}
This together with the choice of $W$\,---\,by appealing to 
Corollary~\ref{coro:ag_wrt_loc_ag_wrt_glob}\,---\,implies that there exists 
$\wt{\kappa}>0$ such that, for every $n$, $\gamma_n|_{[a_n,t_n]}$ is a
$(\lambda,\wt{\kappa})$-almost-geodesic with respect to $\koba_\OM$. We may assume, 
without loss of generality, that $(\gamma_n(t_n))_{n\geq 1}$ converges to some point $\xi\in 
\bdy B(\zeta;r)$. Observe that $\xi\in (\bdy U \cap \bdy\OM)\setminus\clos{\bdy U\cap\OM}$, 
and $\zeta\neq\xi$. Since $\OM$ satisfies the $\lambda$-visibility property and
$\gamma_n|_{[a_n,t_n]}$ is a $(\lambda,\wt{\kappa})$-almost-geodesic with respect to
$\koba_\OM$, there exists a compact subset $\wt{K}$ of $\OM$ such that, for every $n$, 
$\mathsf{ran}\big(\gamma_n|_{[a_n,t_n]}\big) \cap \wt{K} \neq \emptyset$. This implies
there exists a compact subset $K$ of $U$ such that, for every $n$, 
$\mathsf{ran}\big(\gamma_n|_{[a_n,t_n]}\big) \cap K \neq \emptyset$, which contradicts the
assumption that the sequence $(\gamma_n)_{n\geq 1}$ eventually avoids every compact 
subset of $U$. Hence the result. 
\end{proof}

We now state the following theorem, which is an immediate corollary of the one above. We omit the
proof.

\begin{theorem} \label{thm:glob_vis_loc_vis}
Suppose that $\OM\subset\C^d$ is a (weak) visibility domain. Let $U\subset\OM$ be a subdomain 
such that
$(\bdy U \cap \bdy\OM) \setminus \clos{\bdy U \cap \OM} \neq \emptyset$. Then every
two distinct points of $(\bdy U \cap \bdy\OM) \setminus \clos{\bdy U \cap \OM}$ satisfy the
(weak) visibility property with respect to $\koba_U$. 
\end{theorem}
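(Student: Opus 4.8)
The plan is to derive this statement as an immediate formal consequence of \Cref{thm:glob_vis_loc_vis_gen} by unwinding the definitions in \Cref{dfn:vis-wvis-dom} and \Cref{dfn:lambda-vis-vis-wvis-pair-points}. First I would fix a pair of distinct points $\zeta,\eta\in(\bdy U\cap\bdy\OM)\setminus\clos{\bdy U\cap\OM}$ (this set being non-empty by hypothesis); the goal is to show that this pair possesses the relevant visibility property with respect to $\koba_U$.

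I would treat the case in which $\OM$ is a \emph{visibility} domain first. By \Cref{dfn:vis-wvis-dom}, this means that $\OM$ satisfies the $\lambda$-visibility property for \emph{every} $\lambda\geq 1$. So I would fix an arbitrary $\lambda\geq 1$ and apply \Cref{thm:glob_vis_loc_vis_gen} with this value of $\lambda$: since $U\subset\OM$ is a subdomain with $(\bdy U\cap\bdy\OM)\setminus\clos{\bdy U\cap\OM}\neq\emptyset$, that theorem yields that $\zeta,\eta$ satisfy the $\lambda$-visibility property with respect to $\koba_U$, i.e.\ (by \Cref{dfn:lambda-vis-vis-wvis-pair-points}) the $(\lambda,\kappa)$-visibility property with respect to $\koba_U$ for every $\kappa>0$. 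As $\lambda\geq 1$ was arbitrary, $\zeta,\eta$ satisfy the visibility property with respect to $\koba_U$, which is exactly what the theorem asserts in this case.

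For the case in which $\OM$ is a \emph{weak visibility} domain, \Cref{dfn:vis-wvis-dom} says precisely that $\OM$ satisfies the $1$-visibility property. Then I would simply apply \Cref{thm:glob_vis_loc_vis_gen} with $\lambda=1$ to conclude that the pair $\zeta,\eta$ satisfies the $1$-visibility property with respect to $\koba_U$\,---\,which, again by \Cref{dfn:lambda-vis-vis-wvis-pair-points}, is the weak visibility property with respect to $\koba_U$. Since $\zeta,\eta$ were arbitrary distinct points of $(\bdy U\cap\bdy\OM)\setminus\clos{\bdy U\cap\OM}$, this establishes the statement in both cases. I do not expect any genuine obstacle here: all the substantive work has already been done in the proof of \Cref{thm:glob_vis_loc_vis_gen} (in particular the intrinsic localization of \Cref{lmm:loc_kob_dist_vis_glob_met_assmp} and the transfer of almost-geodesics in \Cref{coro:ag_wrt_loc_ag_wrt_glob}), and the only thing to check is the routine bookkeeping that ``(weak) visibility domain'' quantifies over exactly the range of $\lambda$ one needs to feed into that theorem.
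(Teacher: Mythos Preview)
Your proposal is correct and matches the paper exactly: the paper states this theorem as ``an immediate corollary of the one above'' (namely \Cref{thm:glob_vis_loc_vis_gen}) and omits the proof, and your argument is precisely the routine unwinding of the definitions needed to fill in that omission.
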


The proof of Theorem~\ref{thm:glob_vis_iff_loc_vis} is now clear.

\begin{proof}[Proof of Theorem~{\ref{thm:glob_vis_iff_loc_vis}}]
First suppose that $\OM$ is a (weak) visibility domain. To show that it is a local (weak) 
visibility domain, take 
$p\in\bdy\OM$ and a sequence $(x_n)_{n\geq 1}$
in $\OM$ converging to $p$. Choose and fix a bounded neighbourhood $W$ of $p$. Using
\Cref{prp:vsb_nbd_ntrsc_fin}, there exist finitely many components of $W\cap\OM$, say
$V_1,\dots,V_m$, such that, for all $n$, $x_n$ is contained in one of the $V_j$. So we may
suppose, without loss of generality, that $V_1$ contains $x_n$ for infinitely many $n$.
This means there is a subsequence $(x_{k_n})_{n\geq 1}$ of $(x_n)_{n\geq 1}$ lying in $V_1$.
It is easy to see that $p\in (\bdy V_1\cap\bdy\OM)\setminus\clos{\bdy V_1\cap\OM}$. Therefore
we may directly invoke \Cref{thm:glob_vis_loc_vis} to conclude that every two distinct points
of $(\bdy V_1\cap\bdy\OM)\setminus\clos{\bdy V_1\cap\OM}$ possess the (weak) visibility
property. Since $p$ was arbitrary, this shows that $\OM$ is a local (weak) visibility domain.
Note that we did not need the ${\rm BSP}$ assumption for this implication.

Conversely, suppose that $\OM$ is a local (weak) visibility domain. Then, using the ${\rm BSP}$
assumption and \Cref{rmk:BSP_assmp_suff}, we see that we may invoke \Cref{thm:loc_vis_imp_glob_vis_ntrnlzd}
directly to conclude that $\OM$ is a (weak) visibility domain (we take $S\defeq\emptyset$ in
\Cref{thm:loc_vis_imp_glob_vis_ntrnlzd}). 
\end{proof} 

\section{Visibility of planar domains}\label{S:Visibility_Thm}

\subsection{ Conditions 1, 2 and 3}
Given two {\em distinct} points $x_1, x_2 \in \partial \mathbb{D}$,
\begin{enumerate}
	\item $Arc(x_1, x_2)$ and $Arc[x_1, x_2]$ will denote the open arc and 
	the closed arc, respectively, joining $x_1$ and $x_2$ and containing $1$;
	\smallskip
	
	\item $(x_1, x_2)_p$ will denote the bi-geodesic ray in $\unitdisk$ for 
	the hyperbolic distance induced by the
	Poincar{\'e} metric joining the boundary points $x_1$ and $x_2$, and 
	$[x_1, x_2]_p$ will denote $(x_1, x_2)_p\cup\{x_1, x_2\}$;
	\smallskip
	
	\item $Reg(x_1, x_2)$ will denote the open region bounded by
	the arc $Arc[x_1, x_2]$ and the bi-geodesic ray $[x_1, x_2]_p$.
\end{enumerate}

We now introduce three conditions that a hyperbolic planar domain $\OM\subset\C$
may satisfy.
\vspace{0.001mm}

\noindent
{\bf Condition~1.}
$\OM$ satisfies Condition~1 if for every point $p \in \partial 
\Omega $ there exists a topological embedding $\tau_p: \overline{\mathbb{D}} 
\longrightarrow \overline{\Omega}$ such that $\tau_p(\unitdisk)\subset\Omega$ and 
such that for any sequence $(z_n)_{n\geq 1}$ in $\Omega$ converging to 
$p$, the tail of the sequence must lie in $\tau_p(\unitdisk)$. 
\smallskip

It is not difficult to see that the above condition is equivalent to the following: 
for any point $p\in\bdy{\OM}$, there exist an $r>0$ and a topological embedding 
$\tau_p: \overline{\mathbb{D}}\longrightarrow \overline{\Omega}$ such that $\tau_p(\unitdisk)\subset\Omega$ and 
$B(p, r)\cap\OM\subset\tau_p(\unitdisk)$. 
\smallskip

Examples of domains that satisfy Condition~1 include: domains with continuous boundary;
simply connected domains with boundary a Jordan curve; multiply (possibly infinitely)
connected domains each of whose boundary components is a Jordan curve for which there exists
a positive quantity that is a lower bound for the Euclidean distance between any two of
these components.  
\smallskip

\noindent
{\bf Condition~2.}
$\OM$ satisfies Condition~2 if there
exists a closed, totally disconnected set $S \subset 
\bdy\OM$ such that for every $p \in \bdy\OM \setminus S $ and every sequence 
$(z_n)_{n \geq 1}$ in $\OM$ with $z_n \to p  $, there exist a subsequence 
$(z_{k_n})_{n \geq 1}$, an injective 
holomorphic map $\phi: \mathbb{D} \longrightarrow \Omega$ that extends continuously
to $\clos{\unitdisk}$ and 
an arc $Arc(x_1,\,x_2)$ containing $1$ such that 
\begin{enumerate} 
\item $\phi(1)=p$ and $\phi(Arc(x_1, x_2)) \subset \partial \OM$,
\smallskip

\item for all $n \in \posint$, $z_{k_n} \in \phi(Reg(x_1, x_2))$ and 
$\phi^{-1}(z_{k_n}) \to 1$ as $n \to \infty$. 
\end{enumerate}

\begin{remark}\label{Rem:distendpoints}
The existence of $\phi$ and $Arc(x_1,\,x_2)$ also implies that we may, without loss of 
generality, assume that the points $\phi(x_1), \phi(x_2)$ and $\phi(1)$ are pairwise 
distinct points of $\bdy\OM$. 
To see this, consider $\phi_1=\phi-\phi(1)$ which is holomorphic in $\unitdisk$ and extends 
continuously to $\overline{\unitdisk}$. Denoting the zero set of $\phi_1$ by $Z(\phi_1)$, we 
first observe that the arc-length 
measure of $Z(\phi_1)\cap\bdy\unitdisk$ is zero. Choose a point $x_1'\in Arc(x_1,\,x_2)\setminus 
Z(\phi_1)$ and consider $\phi_2=\phi-\phi(x_1')$. The set $Z(\phi_2)\cap\bdy\unitdisk$ has 
arc-length measure zero. 
Therefore, we can choose $x_2'\in Arc(x_1,\,x_2)\setminus\big(Z(\phi_1)\cup Z(\phi_2)\big)$. 
Clearly, $\phi(x_2')\neq\phi(x_1')$, $\phi(x_2')\neq\phi(1)$. It is evident that the points 
$x_1', x_2'$ can be chosen arbitrarily close to $1$. 
\end{remark}

\noindent
{\bf Condition~3.}
$\OM$ satisfies Condition~3 if there
exists a closed, totally disconnected set $S \subset \bdy\OM$ such that for every 
$p \in \bdy\OM 
\setminus S$, for every neighbourhood $U$ of $p$, and for every sequence 
$(z_n)_{n \geq 1}$ in $\OM$ with $z_n \to p$, there exist a subsequence 
$(z_{k_n})_{n \geq 1}$, an injective holomorphic map $\phi: \mathbb{D} 
\longrightarrow \OM$ that extends continuously to 
$\clos{\unitdisk}$, and arcs $Arc(x_1,\,x_2)$, $Arc(y_1,\,y_2)$ containing 
$1$ such that $Arc(x_1,\,x_2)\Subset Arc(y_1,\,y_2)$ and such that:
\begin{enumerate}
\item $\phi(1)=p$, $\phi(Arc(y_1, y_2)) \subset \partial \Omega$, $\phi(Reg(y_1, y_2)) 
\subset U$,
\smallskip

\item for all $n \in \posint$, $z_{k_n} \in \phi(Reg(x_1, x_2))$ and 
$\phi^{-1}(z_{k_n}) \to 1$ as $n \to \infty$.
\end{enumerate}
\smallskip

Observe that if a domain $\OM$ satisfies Condition~3 then it clearly satisfies Condition~2. 
It is not very difficult to see that Condition~2 also implies Condition~3. To see that, 
assume $\OM$ satisfies Condition~2. Choose a point $p\in\bdy\OM\setminus S$, where $S$ 
is as in Condition~2, let $U$ be a neighbourhood of $p$ and $(z_n)_{n\geq 1}$ be a 
sequence in $\OM$ converging to $p$. Let $\phi:\unitdisk\longrightarrow\OM$ be the injective 
holomorphic map provided by Condition~2. Then $\phi^{-1}(U)$ is an open subset of 
$\clos{\unitdisk}$ containing $1$. Therefore there exists a small $r>0$ such that 
$B(1,\,r)\cap\clos{\unitdisk}\subset\phi^{-1}(U)$ and
$B(1,\,r)\cap\bdy\unitdisk\subset Arc(x_1,\,x_2)$, where $Arc(x_1,\,x_2)$ is 
as provided by Condition~2. It is now clear that we can choose points $x'_j, y'_j, j=1,2$, 
in $B(1,\,r)\cap\bdy\unitdisk$ that are close to $1$ such that all the requirements in 
Condition~3 are satisfied. 
\smallskip

Observe that, arguing as described in Remark~\ref{Rem:distendpoints}, we can also assume, 
without loss of generality, that the points $\phi(x_j), \phi(y_j), \phi(1)$ in Condition~3 are all 
distinct for $j=1,2$. 

\begin{remark}
It is clear that if a hyperbolic domain $\OM\subset\C$ satisfies Condition~1, then it
satisfies Condition~2, and hence Condition~3. 
\end{remark}

\begin{theorem}\label{T:cond2ImplVisibility}
Let $\Omega \subset \C$ be a hyperbolic domain
satisfying Condition~2. 
Then $\Omega$ is a visibility domain.
\end{theorem}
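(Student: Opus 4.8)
The plan is to reduce to \Cref{thm:loc_vis_imp_glob_vis_ntrnlzd}, taking {\bf (V)} to be ``the visibility property'' and $S$ to be the closed, totally disconnected set supplied by Condition~2. So fix $p\in\bdy\OM\setminus S$ and a sequence $(x_n)_{n\geq1}$ in $\OM$ with $x_n\to p$. Condition~2, together with \Cref{Rem:distendpoints}, produces a subsequence $(x_{k_n})_{n\geq1}$, an injective holomorphic $\phi:\unitdisk\to\OM$ extending continuously to $\clos{\unitdisk}$, and an arc $Arc(x_1,x_2)\ni 1$ such that $\phi(1)=p$, $\phi(Arc(x_1,x_2))\subset\bdy\OM$, the points $\phi(x_1),\phi(x_2),\phi(1)$ are pairwise distinct, and $x_{k_n}\in\phi(Reg(x_1,x_2))$. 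I would take
\[
U\defeq\phi\big(Reg(x_1,x_2)\big)\subset\OM,
\]
which is a subdomain of $\OM$ since $\phi$ is an injective holomorphic map.

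First I would pin down $\bdy U$. Since $\phi$ is continuous on the compact set $\clos{Reg(x_1,x_2)}\subset\clos{\unitdisk}$ and injective on $\unitdisk$, one obtains $\clos{U}=\phi\big(\clos{Reg(x_1,x_2)}\big)$ and $\bdy U=\phi\big(\bdy Reg(x_1,x_2)\big)=\phi\big(Arc[x_1,x_2]\big)\cup\phi\big([x_1,x_2]_p\big)$; moreover, as $\phi$ maps $(x_1,x_2)_p\subset\unitdisk$ into $\OM$ and $Arc(x_1,x_2)$ (hence, by closedness of $\bdy\OM$, also $x_1,x_2$) into $\bdy\OM$, this splits as $\bdy U\cap\OM=\phi\big((x_1,x_2)_p\big)$ and $\bdy U\cap\bdy\OM=\phi\big(Arc[x_1,x_2]\big)$. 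Consequently $\clos{\bdy U\cap\OM}\subset\phi\big([x_1,x_2]_p\big)$, and since $1\notin[x_1,x_2]_p$, $\phi$ is injective on $\unitdisk$, and $\phi(1)\neq\phi(x_1),\phi(x_2)$, we get $p=\phi(1)\notin\clos{\bdy U\cap\OM}$; together with $p\in\phi(Arc[x_1,x_2])=\bdy U\cap\bdy\OM$, this yields the first bullet of \Cref{thm:loc_vis_imp_glob_vis_ntrnlzd}, while $x_{k_n}\in U$ for all $n$ yields the third.

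Next I would check that $U$ is a visibility domain. The curve $Arc[x_1,x_2]\cup[x_1,x_2]_p$ is a Jordan curve, so $Reg(x_1,x_2)$ is a bounded simply connected domain with (Jordan, hence locally connected) boundary; thus it is a visibility domain by \Cref{R:simplyConn-BNT2022}. The biholomorphism $\phi:Reg(x_1,x_2)\to U$ extends continuously to $\clos{Reg(x_1,x_2)}$, and the extension is onto $\clos{U}$ (its image is compact, lies in $\clos{U}$, and contains $U$); since $Reg(x_1,x_2)$ and $U$ are bounded, their end compactifications are their Euclidean closures, so \Cref{thm:cont_surj_im_vis_dom_vis} applies and shows that $U$ is a visibility domain. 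In particular every pair of distinct points of $(\bdy U\cap\bdy\OM)\setminus\clos{\bdy U\cap\OM}\subset\bdy U=\bdy\clos{U}^{End}$ possesses the visibility property with respect to $\koba_U$, which is the second bullet. For the fourth, I would use $p\notin\clos{\bdy U\cap\OM}$ to choose $r>0$ with $\clos{B(p;r)}\cap\clos{\bdy U\cap\OM}=\emptyset$; then $W\defeq B(p;r)\cap U$ is non-empty (as $p\in\bdy U$) and satisfies $W\Subset(U\cup\bdy\OM)\setminus\clos{\bdy U\cap\OM}$, and since $\OM$, being a hyperbolic planar domain, satisfies ${\rm BSP}$ by \Cref{res:hyp_plan_dom_BSP}, \Cref{lmm:kob_sep_intrnl_dom} gives $\koba_\OM\big(B(p;r)\cap U,\OM\setminus U\big)>0$. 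With all four hypotheses verified, \Cref{thm:loc_vis_imp_glob_vis_ntrnlzd} concludes that $\OM$ is a visibility domain.

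The step I expect to be the main obstacle — indeed essentially the only non-formal point — is the identification of $\bdy U$ and its decomposition into the ``$\OM$-part'' $\phi((x_1,x_2)_p)$ and the ``$\bdy\OM$-part'' $\phi(Arc[x_1,x_2])$: $\phi$ need not be injective on $\bdy\unitdisk$, so this demands carefully combining the injectivity of $\phi$ on $\unitdisk$, the continuity of $\phi$ on the compact closure $\clos{Reg(x_1,x_2)}$, the location of $1,x_1,x_2$, and the distinctness normalization of \Cref{Rem:distendpoints}. Once that is secured, the remainder is a straightforward assembly of \Cref{R:simplyConn-BNT2022}, \Cref{thm:cont_surj_im_vis_dom_vis}, \Cref{res:hyp_plan_dom_BSP}, \Cref{lmm:kob_sep_intrnl_dom} and \Cref{thm:loc_vis_imp_glob_vis_ntrnlzd}.
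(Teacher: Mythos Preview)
Your argument is correct, and it takes a genuinely different route from the paper.

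The paper argues by contradiction directly: assuming a sequence $(\gamma_n)$ of $(\lambda,\kappa)$-almost-geodesics witnesses failure of visibility, it uses Condition~3 to produce $\phi$ and the nested regions $Reg(x_1,x_2)\Subset Reg(y_1,y_2)$, proves by hand that $\koba_\Omega\big(\phi(Reg(x_1,x_2)),\Omega\setminus\phi(\unitdisk)\big)>0$, then applies Royden's localization lemma to see that the truncated $\gamma_n$ are almost-geodesics for $\koba_{\phi(\unitdisk)}$, pulls back through $\phi^{-1}$, and invokes visibility of $\unitdisk$ for a contradiction. The localizing domain is $\phi(\unitdisk)$, and the separation estimate (Claim~2) is the technical heart.

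You instead set $U=\phi(Reg(x_1,x_2))$, show outright that $U$ is a visibility domain (via \Cref{R:simplyConn-BNT2022} for the Jordan domain $Reg(x_1,x_2)$ together with \Cref{thm:cont_surj_im_vis_dom_vis}), and feed this into the local-to-global machine \Cref{thm:loc_vis_imp_glob_vis_ntrnlzd}, with the separation bullet handled by \Cref{res:hyp_plan_dom_BSP} and \Cref{lmm:kob_sep_intrnl_dom}. This is cleaner and makes transparent that \Cref{T:cond2ImplVisibility} is a direct instance of the local-to-global principle developed in Section~\ref{S:gen_loc_glob}; the price is that you rely on the heavier results of that section (in particular \Cref{lmm:ag_wrt_glob_ag_wrt_loc_upld}), whereas the paper's proof is self-contained modulo Royden's lemma and the visibility of $\unitdisk$. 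Your boundary decomposition of $U$ is indeed the only place requiring care, and your treatment---using injectivity of $\phi$ on $\unitdisk$, $\phi(Arc[x_1,x_2])\subset\bdy\Omega$, and the distinctness of $\phi(1),\phi(x_1),\phi(x_2)$ from \Cref{Rem:distendpoints}---is sound.
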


\begin{proof}
To get a contradiction, assume that $\OM$ does not
possess the visibility property. Then, by definition, there exist $\lambda\geq 
1$, $\kappa>0$, points $p,q\in \bdy{\overline{\OM}}^{\text{End}}$, $p\neq q$, 
sequences $(p_n)_{n\geq 1}$ and $(q_n)_{n\geq 1}$ in $\OM$ converging to $p$ and 
$q$, respectively (in the topology of ${\overline{\OM}}^{\text{End}}$), and a 
sequence $(\gamma_n)_{n\geq 1}$ of $(\lambda,\kappa)$-almost-geodesics,
$\gamma_n : [a_n,b_n] \to \OM$, joining $p_n$ to $q_n$, such that 
$\gamma_n$ eventually avoids every compact set in $\OM$.
An application of \Cref{rmk:cnvg_outside_tot_disc} gives the following:
there exist a sequence $(t_n)_{n\geq 1}$, $a_n\leq t_n\leq b_n$, a $\xi\in\bdy\OM
\setminus S$, and an $r>0$ such that $(\gamma_n(t_n))_{n\geq 1}$ converges to $\xi$
and such that each $\gamma_n$ intersects $\OM\setminus D(\xi;r)$. We may suppose,
without loss of generality, that, for every $n$, there exists $t'_n$, $t_n<t'_n\leq
b_n$, such that $\gamma_n(t'_n) \in \OM \setminus D(\xi;r)$. By Condition~3, there
exist a subsequence $\big(\gamma_{k_n}(t_{k_n})\big)_{n\geq 1}$, an injective
holomorphic map $\phi:\unitdisk\to\OM$ that extends
continuously to $\clos{\unitdisk}$, and 
arcs $Arc(x_1,\,x_2)$, $Arc(y_1,\,y_2)$ containing $1$ such that
$Arc(x_1,\,x_2)\Subset Arc(y_1,\,y_2)$ and such that:
\begin{enumerate}
\item $\phi(1)=\xi$, $\phi(Arc(y_1, y_2)) \subset \partial \Omega$, $\phi(Reg(y_1, y_2)) 
\subset D(\xi, r/2)$,
\smallskip
\item for all $n \in \posint$, $\gamma_{k_n}(t_{k_n}) \in \phi(Reg(x_1, x_2))$ and 
$\phi^{-1}(\gamma_{k_n}(t_{k_n}) ) \to 1$ as $n \to \infty$.
\end{enumerate}

Moreover, we can also assume that $\phi(x_1)$, $\phi(x_2)$, $\phi(y_1)$, $\phi(y_2)$
and $\xi$ are all distinct and that, for all $x\in Arc[x_1,\,x_2]$ and $j=1,2$, 
$\phi(y_j)\neq\phi(x)$. 
\smallskip

We now suppress the subsequential notation by relabelling, and suppose that (2) above
holds for the original sequences $(\gamma_n)_{n\geq 1}$ and 
$(t_n)_{n\geq 1}$. For every $n$, we let
\begin{equation*}
	\wt{t}_n \defeq \inf\{t \in [t_n,b_n] \mid \gamma_n(t) \notin \phi(Reg(x_1,x_2))\}.
\end{equation*}
\smallskip
\noindent {\bf Claim 1.} For every $n$, $\wt{t}_n>t_n$,
$\gamma_n\big([t_n,\wt{t}_n)\big)\subset \phi(Reg(x_1,x_2))$ and 
$\gamma_n(\wt{t}_n)\in \phi((x_1,x_2)_p)$.

\noindent The inequality in the claim above follows immediately because we know,
by (2) above, that $\gamma_n(t_n)\in \phi(Reg(x_1,x_2))$, that $\phi(Reg(x_1,x_2))$
is open, and that $\gamma_n$ is continuous. The inclusion follows because of the
definition of $\wt{t}_n$ as an infimum. As for the membership relation,  $\gamma_n(\wt{t}_n) \in \big(\overline{\phi(Reg(x_1,x_2))} \setminus
\phi(Reg(x_1,x_2))\big)\cap \OM = \phi\big((x_1,x_2)_p\big)$.
\hfill $\blacktriangleleft$
\smallskip

Clearly, we may assume, without loss of generality, that 
$(\gamma_n(\wt{t}_n))_{n\geq 1}$ converges to a point
of $\phi([x_1,x_2]_p)$. However, since $(\gamma_n)_{n\geq 1}$ eventually 
avoids every compact subset of $\OM$ and since $\phi(x_1)$ and $\phi(x_2)$ 
are the only two points of $\phi([x_1,x_2]_p)\cap\bdy\OM$, it follows that 
$(\gamma_n(\wt{t}_n))_{n\geq 1}$ converges either to $\phi(x_1)$ or to 
$\phi(x_2)$.
Suppose, without loss of generality, that it converges to $\phi(x_1)$.
\smallskip

Now consider the sequence $\big( \gamma_n|_{[t_n,\wt{t}_n]} \big)_{n\geq 1}$ of 
$(\lambda,\kappa)$-almost-geodesics with respect to $\koba_{\OM}$. We shall show that
the sequence is also a sequence of $(\lambda',\kappa)$-almost-geodesics with respect to 
$\koba_{\phi(\unitdisk)}$ for some $\lambda'> 1$. To do this, we shall use the localization
lemma, but first we make the following:
\smallskip

\noindent {\bf Claim 2.} $\koba_{\OM}\big( \phi(Reg(x_1,x_2)),\OM\setminus
\phi(\unitdisk) \big) > 0$.

\smallskip

\noindent First we make the following observation:
\begin{equation}\label{E:auxclaim2}
\koba_{\OM}\big(\phi(Reg(x_1,x_2)),\OM\setminus
\phi(\unitdisk)\big)\geq \koba_{\OM}\big(\phi(Reg(x_1,x_2)),\phi(\unitdisk) 
\setminus \phi(Reg(y_1,y_2))\big).
\end{equation}

\noindent To see the above, let $x$ and $y$ be arbitrary 
points of $\phi(Reg(x_1,x_2))$ and $\OM\setminus\phi(\unitdisk)$, respectively. 
Using the completeness of $(\OM,\koba_{\OM})$, choose a $\koba_{\OM}$-geodesic
$\gamma  : [0,L]\to\OM$ such that $\gamma(0) = x \in \phi(Reg(x_1,x_2))$ and $\gamma(L) = y \in \OM \setminus \phi(\unitdisk)$.
Let
\begin{equation*}
	S \defeq \{t\in [0,L] \mid \gamma([0,t]) \subset \phi(Reg(y_1,y_2))\}.
\end{equation*}  
Further, let $t_0\defeq \sup S$. Clearly 
$t_0>0$ and it follows that $\gamma(t_0)\in \bdy\phi(Reg(y_1,y_2))\cap\OM =
\phi((y_1,y_2)_p) \subset \phi(\unitdisk) \setminus \phi(Reg(y_1,y_2))$. Now
\begin{align*}
	\koba_{\OM}(x,y) = L \geq t_0 = \koba_{\OM}(x,\gamma(t_0)) \geq
	\koba_{\OM}\big(\phi(Reg(x_1,x_2)),\phi(\unitdisk)\setminus\phi(Reg(y_1,y_2))\big). 
\end{align*}
Since $x\in Reg(x_1,x_2)$ and $y\in \OM\setminus\phi(\unitdisk)$ were
arbitrary, the inequality \eqref{E:auxclaim2} follows. 
\smallskip

Therefore, to prove Claim~2, it suffices to prove that
\begin{equation} \label{eqn:koba_sep_pos}
	\koba_{\OM}\big(\phi(Reg(x_1,x_2)),\phi(\unitdisk)\setminus\phi(Reg(y_1,y_2))\big)
	>0.	
\end{equation}
Assume, to get a contradiction, that \eqref{eqn:koba_sep_pos} does not hold. Then
there exist sequences $(u_n)_{n\geq 1}$ in $\phi(Reg(x_1,x_2))$ and 
$(v_n)_{n\geq 1}$ in $\phi(\unitdisk)\setminus\phi(Reg(y_1,y_2))$ such that 
$\koba_{\OM}(u_n,v_n)\to 0$. For each $n$, choose a $\koba_{\OM}$-geodesic 
$\sigma_n:[0,\,L_n]\lraw\OM$ joining $u_n$ and $v_n$. Also, for each
$n$, let 
\begin{align*}
	s_n &\defeq \sup\{t\in [0,L_n] \mid \sigma_n([0,t]) \subset \phi(Reg(x_1,x_2))\},
	\text{ and} \\
	t_n &\defeq \inf\{t \geq s_n \mid \sigma_n(t) \notin \phi(Reg(y_1,y_2))\}.
\end{align*}
Now note that, clearly, $\sigma_n(s_n)\in \bdy\phi(Reg(x_1,x_2))\cap\OM = 
\phi((x_1,x_2)_p)$ and that $\sigma_n(t_n)\in \bdy\phi(Reg(y_1,y_2))\cap\OM = 
\phi((y_1,y_2)_p)$. So we can find sequences $(\xi_n)_{n\geq 1}$ and 
$(\zeta_n)_{n\geq 1}$ in $(x_1,x_2)_p$ and $(y_1,y_2)_p$, respectively, such that,
for every $n$, $\phi(\xi_n)=\sigma_n(s_n)$ and $\phi(\zeta_n)=\sigma_n(t_n)$.
By compactness we may suppose that $(\xi_n)_{n\geq 1}$ and 
$(\zeta_n)_{n\geq 1}$
converge to points $\check{\xi}$ and $\check{\zeta}$ of $[x_1,x_2]_p$ and 
$[y_1,y_2]_p$, respectively. By the continuity of $\phi$ on 
$\clos{\unitdisk}$, $(\phi(\xi_n))_{n\geq 1}$ and 
$(\phi(\zeta_n))_{n\geq 1}$ converge to $\phi(\check{\xi})$ and
$\phi(\check{\zeta})$, respectively, i.e., $(\sigma_n(s_n))_{n\geq 1}$ and
$(\sigma_n(t_n))_{n\geq 1}$ converge to $\phi(\check{\xi})$ and
$\phi(\check{\zeta})$, respectively. It follows easily from the hypotheses 
on $\phi$ that $\phi(\check{\xi}) \neq \phi(\check{\zeta})$.
Note that $\sigma_n(s_n),\sigma_n(t_n)\in \mathsf{ran}(\sigma_n)$ and so, since
$\koba_{\OM}(u_n,v_n)\to 0$, it follows that $\koba_{\OM}(\sigma_n(s_n),\sigma_n(t_n)) \to 0.$
On the other hand, since $\OM$ is a hyperbolic planar domain and 
$\phi(\check{\xi})$, $ \phi(\check{\zeta})$ are distinct points of $\clos{\OM}$, we have
\[
\liminf_{(z,\,w)\to(\phi(\check{\xi}),\,\phi(\check{\zeta}))}\koba_{\OM}(z, w)>0.
\]
This is a contradiction, and hence Claim~2 follows. 
\hfill{$\blacktriangleleft$}
\smallskip

Let $k\defeq \coth{\koba_{\OM}\big(\phi(Reg(x_1,x_2),\OM\setminus\phi(\unitdisk))\big)}$ 
and let $\lambda' \defeq {k}\lambda$. Now we make the following claim.
\smallskip

\noindent {\bf Claim 3.} For every $n$, 
$\big(\gamma_n|_{[t_n,\wt{t}_n]}\big)_{n\geq 1}$ is a 
$(\lambda',\kappa)$-almost-geodesic with respect to $\koba_{\phi(\unitdisk)}$.
\smallskip

\noindent Since each $\gamma_n$ is a
$(\lambda,\kappa)$-almost-geodesic with respect to $\koba_{\OM}$, the condition on the Kobayashi--Royden metric 
together with the localization lemma gives
\begin{equation} \label{eqn:ub_kob_met_trnc}
	\forall\, n\in\posint,\, \text{for a.e. } t\in [t_n,\wt{t}_n],\; 
	\dkoba_{\phi(\unitdisk)}(\gamma_n(t))|\gamma_n'(t)| \leq k \dkoba_{\OM}(\gamma_n(t))
	|\gamma_n'(t)| \leq k\lambda = \lambda'.
\end{equation}
On the other hand, by the distance-decreasing property of the inclusion map,
\begin{equation} \label{eqn:lb_kob_dist_trnc}
	\forall\,n\in\posint,\,\forall\,s,t\in [t_n,\wt{t}_n],\; (1/\lambda)|s-t|-\kappa\leq
	\koba_{\OM}(\gamma_n(s),\gamma_n(t))\leq 
	\koba_{\phi(\unitdisk)}(\gamma_n(s),\gamma_n(t)).
\end{equation}
Finally
\begin{align}
	\forall\,n\in\posint,\,\forall\,s,t\in [t_n,\wt{t}_n],\,s\leq t,\;
	\koba_{\phi(\unitdisk)}(\gamma_n(s),\gamma_n(t)) &\leq 
	\int_{s}^{t}\dkoba_{\phi(\unitdisk)}(\gamma_n(\tau))|\gamma_n'(\tau)|d\tau \notag\\
	&\leq \lambda'|s-t| \leq \lambda'|s-t|+\kappa. \label{eqn:ub_kob_dist_trnc}
\end{align}
By \eqref{eqn:ub_kob_met_trnc}, \eqref{eqn:lb_kob_dist_trnc} and 
\eqref{eqn:ub_kob_dist_trnc}, the claim follows. 
\hfill{$\blacktriangleleft$}
\smallskip

Now consider the sequence of curves given by
\begin{equation*}
	\rho_n \defeq \phi^{-1}\circ\gamma_n|_{[t_n,\wt{t}_n]}.
\end{equation*}
Note that $(\rho_n)$ is a sequence of curves in $\unitdisk$ such that
$\rho_n \text{ joins } \phi^{-1}(\gamma_n(t_n)) 
\text{ and } \phi^{-1}(\gamma_n(\wt{t}_n))$ for each $n$.
Observe that $\big(\phi^{-1}(\gamma_n(t_n))\big)_{n\geq 1} \text{ converges to } 1$
and we may assume, without loss of generality, that
$\big(\phi^{-1}(\gamma_n(\wt{t}_n))\big)_{n\geq 1} \text{ converges to } x_1$.
Now, by the fact that $\phi$ is a biholomorphism and from Claim~3, it follows that
$\rho_n$ is a $(\lambda',\kappa)\text{-almost-geodesic}$ with respect to $\koba_{\unitdisk}$
whose end points converge to $1$ and $x_1$ respectively.
This together with the well-known fact that $\unitdisk$ is a 
visibility domain, implies that there exists a compact subset $K$ of $\unitdisk$ such that
\begin{equation*}
	\forall\,n\in\posint,\;\mathsf{ran}(\rho_n)\cap K\neq\emptyset.
\end{equation*}
Obviously then
\begin{equation*}
	\forall\,n\in\posint,\;\mathsf{ran}(\gamma_n)\cap\phi(K)\neq\emptyset.
\end{equation*}
But then $\phi(K)$ is a compact subset of $\OM$ that {\em all} the $\gamma_n$
intersect. This contradicts the initial assumption and completes the proof.
\end{proof} 

We now prove a result that shows that the Euclidean boundary of a planar
hyperbolic domain satisfying Condition~1 possesses a property that is an
analogue of a corresponding property possessed by the ideal boundary
of a (proper, geodesic) Gromov hyperbolic space (see, for example, 
\cite[Proposition~2.10]{ZimCharDomLimAut}). It is important to note that the
analogy is not perfect. Specifically, the lemma below does not deal with the
boundary of the end-compactification of $\clos{\OM}$ (which one expects to be
the true analogue of the ideal boundary), but rather with the Euclidean boundary:
in effect this means that the ends of $\clos{\OM}$ are excluded from consideration
below.

\begin{lemma}\label{L:Unbounded_Gromov_prod}
	Suppose $\OM \subset \C$ is a hyperbolic planar domain satisfying 
	Condition~1. Then, for any fixed $o \in \OM$ and every $\xi_1, \xi_2 \in 
	\bdy \OM$,
	\[ \liminf_{(z,w) \to (\xi_1,\xi_2),\, z,w\in\OM}(z|w)_o = +\infty \]
	if and only if $\xi_1 = \xi_2$.
\end{lemma}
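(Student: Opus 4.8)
The ``if'' direction ($\xi_1=\xi_2$ implies the liminf is $+\infty$) is the easy half: for a single boundary point $\xi\in\bdy\OM$, the embedding $\tau_\xi:\clos\unitdisk\to\clos\OM$ furnished by Condition~1, together with the remark that $B(\xi,r)\cap\OM\subset\tau_\xi(\unitdisk)$, confines a tail of both sequences inside $\tau_\xi(\unitdisk)$; since $\tau_\xi(\unitdisk)$ is biholomorphic to $\unitdisk$, one has $\koba_\OM\le\koba_{\tau_\xi(\unitdisk)}$ on it, and on $\unitdisk$ the Gromov product at an interior point of two sequences converging to the \emph{same} boundary point of the disc tends to $+\infty$ (this is the standard Gromov-hyperbolicity of $\unitdisk$). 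Transferring the basepoint $o$ to an interior point of $\tau_\xi(\unitdisk)$ changes the Gromov product only by an additive constant, so $\liminf (z|w)_o=+\infty$. I would write this out first.

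\textbf{The harder direction} is: if $\xi_1\ne\xi_2$ then $\liminf_{(z,w)\to(\xi_1,\xi_2)}(z|w)_o<+\infty$. The plan is to argue by contradiction: suppose there are sequences $z_n\to\xi_1$, $w_n\to\xi_2$ with $(z_n|w_n)_o\to+\infty$. Recall $(z_n|w_n)_o=\tfrac12\big(\koba_\OM(o,z_n)+\koba_\OM(o,w_n)-\koba_\OM(z_n,w_n)\big)$. For each $n$ pick a $(1,1)$-almost-geodesic (or, using completeness of the planar domain, an honest $\koba_\OM$-geodesic) $\gamma_n$ joining $z_n$ and $w_n$. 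The statement $(z_n|w_n)_o\to+\infty$ forces every point of $\gamma_n$ to be far from $o$: indeed for $x\in\range(\gamma_n)$ one has, up to a bounded additive error, $\koba_\OM(o,x)\ge(z_n|w_n)_o$, so $\gamma_n$ eventually leaves every compact subset of $\OM$. But $\gamma_n$ is a sequence of almost-geodesics whose endpoints converge to the \emph{distinct} boundary points $\xi_1,\xi_2$; this is exactly the configuration that the visibility property would forbid --- except that we have not assumed visibility here. So instead I would feed this into \Cref{L:tr_1} (and \Cref{rmk:cnvg_outside_tot_disc} with $S=\emptyset$): there exist a point $p_0\in\bdy\OM$, an $R>0$, and parameters $t_n$ with $\gamma_n(t_n)\to p_0$ and with each $\gamma_n$ also meeting $\OM\setminus B(p_0;R)$. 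Now apply Condition~1 at $p_0$: the embedding $\tau_{p_0}$ and its associated radius $r_0$ trap a tail of $\gamma_n(t_n)$ (and nearby arcs of $\gamma_n$) inside $\tau_{p_0}(\unitdisk)$, while each $\gamma_n$ must exit $\tau_{p_0}(\unitdisk)$ to reach $\OM\setminus B(p_0;R)$.

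\textbf{Completing the contradiction.} Having localized a sub-arc $\gamma_n|_{[a_n,c_n]}$ of $\gamma_n$ that starts near $p_0$ (inside $\tau_{p_0}(\unitdisk)$) and ends on the ``relative boundary'' of a smaller region $W\Subset\tau_{p_0}(\unitdisk)$, I would use \Cref{lmm:kob_sep_intrnl_dom} to get $\koba_\OM(W,\OM\setminus\tau_{p_0}(\unitdisk))>0$, hence by \Cref{lmm:Roy_loc_lmm} (Royden localization) these sub-arcs are $(\lambda_0,\kappa_0)$-almost-geodesics with respect to $\koba_{\tau_{p_0}(\unitdisk)}\cong\koba_\unitdisk$. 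Their endpoints converge to two \emph{distinct} points of $\bdy\unitdisk$ (the preimage of $p_0$, which is a single point of $\bdy\unitdisk$ since $\tau_{p_0}$ is an embedding, and a preimage of the exit point on $\bdy\tau_{p_0}(\unitdisk)\cap\OM$, forced to be distinct because $W$ is compactly contained away from the relevant part of $\bdy\OM$). Since $\unitdisk$ is a visibility domain, some fixed compact $K\subset\unitdisk$ meets every such sub-arc; pushing forward, $\phi(K)$ is a fixed compact subset of $\OM$ that every $\gamma_n$ meets --- contradicting that $\gamma_n$ eventually leaves every compact set. The main obstacle, and the place requiring care, is the bookkeeping in this last step: correctly setting up the nested regions $W_2\Subset W_1\Subset\tau_{p_0}(\unitdisk)$ so that Condition~1's trapping property, the separation lemma \Cref{lmm:kob_sep_intrnl_dom}, and the localization lemma all apply simultaneously, and verifying that the two families of endpoints of the localized sub-arcs really do converge to \emph{distinct} boundary points of $\unitdisk$ (one must ensure the exit point $p_0$ of Condition~1 is not accidentally equal to the limit of $\gamma_n(c_n)$, which is handled by choosing the radii small and using that $p_0$ lies in the interior of the trapping neighbourhood).
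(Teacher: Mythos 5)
Your proposal gets the ``only if'' direction right in substance: assuming $\xi_1\neq\xi_2$ and $(z_n|w_n)_o\to+\infty$, the estimate $(z_n|w_n)_o\leq\koba_\OM(o,x)$ for $x$ on a geodesic joining $z_n$ and $w_n$ does force the geodesics to escape every compact set, and the localization-plus-disc-visibility argument you sketch is essentially a re-derivation of \Cref{T:cond2ImplVisibility}. The paper dispatches this direction in two lines, since Condition~1 already implies that $\OM$ is a visibility domain (\Cref{T:Visibility_Thm_in_plane}), whence \Cref{lmm:weak_visib_Grom_lim_fin} applies directly; your route is correct but redundant.

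The genuine gap is in the half you call easy. To show that $\xi_1=\xi_2\defines\xi$ forces $(z|w)_o\to+\infty$, you trap the tails of the sequences in $\tau_\xi(\unitdisk)$, invoke $\koba_\OM\leq\koba_{\tau_\xi(\unitdisk)}$, and transfer the divergence of the Gromov product from the disc to $\OM$. But the Gromov product is a \emph{difference} of distances, so monotonicity of the metric under inclusion gives no inequality between $(z|w)_o^{\OM}$ and $(z|w)_{o'}^{\tau_\xi(\unitdisk)}$ in either direction: the terms $\koba_\OM(o,z)$ and $\koba_\OM(o,w)$ are bounded \emph{above} by their $\tau_\xi(\unitdisk)$-counterparts, which is the wrong way. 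Even the multiplicative Royden localization $\koba_{\tau_\xi(\unitdisk)}\leq\lambda_0\koba_\OM$ on a smaller neighbourhood is insufficient: writing $\koba_{\tau}(z,w)=\koba_{\tau}(o',z)+\koba_{\tau}(o',w)-2(z|w)^{\tau}_{o'}$, the resulting lower bound for $(z|w)_o^\OM$ contains the term $\tfrac12(\lambda_0^{-1}-1)\bigl(\koba_\tau(o',z)+\koba_\tau(o',w)\bigr)$, which tends to $-\infty$ and can dominate $(z|w)^{\tau}_{o'}$ when the latter diverges slowly (e.g.\ $z,w\to\xi$ radially in the disc model with angular separation $(1-r)^{\epsilon}$). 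What you actually need is the \emph{additive} localization $\koba_{\tau_\xi(\unitdisk)}\leq\koba_\OM+C$ near $\xi$, which in this paper is \Cref{lmm:loc_kob_dist_vis_glob_met_assmp} and itself rests on the visibility of $\OM$ --- a substantial input, not a one-liner. The paper avoids this entirely by a different mechanism: assuming $(z_n|w_n)_o$ bounded, it builds two geodesic rays landing at $\xi$, concatenates them into a $(1,\kappa)$-almost-geodesic whose two ends both land at $\xi$, localizes only the two tails into $\tau_\xi(\unitdisk)$ (where the multiplicative localization suffices), and obtains a quasi-geodesic loop at a boundary point of $\unitdisk$, contradicting the Gromov hyperbolicity of the disc. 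Your write-up would need either to import the additive localization explicitly or to adopt an argument of this latter kind; as it stands, the step from $\koba_\OM\leq\koba_{\tau_\xi(\unitdisk)}$ to the divergence of $(z|w)_o$ does not follow.
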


\begin{proof}
	First note that, by Theorem~\ref{T:Visibility_Thm_in_plane}, $\Omega$ is a 
	visibility domain. It follows from Lemma~\ref{lmm:weak_visib_Grom_lim_fin}
	that if \[
	\liminf_{(z,w) \to (\xi_1,\xi_2),\,z,w\in\OM}(z|w)_o = +\infty,
	\]
	then $\xi_1 = \xi_2$.
	\smallskip
	
	Conversely, suppose that $\xi_1 = \xi_2 \defines \xi$. To get a 
	contradiction 
	suppose that there exist a point $o$ and sequences $(z_n)_{n\geq 1}$ and 
	$(w_n)_{n\geq 1}$ in $\OM$ 
	such that $z_n \to \xi$ and $w_n \to \xi$ as $n \to \infty$ and, for all $n \in 
	\posint$,
	\[
	(z_n|w_n)_o \leq  \kappa/2 < +\infty,
	\]
	for some positive constant $\kappa$. 
	Choose, for every $n\in\posint$, a $\koba_{\OM}$-geodesic $\sigma_n^1$ joining
	$o$ and $z_n$ and a $\koba_{\OM}$-geodesic $\sigma_n^2$ joining $o$ and $w_n$.
	For all $s, t \in [0, +\infty)$ and $n \in \posint$,		
	by the triangle inequality and using the fact that $\sigma_n^1$ and $\sigma_n^2$ 
	are geodesics, we have
	\begin{equation}\label{E:Bounded_Gromov_product}
		\begin{split}
			&\koba_{\Omega}(\sigma_n^1(t), o ) + \koba_{\Omega}( o, \sigma_n^2(s)) - \koba_{\Omega}(\sigma_n^1(t), \sigma_n^2(s) ) \\
			&= \koba_{\Omega}(z_n, o ) - \koba_{\Omega}(\sigma_n^1(t), z_n )  + 
			\koba_{\Omega}( o, w_n) - \koba_{\Omega}( \sigma_n^2(s), w_n) - \koba_{\Omega}(\sigma_n^1(t),    \sigma_n^2(s) )\\
			&\leq \koba_{\Omega}(z_n, o ) + \koba_{\Omega}( o, w_n) -  \koba_{\Omega}(z_n, w_n)\\
			&= 2(z_n|w_n)_o \\
			&\leq  \kappa.
		\end{split}
	\end{equation}

	By Lemma~\ref{lmm:seq_geods_conv_geod_ray},
	passing to subsequences, we may assume that $(\sigma_n^1)_{n\geq 1}$ 
	and $(\sigma_n^2)_{n\geq 1}$ converge to $\koba_{\OM}$-geodesic 
	rays $\sigma_1: [0, +\infty) 
	\longrightarrow \OM$ and $\sigma_2: [0, +\infty) \longrightarrow \OM$, 
	respectively, emanating from $o$ and landing at the boundary point $\xi$. 
	Now, define
	\[
	\sigma(t) \defeq \begin{cases}
		\sigma_1(t),  &\forall\, t \geq 0 ,\\
		\sigma_2(-t), &\forall\, t \leq 0.
	\end{cases}
	\]
	
	We claim that $\sigma$ is a $(1, \kappa)$-almost-geodesic of $(\Omega, 
	\koba_{\Omega} )$. It is easy to see that 
	\[ \forall\, t \in \R \setminus \{0\},\; \kappa_{\OM}(\sigma(t);\sigma'(t)) = 1 \]
	since the Kobayashi metric (Poincar\'e metric) of the hyperbolic
	planar domain $\Omega$ is a 
	Riemannian metric and geodesics are the unit-speed curves. For $s, t$ in $[0, 
	+\infty)$ or $( -\infty, 0]$, it can be seen easily, by definition, that 
	$\koba_{\Omega}(\sigma(t), \sigma(s) ) = |t - s|$. For $s \leq 0$ and $ t \geq 0$, 
	we have
	\begin{equation*}
		\begin{split} \koba_{\Omega}(\sigma(t), \sigma(s) ) &\leq  
		\koba_{\Omega}(\sigma(t), \sigma(0) ) + \koba_{\Omega}(\sigma(0), \sigma(s) )\\
			&= t - s = |t - s|,
		\end{split}
	\end{equation*}
	and 
	\begin{equation*}
		\begin{split} \koba_{\Omega}(\sigma(t), \sigma(s) )
			&=\koba_{\Omega}(\sigma_1(t), \sigma_2(-s) )\\
			&= \lim_{n \to \infty} \koba_{\Omega}(\sigma_n^1(t), \sigma_n^2(-s) )\\
			&= \lim_{n \to \infty} \big\{ \koba_{\Omega}(\sigma_n^1(t), o ) + \koba_{\Omega}( o, \sigma_n^2(-s)) \\
			&- \left( \koba_{\Omega}(\sigma_n^1(t),o) + \koba_{\Omega}(o, \sigma_n^2(-s)) - \koba_{\Omega}(\sigma_n^1(t),\sigma_n^2(-s))\right) \big\} \\
			&\geq  \lim_{n \to \infty} ( \koba_{\Omega}(\sigma_n^1(t), o ) + \koba_{\Omega}( o, \sigma_n^2(-s)) - \kappa ) \quad (\text{by (\ref{E:Bounded_Gromov_product})})\\
			&= \koba_{\Omega}(\sigma_1(t), o ) + \koba_{\Omega}( o, \sigma_2(-s)) - \kappa \\
			&= t - s - \kappa = |t - s| - \kappa.
		\end{split}
	\end{equation*}
	Hence $\sigma$ is a $(1, \kappa)$-almost-geodesic of $(\Omega,\koba_{\Omega})$.
	Note that $\lim_{t\to\infty} \sigma(t) = \lim_{t\to\infty} \sigma(-t) = \xi$.
	\smallskip
	
	By Condition~1, there exists a topological embedding 
	$\tau:\clos{\unitdisk}\to\clos{\OM}$ such that $\tau(\unitdisk)\subset\OM$ and such
	that every sequence $(z_n)_{n\geq 1}$ in $\OM$ converging to $\xi$ lies eventually 
	in $\tau(\unitdisk)$. By the remark made right after the statement of Condition~1, 
	there exists a neighbourhood
	$U$ of $\xi$ in $\C$ such that $U\cap\OM \subset \tau(\unitdisk)$. We choose a
	neighbourhood $W$ of $\xi$ in $\C$ such that $W\Subset U$. After this,
	we choose $T_0<\infty$ such that, for all $t\in\R$ with $|t| \geq T_0$, 
	$\sigma(t)\in W$. It is immediate that 
	$d_{Euc}(W\cap\OM,\OM\setminus\tau(\unitdisk))>0$, where $d_{Euc}$ is 
	the Euclidean distance. From this, \Cref{res:hyp_plan_dom_BSP} and \Cref{lmm:kob_sep_intrnl_dom}, 
	it now follows easily that $\koba_{\OM}(W \cap \OM, \OM \setminus \tau(\unitdisk)) > 0$ 
	and hence that $\coth\koba_{\OM}(W \cap \OM, \OM \setminus \tau(\unitdisk)) \defines \lambda<\infty$.
	Hence, by \Cref{rmk:weak_loc_almost-geod}, 
	$\sigma|_{[T_0, \infty)}$ and $\sigma|_{(-\infty, -T_0]}$ are 
	$(\lambda, \kappa)$-almost-geodesics of $(\tau(\mathbb{D}), 
	\koba_{\tau(\mathbb{D})})$.
	\smallskip
	
	Choose a biholomorphism $\phi:\unitdisk\to\tau(\unitdisk)$. By 
	the Carath{\'e}odory Extension Theorem $\phi$ extends to a homeomorphism from 
	$\overline{\mathbb{D}}$ onto $\tau(\overline{\mathbb{D}})$.  Choose a continuous
	curve $\sigma_0 : [-T_0,T_0] \lraw \tau(\unitdisk)$ such that
	$\sigma_0(-T_0) = \sigma(-T_0)$ and $\sigma_0(T_0) = \sigma(T_0)$.
	
	Now, define $\gamma : \R \lraw \tau(\unitdisk)$ by
	\[
	\gamma(t) \defeq 
	\begin{cases}
		\sigma(t),    &\forall\, t \,\text{such that}\, |t|\geq T_0, \\
		\sigma_0(t),  &\forall\, t \,\text{such that} \,|t|\leq T_0.
	\end{cases}
	\]
	Now, we show that $\gamma$ is a $(\lambda_1, \kappa_1)$-quasi-geodesic loop of 
	$(\tau(\mathbb{D}),\koba_{\tau(\mathbb{D})})$ at the point 
	$\xi \in \bdy\tau(\mathbb{D})$ for some $\lambda_1 \geq 1$ and $\kappa_1 > 0$. To 
	see this we proceed as follows.\\
	
	\noindent{\bf Case 1.}
	For $s \leq -T_0$ and $t \geq T_0$, by the monotonicity of the Kobayashi distance 
	with respect to inclusions, and using the fact that $\sigma$ is a 
	$(1,\kappa)$-almost-geodesic of $(\OM, \koba_{\OM})$, we have
	\begin{equation*}
		\koba_{\tau(\mathbb{D})}(\gamma(t),\gamma(s)) \geq 
		\koba_{\Omega}(\sigma(t),\sigma(s)) \geq |t - s| - \kappa,
	\end{equation*}
	and by the triangle inequality, definition of $\gamma$ and using the fact that 
	$\sigma|_{[T_0,\infty)}$ and $\sigma|_{(-\infty, -T_0]}$ are $(\lambda, 
	\kappa)$-almost-geodesics of $(\tau(\mathbb{D},\koba_{\tau(\mathbb{D})})$, 
	we have
	\begin{equation*}
		\begin{split} \koba_{\tau(\mathbb{D})}(\gamma(t), \gamma(s) )
			&\leq  \koba_{\tau(\mathbb{D})}(\sigma(s), \sigma(-T_0)) + 
			\koba_{\tau(\mathbb{D})}(\sigma_0(-T_0),\sigma_0(T_0)) +  
			\koba_{\tau(\mathbb{D})}(\sigma(T_0),\sigma(t))\\
			&\leq  \lambda(-T_0-s) + \lambda(t-T_0)  + 
			\text{diam}_{\koba_{\tau(\mathbb{D})}}(\sigma_0)+2\kappa\\
			&\leq \lambda(t-s)  + \text{diam}_{\koba_{\tau(\mathbb{D})}}(\sigma_0)+ 2\kappa.
		\end{split}
	\end{equation*}
	
	\noindent{\bf Case 2.}
	For $s, t \geq T_0$ or $s, t \leq -T_0$, and $s\leq t$, by the monotonicity of the 
	Kobayashi distance with respect to inclusions, and using the fact that $\sigma$ is 
	a $(1,\kappa)$-almost-geodesic of $(\Omega,\koba_{\Omega})$, we have
	\begin{equation*}
		\koba_{\tau(\mathbb{D})}(\gamma(t),\gamma(s)) = 
		\koba_{\tau(\mathbb{D})}(\sigma(s),\sigma(t))  
		\geq \koba_{\Omega}(\sigma(t), \sigma(s) ) \geq |t - s| - \kappa.
	\end{equation*}
	Further, by the definition of $\gamma$ and using the fact that 
	$\sigma|_{[T_0,\infty)}$ and $\sigma|_{(-\infty, -T_0]}$ are $(\lambda, 
	\kappa)$-almost-geodesics of $(\tau(\mathbb{D},\koba_{\tau(\mathbb{D})})$, 
	we have
	\begin{equation*}
		\koba_{\tau(\mathbb{D})}(\gamma(s),\gamma(t)) \leq  
		\koba_{\tau(\mathbb{D})}(\sigma(s),\sigma(t)) \leq \lambda(t-s)+\kappa.
	\end{equation*}
	
	\noindent{\bf Case 3.}
	For $-T_0\leq s\leq t\leq  T_0$, the following inequalities follow easily:
	\[
	(t-s)-2T_0 \leq 0 \leq \koba_{\tau(\mathbb{D})}(\gamma(s),\gamma(t)) \leq  
	(t-s)+\text{diam}_{\koba_{\tau(\mathbb{D})}}(\sigma_0).
	\]
	
	\noindent{\bf Case 4.}
	For $s\leq -T_0\leq t\leq T_0$ or $-T_0\leq s\leq T_0\leq t$, the following
	inequalities follow easily:
	\begin{align*}
		\frac{1}{\lambda}(t-s)-\kappa-\text{diam}_{\koba_{\tau(\mathbb{D})}}(\sigma_0)-
		2T_0 &\leq \koba_{\tau(\mathbb{D})}(\gamma(s),\gamma(t)) \\
		&\leq \lambda(t-s)+\kappa+\text{diam}_{\koba_{\tau(\mathbb{D})}}(\sigma_0). 
	\end{align*}
	
	\noindent From the consideration of the four cases above, it is now clear that there exists
	$\kappa_1>0$ such that
	$\gamma$ is a $(\lambda,\kappa_1)$-quasi-geodesic loop at the boundary point 
	$\xi$ of $\tau(\mathbb{D})$.
	\smallskip
	
	Since the map $\phi^{-1} : \tau(\mathbb{D}) \lraw \mathbb{D}$ is a 
	Kobayashi isometry that extends up to the boundary as a homeomorphism, $\phi^{-1} 
	\circ \gamma :\mathbb{R}\lraw \mathbb{D}$ is a 
	$(\lambda,\kappa_1)$-quasi-geodesic loop at the boundary point $\phi^{-1}(\xi)$
	of $\unitdisk$. This is a 
	contradiction because $(\mathbb{D},\koba_{\mathbb{D}})$ is a Gromov hyperbolic 
	distance 
	space for which the Euclidean boundary can be identified with the Gromov 
	boundary, and hence there does not exist any quasi-geodesic loop at any boundary 
	point of $\mathbb{D}$.
	\end{proof}	


\subsection{Simply Connected Visibility Domains}\label{ss:simp_conn_visib}
The main result of this subsection gives a characterization of simply connected visibility domains. To prove this 
result we need two lemmas below. These results will also be an important tool in the next subsection in which 
we provide a characterization of domains that satisfy Condition~2 and Condition~1 respectively.
\smallskip

In what follows, $\C_\infty$ will denote the Riemann sphere, i.e., the one-point
compactification of $\C$, i.e. $\C\cup\{\infty\}$. Also, for a domain $\OM$ in $\C$, $\cl_{\C_\infty}(\OM)$ and 
$\bdy_{\C_\infty}\OM$ denote the closure and the boundary of $\OM$ in $\C_\infty$ respectively.
\begin{lemma} \label{lmm:simp_conn_loc_conn}
		Let $\OM\subset\C_\infty$ be a hyperbolic simply connected domain. Suppose that there is a totally disconnected subset
		$S\subset\bdy_{\C_\infty}\OM\cap\C$ such that $\bdy_{\C_\infty}\OM$
		is locally connected at every point of $(\bdy_{\C_\infty}\OM\cap\C)\setminus S$.
		Then $\bdy_{\C_\infty}\OM$
		is locally connected everywhere.
	\end{lemma}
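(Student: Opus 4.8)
The plan is to reduce the claim about local connectedness of $\bdy_{\C_\infty}\OM$ to a statement about the continuous extension of a Riemann map, using the Carath\'eodory Extension Theorem (Result~\ref{R:Cara}) in both directions. Fix a biholomorphism $f:\unitdisk\to\OM$ (which exists since $\OM$ is hyperbolic and simply connected, hence $\clos{\OM}^{\,c}$ in $\C_\infty$ has more than one point and $\OM\neq\C_\infty$). The target is to show $\bdy_{\C_\infty}\OM$ is locally connected; by Result~\ref{R:Cara} this is equivalent to showing that $f$ extends continuously to $\clos{\unitdisk}$. So the real task becomes: \emph{if $\bdy_{\C_\infty}\OM$ is locally connected at every point of $(\bdy_{\C_\infty}\OM\cap\C)\setminus S$ for some totally disconnected $S$, then $f$ extends continuously to $\clos{\unitdisk}$.}

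First I would recall the standard prime-end / cluster-set machinery: $f$ extends continuously to a point $\zeta\in\bdy\unitdisk$ if and only if the cluster set $\mathrm{Cl}(f,\zeta)$ is a single point, and in general $\mathrm{Cl}(f,\zeta)$ is a compact connected subset of $\bdy_{\C_\infty}\OM$. The set $B$ of points $\zeta\in\bdy\unitdisk$ at which $f$ does \emph{not} extend continuously is a set at which $\mathrm{Cl}(f,\zeta)$ is a nondegenerate continuum; a classical fact (essentially from the theory of prime ends / Koebe's distortion, see \cite[Ch.~2]{BCM}) is that $B$ has Lebesgue measure zero on $\bdy\unitdisk$, and moreover the $f$-images of the ``impressions'' at distinct points of $B$ are pairwise disjoint continua, except possibly for finitely many. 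Here is where $S$ enters: since $\bdy_{\C_\infty}\OM$ is locally connected at each point of $(\bdy_{\C_\infty}\OM\cap\C)\setminus S$, a nondegenerate continuum $\mathrm{Cl}(f,\zeta)$ with $\zeta\in B$ must meet $S\cup\{\infty\}$ — because a continuum that is a subset of $\bdy_{\C_\infty}\OM$ and avoids $S\cup\{\infty\}$ lies entirely in the locally connected part; and one shows (using that $S$ is totally disconnected, so contains no nondegenerate continuum, and that $\{\infty\}$ is a single point) that such a continuum cannot be nondegenerate unless it hits $S\cup\{\infty\}$ in a way that propagates local connectedness along it. More precisely: a nondegenerate subcontinuum $C\subseteq\bdy_{\C_\infty}\OM$ meeting $S\cup\{\infty\}$ still has a point $x\in C\cap(\bdy_{\C_\infty}\OM\cap\C)\setminus S$ (since $C$ is nondegenerate and $S\cup\{\infty\}$ cannot contain an open-in-$C$ neighborhood: $S$ is totally disconnected, $\{\infty\}$ a point, so $(S\cup\{\infty\})\cap C$ is totally disconnected while $C$ is a continuum), and one argues that local connectedness of $\bdy_{\C_\infty}\OM$ at $x$ forces $C$ to be too ``fat'' to be an impression — contradiction with the disjointness-of-impressions fact. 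This contradiction shows $B=\emptyset$, i.e.\ $f$ extends continuously, and then Result~\ref{R:Cara} gives that $\bdy_{\C_\infty}\OM$ is locally connected everywhere.

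An alternative and perhaps cleaner route, which I would run in parallel and probably prefer, is topological rather than via prime ends: use the result \emph{weak visibility implies local connectedness of the boundary} that is implicit in the paper's framework, but actually the cleanest is to invoke Theorem~\ref{thm-totally disconnected} (removability of a totally disconnected set in the context of visibility) together with Result~\ref{R:simplyConn-BNT2022}/its generalization. Namely: local connectedness of $\bdy_{\C_\infty}\OM$ at $p$ is known (by Result~\ref{R:Cara}, applied ``locally'' — or by the hyperbolic-geometry characterization) to be equivalent to a visibility-type condition at $p$; so the hypothesis says $\OM$ has the relevant visibility property for every pair of points in $\bdy\OM\setminus S$; Theorem~\ref{thm-totally disconnected} then upgrades this to visibility for \emph{all} pairs; and then the simply connected visibility characterization (the generalization of Result~\ref{R:simplyConn-BNT2022} proved in this paper) gives that $\bdy_{\C_\infty}\OM$ is locally connected everywhere. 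The bookkeeping needed here is to move carefully between $\C$ and $\C_\infty$ (the point $\infty$, if it is a boundary point, must be handled separately, but a single point is automatically a point of local connectedness and is harmless for totally-disconnected-set arguments).

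\textbf{Main obstacle.} The crux is the passage from ``local connectedness of $\bdy\OM$ at $p$ outside $S$'' to a statement phrased in the paper's visibility/almost-geodesic language to which Theorem~\ref{thm-totally disconnected} applies — i.e.\ establishing, for simply connected planar domains, the local equivalence between (a) local connectedness of the boundary at $p$ and (b) the pair-visibility property for pairs involving boundary points near $p$. If instead one stays with the classical complex-analysis approach, the main obstacle is the careful argument that a nondegenerate impression $\mathrm{Cl}(f,\zeta)$ \emph{cannot} avoid having a ``local connectedness point'' in its interior while remaining an impression, which requires invoking the disjointness of impressions and a dimension/separation argument using that $S$ is totally disconnected (so $S\cup\{\infty\}$, intersected with any continuum, is still a proper totally disconnected subset of that continuum, hence the continuum has local-connectedness points in abundance). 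I expect this to be the only genuinely delicate point; everything else (choosing $f$, citing Carath\'eodory, the $\C$ vs.\ $\C_\infty$ bookkeeping) is routine.
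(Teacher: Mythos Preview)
Your second route, via visibility and Theorem~\ref{thm-totally disconnected}, is circular: in this paper the implication ``locally connected boundary $\Rightarrow$ visibility'' for simply connected domains (the converse half of Theorem~\ref{thm:vis_simp_conn_loc_conn}) is proved \emph{using} Lemma~\ref{lmm:simp_conn_loc_conn}. You also do not have available a \emph{pointwise} equivalence ``local connectedness at $p$ $\Leftrightarrow$ pair-visibility near $p$'' to feed into Theorem~\ref{thm-totally disconnected}; the paper only establishes the global equivalence, and it does so after this lemma.

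Your first route has the right skeleton (Riemann map, prime-end impressions, find points of the impression outside $S\cup\{\infty\}$ because $S$ is totally disconnected), but the step where you close the argument is not an argument: impressions of distinct prime ends are \emph{not} pairwise disjoint in general, and ``local connectedness at $x$ forces $C$ to be too fat to be an impression'' has no content as stated. What actually does the work in the paper is a theorem of Rempe \cite[Theorem~1.1]{Rempe}: for any prime end $p$, there are at most \emph{two} points of the impression $I(p)$ at which $\C_\infty\setminus\OM$ is locally connected. Given this, the contradiction is immediate: assume some $I(p)$ is a nondegenerate continuum; since $S\cup\{\infty\}$ is totally disconnected plus one point, one can successively extract \emph{three} distinct points $x_0,z_0,v_0\in I(p)\setminus(S\cup\{\infty\})$ (each time using that a nondegenerate subcontinuum cannot sit inside $S$), and at each of these $\bdy_{\C_\infty}\OM$, hence $\C_\infty\setminus\OM$, is locally connected---contradicting Rempe's bound of two. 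So the missing idea in your proposal is precisely this quantitative prime-end fact; without it, the ``impression contains a local-connectedness point, hence contradiction'' step does not go through.
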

	
\begin{proof}
	Since $\OM$ is hyperbolic and simply connected, choose a biholomorphism $\phi:\unitdisk\to\OM$. We wish to show that $\phi$ extends to
	a continuous map from $\clos{\unitdisk}$ to $\cl_{\C_\infty}(\OM)$. To do this, it is sufficient (see \cite[Chapter~2]{Pomme})
	to show that for every
	prime end $p$ of $\OM$, the impression $I(p)$ of $p$ is trivial (i.e., is a singleton). To get a contradiction, we assume that
	there exists a prime end $p$ of $\OM$ such that $I(p)$ is not a singleton. We know that $I(p)$ is a compact, connected set that is
	contained in $\bdy_{\C_\infty}\OM$ (see, for example, \cite[Section~2.5]{Pomme}); and by assumption, it contains more than one point. By 
	\cite[Theorem~1.1]{Rempe}, there are at most two points of $I(p)$ where $\C_\infty\setminus\OM$ is locally connected. Note that 
	$I(p)\setminus\{\infty\}\not\subset S$. The reason is that, since $I(p)$ is compact, connected, and contains more than one point,
	$I(p)\setminus\{\infty\}$ is not totally disconnected, whereas $S$ is. Therefore, choose $x_0\in I(p)\setminus(S\cup\{\infty\})$. Since 
	$x_0\in \bdy_{\C_\infty}\OM\setminus S$, $\bdy_{\C_\infty}\OM$ is locally connected at $x_0$ and, hence, so is $\C_\infty\setminus\OM$. 
	Now, note that $I(p)\setminus \{\infty,x_0\}$ is non-empty and not totally disconnected. Choose $y_0\in I(p)\setminus\{\infty,x_0\}$
	such that the connected component $C$ of $I(p)\setminus\{\infty,x_0\}$ containing $y_0$ is not a singleton. Once again, $C\not\subset
	S$, so there exists $z_0\in C\setminus S$. Since $z_0\in \bdy_{\C_\infty}\OM\setminus S$, $\bdy_{\C_\infty}\OM$ is locally connected 
	at $z_0$ and, hence, so is $\C_\infty\setminus\OM$.
	Repeating this procedure, we find $v_0\in I(p)\setminus\{\infty,x_0,z_0\}$ where $\C_{\infty}\setminus\OM$ is also locally connected. 
	Therefore, we have three distinct points of $I(p)$,
	namely $x_0,z_0$ and $v_0$, where $\C_\infty\setminus\OM$ is locally connected. This contradicts \cite[Theorem~1.1]{Rempe}.
	Thus $I(p)$ must be a singleton; since the prime end $p$ of $\OM$ was arbitrary, this shows that $\phi$ extends to a continuous map
	from $\clos{\unitdisk}$ to $\cl_{\C_\infty}(\OM)$, which we continue to denote by $\phi$, and which maps $\bdy\unitdisk$ continuously
	to $\bdy_{\C_\infty}\OM$. The latter set is, therefore, locally connected (everywhere), 
	as required.
\end{proof}

\begin{lemma} \label{lmm:pln_ext_Kob_isom_gen}
	Let $\OM_1,\OM_2\subset\C$ be hyperbolic domains. Suppose that $\OM_1$ is bounded and satisfies Condition~1
	and that $\OM_2$ is a weak visibility domain.
	Then any biholomorphism (indeed, any Kobayashi-isometric embedding) from $\OM_1$ to $\OM_2$ extends to a continuous map from $\clos{\OM}_1$ to
	$\cl_{\C_\infty}(\OM_2)$.	
\end{lemma}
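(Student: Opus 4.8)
The plan is to build a continuous extension $F\colon\clos{\OM}_1\to\clos{\OM}_2^{End}$ of $f$ and then post-compose it with the natural ``collapse'' map $\Pi\colon\clos{\OM}_2^{End}\to\cl_{\C_\infty}(\OM_2)$ which is the identity on $\clos{\OM}_2$ and sends every end of $\clos{\OM}_2$ to $\infty$: using the compact exhaustion $K_j=\clos{B(0,j)}\cap\clos{\OM}_2$ of $\clos{\OM}_2$ (the end compactification is independent of the exhaustion up to homeomorphism) one checks that every end ``lands'' at $\infty$, so $\Pi$ is well defined, and that $\Pi$ is continuous; if $\OM_2$ is bounded there are no ends and $\Pi$ is simply the identity. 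Holomorphy of $f$ will never be used: a Kobayashi-isometric embedding is injective and preserves $\koba$, which is all that is needed, so the biholomorphic case is subsumed. We will also use freely that $\clos{\OM}_2^{End}$ is compact (being the end compactification of the locally compact, $\sigma$-compact space $\clos{\OM}_2$), Hausdorff, and first countable (at ordinary points via Euclidean neighbourhoods, at ends via the explicit countable basis $(\wh{F}_j)_j$).

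Fix $p\in\bdy\OM_1$ and $o\in\OM_1$. First I would record that $\koba_{\OM_1}(z,o)\to+\infty$ as $\OM_1\ni z\to p$: choosing $b\in(\C\setminus\OM_1)\setminus\{p\}$ (possible since $\OM_1$ is hyperbolic) we have $\OM_1\subset\C\setminus\{p,b\}$, hence $\koba_{\OM_1}\ge\koba_{\C\setminus\{p,b\}}$, and $p$ is an isolated point of the complement of $\C\setminus\{p,b\}$, near which the Kobayashi--Royden metric blows up fast enough that the distance to any fixed point diverges. Since $f$ preserves $\koba$, $\koba_{\OM_2}(f(z),f(o))\to+\infty$; by continuity of $\koba_{\OM_2}$ on $\OM_2\times\OM_2$ this forces $f(z)$ to eventually leave every compact subset of $\OM_2$. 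Consequently the cluster set $C(p)\defeq\bigcap_{r>0}\clos{f(B(p,r)\cap\OM_1)}$ (closures taken in $\clos{\OM}_2^{End}$) is a non-empty compact subset of $\bdy\clos{\OM}_2^{End}$.

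The heart of the argument is the claim that $C(p)$ is a singleton, and this is the step I expect to be the main obstacle. Suppose $\eta_1\neq\eta_2$ both lie in $C(p)$. Using first countability of $\clos{\OM}_2^{End}$ at $\eta_1$ and $\eta_2$, choose sequences $(u_n)$, $(v_n)$ in $\OM_1$ with $u_n,v_n\to p$, $f(u_n)\to\eta_1$ and $f(v_n)\to\eta_2$ in $\clos{\OM}_2^{End}$. Since $\OM_1$ satisfies Condition~1, \Cref{L:Unbounded_Gromov_prod} (applied with $\xi_1=\xi_2=p$) gives $(u_n|v_n)_o\to+\infty$, whence $(f(u_n)|f(v_n))_{f(o)}=(u_n|v_n)_o\to+\infty$ because $f$ is a $\koba$-isometry. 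On the other hand $\OM_2$ is a weak visibility domain, so the distinct points $\eta_1,\eta_2\in\bdy\clos{\OM}_2^{End}$ satisfy the visibility property with respect to $(1,1)$-almost-geodesics, and \Cref{lmm:weak_visib_Grom_lim_fin} then yields $\limsup_n(f(u_n)|f(v_n))_{f(o)}<+\infty$ --- a contradiction. So $C(p)=\{F(p)\}$ for a unique point $F(p)\in\bdy\clos{\OM}_2^{End}$, and setting $F|_{\OM_1}=f$ and $F(p)$ as above defines $F\colon\clos{\OM}_1\to\clos{\OM}_2^{End}$. The mechanism is exactly that Condition~1 on $\OM_1$ forces Gromov products along pairs tending to the \emph{same} boundary point to diverge, while weak visibility of $\OM_2$ forbids divergence of Gromov products along pairs tending to \emph{distinct} boundary points, with the isometry $f$ transporting one statement into the other.

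Finally I would check $F$ is continuous and conclude. Continuity at interior points is immediate. At $p\in\bdy\OM_1$: for any $q\in B(p,r)\cap\bdy\OM_1$ a small ball around $q$ lies in $B(p,r)$, so $F(q)\in C(q)\subseteq\clos{f(B(p,r)\cap\OM_1)}$; hence $\clos{F(B(p,r)\cap\clos{\OM}_1)}=\clos{f(B(p,r)\cap\OM_1)}$ and therefore $\bigcap_{r>0}\clos{F(B(p,r)\cap\clos{\OM}_1)}=C(p)=\{F(p)\}$. A standard finite-intersection-property argument (source first countable, target compact Hausdorff) then shows that any map whose cluster set at each point equals its value there is continuous. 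Composing with $\Pi$ gives the required continuous extension $\Pi\circ F\colon\clos{\OM}_1\to\cl_{\C_\infty}(\OM_2)$ of $f$. The remaining ingredients --- the blow-up of $\koba_{\OM_1}$ at boundary points, the definition and continuity of $\Pi$, and the cluster-set criterion for continuity --- are routine.
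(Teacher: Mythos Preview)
Your argument is correct and shares the paper's core mechanism: the contradiction between $(u_n|v_n)_o\to+\infty$ (from \Cref{L:Unbounded_Gromov_prod}, using Condition~1 on $\OM_1$) and $\limsup_n(f(u_n)|f(v_n))_{f(o)}<+\infty$ (from \Cref{lmm:weak_visib_Grom_lim_fin}, using weak visibility of $\OM_2$), transported by the isometry $f$. The packaging differs: the paper works directly in $\cl_{\C_\infty}(\OM_2)$, whereas you detour through $\clos{\OM}_2^{End}$ and then collapse via $\Pi$. Your route has the advantage that \Cref{lmm:weak_visib_Grom_lim_fin} applies verbatim to distinct $\eta_1,\eta_2\in\bdy\clos{\OM}_2^{End}$; the paper, by contrast, must remark that at least one of the two subsequential limits in $\bdy_{\C_\infty}\OM_2$ lies in $\bdy\OM_2$ (since both cannot equal $\infty$) and then appeal to the argument of \Cref{thm-totally disconnected} to secure visibility for that pair. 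On the other hand, your assertion that $\clos{\OM}_2^{End}$ is compact ``being the end compactification of the locally compact, $\sigma$-compact space $\clos{\OM}_2$'' is not justified from general principles alone: the paper's own \Cref{res:end_seq_comp} requires the additional hypothesis that each $\clos{\OM}_2\setminus K$ has only finitely many far-reaching components, and \Cref{thm:vis_seqcompact} obtains this precisely from (weak) visibility. So your argument goes through, but the sequential compactness you need comes from \Cref{thm:vis_seqcompact} (available here since $\OM_2$ is weak visibility), not from the bare end-compactification construction. The paper's choice of $\cl_{\C_\infty}(\OM_2)$, a closed subset of the compact metrizable space $\C_\infty$, sidesteps this point entirely.
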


\begin{proof}
	Choose and fix a point $o\in\OM_1$.
	Now suppose that $f:\OM_1\to\OM_2$ is a Kobayashi-isometric embedding. In order to show
	that $f$ extends to a continuous map from $\clos{\OM}_1$ to $\cl_{\C_\infty}(\OM_2)$, it suffices to show that, for every
	$\xi\in\bdy\OM_1$, $\lim_{z\to\xi,\,z\in\OM_1}f(z)$ exists as an element of $\bdy_{\C_\infty}\OM_2$. Assume, to get
	a contradiction, that the stated limit does not exist. Then, by the compactness of $\cl_{\C_\infty}(\OM_2)$, there exist
	sequences $(z_n)_{n\geq 1}$ and $(w_n)_{n\geq 1}$ in $\OM_1$ converging to $\xi$ such that $(f(z_n))_{n\geq 1}$ and
	$(f(w_n))_{n\geq 1}$ converge to distinct points $\zeta_1,\zeta_2$ of $\bdy_{\C_\infty}\OM_2$.
	Since one of the points $\zeta_1, \zeta_2$ must belong to $\bdy\OM_2$,
	we can use arguments as in the proof of Theorem~\ref{thm-totally disconnected} to conclude 
	that the aforementioned points satisfy visibility property. Therefore, by Lemma~\ref{lmm:weak_visib_Grom_lim_fin} we have
	\[ \limsup_{n\to\infty} (f(z_n)|f(w_n))_{f(o)} < \infty. \]
	Since $f$ is assumed to be a Kobayashi isometric embedding,
	\[ (f(z_n)|f(w_n))_{f(o)} = (z_n|w_n)_o. \]
	By Lemma~\ref{L:Unbounded_Gromov_prod}, 
	\[ \lim_{n\to\infty} (z_n|w_n)_o = +\infty. \]
	Thus, we have a contradiction, which completes the proof.
\end{proof}

We now present the main result of this subsection.

\begin{theorem} \label{thm:vis_simp_conn_loc_conn}
	Let $\OM\subset\C$ be a hyperbolic simply connected domain.
	If, for some totally disconnected subset $S$ of $\bdy\OM$ (not necessarily closed), every pair of distinct 
	points of $\bdy\OM\setminus S$ possesses the visibility property with respect to $\koba_\OM$, then $\bdy\OM$ 
	is locally connected and $\bdy_{\C_\infty}\OM$ is a continuous surjective image of $S^1$. Conversely, if 
	$\bdy\OM$ is locally connected, then $\OM$ is a visibility domain.
\end{theorem}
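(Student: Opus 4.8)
The plan is to handle the two implications separately; both go through a Riemann map $\phi\colon\unitdisk\to\OM$ (which exists since $\OM$ is hyperbolic and simply connected, so $\OM\subsetneq\C_\infty$) and the Carath\'eodory extension theorem (\Cref{R:Cara}), viewing $\OM$ as a simply connected domain in $\C_\infty$.

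\emph{Necessity.} Since every pair of distinct points of $\bdy\OM\setminus S$ possesses the visibility property with respect to $\koba_\OM$, it possesses, for each fixed $\lambda\geq1$ and $\kappa>0$, the visibility property with respect to $(\lambda,\kappa)$-almost-geodesics; \Cref{thm-totally disconnected}, applied with these $\lambda,\kappa$, shows that $\OM$ is a $(\lambda,\kappa)$-visibility domain, and since $\lambda,\kappa$ were arbitrary, $\OM$ is a visibility domain. It then suffices, by \Cref{R:Cara}, to show that $\phi$ extends to a continuous map $\clos{\unitdisk}\to\cl_{\C_\infty}(\OM)$; equivalently, that for every $\zeta_0\in\bdy\unitdisk$ the cluster set $C(\phi,\zeta_0)\defeq\bigcap_{r>0}\clos{\phi(\unitdisk\cap B(\zeta_0,r))}$ (closure taken in $\C_\infty$) is a singleton. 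As $\phi^{-1}$ is continuous, $\phi$ maps some neighbourhood of $\bdy\unitdisk$ out of any prescribed compact subset of $\OM$, so $C(\phi,\zeta_0)\subset\bdy_{\C_\infty}\OM$; and $C(\phi,\zeta_0)$, being a nested intersection of continua, is a continuum.

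Suppose, for contradiction, that $C(\phi,\zeta_0)$ is non-degenerate for some $\zeta_0$. A connected set with more than one point has infinitely many, so $C(\phi,\zeta_0)\cap\C$ contains two distinct points $\xi_1,\xi_2$, both lying in $\bdy\OM$; choose sequences $(a_n)_{n\geq1},(b_n)_{n\geq1}$ in $\unitdisk$ with $a_n,b_n\to\zeta_0$, $\phi(a_n)\to\xi_1$ and $\phi(b_n)\to\xi_2$. Let $\tilde\gamma_n\colon[0,T_n]\to\unitdisk$ be the $\koba_{\unitdisk}$-geodesic joining $a_n$ and $b_n$. An elementary property of the hyperbolic metric of $\unitdisk$ — a geodesic segment with both endpoints in $B(\zeta_0,\epsilon)\cap\unitdisk$ lies in $B(\zeta_0,C\epsilon)\cap\unitdisk$ for a universal constant $C$ (seen, e.g., by transferring to a half-plane) — implies that $(\tilde\gamma_n)_{n\geq1}$ eventually avoids every compact subset of $\unitdisk$. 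Consequently $\gamma_n\defeq\phi\circ\tilde\gamma_n$, a $\koba_\OM$-geodesic (hence a $(1,1)$-almost-geodesic) with $\gamma_n(0)=\phi(a_n)$ and $\gamma_n(T_n)=\phi(b_n)$, eventually avoids every compact subset of $\OM$, while its endpoints converge in $\clos{\OM}^{End}$ to the distinct boundary points $\xi_1,\xi_2$. This contradicts visibility of $\OM$. Hence $\phi$ extends continuously; by \Cref{R:Cara}, $\bdy_{\C_\infty}\OM$ is locally connected, so $\bdy\OM=\bdy_{\C_\infty}\OM\cap\C$ is locally connected; and the extension $\psi$ satisfies $\psi(\clos{\unitdisk})=\cl_{\C_\infty}(\OM)$ with $\psi(\bdy\unitdisk)\subset\bdy_{\C_\infty}\OM$ (by the same ``escape'' property), so $\psi|_{\bdy\unitdisk}\colon S^1\to\bdy_{\C_\infty}\OM$ is a continuous surjection.

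\emph{Sufficiency.} Assume $\bdy\OM$ is locally connected. Near any point of $\bdy_{\C_\infty}\OM\cap\C$ the spaces $\bdy_{\C_\infty}\OM$ and $\bdy\OM$ coincide locally, so $\bdy_{\C_\infty}\OM$ is locally connected at every such point; \Cref{lmm:simp_conn_loc_conn} (with $S=\emptyset$) then gives that $\bdy_{\C_\infty}\OM$ is locally connected everywhere. By \Cref{R:Cara}, $\phi$ extends to a continuous surjection $\psi\colon\clos{\unitdisk}\to\cl_{\C_\infty}(\OM)$. I would then upgrade $\psi$ to a continuous surjection onto the end compactification $\clos{\OM}^{End}$: at points $\zeta_0\in\bdy\unitdisk$ with $\psi(\zeta_0)\in\bdy\OM$ nothing changes, as ordinary boundary points have a neighbourhood basis of Euclidean sets; at points with $\psi(\zeta_0)=\infty$, the $\clos{\OM}^{End}$-cluster set of $\phi$ at $\zeta_0$ is a continuum contained in the set of ends of $\clos{\OM}$, which is totally disconnected, hence a single end. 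This yields a continuous map $\tilde\psi\colon\clos{\unitdisk}^{End}=\clos{\unitdisk}\to\clos{\OM}^{End}$ extending $\phi$, surjective because its image is compact (hence closed) and contains the dense set $\OM$. Since $\unitdisk$ is a visibility domain, \Cref{thm:cont_surj_im_vis_dom_vis} applies and shows $\OM$ is a visibility domain.

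\emph{Expected main difficulty.} The essential geometric input in the necessity proof is the statement that a $\unitdisk$-geodesic with both endpoints close to a boundary point stays close to that boundary point — routine but needing a clean proof. In the sufficiency proof, the somewhat delicate point is the passage from $\cl_{\C_\infty}(\OM)$ to $\clos{\OM}^{End}$; it rests on the total disconnectedness of the space of ends together with a careful continuity check.
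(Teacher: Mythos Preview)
Your necessity argument is correct and more direct than the paper's. The paper invokes \Cref{lmm:pln_ext_Kob_isom_gen}, which rests on the Gromov-product apparatus of \Cref{lmm:weak_visib_Grom_lim_fin} and \Cref{L:Unbounded_Gromov_prod} (the latter in turn requiring Condition~1 and the geodesic-ray machinery of \Cref{lmm:seq_geods_conv_geod_ray}). You bypass all of this with the elementary observation that $\koba_{\unitdisk}$-geodesics with both endpoints near $\zeta_0\in\bdy\unitdisk$ stay near $\zeta_0$, and then push forward by the Riemann isometry to contradict visibility directly. This is a genuine simplification.

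Your sufficiency argument has a gap in the upgrade from $\cl_{\C_\infty}(\OM)$ to $\clos{\OM}^{End}$. The claim that the $\clos{\OM}^{End}$-cluster set of $\phi$ at $\zeta_0$ is a continuum---so that, lying in the totally disconnected space of ends, it must be a singleton---presupposes that $\clos{\OM}^{End}$ is compact Hausdorff (so that nested closures of connected sets intersect in a nonempty continuum, and so that a singleton cluster set forces continuity of $\tilde\psi$). Within the paper's framework, (sequential) compactness of $\clos{\OM}^{End}$ is established only as a \emph{consequence} of visibility (\Cref{thm:vis_seqcompact} via \Cref{res:end_seq_comp}), so invoking it here is circular. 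The paper sidesteps the issue entirely: it keeps the extension $\psi$ landing in $\cl_{\C_\infty}(\OM)$, notes explicitly that this space may differ from $\clos{\OM}^{End}$, and observes that the \emph{proof method} of \Cref{thm:cont_surj_im_vis_dom_vis} still goes through. Concretely, by \Cref{thm-totally disconnected} with $S=\emptyset$ it suffices to check visibility for pairs $p\neq q$ in the Euclidean boundary $\bdy\OM$; for such pairs $\psi^{-1}(p),\psi^{-1}(q)\subset S^1$ are disjoint compacta, and \Cref{lmm:dsj_cpt_sets_visib_gen} applied to $\unitdisk$ produces a compact $K\subset\unitdisk$ that all pulled-back almost-geodesics must meet, whence $\phi(K)\subset\OM$ witnesses visibility. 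This route never touches $\clos{\OM}^{End}$.
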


\begin{proof}
	First suppose that for some totally disconnected (not necessarily closed) subset $S$ of $\bdy\OM$, every pair of distinct
	points of $\bdy\OM\setminus S$ possesses the visibility property with respect to $\koba_\OM$. Then, by 
	Theorem~\ref{thm-totally disconnected}, $\OM$ is a visibility domain. Since $\OM$ is hyperbolic and simply connected,
	we may choose a biholomorphism $f$ from $\unitdisk$ to $\OM$. By Lemma~\ref{lmm:pln_ext_Kob_isom_gen}, $f$ extends
	to a continuous map $\wt{f}$ from $\clos{\unitdisk}$ to $\cl_{\C_\infty}(\OM)$. By compactness and continuity, this map is surjective
	(and maps $\bdy\unitdisk$ onto $\bdy_{\C_\infty}\OM$). 
	Therefore, $\bdy_{\C_\infty}\OM$,
	being the continuous image of the compact, locally connected space $S^1$, is locally connected. Note that $\bdy_{\C_\infty}\OM$
	is either equal to $\bdy\OM$ (if $\bdy\OM$ is bounded) or equal to $\bdy\OM\cup\{\infty\}$ (if $\bdy\OM$ is unbounded). 
	In either case, $\bdy\OM$ is an open subset of $\bdy_{\C_\infty}\OM$, and is therefore itself locally connected. 
        \smallskip
	
	Conversely, suppose that $\bdy\OM$ is locally connected. Then it follows by
 Lemma~\ref{lmm:simp_conn_loc_conn} that
	$\bdy_{\C_\infty}\OM$ is also locally connected. We may now regard $\OM$ as a simply connected hyperbolic domain in $\C_\infty$
	with locally connected boundary, and we may choose a biholomorphism $f:\unitdisk\to\OM$. By (a version of) Carath{\'e}odory's 
	theorem on the extension of biholomorphisms (see \cite[Chapter~2, Section~1]{Pomme}), it follows that $f$ extends to a 
	continuous map from $\clos{\unitdisk}$ to $\cl_{\C_\infty}(\OM)$. Note that the latter set may not be homeomorphic to
	$\clos{\OM}^{End}$. Nevertheless, the proof method of Theorem~\ref{thm:cont_surj_im_vis_dom_vis} readily implies 
	that $\OM$ is a visibility domain.
\end{proof}

\subsection{Characterization of domains that satisfy Condition~2 and applications}

\begin{definition}
Let $\OM\subset\C$ be a domain and let $\gamma$ be a connected component of $\bdy\OM$. Let
$K_\gamma$ be the connected component of $\C\setminus\OM$ that contains $\gamma$. Define
\begin{equation*}
  \OM_\gamma\defeq
  \begin{cases}
    \C\setminus K_\gamma, &\text{if $K_\gamma$ is unbounded as a subset of $\C$},\\
    \C_\infty\setminus K_\gamma, &\text{if $K_\gamma$ is bounded as a subset of $\C$}.
  \end{cases}
\end{equation*}
In either case, we regard $\OM_\gamma$ as a subset of $\C_\infty$.
\end{definition}
Given a domain $\OM$ in $\C$ and a connected component $K$ of $\C\setminus\OM$, it is not difficult to show that 
$\bdy{K}\subset\bdy{\OM}$ and $\C\setminus K$ is connected. Moreover, 
it is a fact \cite[Theorem~14.5, page~124]{Newman} that $K$ contains 
{\bf only} one component of $\bdy{\OM}$. Therefore the components of $\C\setminus\OM$ and $\bdy{\OM}$ are in 
one-to-one correspondence with each other in such a way that given a component $K$ of $\C\setminus\OM$, there 
is a unique component $\gamma$ of $\bdy{\OM}$ such that
\[
\bdy{K}=\gamma.
\]
This fact helps us to prove the following 
important result. 

\begin{proposition}
	Let $\OM\subset\C$ be a domain and let $\gamma\subset\bdy\OM$ be a connected component. Then
	$\OM_\gamma$ is a simply connected domain in $\C_\infty$ such that 
	\[
	 \bdy_{\C_\infty}{\OM_\gamma}\cap\C=\gamma=\bdy{K_\gamma}.
	 \]
\end{proposition}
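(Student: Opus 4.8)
The plan is to extract everything from the two structural facts recalled just before the statement: (a) for a connected component $K$ of $\C\setminus\OM$ one has $\bdy K\subset\bdy\OM$ and $\C\setminus K$ connected, and (b) $K$ contains exactly one component of $\bdy\OM$, so that $\bdy K_\gamma=\gamma$. I would split into the two cases in the definition of $\OM_\gamma$ and argue each by essentially the same topology, the unbounded case in $\C$ and the bounded case in $\C_\infty$.

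First I would verify that $\OM_\gamma$ is a domain. In the unbounded case $\OM_\gamma=\C\setminus K_\gamma$: since $K_\gamma$ is closed in $\C$ (components of the closed set $\C\setminus\OM$ are closed), $\OM_\gamma$ is open; and it is connected because $\C\setminus K_\gamma$ is connected by fact (a). In the bounded case $\OM_\gamma=\C_\infty\setminus K_\gamma$: here $K_\gamma$ is a compact connected subset of $\C$ (closed in $\C$ and bounded), hence closed in $\C_\infty$, so $\OM_\gamma$ is open in $\C_\infty$; and $\C_\infty\setminus K_\gamma=(\C\setminus K_\gamma)\cup\{\infty\}$ is connected since $\C\setminus K_\gamma$ is connected and unbounded (so $\infty$ is in its closure in $\C_\infty$). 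In both cases $\OM\subset\OM_\gamma$, so $\OM_\gamma$ is nonempty.

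Next, the boundary identity. In the unbounded case, $\bdy_{\C_\infty}\OM_\gamma\cap\C=\bdy_\C(\C\setminus K_\gamma)=\bdy_\C K_\gamma$ because $K_\gamma$ and $\C\setminus K_\gamma$ are complementary closed/open sets; by fact (b) this equals $\gamma$. (One must also note $\OM_\gamma$ is unbounded, so $\infty\in\bdy_{\C_\infty}\OM_\gamma$, but intersecting with $\C$ removes it.) In the bounded case, $\bdy_{\C_\infty}\OM_\gamma=\bdy_{\C_\infty}(\C_\infty\setminus K_\gamma)=\bdy_{\C_\infty}K_\gamma$; since $K_\gamma\subset\C$ is compact, $\infty\notin K_\gamma$ and $\infty$ is an interior point of $\OM_\gamma$, so $\bdy_{\C_\infty}K_\gamma=\bdy_\C K_\gamma$, which by fact (b) is $\gamma$; intersecting with $\C$ changes nothing. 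Either way $\bdy_{\C_\infty}\OM_\gamma\cap\C=\gamma=\bdy K_\gamma$, using the last displayed fact in the excerpt.

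Finally, simple connectedness. A domain in $\C_\infty$ is simply connected iff its complement in $\C_\infty$ is connected. In the bounded case, $\C_\infty\setminus\OM_\gamma=K_\gamma$, which is connected by definition; done. In the unbounded case, $\C_\infty\setminus\OM_\gamma=K_\gamma\cup\{\infty\}$, and this is connected because $K_\gamma$ is connected, unbounded in $\C$, hence has $\infty$ in its $\C_\infty$-closure, so $K_\gamma\cup\{\infty\}=\cl_{\C_\infty}(K_\gamma)$ is connected. Thus $\OM_\gamma$ is a simply connected domain in $\C_\infty$ in all cases. I expect the only mild subtlety — the ``main obstacle'', though it is minor — to be bookkeeping the behavior of the point $\infty$ correctly in the two cases (whether it lies in the interior of $\OM_\gamma$, on its boundary, or in its complement), and invoking the characterization ``simply connected $\iff$ connected complement in $\C_\infty$'' for subdomains of the sphere; everything else is the direct application of facts (a), (b) and the one-to-one correspondence recalled immediately above the proposition.
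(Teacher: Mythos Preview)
Your proof is correct and follows essentially the same approach as the paper's: both argue openness and connectedness of $\OM_\gamma$ from fact~(a), simple connectedness via the ``connected complement in $\C_\infty$'' criterion (splitting into the bounded/unbounded cases for $K_\gamma$), and the boundary identity from fact~(b). The paper's proof is simply a terser version of yours, with the order of the last two steps swapped.
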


\begin{proof}
	Note that from the definition of $\OM_\gamma$ and the fact that $\C\setminus K_\gamma$ is connected, 
	it follows that $\OM_\gamma$ is a connected open set in $\C_\infty$. Note that
	$\C_\infty\setminus\OM_\gamma$ is ${K_\gamma}$ when ${K_\gamma}$ is bounded and it is 
	${K_\gamma}\cup\{\infty\}$ when $K_\gamma$ is unbounded; we see that $\C_\infty\setminus\OM_\gamma$
	is connected in either case. Therefore $\OM_\gamma$ is simply connected.
	\smallskip
	
	Note that $\bdy_{\C_\infty}{\OM_\gamma}\cap\C=\bdy{K_\gamma}$. Since $\gamma\subset\bdy{K_\gamma}$,
	it follows from the discussion above that $\bdy{K_\gamma}=\gamma$. This establishes the result. 
	\end{proof}

Now we have an important technical lemma.

\begin{lemma} \label{lmm:loc_conn_outside_a_pt_loc_conn}
Let $\gamma\subset\C_\infty$ be a closed, connected set, let $a\in\gamma$, and suppose that
$\gamma$ is locally connected at every point other than $a$. Then $\gamma$ is locally 
connected at $a$ as well.
\end{lemma}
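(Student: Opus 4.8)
The plan is to prove the contrapositive-style statement by a connectedness argument internal to a small neighbourhood of $a$. Since $\gamma$ is locally connected at every point other than $a$ and is a closed subset of the compact metric space $\C_\infty$, it is locally connected as a subspace at every point of $\gamma\setminus\{a\}$. Recall the classical characterization (see, e.g., Whyburn or Newman): a compact metric space $X$ is locally connected if and only if for every $\varepsilon>0$ there is a $\delta>0$ such that any two points of $X$ at distance $<\delta$ lie in a connected subset of $X$ of diameter $<\varepsilon$; equivalently, $X$ is locally connected if and only if it has only finitely many components of diameter $>\varepsilon$ for each $\varepsilon$ and each of its components is locally connected, OR one can use the Hahn--Mazurkiewicz machinery. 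I would work with the ``uniform local connectedness'' (Property S) formulation, which is equivalent to local connectedness for compact metric spaces.

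First I would fix a small ball $B(a;r)$ in $\C_\infty$ (using the spherical metric) and set $\gamma_r \defeq \gamma\cap\clos{B(a;r)}$; I want to produce, given any $\varepsilon>0$, a connected neighbourhood of $a$ in $\gamma$ of diameter $<\varepsilon$. The key observation is that $\gamma$ is connected, so every connected component of the open set $\gamma\setminus\clos{B(a;\rho)}$ (for $\rho<r$) has a limit point on the sphere $\bdy B(a;\rho)$; hence, by compactness of $\gamma$ and the fact that $\gamma$ has Property S away from $a$, for $\rho$ small only finitely many components of $\gamma\setminus\clos{B(a;\rho)}$ can meet both $\bdy B(a;\rho)$ and $\C_\infty\setminus B(a;\varepsilon/2)$. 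Next I would take a Hausdorff-convergent subsequence: suppose, for contradiction, that $a$ is \emph{not} a point of local connectedness, i.e.\ there is $\varepsilon_0>0$ so that no connected neighbourhood of $a$ in $\gamma$ has diameter $<\varepsilon_0$. Then one can find a sequence $\rho_n\searrow 0$ and, for each $n$, two points $u_n,v_n\in\gamma\cap\clos{B(a;\rho_n)}$ that cannot be joined inside $\gamma$ by a connected set of diameter $<\varepsilon_0$; since both converge to $a$, this already contradicts uniform local connectedness \emph{unless} the obstruction is genuinely at $a$. To convert this into a contradiction I would use connectedness of $\gamma$ itself: join $u_n$ to $v_n$ \emph{within} $\gamma$ (possible since $\gamma$ is connected, via a sub-continuum $C_n\subset\gamma$ containing both — the component of $\gamma$ containing $u_n$ is all of $\gamma$), take a Hausdorff limit $C$ of the $C_n$ (a subcontinuum of $\gamma$ containing $a$), and then use that on $\gamma\setminus\{a\}$, which is Property S, the continua $C_n$ restricted away from a fixed ball $\clos{B(a;\delta)}$ decompose into \emph{boundedly many} small-diameter pieces; pushing $\delta\to0$ forces $C_n$ to shrink towards $a$, so $\diam C_n\to 0$, contradicting $\diam C_n\geq \varepsilon_0$ for $n$ with $\rho_n$ small.

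The main obstacle I anticipate is the bookkeeping in the ``boundedly many small pieces'' step: one must justify, uniformly in $n$, that a sub-continuum of $\gamma$ passing near $a$ cannot re-enter a far region $\C_\infty\setminus B(a;\varepsilon_0)$ without picking up large diameter, and this uses that $\gamma\setminus\{a\}$ is locally connected in a \emph{uniform} way on each compact set $\gamma\setminus B(a;\delta)$ — which is fine because a closed subset of a compact metric space that is locally connected at each of its points is Property S, but one must be careful that ``$\gamma$ locally connected at $x$'' for $x\neq a$ is being used in the subspace topology of $\gamma$, not of $\C_\infty$, and reconcile this with how the hypothesis is phrased. An alternative, possibly cleaner, route is to invoke the Torhorst theorem / the fact that the boundary of a simply connected region is locally connected iff it has no ``bad'' prime-end behaviour, but since this lemma is stated for an abstract continuum I would stick with the direct Property-S argument. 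If that proves delicate, a fallback is: embed the statement into a known result such as ``a continuum is locally connected iff it is locally connected at all but countably many points'' (true by a classical theorem), a fortiori at all but one point, and cite it. In writing the final version I would give the direct argument, with the Property-S characterization quoted from a standard reference (e.g.\ Whyburn, \emph{Analytic Topology}) and the Hausdorff-limit facts quoted from the discussion following \Cref{dfn:vis-wvis-dom} in this paper.
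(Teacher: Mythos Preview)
Your main line of argument has a gap that you partly anticipate but do not close. You take sub-continua $C_n\subset\gamma$ containing $u_n$ and $v_n$ and then want $\diam C_n\to 0$; but as written, $C_n$ could simply be $\gamma$ itself (you even note ``the component of $\gamma$ containing $u_n$ is all of $\gamma$''), so nothing forces the diameters to shrink. Taking $C_n$ irreducible from $u_n$ to $v_n$ does not help either: irreducible continua between points converging to $a$ can have large diameter (think of the topologist's sine curve). The step ``pushing $\delta\to 0$ forces $C_n$ to shrink towards $a$'' is exactly the heart of the matter and is left as an assertion. A second issue: you invoke Property~S on $\gamma\setminus\{a\}$, but that set is non-compact, and on the compact pieces $\gamma\setminus B(a;\delta)$ you would need local connectedness of that \emph{subspace}, which does not follow automatically from local connectedness of $\gamma$ at those points.

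The paper's proof is different and more direct. Fix $B(a;r)$ and let $\sigma_0$ be the component of $B(a;r)\cap\gamma$ containing $a$; the goal is to show $\sigma_0$ is a neighbourhood of $a$ in $\gamma$. Three claims do this: (1) no component $\sigma\neq\sigma_0$ can have $\clos{\sigma}\subset B(a;r)$ (otherwise $\gamma$ disconnects, after a short argument using local connectedness at an accumulation point $\neq a$); (2) for each $\delta\in(0,r)$ only finitely many components of $B(a;r)\cap\gamma$ meet $\bdy B(a;\delta)$ (an infinite family would accumulate on some point of $\bdy B(a;\delta)$, contradicting local connectedness there); (3) consequently $\sigma_0$ is disjoint from the closure of $\bigcup_{\sigma\neq\sigma_0}\sigma$, so $V\defeq B(a;r)\setminus\clos{\bigcup_{\sigma\neq\sigma_0}\sigma}$ is an open set with $V\cap\gamma=\sigma_0$. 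No Property~S, no Hausdorff limits, no subspace subtleties. Your fallback of citing a classical theorem (``locally connected off a countable set implies locally connected'') would settle the lemma in one line if you can pin down a precise reference, but verify the exact statement before relying on it.
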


\begin{proof}
By using the inversion self-homeomorphism of $\C_\infty$, it is clear that it is enough to
prove the result assuming that $a\in\C$. So we assume that $a\in\C$ and that $\gamma\subset
\C$ is a closed, connected set in $\C$ that is locally connected at all points other than
$a$. 

To prove that $\gamma$ is locally connected at $a$ as well, we suppose that $U$ is a
neighbourhood of $a$ in $\C$. We choose $r>0$ such that $B(a;r)\subset U$. We first make the
following

\noindent {\bf Claim~1.} There does not exist any connected component $\sigma$ of $B(a;r)
\cap \gamma$ such that $\clos{\sigma}\subset B(a;r)$ and such that $a\notin\sigma$.

\noindent {\itshape Proof of Claim~1.} Assume, to get a contradiction, that there does exist
such a component $\sigma$. Note that since $\clos{\sigma}\subset B(a;r)$ (by assumption) and
$\sigma$ is closed in $B(a;r)$ (since $\gamma$ is closed in $\C$ and since $\sigma$ is a 
connected component of $B(a;r)\cap\gamma$), it follows that $\sigma$ is closed in $\C$, i.e.,
$\clos{\sigma}=\sigma$.
Now, there cannot exist any neighbourhood $V$ of $\sigma$
in $B(a;r)$ (or, equivalently, in $\C$) such that $V\cap\gamma=\sigma$. For, suppose that
there exists such a neighbourhood $V$. Since, by the above, $\sigma$ is a compact set,
it follows that we may choose a neighbourhood $W$ of $\sigma$ such that $W\Subset V$.
Clearly, one also has $W\cap\gamma=\sigma$. Now, $W\cap\gamma$ and $(\C\setminus\clos{W})\cap
\gamma$ are disjoint, non-empty open subsets of $\gamma$ and, clearly, their union is $\gamma$.
This shows that $\gamma$ is disconnected, which is a contradiction. So, there indeed doesn't
exist any neighbourhood $V$ of $\sigma$ such that $V\cap\gamma=\sigma$. Therefore, for every
neighbourhood $V$ of $\sigma$, $(V\cap\gamma)\setminus\sigma\neq\emptyset$. In particular, once
again by the compactness of $\sigma$, there exists a sequence $(y_n)_{n\geq 1}$ in 
$\gamma\setminus\sigma$ that converges to some point $p\in\sigma$. Now, each $y_n$ belongs to
{\em some} connected component $C_n$ of $B(a;r)\cap\gamma$. First assume that the total number
of distinct $C_n$'s is finite, that is,
\begin{equation} \label{eqn:Cns_set} 
\{C_n\mid n\in\posint\} 
\end{equation} 
is a finite set. This means that there exist some connected component $\wh{C}$ of 
$B(a;r)\cap\gamma$ and some sequence $(z_n)_{n\geq 1}$ in it (a subsequence of $(y_n)_{n\geq 1}$)
that converges to $p$. Since, as above, $\wh{C}$ is closed in $B(a;r)$, it follows that $p\in
\wh{C}$. But that is a contradiction because $p\in\sigma$, and $\sigma$ and $\wh{C}$ are distinct
connected components of $B(a;r)\cap\gamma$. So \eqref{eqn:Cns_set} must be infinite, i.e., there
exist infinitely many distinct $C_n$'s. Therefore we may assume, without loss of generality,
that $(C_n)_{n\geq 1}$ is a sequence of pairwise distinct components of $B(a;r)\cap\gamma$. Since
$p\in\sigma$ and hence $p\neq a$, $\gamma$ is, by assumption, locally connected at $p$. 
Therefore, there exists a neighbourhood $W$ of $p$ that is included in $B(a;r)$ and such that
$W\cap\gamma$ is connected. Since $(y_n)_{n\geq 1}$ converges to $p$, there exists 
$n_0\in\posint$ such that, for all $n\geq n_0$, $y_n\in W$. In particular, $y_{n_0},y_{n_0+1}\in
W$. So, $y_{n_0},y_{n_0+1}\in W\cap\gamma$. Since $W\cap\gamma$ is connected, this implies that
$y_{n_0},y_{n_0+1}$ belong to the same connected component of $B(a;r)\cap\gamma$. But this is a
contradiction because they belong to the {\em distinct} components $C_{n_0}$ and $C_{n_0+1}$ of
$B(a;r)\cap\gamma$. This contradiction establishes Claim~1.\hfill$\blacktriangleleft$

Our next observation is as follows.

\noindent {\bf Claim~2.} For every $\delta\in (0,r)$, only finitely many components of
$B(a;r)\cap\gamma$ intersect $\bdy B(a;\delta)$.

\noindent {\itshape Proof of Claim~2.} Assume, to get a contradiction, that for some $\delta\in
(0,r)$, infinitely many components of $B(a;r)\cap\gamma$ intersect $\bdy B(a;\delta)$. In 
particular, there exists a sequence $(C_n)_{n\geq 1}$ of distinct components of $B(a;r)\cap
\gamma$ such that, for every $n$, $C_n\cap\bdy B(a;\delta)\neq\emptyset$. Now exactly the same
argument as that at the end of the proof of Claim~1 shows that we have a contradiction to the
local connectedness of $\gamma$ at points other than $a$. This contradiction completes the proof.
\hfill$\blacktriangleleft$

Write
\[ \sigma_0 \defeq \text{the connected component of $B(a;r)\cap\gamma$ that contains $a$}, \]
and write $\mathscr{C}$ for the set of connected components of $B(a;r)\cap\gamma$. Our third
claim is:

\noindent {\bf Claim~3.} $\sigma_0\cap\clos{\left(\bigcup_{\sigma\in\mathscr{C}\setminus
\{\sigma_0\}}\sigma\right)}=\emptyset$.

\noindent {\itshape Proof of Claim~3.} Assume, to get a contradiction, that
\[ \sigma_0 \cap \clos{\left(\bigcup_{\sigma\in\mathscr{C}\setminus\{\sigma_0\}}\sigma\right)} 
\neq \emptyset. \]
Then there exist a sequence $(x_n)_{n\geq 1}$ in 
$\cup_{\sigma\in\mathscr{C}\setminus\{\sigma_0\}}\sigma$ and a point $y_0\in\sigma_0$ such that
$x_n\to y_0$. Note that, since $\sigma_0\subset B(a;r)$, $y_0\in B(a;r)$. Choose $\epsilon_0>0$
such that $\clos{B}(y_0;\epsilon)\subset B(a;r)$ and write $\delta\defeq |y_0-a|+\epsilon$; then
$\clos{B}(a;\delta)\subset B(a;r)$ and $0<\delta<r$. Since $(x_n)_{n\geq 1}$ is a sequence in
$\cup_{\sigma\in\mathscr{C}\setminus\{\sigma_0\}}\sigma$, for every $n$ there exists a connected
component $C_n$ of $B(a;r)\cap\gamma$ other than $\sigma_0$ such that $x_n\in C_n$. We may assume 
that, for every $n$, $x_n\in B(y_0;\epsilon)\subset B(a;\delta)$. Note that $a\notin C_n$.
Consequently, by Claim~1, $\clos{C}_n\cap\bdy B(a;r)\neq\emptyset$. Therefore, $C_n$ is a
connected set that intersects both $B(a;\delta)$ and $\bdy B(a;r)$; hence it must also intersect
$\bdy B(a;\delta)$. So, by Claim~2, there cannot be infinitely many distinct $C_n$, i.e., 
$\{C_n\mid n\in\posint\}$ must be finite. Thus, there must exist some connected component $C$ of
$B(a;r)\cap\gamma$ {\em other than} $\sigma_0$ such that $x_n\in C$ for infinitely many $n$. 
Suppose, without loss of generality, that $x_n\in C$ for {\em all} $n$. Then, as $C$ is closed in
$B(a;r)$ and as $x_n\to y_0\in B(a;r)$, it follows that $y_0\in C$. But that is an immediate
contradiction because $y_0\in\sigma_0$ and $C\cap\sigma_0=\emptyset$. This contradiction 
completes the proof. \hfill $\blacktriangleleft$

Write 
\[ V\defeq B(a;r)\setminus\clos{\bigcup_{\sigma\in \mathscr{C}\setminus\{\sigma_0\}}\sigma}; \]
then, by Claim~3, $V$ is a neighbourhood of $\sigma_0$ in $B(a;r)$. Furthermore, clearly,
$V\cap\gamma=\sigma_0$. Thus, we have a neighbourhood of $a$, namely $V$, contained in $B(a;r)$,
whose intersection with $\gamma$ is connected (and is equal to $\sigma_0$). This proves the local
connectedness of $\gamma$ at $a$. The proof is complete.
 
\end{proof}

\begin{lemma} \label{lmm:cond-2-nbd-bhvr}
Suppose that $\OM\subset\C$ is a hyperbolic domain satisfying Condition~2 and let $S$ be the
associated totally disconnected subset of $\bdy\OM$. Given
$p\in\bdy\OM\setminus S$ denote by $\gamma$ and $K_\gamma$ the connected component
of $\bdy\OM$ and the connected component of $\C\setminus\OM$ containing $p$, respectively.
Then there exists a neighbourhood $V$ of $p$ in $\C$ such that
\[ 
 V = (V\cap\OM)\cup (V\cap K_\gamma). 
  \]
\end{lemma}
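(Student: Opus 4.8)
## Proof Strategy for Lemma~\ref{lmm:cond-2-nbd-bhvr}

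The plan is to use Condition~2 to produce a topological half-disc neighbourhood of $p$ that is split cleanly into an ``inside'' part (inside $\OM$) and an ``outside'' part (inside $K_\gamma$), together with a boundary arc lying on $\gamma$. First I would fix $p\in\bdy\OM\setminus S$ and apply Condition~2 (or, more conveniently, Condition~3, which is equivalent) to some sequence $(z_n)_{n\geq 1}$ in $\OM$ converging to $p$; this yields an injective holomorphic map $\phi:\unitdisk\to\OM$ extending continuously to $\clos\unitdisk$ with $\phi(1)=p$, and an arc $Arc(x_1,x_2)$ containing $1$ with $\phi(Arc(x_1,x_2))\subset\bdy\OM$. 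Using Remark~\ref{Rem:distendpoints}, I may shrink the arc so that $\phi(x_1),\phi(x_2),\phi(1)$ are pairwise distinct. The set $\phi(Reg(x_1,x_2))$ is then an open subset of $\OM$ whose closure in $\C$ meets $\bdy\OM$ along $\phi(Arc[x_1,x_2])$ together with $\phi((x_1,x_2)_p)$; the latter is a compact arc in $\bdy\phi(Reg(x_1,x_2))\cap\OM$ that is bounded away from $p$. Hence there is $r_0>0$ so small that the ball $B(p;r_0)$ misses $\phi((x_1,x_2)_p)$ and misses $\phi(x_1),\phi(x_2)$, and so that $B(p;r_0)\cap\bdy\unitdisk$ (pulled back) lies inside $Arc(x_1,x_2)$.

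The key step is to identify what $B(p;r_0)$ looks like relative to $\OM$. I claim that $\phi(Arc(x_1,x_2))$, being a connected subset of $\bdy\OM$ containing $p$, lies in the connected component $\gamma$ of $\bdy\OM$ through $p$; hence $\phi(Reg(x_1,x_2))$, whose ``outer'' boundary arc is glued to $\gamma$, is separated from the rest of $\C\setminus\OM$ by $\gamma$. More precisely, I would argue that near $p$ the set $\C$ is split by the Jordan-type arc $\phi(Arc[x_1,x_2])\subset\gamma$ into the region $\phi(Reg(x_1,x_2))\subset\OM$ on one side and a region contained in $K_\gamma$ on the other: since $\gamma=\bdy K_\gamma$ (by the Proposition preceding this lemma, $\bdy_{\C_\infty}\OM_\gamma\cap\C=\gamma=\bdy K_\gamma$), any point of $\C\setminus\OM$ sufficiently close to $p$ and not on $\gamma$ must lie in $K_\gamma$, because $K_\gamma$ is precisely the component of $\C\setminus\OM$ adjacent to $\gamma$, and the embedded arc locally disconnects a neighbourhood of $p$ into exactly two pieces. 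Choosing $r\le r_0$ small enough that $B(p;r)$ is contained in the union of these two local pieces (using that $\phi$ restricted to a neighbourhood of $1$ in $\clos\unitdisk$ is a homeomorphism onto a relatively open set meeting $\bdy\OM$ in an arc), I would set $V\defeq B(p;r)$ and conclude $V=(V\cap\OM)\cup(V\cap K_\gamma)$, since every point of $V$ is either in $\OM$, or in $\bdy\OM$ — but the only boundary points in $V$ are those on $\phi(Arc(x_1,x_2))\subset\gamma\subset K_\gamma$ — or in $\C\setminus\clos\OM\subset K_\gamma$ (the last because $B(p;r)$ meets only the component $K_\gamma$).

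The main obstacle is the topological separation argument: making rigorous the claim that near $p$ the embedded boundary arc $\phi(Arc[x_1,x_2])$ genuinely cuts a neighbourhood into an $\OM$-side and a $K_\gamma$-side, with nothing else intruding. This requires care because $\OM$ is only assumed hyperbolic and $\bdy\OM$ could be wild elsewhere; the leverage is that Condition~2 supplies the continuous extension $\phi$, so on the {\em $\OM$-side} we have genuine local structure near $p$. I would handle this by a pullback argument: $\phi$ is a homeomorphism from a relatively open neighbourhood $N$ of $1$ in $\clos\unitdisk$ onto its image (injectivity plus continuity plus the fact that $\phi$ is open on $\unitdisk$ by the open mapping theorem), so $\phi(N)$ is relatively open in $\clos\OM_\gamma$-type sense; then for $r$ small, $B(p;r)\cap\clos\OM\subset\phi(Reg(x_1,x_2))\cup\phi(Arc(x_1,x_2))$, which forces $B(p;r)\cap\OM\subset\phi(Reg(x_1,x_2))$ and $B(p;r)\cap\bdy\OM\subset\gamma$; and $B(p;r)\setminus\clos\OM$ consists of points of $\C\setminus\OM$ close to $p\in\gamma$, hence in $K_\gamma$. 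Assembling these three inclusions gives exactly $V=(V\cap\OM)\cup(V\cap K_\gamma)$, with the middle (boundary) points absorbed into $V\cap K_\gamma$ via $\gamma\subset K_\gamma$.
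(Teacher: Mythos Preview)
Your direct-construction approach has a genuine gap at the ``key step.'' The claim that, for small $r$, one has $B(p;r)\cap\clos{\OM}\subset\phi(Reg(x_1,x_2))\cup\phi(Arc(x_1,x_2))$ is in general \emph{false}, and the justification you offer (that $\phi$ is a homeomorphism from a neighbourhood of $1$ in $\clos{\unitdisk}$ onto its image, and that this image is relatively open) does not hold. First, $\phi$ is only assumed injective on $\unitdisk$; its continuous extension to $\clos{\unitdisk}$ need not be injective on any boundary arc. Second, and more seriously, the map $\phi$ produced by Condition~2 depends on the \emph{particular} sequence you feed it, so a $\phi$ built from one sequence need not capture all of $\OM$ near $p$. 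Concretely, take $\OM=D(0;2)\setminus[-1,1]$ and $p=0$, so $\gamma=K_\gamma=[-1,1]$; the lemma's conclusion holds with $V=D(0;1/2)$. If you apply Condition~3 to the sequence $z_n=i/n$ with $U=D(0;1/2)$, the resulting $\phi(Reg(x_1,x_2))$ is a connected subset of $D(0;1/2)\cap\OM$ containing points of the upper half-plane, hence lies entirely in the upper component; the lower half-disc of $B(0;r)\cap\OM$ is never contained in $\phi(Reg(x_1,x_2))$. Your argument, which sets $V=B(p;r)$ and tries to exhaust $V$ by $\phi(Reg)\cup K_\gamma$, therefore fails here. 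The accompanying claim that $B(p;r)\setminus\clos{\OM}\subset K_\gamma$ is also essentially assuming the conclusion: nothing you have established rules out other components of $\C\setminus\OM$ approaching $p$.

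The paper's proof avoids all of this by arguing by contradiction. It assumes there are points $x_n\in D(p;1/n)\setminus(\OM\cup K_\gamma)$ and reduces (via a line-segment argument) to the case $x_n\in\bdy\OM\setminus K_\gamma$. It then chooses $z_n\in\OM$ with $z_n\to p$ lying in small discs $D(x_n;r_n)$ that are disjoint from $K_\gamma$, and only \emph{now} applies Condition~2 to this carefully chosen sequence $(z_n)$ to obtain $\phi$. Because $\phi(Arc[\xi_1,\xi_2])\subset\gamma\subset K_\gamma$ while $D(x_{k_n};r_{k_n})\cap K_\gamma=\emptyset$, and because $\phi((\xi_1,\xi_2)_p)$ stays away from $p$, the straight segment $L_n$ from $x_{k_n}$ to $z_{k_n}$ cannot cross $\bdy\phi(Reg(\xi_1,\xi_2))$, contradicting that it joins an interior point of $\phi(Reg)$ to an exterior one. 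The essential difference is that the paper lets the hypothetical bad points dictate which $\phi$ to use, rather than fixing $\phi$ in advance.
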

\begin{proof}
Assume, to get a contradiction, that there does not exist any such $V$. Then, for every
	$n\in\posint$, there exists $x_n\in D(p;1/n)\setminus(\OM\cup K_\gamma)$.
	\smallskip
	
	\noindent{\bf Case~1.} $x_n\in\bdy\OM$ for infinitely many $n$.
	
	\noindent Without loss of any generality, we assume that $x_n\in\bdy\OM$ for all 
	$n$. Since $x_n\notin K_\gamma$, there exists $r_n>0$ such that $D(x_n;r_n)\cap K_\gamma=
	\emptyset$ for all $n$. As $x_n\to p\in K_\gamma$, it follows that $r_n\to 0$ as $n\to
	\infty$. For every $n$,
	choose $z_n\in D(x_n;r_n)\cap\OM$; clearly, $z_n\to p$. Since $p\in\bdy\OM\setminus S$ and
	$(z_n)_{n\geq 1}$ is a sequence in $\OM$ converging to $p$, by Condition~2, there exist an
	injective holomorphic map $\phi:\unitdisk\to\OM$ that extends to a continuous map from
	$\clos{\unitdisk}$ to $\clos{\OM}$, points $\xi_1,\xi_2\in S^1$, $\xi_1\neq\xi_2$, and a 
	subsequence $z_{k_n}$ of $z_n$ such that $\phi(1)=p$, $\phi\big(Arc[\xi_1,\xi_2]\big)\subset\bdy\OM$, 
	and such that $z_{k_n} \in \phi(Reg(\xi_1,\xi_2))$ for every $n$. This 
	clearly implies that $\phi\big(Arc[\xi_1,\xi_2]\big)\subset\gamma\subset K_\gamma$ and hence 
	$x_{n}\notin \phi(\clos{Reg(\xi_1,\xi_2)})=\clos{\phi(Reg(\xi_1,\xi_2))}$. 
	\smallskip
	
	Note that, since $p\notin
	\phi([\xi_1,\xi_2])$, there exists $r>0$ such that $D(p;2r)\cap\phi([\xi_1,\xi_2])=\emptyset$. 
	Therefore, for all $n$ large enough, $z_{k_n}\in D(p;r)\cap\phi(Reg(\xi_1,\xi_2))$. We may
	also suppose that $D(x_{k_n};r_{k_n})\subset D(p;r)$ for all such $n$.
	Now, for
	all $n$, join $x_{k_n}$ and $z_{k_n}$ by the radial line $L_n$
	in $D(x_{k_n};r_{k_n})$.
	Note $L_n$ is a connected set that intersects both
	$\phi(Reg(\xi_1,\xi_2))$ and $\C\setminus\clos{\phi(Reg(\xi_1,\xi_2))}$. Consequently, it must
	intersect $\bdy\phi(Reg(\xi_1,\xi_2))\subset\phi(\bdy Reg(\xi_1,\xi_2))$, i.e., for every $n$,
	there exists $u_n\in\bdy Reg(\xi_1,\xi_2)$ such that $\phi(u_n)\in L_n$. Now, $u_n$ cannot be
	in $(\xi_1,\xi_2)$ because then $\phi(u_n)$ would be in $\phi([\xi_1,\xi_2])$ whereas one has
	$\phi(u_n)\in L_n\subset D(x_{k_n};r_{k_n})\subset D(p;r)$ and $D(p;r)\cap\phi([\xi_1,\xi_2])
	=\emptyset$. Therefore $u_n\in Arc[\xi_1,\xi_2]$ and hence $\phi(u_n)\in\phi(Arc[\xi_1,\xi_2])
	\subset \gamma$, which is again a contradiction because $\phi(u_n)\in L_n\subset 
	D(x_{k_n};r_{k_n})$ and $D(x_{k_n};r_{k_n})\cap\gamma=\emptyset$. Thus, Case~1 cannot arise.
	\smallskip
	
	\noindent{\bf Case~2.} There are only finitely many $n$ such that $x_n\in\bdy\OM$.
	
	\noindent We may assume, without loss of any generality that 
	$x_n\in\C\setminus (\clos{\OM}\cup K_\gamma)$, for all $n$.
	For every $n$, choose $z_n\in D(p;1/n)\cap\OM$ and join $z_n$ to $x_n$ by the line segment 
	$L_n$. Then $L_n\subset D(p;1/n)$ and $L_n$ can be parametrized in this way: $\theta_n\defeq
	t\mapsto z_n+t(x_n-z_n) : [0,1]\to D(p;1/n)$. For every $n$, let
	\[ t_n\defeq \sup\{t\in[0,1]\mid \theta_n(t)\in\OM\}. \]
	Then, for every $n$, $t_n<1$ (because $\theta_n(1)=x_n\notin\clos{\OM}$) and $\theta_n(t_n)
	\in\bdy\OM$. We claim that $\theta_n(t_n)\notin K_\gamma$ for all $n$. This is because 
	$\theta_n([t_n,1])\subset\C\setminus\OM$ is a connected set. Hence, if $\theta_n(t_n)\in
	K_\gamma$, then $\theta_n([t_n,1])\subset K_\gamma$, so also $\theta_n(1)=x_n\in K_\gamma$, which is
	a contradiction. Thus, for all $n$, $\theta_n(t_n)\in\bdy\OM\setminus K_\gamma$ and $\theta_n(t_n)$ 
	converges to $p$. But by Case~1, this is impossible.
	\smallskip
	
	Therefore, we have a contradiction in each case whence the result. 
\end{proof}

\begin{lemma} \label{lmm:dom_cond2_hfidDom}
	Let $\OM\subset\C$ be a hyperbolic domain that satisfies Condition~2 and let $S\subset\bdy\OM$ be the 
	associated totally disconnected subset. Let $\gamma$ be a non-degenerate
	component of $\bdy\OM$. Then $\OM_\gamma\cap\C$ satisfies Condition~2 with the associated 
	totally disconnected set $S\cap\gamma$.
\end{lemma}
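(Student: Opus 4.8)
The strategy is to reduce Condition~2 for $\OM_\gamma\cap\C$ to Condition~2 for $\OM$ itself, exploiting two facts: near any point of $\gamma\setminus S$ the two domains coincide, and the boundary arcs produced by Condition~2 for $\OM$ automatically lie in the single component $\gamma$.

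First I would record the elementary structure. By the definition of $\OM_\gamma$ and the facts recalled before the statement of this lemma — namely that for a component $K$ of $\C\setminus\OM$ the set $\C\setminus K$ is connected and $\bdy K$ is the (unique) component of $\bdy\OM$ contained in $K$ — we have $\OM_\gamma\cap\C=\C\setminus K_\gamma$ in either case of the definition, so $\OM_\gamma\cap\C$ is a domain in $\C$ with $\bdy(\OM_\gamma\cap\C)=\bdy K_\gamma=\gamma$; it is hyperbolic because $\C\setminus(\OM_\gamma\cap\C)=K_\gamma$ contains the non-degenerate continuum $\gamma$ and hence has at least two points. Note $\OM\subset\OM_\gamma\cap\C$. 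Moreover $\gamma$, being a component of the closed set $\bdy\OM$, is closed in $\C$, so $S\cap\gamma$ is a closed, totally disconnected subset of $\gamma=\bdy(\OM_\gamma\cap\C)$, and $\gamma\setminus(S\cap\gamma)=\gamma\setminus S\subset\bdy\OM\setminus S$. We will show $\OM_\gamma\cap\C$ satisfies Condition~2 with $S\cap\gamma$ as its exceptional set.

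Fix $p\in\gamma\setminus(S\cap\gamma)$, so that $p\in\bdy\OM\setminus S$ and the component of $\bdy\OM$ (resp.\ of $\C\setminus\OM$) containing $p$ is $\gamma$ (resp.\ $K_\gamma$). Let $(z_n)_{n\ge1}$ be any sequence in $\OM_\gamma\cap\C$ with $z_n\to p$. By Lemma~\ref{lmm:cond-2-nbd-bhvr} there is a neighbourhood $V$ of $p$ in $\C$ with $V=(V\cap\OM)\cup(V\cap K_\gamma)$; intersecting with $\C\setminus K_\gamma=\OM_\gamma\cap\C$ and using $\OM\cap K_\gamma=\emptyset$ gives $V\cap(\OM_\gamma\cap\C)=V\cap\OM$. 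Since $z_n\in\OM_\gamma\cap\C$ and $z_n\in V$ for all large $n$, the tail of $(z_n)$ lies in $\OM$ and still converges to $p\in\bdy\OM\setminus S$. Applying Condition~2 for $\OM$ to this tail furnishes a subsequence $(z_{k_n})$, an injective holomorphic $\phi\colon\unitdisk\to\OM$ extending continuously to $\clos{\unitdisk}$, and an arc $Arc(x_1,x_2)$ containing $1$ such that $\phi(1)=p$, $\phi(Arc(x_1,x_2))\subset\bdy\OM$, $z_{k_n}\in\phi(Reg(x_1,x_2))$ and $\phi^{-1}(z_{k_n})\to1$.

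It remains to check that this same $\phi$ and arc verify Condition~2 for $\OM_\gamma\cap\C$ at $p$. Composing with the inclusion $\OM\hookrightarrow\OM_\gamma\cap\C$ keeps $\phi$ an injective holomorphic map into $\OM_\gamma\cap\C$ with continuous extension to $\clos{\unitdisk}$, and the relations $\phi(1)=p$, $z_{k_n}\in\phi(Reg(x_1,x_2))$, $\phi^{-1}(z_{k_n})\to1$ are untouched. The only genuinely new point is that $\phi(Arc(x_1,x_2))\subset\bdy(\OM_\gamma\cap\C)=\gamma$: since $Arc(x_1,x_2)$ is connected and contains $1$, $\phi(Arc(x_1,x_2))$ is a connected subset of $\bdy\OM$ containing $p=\phi(1)$, hence is contained in the component $\gamma$ of $\bdy\OM$ through $p$. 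Thus all requirements of Condition~2 hold, so $\OM_\gamma\cap\C$ satisfies Condition~2 with associated totally disconnected set $S\cap\gamma$. There is no serious obstacle; the one step that must be handled carefully is the localization "the tail of $(z_n)$ lies in $\OM$," which is exactly the content of Lemma~\ref{lmm:cond-2-nbd-bhvr}, and everything else is bookkeeping.
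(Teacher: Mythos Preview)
Your proof is correct and follows essentially the same approach as the paper: invoke Lemma~\ref{lmm:cond-2-nbd-bhvr} to show the tail of the sequence lies in $\OM$, apply Condition~2 for $\OM$, and then observe that the resulting $\phi$ and arc work for $\OM_\gamma\cap\C$ because $\OM\subset\OM_\gamma\cap\C$ and the image of the arc, being connected in $\bdy\OM$ and containing $p$, must lie in $\gamma$. Your write-up is in fact slightly more careful than the paper's, supplying the connectedness justification for $\phi(Arc(x_1,x_2))\subset\gamma$ that the paper leaves as ``clearly.''
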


\begin{proof}
	Note that $\bdy(\OM_\gamma\cap\C)=\bdy_\C\OM_\gamma=\gamma$.
	Choose $q\in\gamma\setminus S$ and let $(z_n)_{n\geq 1}\subset\OM_\gamma$ be 
	convergent to $q$. Using Lemma~\ref{lmm:cond-2-nbd-bhvr}, choose a neighbourhood
	$V$ of $q$ such that
	\[ V = (V\cap\OM) \cup (V\cap K_\gamma). \]
	Since $z_n\notin K_\gamma$, therefore, by the above, for all $n$ large enough, $z_n\in V\cap\OM\subset\OM$.
	Therefore $(z_n)_{n\geq 1}$ can be considered to be a sequence in $\OM$ converging to the point
	$q\in \gamma\setminus S \subset \bdy\OM\setminus S$. Since $\Omega$ satisfies Condition~2, 
	there exist an injective holomorphic map $\phi:\unitdisk\to\OM$ that extends continuously to $\clos{\unitdisk}$, points $\xi_1,
	\xi_2\in S^1$ close to $1$ and on opposite sides of $1$, and a subsequence $(z_{k_n})_{n\geq 1}$ such that
	$\phi(1)=q$, $\phi(Arc(\xi_1,\xi_2))\subset\bdy\OM$ and such that $z_{k_n}
	\in \phi(Reg(\xi_1,\xi_2))$. Clearly $\phi(Arc(\xi_1,\xi_2))\subset\gamma=\bdy(\OM_\gamma\cap\C)$, this 
	together with the fact that $\OM\subset\OM_\gamma$ implies that $\OM_\gamma$ satisfies Condition~2. 
\end{proof}

\begin{theorem} \label{thm:loc_conn_apt_tot_disconn_cond2}
	Let $\OM\subset\C$ be a hyperbolic domain. 
        Then $\OM$ satisfies Condition~2 if and only if 
	there exists a totally
	disconnected subset $S\subset\bdy\OM$ (not necessarily closed) such that, for every $p\in\bdy\OM\setminus S$, $\bdy\OM$ is 
	locally connected at $p$ and the connected component of $\bdy\OM$ containing $p$ is not a singleton (i.e., is non-degenerate). 	
\end{theorem}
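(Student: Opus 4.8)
The statement is an equivalence. The implication ``Condition~2 $\Rightarrow$ topological condition'' is short, so I would dispose of it first. Fix $p\in\bdy\OM\setminus S$ and let $\gamma,K_\gamma$ be the components of $\bdy\OM$ and of $\C\setminus\OM$ through $p$. Feeding any sequence in $\OM$ tending to $p$ into Condition~2 yields an injective holomorphic $\phi:\unitdisk\to\OM$ with $\phi(1)=p$ and $\phi(Arc(x_1,x_2))\subset\bdy\OM$; by \Cref{Rem:distendpoints} we may take $\phi(x_1)\neq p$, so the connected set $\phi(Arc[x_1,x_2])\subset\bdy\OM$ forces $\gamma$ to be non-degenerate. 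For local connectedness at $p$, \Cref{lmm:cond-2-nbd-bhvr} supplies a neighbourhood $V$ of $p$ with $V=(V\cap\OM)\cup(V\cap K_\gamma)$; since $K_\gamma\cap\bdy\OM=\bdy K_\gamma=\gamma$ this gives $V\cap\bdy\OM=V\cap\gamma$. Now $\OM_\gamma\cap\C$ is simply connected and, by \Cref{lmm:dom_cond2_hfidDom}, satisfies Condition~2, hence is a visibility domain by \Cref{T:cond2ImplVisibility}, hence (\Cref{thm:vis_simp_conn_loc_conn}) its boundary $\gamma$ is locally connected; shrinking neighbourhoods inside $V$ then shows $\bdy\OM$ is locally connected at $p$.

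\textbf{The reverse direction: the exceptional set.} Assume the topological hypothesis with a (possibly non-closed) totally disconnected $S$. Condition~2 demands a \emph{closed} exceptional set, so I would first set
\[
S_0:=\{p\in\bdy\OM:\bdy\OM\text{ is not locally connected at }p,\text{ or the component of }\bdy\OM\text{ through }p\text{ is a singleton}\}.
\]
By the contrapositive of the hypothesis $S_0\subset S$, so $S_0$ is totally disconnected. The key elementary remark is: if $\bdy\OM$ is locally connected at a point $q$ with component $\gamma$, then a small $W\ni q$ has $W\cap\bdy\OM$ connected, hence $W\cap\bdy\OM\subset\gamma$, so $W\cap\bdy\OM=W\cap\gamma$; in particular $\gamma$ is locally connected at $q$, and a clopen argument inside a small disk $D(q;r)$ shows no other component of $\C\setminus\OM$ meets $D(q;r)$, so $D(q;r)\cap\OM=D(q;r)\cap\OM_\gamma$. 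Applying the first half of this remark together with \Cref{lmm:simp_conn_loc_conn} to the hyperbolic simply connected domain $\OM_\gamma\subset\C_\infty$ (with totally disconnected set $\gamma\cap S\subset\bdy_{\C_\infty}\OM_\gamma\cap\C$) shows that \emph{every non-degenerate component $\gamma$ of $\bdy\OM$ is locally connected everywhere}, i.e. $\bdy_{\C_\infty}\OM_\gamma$ is locally connected. From this $S_0$ is closed: if $p_n\in S_0$ and $p_n\to p\notin S_0$, then the component $\gamma$ of $p$ is non-degenerate with $\bdy\OM$ locally connected at $p$, so by the clopen argument $D(p;r)\cap\bdy\OM=D(p;r)\cap\gamma$ for some $r>0$; for large $n$ this forces $p_n\in\gamma$ (non-degenerate component), and local connectedness of $\gamma$ everywhere makes $\bdy\OM$ locally connected at $p_n$ too, contradicting $p_n\in S_0$.

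\textbf{The reverse direction: constructing $\phi$.} Fix $p\in\bdy\OM\setminus S_0$, with $\gamma,K_\gamma,\OM_\gamma$ as above, and $(z_n)\subset\OM$ with $z_n\to p$. Since $\gamma$ is non-degenerate, $\OM_\gamma$ is a hyperbolic simply connected domain in $\C_\infty$ whose boundary is locally connected, so a Riemann map $\psi:\unitdisk\to\OM_\gamma$ extends to a continuous surjection $\clos\unitdisk\to\cl_{\C_\infty}(\OM_\gamma)$ by \Cref{R:Cara}. As $z_n\in\OM\subset\OM_\gamma$ and $z_n\to p\in\bdy_{\C_\infty}\OM_\gamma$, the points $\zeta_n:=\psi^{-1}(z_n)$ leave every compact subset of $\unitdisk$; after passing to a subsequence and rotating, assume $\zeta_n\to 1$ and $\psi(1)=p$. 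Choose $r>0$ with $V:=D(p;r)$ satisfying $V\cap\OM=V\cap\OM_\gamma$ and $V\cap\bdy\OM=V\cap\gamma$ (the remark above), and then $x_1,x_2\in\bdy\unitdisk$ on opposite sides of $1$, so close to $1$ that $\psi(\clos{Reg(x_1,x_2)})\subset V$; then $\psi(Reg(x_1,x_2))\subset\OM_\gamma\cap V=\OM\cap V\subset\OM$ and $\psi(Arc[x_1,x_2])\subset\bdy_{\C_\infty}\OM_\gamma\cap V=\gamma\cap V\subset\bdy\OM$. Let $h:\unitdisk\to Reg(x_1,x_2)$ be a conformal equivalence of this Jordan domain, extended to a homeomorphism of closures with $h(1)=1$, and put $\phi:=\psi\circ h:\unitdisk\to\OM$: this is injective holomorphic, extends continuously to $\clos\unitdisk$ with $\phi(1)=p$, and with $y_i:=h^{-1}(x_i)$ one has $1\in Arc(y_1,y_2)$ and $\phi(Arc(y_1,y_2))=\psi(Arc(x_1,x_2))\subset\bdy\OM$. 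Finally, since the geodesics $[x_1,x_2]_p$ and $[y_1,y_2]_p$ stay at positive distance from $1$, for all large $n$ one has $\zeta_n\in Reg(x_1,x_2)$, then $h^{-1}(\zeta_n)\to 1$ with $h^{-1}(\zeta_n)\in Reg(y_1,y_2)$, so $z_n=\phi(h^{-1}(\zeta_n))\in\phi(Reg(y_1,y_2))$ and $\phi^{-1}(z_n)=h^{-1}(\zeta_n)\to 1$; discarding finitely many terms and relabelling $y_i$ as $x_i$ verifies Condition~2 at $p$ with exceptional set $S_0$.

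\textbf{Main obstacle.} The hard part is entirely in the reverse direction, and the real difficulty is that Condition~2 insists on a holomorphic map into $\OM$ itself rather than into its simply connected ``filling'' $\OM_\gamma$. This is what forces us to prove, separately, (i) that $\OM$ and $\OM_\gamma$ agree in a Euclidean neighbourhood of $p$ (the clopen/local-connectedness argument) and (ii) that $\bdy_{\C_\infty}\OM_\gamma$ is locally connected \emph{everywhere}, not merely off a totally disconnected set — which is exactly the content of the rigidity \Cref{lmm:simp_conn_loc_conn} and is indispensable for the Carath\'eodory extension of $\psi$. Manufacturing the closed exceptional set $S_0$ out of the given non-closed $S$ is a second, subtler point (the closure of a totally disconnected set need not be totally disconnected), which is resolved only because $S_0$ happens to be closed on its own.
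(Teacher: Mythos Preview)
Your argument is correct and follows essentially the same route as the paper: in both directions you pass to the simply connected auxiliary domain $\OM_\gamma$, use \Cref{lmm:cond-2-nbd-bhvr} (respectively the local identification $V\cap\bdy\OM=V\cap\gamma$) to reduce local connectedness of $\bdy\OM$ to that of $\gamma$, and for the converse you invoke \Cref{lmm:simp_conn_loc_conn} together with Carath\'eodory to extend a Riemann map and then cut out a small region near $1$ that lands in $\OM$ (the paper uses $D(1;r)\cap\unitdisk$ where you use $Reg(x_1,x_2)$, which is immaterial).

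Two remarks. First, your sentence ``$\OM_\gamma\cap\C$ is simply connected'' is false when $K_\gamma$ is bounded (then $\C\setminus K_\gamma$ has a hole and \Cref{thm:vis_simp_conn_loc_conn} does not apply directly); the paper is equally imprecise at the corresponding step, and both are repaired by first applying a M\"obius transformation sending a point of $K_\gamma$ to $\infty$, so that the new $K_\gamma$ is unbounded. Second, your introduction of $S_0$ and the verification that it is \emph{closed} is an actual improvement over the paper, which simply reuses the given (possibly non-closed) $S$ in the converse and thus, read literally, does not verify Condition~2 as stated; since closedness of $S$ is never used elsewhere in the paper this is harmless, but your treatment is the correct one.
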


\begin{proof}
        First suppose $\OM$ satisfies Condition~2 and let $S\subset\bdy\OM$ be the 
        associated totally disconnected subset. 
        Let $p\in\bdy\OM\setminus S$ and let $\gamma$ be the boundary component of $\OM$ such 
	that $p\in\gamma$. Clearly $\gamma$ is not singleton. 
	By Lemma~\ref{lmm:cond-2-nbd-bhvr}, there exists a neighbourhood $V$ of $p$ such that
	$V = (V\cap\OM) \cup (V\cap K_\gamma)$. From this it follows that $V\cap\bdy\OM=V\cap\gamma$. 
	From this latter fact
	it follows that, in order to prove the local connectedness of $\bdy\OM$ at $p$, it suffices to prove that of $\gamma$ at $p$.
	By Lemma~\ref{lmm:dom_cond2_hfidDom}, $\OM_\gamma\cap\C$ also satisfies Condition~2.
	Therefore, by Theorem~\ref{T:Visibility_Thm_in_plane}, it is a visibility domain. 
	Hence, by Lemma~\ref{lmm:pln_ext_Kob_isom_gen}, the Riemann map 
	$\phi$ extends to a continuous map from $\clos{\unitdisk}$ to 
	$\cl_{\C_\infty}(\OM_\gamma\cap\C)=\cl_{\C_\infty}(\OM_\gamma)$, 
	which we will continue to denote by $\phi$. Clearly $\phi$ maps $\bdy\unitdisk$ onto
	$\bdy_{\C_\infty}\OM_\gamma=\cl_{\C_\infty}(\gamma)$,
	which is therefore locally connected. Consequently, $\gamma$ itself is locally connected and in particular
	$\bdy\OM$ is locally connected at $p$ whence the conclusion. 
	\smallskip
	
	For the converse,
	given $p\in \bdy\OM\setminus S$, let $\gamma$ denote the connected component of $\bdy\OM$ containing $p$ and let
	$K_\gamma$ be the connected component of $\C\setminus\OM$ that contains $p$.
	\smallskip
	
	\noindent {\bf Claim~1.} For every $p\in\bdy\OM\setminus S$, there exists $r>0$ such that $D(p;r)\cap\bdy\OM=D(p;r)\cap\gamma$.
	\smallskip
	
	\noindent Assume, to get a contradiction, that for every $r>0$, $D(p;r)\cap\bdy\OM\supsetneq D(p;r)\cap
	\gamma$. Then, in particular, there exists a sequence $(z_n)_{n\geq 1}$ such that, for every $n$, $z_n\in D(p;1/n)\cap\bdy\OM$
	and $z_n\notin\gamma$. Since $z_n\notin\gamma$, $z_n$
	belongs to some connected component $C_n$ of $\bdy\OM$ other than $\gamma$.
	Now, considering the set $\{C_n\mid n\in\posint\}$ and arguing as in the proof of Lemma~\ref{lmm:loc_conn_outside_a_pt_loc_conn},
	Claim~1, it is easy to contradict the local connectedness of $\bdy\OM$ at $p$. Therefore, Claim~1 holds.\hfill$\blacktriangleleft$
	\smallskip
	
	\noindent {\bf Claim~2.} For every $p\in\bdy\OM\setminus S$, there exists a neighbourhood $V$ of $p$ such that $V=(V\cap\OM)
	\cup (V\cap K_\gamma)$.
	\smallskip
	
	\noindent By Claim~1, there exists $r_0>0$ such that $D(p;r_0)\cap\bdy\OM=D(p;r_0)\cap\gamma$. Put
	$r\defeq r_0/2$ and $V\defeq D(p;r)$; then we claim that $V$ satisfies the requirement of Claim~2. Assume, to get a contradiction,
	that it doesn't. Then there exists $q\in V\setminus (\OM\cup K_\gamma)$. Now, since $q\in V\subset D(p;r_0)$ and $q\notin\gamma$
	(because $q\notin K_\gamma$), $q\notin\bdy\OM$. So $q\notin\clos{\OM}$. Consider the straight-line segment joining $q$ to $p$;
	more precisely, consider the mapping
	\[ f\defeq t\mapsto q+t(p-q) : [0,1]\to V. \]
	Obviously it is continuous, and $f(0)=q\notin\clos{\OM}$. Let
	\[ t_0\defeq \sup\{t\in [0,1] \mid f(t)\notin\clos{\OM} \}. \]
	Then $f(t_0)\in\bdy\OM$ and $f([0,t_0])\subset\C\setminus\OM$. Now $f(t_0)\notin\gamma$ because, if it were, then $f([0,t_0])$
	would be a connected subset of $\C\setminus\OM$ that intersects $\gamma$, and hence one would have $f([0,t_0])\subset K_\gamma$,
	so in particular $f(0)=q\in K_\gamma$, which is not the case. Thus $f(t_0)\in\bdy\OM\setminus\gamma$. But $f(t_0)\in V\subset
	D(p;r_0)$ and so this is a contradiction. This proves Claim~2.\hfill$\blacktriangleleft$
	\smallskip
	
	To see that $\OM$ satisfies Condition~2, choose $p\in\bdy\OM\setminus S$ and let $(z_n)_{n\geq 1}$
	be a sequence in $\OM$ that converges to $p$. Let $\gamma$, $K_\gamma$ and $\OM_\gamma$ have the same meanings as above.
	By hypothesis $\gamma$ is not a singleton, so $\OM_\gamma$ is a hyperbolic, simply connected domain in $\C_\infty$.
	\smallskip
	
	Using the fact that $\bdy_{\C_\infty}\OM_\gamma\cap\C=\gamma$ and Claim~1 above, it follows that 
	$\bdy_{\C_\infty}\OM_\gamma$ is locally connected at every point of $(\bdy_{\C_\infty}\OM_\gamma\cap\C)\setminus S$. 
	Therefore we may invoke Lemma~\ref{lmm:simp_conn_loc_conn} to conclude 
	that $\bdy_{\C_\infty}\OM_\gamma$ is locally connected everywhere.
	Choose a biholomorphism $\phi:\unitdisk\to
	\OM_\gamma$. By Carth{\'e}odory's extension theorem \cite[Theorem~4.3.1]{BCM}, $\phi$ extends to a continuous map
	from $\clos{\unitdisk}$ onto 
	$\cl_{\C_\infty}(\OM_\gamma)$ (which we will continue to denote by $\phi$); note that $\phi$ maps $\bdy\unitdisk$ continuously to 
	$\cl_{\C_\infty}(\gamma)$.
	Using Claim~2, choose a neighbourhood $V$ of $p$ such that $V=(V\cap\OM)\cup(V\cap K_\gamma)$.
	Also, choose a neighbourhood $W$ of $p$ such that $W\Subset V$. (Recall that $p\in\C$ and that $V$
	and $W$ are neighbourhoods in $\C$.)
	We may suppose, without loss of generality, that $(z_n)_{n\geq 1}\subset W$.
	Consider $\phi^{-1}(W)$, which is an open subset of $\clos{\unitdisk}$ that intersects $\bdy\unitdisk$ (because it includes 
	$\phi^{-1}\{p\}$, which is non-empty). Because $(z_n)_{n\geq 1}$ converges to $p$, $(\phi^{-1}(z_n))_{n\geq 1}$, which is a sequence
	in $\unitdisk$, has a subsequence, say $(\phi^{-1}(z_{k_n}))_{n\geq 1}$, that converges to a point, say $x_0$, of $\phi^{-1}\{p\}$ 
	(indeed, the set of limit points of $(\phi^{-1}(z_n))_{n\geq 1}$ is included in $\phi^{-1}\{p\}$). Using a rotation, we may assume
	that $x_0=1$. Choose $r>0$ very small such that $D(1;r)\cap\clos{\unitdisk}\subset\phi^{-1}(W)$. 
	Then note that $U\defeq D(1;r)\cap\unitdisk$ is a
	convex domain that is mapped biholomorphically to $\phi(U)\subset W\cap\OM=W\cap\OM_\gamma$ by $\phi$.
	Further, $(z_{k_n})_{n\geq 1}$
	eventually lies in $\phi(U)$. Finally, writing $C\defeq D(1;r)\cap\bdy\unitdisk$, $C$ is a piece of $\bdy\unitdisk$ homeomorphic to an open
	interval; $C$ is mapped continuously into $\gamma\subset\bdy\OM$ by $\phi$, and $p\in\phi(C)$ (because $p=\phi(1)$). Now it is easy
	to see, using the Riemann mapping theorem, that one may choose an injective holomorphic map $\psi:\unitdisk\to\OM$ such that all
	the requirements of Condition~2 are satisfied (simply map $\unitdisk$ biholomorphically to $U$ by a Riemann map).
	Since $p\in\bdy\OM\setminus S$ and $(z_n)_{n\geq 1}$ were arbitrary, it follows that $\OM$ satisfies Condition~2.
	\end{proof}
	
\subsection{Proof of {\Cref{T:Visibility_Thm_in_plane}}} 
\label{ss:proof_of_Visibility_Thm_in_plane}
Our proof of \Cref{T:Visibility_Thm_in_plane} follows from 
\Cref{T:cond2ImplVisibility} and \Cref{thm:loc_conn_apt_tot_disconn_cond2}.

\subsection{Characterization of Condition 1}	
\begin{proposition}\label{prp:cond1_jordanbdy}
	Let $\OM\subset\C$ be a hyperbolic domain that satisfies Condition~1. Then
	(the closure in $\C_\infty$ of) every boundary component of $\OM$ is
	a Jordan curve. Conversely, if (the closure in $\C_\infty$ of) every boundary
	component of $\OM$ is a Jordan curve and $\bdy\OM$ is locally connected,
	then $\OM$ satisfies Condition~1.
\end{proposition}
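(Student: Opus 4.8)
The plan is to prove the two implications separately, in each case reading off the local topological structure of a boundary component of $\OM$ near one of its points.

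\textbf{Forward implication.} Assume $\OM$ satisfies Condition~1, fix a boundary component $\gamma$ and a point $p\in\gamma$, and use the equivalent form of Condition~1 to obtain $r>0$ and a topological embedding $\tau_p\colon\clos{\unitdisk}\to\clos{\OM}$ with $\tau_p(\unitdisk)\subset\OM$ and $B(p,r)\cap\OM\subset\tau_p(\unitdisk)$. Since $p\notin\OM\supset\tau_p(\unitdisk)$, the point $\zeta_0\defeq\tau_p^{-1}(p)$ lies on $\bdy\unitdisk$. The first step is to show that there is an open arc $J\subset\bdy\unitdisk$ around $\zeta_0$ with $\tau_p(J)\subset\bdy\OM$: if $\tau_p(e^{i\theta})\in\OM$ for some $e^{i\theta}$ close to $\zeta_0$, then by continuity of $\tau_p$ this point lies in $B(p,r)\cap\OM\subset\tau_p(\unitdisk)$, contradicting injectivity of $\tau_p$. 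Then $\tau_p(J)$ is a non-degenerate continuum inside $\bdy\OM$ containing $p$, so $\tau_p(J)\subset\gamma$ and $\gamma$ is non-degenerate. Next, a compactness argument (any $z_n\in\OM$ with $z_n\to q$, $q$ near $p$, eventually lies in $\tau_p(\unitdisk)$ and the $\tau_p^{-1}(z_n)$ subconverge in $\clos{\unitdisk}$) gives $B(p,\rho)\cap\bdy\OM\subset\tau_p(\bdy\unitdisk)$ for small $\rho$; and uniform continuity of $\tau_p^{-1}$ forces $\tau_p^{-1}\bigl(B(p,\rho)\cap\bdy\OM\bigr)\subset J$ once $\rho$ is small. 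Hence $B(p,\rho)\cap\bdy\OM=\tau_p\bigl(J\cap\tau_p^{-1}(B(p,\rho))\bigr)=B(p,\rho)\cap\gamma$ is homeomorphic to an open subset of $\bdy\unitdisk$. As $p\in\gamma$ was arbitrary, $\gamma$ is a connected topological $1$-manifold without boundary, hence homeomorphic to $\R$ or to $S^1$.

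\textbf{Finishing the forward implication in $\C_\infty$.} A boundary component of $\OM$ is closed in $\C$. If $\gamma\cong S^1$ it is compact, so $\cl_{\C_\infty}\gamma=\gamma$ is a Jordan curve. If $\gamma\cong\R$ it is non-compact, hence unbounded (a closed bounded subset of $\C$ is compact); since $\gamma$ is closed in $\C$, the embedding $\R\to\C$ parametrizing $\gamma$ is proper, i.e.\ $\gamma(t)\to\infty$ in $\C_\infty$ as $|t|\to\infty$. Therefore the natural map $S^1\cong\R\cup\{\infty_{\R}\}\to\cl_{\C_\infty}\gamma=\gamma\cup\{\infty\}$ is a continuous bijection from a compact space to a Hausdorff space, hence a homeomorphism, so $\cl_{\C_\infty}\gamma$ is again a Jordan curve.

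\textbf{Converse.} Assume every $\cl_{\C_\infty}\gamma$ is a Jordan curve and $\bdy\OM$ is locally connected; fix $p\in\bdy\OM$, with boundary component $\gamma$ and complementary component $K_\gamma$ of $\C\setminus\OM$. First I would localize: local connectedness provides a connected neighbourhood of $p$ in $\bdy\OM$, which lies in $\gamma$, so $B(p,r_0)\cap\bdy\OM\subset\gamma$ for some $r_0>0$; a short argument then shows that for $r\le r_0$ no other complementary component $K_{\gamma'}$ meets $B(p,r)$ (otherwise $B(p,r)$ would meet $\bdy K_{\gamma'}=\gamma'\subset\gamma$, or be contained in $K_{\gamma'}$, both impossible), whence $B(p,r)\cap\OM=B(p,r)\cap\OM_\gamma$, so near $p$ the domain $\OM$ coincides with $\OM_\gamma$. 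Since $\OM_\gamma$ is simply connected in $\C_\infty$ with $\bdy_{\C_\infty}\OM_\gamma=\cl_{\C_\infty}\gamma$ a non-degenerate Jordan curve, the Jordan--Schoenflies theorem identifies $\OM_\gamma$ with one of the two complementary Jordan domains, and Carath\'eodory's theorem (\Cref{R:Cara}) gives a homeomorphism $\phi\colon\clos{\unitdisk}\to\cl_{\C_\infty}\OM_\gamma$ extending a Riemann map, with $\phi(\bdy\unitdisk)=\cl_{\C_\infty}\gamma$. Setting $\zeta_0\defeq\phi^{-1}(p)\in\bdy\unitdisk$ and $H\defeq\clos{B(\zeta_0,\delta)}\cap\clos{\unitdisk}$ — a closed lens region homeomorphic to $\clos{\unitdisk}$ with disk-interior $B(\zeta_0,\delta)\cap\unitdisk$ — with $\delta$ so small that $\phi(H)\subset B(p,r)$, the map $\tau_p\defeq\phi|_H$ (after identifying $H\cong\clos{\unitdisk}$) is the desired embedding: $\tau_p(\unitdisk)=\phi(B(\zeta_0,\delta)\cap\unitdisk)\subset\OM_\gamma\cap B(p,r)=\OM\cap B(p,r)\subset\OM$; $\tau_p(\clos{\unitdisk})=\phi(H)\subset\clos{\OM}$ because within $B(p,r)$ one has $\cl_{\C_\infty}\gamma\cap\C=\gamma\subset\bdy\OM$; and by continuity of $\phi^{-1}$, for $r'$ small every $q\in B(p,r')\cap\OM=B(p,r')\cap\OM_\gamma$ has $\phi^{-1}(q)\in B(\zeta_0,\delta)\cap\unitdisk$, i.e.\ $q\in\tau_p(\unitdisk)$.

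\textbf{Main obstacle.} In the forward direction the delicate point is showing that the embedded disk genuinely forces $\bdy\OM$ to be locally a $1$-manifold at $p$ — that is, establishing both inclusions $\tau_p(J)\subset\bdy\OM$ near $\zeta_0$ and $B(p,\rho)\cap\bdy\OM\subset\tau_p(\bdy\unitdisk)$, together with the identification $B(p,\rho)\cap\bdy\OM=B(p,\rho)\cap\gamma$ — and then treating the non-compact case $\gamma\cong\R$ carefully in $\C_\infty$ (properness). In the converse, the crux is the localization step: without local connectedness of $\bdy\OM$, other boundary components could accumulate at $p$, making $\OM$ strictly smaller than $\OM_\gamma$ near $p$ and breaking the construction, so that hypothesis must be used precisely there. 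The remaining steps (lens regions, uniform continuity, invoking \Cref{R:Cara}) are routine.
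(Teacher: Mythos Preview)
Your proof is correct, and for the converse it matches the paper's argument essentially line for line (localize via local connectedness to get $B(p,r)\cap\OM=B(p,r)\cap\OM_\gamma$, apply Carath\'eodory's theorem to the Riemann map of the simply connected domain $\OM_\gamma$, then restrict to a lens $\clos{D}(1;s)\cap\clos{\unitdisk}$).

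For the forward implication, however, your route is genuinely different from the paper's and considerably more elementary. The paper first shows that $\OM_\gamma$ inherits Condition~1 from $\OM$ (via Lemma~\ref{lmm:cond-2-nbd-bhvr}), then applies the extension theorem, Theorem~\ref{thm:ext_biholo}, to the Riemann map $\phi:\unitdisk\to\OM_\gamma$ to obtain a homeomorphism $\clos{\unitdisk}\to\cl_{\C_\infty}(\OM_\gamma)$, from which $\cl_{\C_\infty}(\gamma)\cong S^1$ follows. This is a rather heavy forward reference to one of the main results of Section~\ref{sec:ext_biholo}. Your argument stays entirely within point-set topology: you read off from the embedding $\tau_p$ that $\bdy\OM$ is locally a $1$-manifold near $p$, invoke the classification of connected $1$-manifolds, and handle the unbounded case by observing that a closed embedding $\R\hookrightarrow\C$ is proper. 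The advantage of your approach is self-containment and independence from the analytic machinery of the paper; the advantage of the paper's approach is that it simultaneously exhibits the Riemann map of $\OM_\gamma$ with its boundary extension, which is conceptually aligned with how Condition~1 is used elsewhere.
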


\begin{proof}
	First suppose that $\OM$ satisfies Condition~1. 
	Choose $p\in\bdy\OM$ and let $\gamma$, $\OM_\gamma$ and $K_\gamma$ have the same meanings as above.
	Note that, by Condition~1,
	$\gamma$ is not a singleton (indeed, it includes a non-degenerate arc). 
	Consider the hyperbolic simply connected domain $\OM_\gamma\subset\C_\infty$. Since $\OM$ satisfies Condition~1,
	using Lemma~~\ref{lmm:cond-2-nbd-bhvr} and arguing as in Lemma~\ref{lmm:dom_cond2_hfidDom},
	 it is easy to see that $\OM_\gamma$ satisfies Condition~1.
	 \smallskip
	
	Choose a biholomorphism $\phi:\unitdisk\to\OM_\gamma$.
	We now consider two cases: $\gamma$ bounded and $\gamma$ unbounded. In the former
	case, $\OM_\gamma$ contains $\infty$ and so, strictly speaking, is not a planar domain; nevertheless,
	it is easy to see
	that Theorem~\ref{thm:ext_biholo} implies that $\phi$ extends to a homeomorphism from $\clos{\unitdisk}$
	to $\cl_{\C_\infty}(\OM_\gamma)$, which we will continue to denote by $\phi$, and which maps $\bdy\unitdisk$ homeomorphically to
	$\bdy_{\C_\infty}\OM_\gamma=\gamma$. Hence, in this case, $\gamma$
	(which coincides with $\cl_{\C_\infty}(\gamma)$) is a homeomorphic
	image of $S^1$, and is therefore a Jordan curve. In the latter case, $\OM_\gamma$ is a planar domain that satisfies Condition~1. By a
	direct application of Theorem~\ref{thm:ext_biholo}, $\phi$ extends to a homeomorphism from
	$\clos{\unitdisk}$ to $\clos{\OM}^{End}_\gamma$,
	which we will continue to denote by $\phi$, and which maps
	$\bdy\unitdisk$ homeomorphically to $\bdy\clos{\OM}^{End}_\gamma$. From this
	fact it follows easily that $\clos{\OM}_\gamma$ has only one end,
	whence $\clos{\OM}^{End}_\gamma \simeq \cl_{\C_\infty}(\OM_\gamma)$
	and $\bdy\clos{\OM}^{End}_\gamma \simeq \bdy_{\C_\infty}\OM_\gamma$.
	But $\bdy_{\C_\infty}\OM_\gamma=\cl_{\C_\infty}(\gamma)$, whence
	$\cl_{\C_\infty}(\gamma)$ is a homeomorphic image of $S^1$ and hence a Jordan curve. 
	Since $p\in\bdy\OM$ was arbitrary, it follows that (the closure in $\C_\infty$ of) 
	every component of $\bdy\OM$ is a Jordan curve.
	\smallskip
	
         To see the converse, choose $p\in\bdy\OM$ 
	and let $\gamma$, $\OM_\gamma$ and $K_\gamma$ have the same meanings as above.
	Since $\cl_{\C_\infty}(\gamma)$ is, by assumption, a Jordan curve, $\gamma$ is not a singleton.
	Therefore $\OM_\gamma\subset\C_\infty$ is a hyperbolic simply connected domain.
	Choose a biholomorphism $\phi:\unitdisk\to\OM_\gamma$.
	Since $\bdy_{\C_\infty}\OM_\gamma=\cl_{\C_\infty}(\gamma)$, which is a Jordan curve, it follows from Carath{\'e}odory's extension
	theorem for biholomorphisms that $\phi$ extends to a homeomorphism from $\clos{\unitdisk}$ to $\cl_{\C_\infty}(\OM_\gamma)$, which
	we will continue to denote by $\phi$. We may suppose, without loss of generality, that $\phi(1)=p$.
	Since $\bdy\OM$ is locally connected by assumption,
	the same proof as that of Claim~2 in Theorem~\ref{thm:loc_conn_apt_tot_disconn_cond2}
	works to show that there exists a neighbourhood $V$ of $p$ such that $V=(V\cap\OM)\cup(V\cap K_\gamma)$. This implies in particular
	that $V\cap\OM=V\cap\OM_\gamma$. Now choose $r>0$ such that $\clos{D}(p;r)\subset V$ and write $U\defeq 
	\phi^{-1}\big(D(p;r)\cap\clos{\OM}_\gamma\big)$; then $U$ is a neighbourhood of $1$ in $\clos{\unitdisk}$. Choose $s>0$ such that
	$\clos{D}(1;s)\cap\clos{\unitdisk}\subset U$. Once again using a Riemann map to map 
	$\unitdisk$ biholomorphically to $D(1;s)\cap\unitdisk$
	(and $\clos{\unitdisk}$ homeomorphically to $\clos{D(1;s)\cap\unitdisk}$), it is now easy to see that the requirement of Condition~1 
	is satisfied at $p$ (note that we need to restrict to $D(1;s)\cap\unitdisk$; otherwise $\phi$ might not map into $\OM$). Since $p$ was
	arbitrary, this shows that $\OM$ satisfies Condition~1.
\end{proof}

Next, we characterize visibility on a large class of planar domains.

\begin{proposition}\label{prp:visib_loc_conn}
	Let $\OM\subset\C$ be a hyperbolic domain and suppose that there exists a (closed) totally disconnected subset $S$ of $\bdy\OM$ such that,
	for every $p\in\bdy\OM\setminus S$, the connected component $\gamma$ of $\bdy\OM$ containing $p$ is not a singleton and there exists a
	neighbourhood $V$ of $p$ such that $V\cap\bdy\OM=V\cap\gamma$. Then $\OM$ is a visibility domain if and only if $\bdy\OM$ is locally
	connected at every point of $\bdy\OM\setminus S$.
\end{proposition}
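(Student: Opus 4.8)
\textbf{Proof plan for \Cref{prp:visib_loc_conn}.}

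The plan is to deduce the proposition from \Cref{T:Visibility_Thm_in_plane} (for the ``if'' direction) and from \Cref{thm:vis_simp_conn_loc_conn}, suitably localized through the domains $\OM_\gamma$, together with \Cref{thm-totally disconnected} (for the ``only if'' direction). First, assume $\bdy\OM$ is locally connected at every point of $\bdy\OM\setminus S$. The hypothesis already gives that, for each $p\in\bdy\OM\setminus S$, the component $\gamma$ of $\bdy\OM$ through $p$ is non-degenerate. Hence all the hypotheses of \Cref{T:Visibility_Thm_in_plane} are met (with the same totally disconnected set $S$), and we conclude directly that $\OM$ is a visibility domain. This direction is essentially immediate.

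For the converse, suppose $\OM$ is a visibility domain; we must show $\bdy\OM$ is locally connected at every $p\in\bdy\OM\setminus S$. Fix such a $p$, let $\gamma$ be the non-degenerate component of $\bdy\OM$ containing $p$, let $K_\gamma$ be the component of $\C\setminus\OM$ containing $\gamma$, and let $\OM_\gamma\subset\C_\infty$ be the associated simply connected domain, so that $\bdy_{\C_\infty}\OM_\gamma\cap\C=\gamma$. By the assumed neighbourhood condition, there is a neighbourhood $V$ of $p$ with $V\cap\bdy\OM=V\cap\gamma$; shrinking, we get $V=(V\cap\OM)\cup(V\cap K_\gamma)$ (the same argument as in Claim~2 of the proof of \Cref{thm:loc_conn_apt_tot_disconn_cond2} applies since we have the neighbourhood hypothesis in hand). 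This shows $V\cap\OM=V\cap\OM_\gamma$, so locally near $p$ the Kobayashi geometry of $\OM$ controls that of $\OM_\gamma$. The key point is to transfer visibility from $\OM$ to $\OM_\gamma$: I would use \Cref{thm:glob_vis_loc_vis} (global visibility implies local visibility) to see that every pair of distinct points of $(\bdy W\cap\bdy\OM)\setminus\clos{\bdy W\cap\OM}$ satisfies the visibility property with respect to $\koba_W$, where $W\defeq V\cap\OM$ is a suitable sub-domain of $\OM$; and then, because $W=V\cap\OM_\gamma$, to promote this to a statement about $\OM_\gamma$ in a neighbourhood of $p$. Combining the local visibility statements obtained this way over all $p\in\gamma\setminus S$, and invoking \Cref{thm:loc_vis_imp_glob_vis_ntrnlzd} (using \Cref{res:hyp_plan_dom_BSP} so that the ${\rm BSP}$ hypothesis is free), one concludes that $\OM_\gamma\cap\C$ is a visibility domain — here the exceptional set is $S\cap\gamma$, which is totally disconnected. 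Then \Cref{thm:vis_simp_conn_loc_conn}, applied to the simply connected domain $\OM_\gamma$, yields that $\bdy_{\C_\infty}\OM_\gamma$ is locally connected, hence so is $\gamma$, hence (using $V\cap\bdy\OM=V\cap\gamma$) $\bdy\OM$ is locally connected at $p$.

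The main obstacle is the transfer step: matching up the ``relative boundary'' sets $(\bdy W\cap\bdy\OM)\setminus\clos{\bdy W\cap\OM}$ with the genuine boundary points of $\OM_\gamma$ near $p$, and making sure that the visibility property with respect to $\koba_W=\koba_{V\cap\OM_\gamma}$ can be fed into the hypotheses of \Cref{thm:loc_vis_imp_glob_vis_ntrnlzd} for the domain $\OM_\gamma$ (rather than $\OM$). This requires checking that each $p\in\gamma\setminus S$ admits a sub-domain $U\subset\OM_\gamma$ with $p\in(\bdy U\cap\bdy\OM_\gamma)\setminus\clos{\bdy U\cap\OM_\gamma}$, with the relevant visibility property, containing a tail of any given sequence converging to $p$, and with $\koba_{\OM_\gamma}(B(p;r)\cap U,\OM_\gamma\setminus U)>0$ — the last being automatic from \Cref{res:hyp_plan_dom_BSP} and \Cref{lmm:kob_sep_intrnl_dom}. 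Care is also needed at points of $S\cap\gamma$ that are limits of $\bdy\OM_\gamma$-points but to which the local hypothesis does not apply; these are handled by the totally-disconnected-set mechanism built into \Cref{thm:loc_vis_imp_glob_vis_ntrnlzd}. Once this bookkeeping is done, the rest follows by citing the results already established.
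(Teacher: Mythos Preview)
Your overall strategy matches the paper's proof almost exactly: the ``if'' direction via \Cref{T:Visibility_Thm_in_plane}, and the ``only if'' direction by showing $\OM_\gamma$ is a visibility domain through \Cref{thm:glob_vis_loc_vis} and \Cref{thm:loc_vis_imp_glob_vis_ntrnlzd}, then applying \Cref{thm:vis_simp_conn_loc_conn}.

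The one point you correctly flag as the ``main obstacle'' but do not resolve is the connectedness of $W$. You set $W\defeq V\cap\OM$, but nothing guarantees this is connected, so \Cref{thm:glob_vis_loc_vis} does not apply to it directly, and \Cref{thm:loc_vis_imp_glob_vis_ntrnlzd} requires a genuine sub-domain containing a subsequence of the given sequence. The paper handles this by invoking \Cref{prp:vsb_nbd_ntrsc_fin}: since $\OM$ is a (weak) visibility domain, for any sequence $(z_n)$ in $\OM$ converging to $q\in\gamma\setminus S$, only finitely many components of $D(q;r)\cap\OM=D(q;r)\cap\OM_\gamma$ contain terms of the sequence, so some component $W$ contains a subsequence. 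One then applies \Cref{thm:glob_vis_loc_vis} to this component $W$, and the identification $D(q;r)\cap\OM=D(q;r)\cap\OM_\gamma$ gives $(\bdy W\cap\bdy\OM)\setminus\clos{\bdy W\cap\OM}=(\bdy W\cap\bdy\OM_\gamma)\setminus\clos{\bdy W\cap\OM_\gamma}$, so the visibility property with respect to $\koba_W$ feeds directly into the hypotheses of \Cref{thm:loc_vis_imp_glob_vis_ntrnlzd} for $\OM_\gamma$. With this one missing ingredient supplied, your argument goes through as written.
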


\begin{proof}
	First suppose $\bdy\OM$ is locally connected at every point of $\bdy\OM\setminus S$. Then
	Theorem~\ref{thm:loc_conn_apt_tot_disconn_cond2} implies that 
	$\OM$ satisfies Condition~2 whence by Theorem~\ref{T:Visibility_Thm_in_plane} is
	a visibility domain.
	\smallskip
	
	Conversely, suppose that $\OM$ is a visibility domain. Let $p\in\bdy\OM\setminus S$ and suppose
	$\gamma$, $\OM_\gamma$ and $K_\gamma$ have their usual meanings. 
	\smallskip
	
	\noindent {\bf Claim~1.} $\OM_\gamma$ is a visibility domain.
	\smallskip
	
	\noindent We shall use Theorem~\ref{thm:loc_vis_imp_glob_vis_ntrnlzd} to prove the claim. It is clear that, for the
	purpose of this claim, it is enough to deal with $\OM_\gamma\cap\C$, i.e, to regard $\OM_\gamma$ as being contained in $\C$, which we shall do.
	First note that $\gamma\cap S$ is a totally disconnected subset of $\gamma=\bdy_\C\OM_\gamma$.
	Suppose that $q\in\gamma\setminus S$ and that 
	$(z_n)_{n\geq 1}$ is a sequence in $\OM_\gamma$ converging to $q$. By hypothesis, there exists a neighbourhood $V$ of $q$ such that
	$V\cap\bdy\OM=V\cap\gamma$. From this it follows, as in the proof of Claim~2 in 
	Theorem~\ref{thm:loc_conn_apt_tot_disconn_cond2}, that
	there also exists $r>0$ such that $D(q;r)=(D(q;r)\cap\OM)\cup(D(q;r)\cap K_\gamma)$. We may assume, without loss of generality, that
	$(z_n)_{n\geq 1}\subset D(q;r)$. Since $\OM$ is a visibility domain, we may invoke Lemma~\ref{prp:vsb_nbd_ntrsc_fin} to conclude that
	there are only finitely many connected components of $D(q;r)\cap\OM=D(q;r)\cap\OM_\gamma$ that contain $z_n$ for some $n$. From this it follows
	that there is some connected component of $D(q;r)\cap\OM_\gamma$, say $W$, that contains $z_n$ for infinitely many $n$. So there is a subsequence
	$(z_{k_n})_{n\geq 1}$ of $(z_n)_{n\geq 1}$ that lies in $W$. Note that, since $\OM$ is a visibility domain, it follows by 
	Theorem~\ref{thm:glob_vis_loc_vis} that every two points of $(\bdy W\cap\bdy\OM)\setminus\clos{\bdy W\cap\OM}$ possess the visibility property
	with respect to $\koba_W$. It is clear that $q\in(\bdy W\cap\bdy\OM)\setminus\clos{\bdy W\cap\OM}$. Furthermore, since $\OM_\gamma$ is a planar domain,
	it follows that
	\[ \koba_{\OM_\gamma}\big( D(q;r/2)\cap W,\OM_\gamma\setminus W \big)>0. \]
	Now recall that $q\in\gamma\setminus S$ and the sequence $(z_n)_{n\geq 1}$ converging to $q$ were arbitrary. Therefore, it follows by
	Theorem~\ref{thm:loc_vis_imp_glob_vis_ntrnlzd} that $\OM_\gamma$ is a visibility domain. \hfill $\blacktriangleleft$
	\smallskip
	
	Since $\OM_\gamma$ is a simply connected planar visibility domain and hence, by Theorem~\ref{thm:vis_simp_conn_loc_conn}, 
	$\bdy\OM_\gamma=\gamma$ is locally connected. In particular, it is locally connected at $p$. Once again recall, by hypothesis, that there exists a 
	neighbourhood $U$ of $p$ such that $U\cap\bdy\OM=U\cap\gamma$. From this and from the local connectedness of $\gamma$ at $p$ the local connectedness 
	of $\bdy\OM$ at $p$ follows. This completes the proof.
\end{proof}

Finally, we state a corollary whose proof we omit, because it follows immediately from the foregoing.

\begin{corollary}
	Let $\OM\subset\C$ be a hyperbolic planar domain such that there exists $\delta>0$ such that for every two distinct connected components
	$\gamma_1$ and $\gamma_2$ of $\bdy\OM$, $\distance_{Euc}(\gamma_1,\gamma_2)\geq\delta$. Then the following are equivalent:
	\begin{enumerate}
		\item $\OM$ is a visibility domain;
		\item $\bdy\OM$ is locally connected;
		\item Every connected component $\gamma\subset\bdy\OM$ is locally connected;
		\item For every connected component $\gamma\subset\bdy\OM$, $\cl_{\C_\infty}(\gamma)$ is a continuous surjective image of $S^1$.
	\end{enumerate}
\end{corollary}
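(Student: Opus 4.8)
The plan is to obtain all four equivalences by feeding the uniform separation hypothesis into the planar results already proved, the point being that "$\distance_{Euc}(\gamma_1,\gamma_2)\geq\delta$ for distinct components" is precisely what lets us localize every argument to a single boundary component. The one elementary observation used throughout is this: if $p\in\bdy\OM$ and $\gamma$ denotes the component of $\bdy\OM$ containing $p$, then every other component is at Euclidean distance $\geq\delta$ from $p$, so
$B(p;\delta)\cap\bdy\OM=B(p;\delta)\cap\gamma$.

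For $(2)\Leftrightarrow(3)$ I would argue pointwise: given $p\in\gamma\subset\bdy\OM$, any neighbourhood $W\subset B(p;\delta)$ satisfies $W\cap\bdy\OM=W\cap\gamma$, so $\bdy\OM$ is locally connected at $p$ if and only if $\gamma$ is; ranging over all $p$ gives $(2)\Leftrightarrow(3)$. For $(3)\Leftrightarrow(4)$ I would treat one component $\gamma$ at a time. If $\gamma$ is degenerate, $\cl_{\C_\infty}(\gamma)$ is a single point and both conditions hold trivially for it. If $\gamma$ is non-degenerate, then $\OM_\gamma\subset\C_\infty$ is a hyperbolic simply connected domain with $\bdy_{\C_\infty}\OM_\gamma=\cl_{\C_\infty}(\gamma)$. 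A continuous surjective image of $S^1$ is a Peano continuum, hence locally connected; thus $(4)$ forces $\cl_{\C_\infty}(\gamma)$, and hence its open subset $\gamma=\cl_{\C_\infty}(\gamma)\setminus\{\infty\}$, to be locally connected. Conversely, if $\gamma$ is locally connected then $\cl_{\C_\infty}(\gamma)$ (which is closed and connected) is locally connected at every point other than possibly $\infty$, hence locally connected everywhere by \Cref{lmm:loc_conn_outside_a_pt_loc_conn}; Carath\'{e}odory's theorem (\Cref{R:Cara}) then extends a Riemann map $\unitdisk\to\OM_\gamma$ continuously to $\clos{\unitdisk}\to\cl_{\C_\infty}(\OM_\gamma)$, and the restriction to $S^1$ is (by a routine compactness/boundary-correspondence argument) a continuous surjection onto $\bdy_{\C_\infty}\OM_\gamma=\cl_{\C_\infty}(\gamma)$, which is $(4)$. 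Collecting over all components yields $(3)\Leftrightarrow(4)$.

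For $(1)\Leftrightarrow(2)$ the idea is to invoke \Cref{prp:visib_loc_conn} with $S$ taken to be the set of isolated points of $\bdy\OM$, i.e.\ the union of the degenerate components. This $S$ is totally disconnected (it cannot contain two points of one component, and each of its points is its own component), and, using the displayed identity, it is in fact closed and discrete: a sequence of distinct points of $S$ converging to some $q\in\bdy\OM$ would eventually lie in $B(q;\delta)\cap\bdy\OM=B(q;\delta)\cap\gamma_q$, forcing those points into the single component $\gamma_q$, which is impossible. Moreover, for each $p\in\bdy\OM\setminus S$ the component $\gamma$ of $p$ is non-degenerate and $V:=B(p;\delta)$ satisfies $V\cap\bdy\OM=V\cap\gamma$; so all hypotheses of \Cref{prp:visib_loc_conn} hold, giving that $\OM$ is a visibility domain if and only if $\bdy\OM$ is locally connected at every point of $\bdy\OM\setminus S$. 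Since $\bdy\OM$ is automatically locally connected at each (isolated) point of $S$, this is the same as $(2)$. (For the direction $(2)\Rightarrow(1)$ one may equivalently cite \Cref{T:Visibility_Thm_in_plane} directly with this $S$.) Chaining $(1)\Leftrightarrow(2)\Leftrightarrow(3)\Leftrightarrow(4)$ completes the argument.

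The only point requiring genuine care is the bookkeeping around isolated boundary points / degenerate components: one must gather them into the exceptional set $S$ and check that the separation hypothesis keeps $S$ totally disconnected (indeed closed and discrete), so that \Cref{prp:visib_loc_conn} truly applies and so that the presence or absence of such components is consistent on the visibility side and on the local-connectedness side. Everything else is a direct citation of the planar results above, which is why the corollary can be stated without a separate proof.
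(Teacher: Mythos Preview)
Your proposal is correct and is exactly the kind of argument the paper has in mind: the corollary is placed immediately after \Cref{prp:visib_loc_conn} and the authors explicitly omit the proof as following ``immediately from the foregoing,'' i.e., from \Cref{prp:visib_loc_conn}, \Cref{lmm:loc_conn_outside_a_pt_loc_conn}, and the Carath\'{e}odory-type facts, which is precisely what you invoke. Your careful handling of degenerate components via the exceptional set $S$ (and the verification that the separation hypothesis forces $S$ to be closed and discrete) fills in the only point the paper leaves implicit.
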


\begin{proposition} \label{prp:cond1-sat-xtrnl}
	Suppose that $\OM\subset\C$ is a hyperbolic domain such that, for every $p\in\bdy\OM$ there exists a neighbourhood $U$ of $p$ such that
	$U\cap\OM$ is connected, simply connected, and has a boundary that is a Jordan curve. Then $\OM$ satisfies Condition~1.
\end{proposition}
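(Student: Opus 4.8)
The plan is to verify the reformulation of Condition~1 recorded immediately after its statement in the paper: for each $p\in\bdy\OM$ one must exhibit an $r>0$ and a topological embedding $\tau_p\colon\clos{\unitdisk}\to\clos{\OM}$ with $\tau_p(\unitdisk)\subset\OM$ and $B(p,r)\cap\OM\subset\tau_p(\unitdisk)$. Fix $p\in\bdy\OM$ and let $U$ be the neighbourhood of $p$ provided by the hypothesis, so that $V\defeq U\cap\OM$ is a connected, simply connected domain whose boundary $J\defeq\bdy V$ (taken in $\C$) is a Jordan curve; note that $J$ is compact and that $V\neq\emptyset$, since every neighbourhood of a point of $\bdy\OM$ meets $\OM$.

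First I would identify $V$ with the bounded Jordan region enclosed by $J$. By the Jordan curve theorem, $\C\setminus J$ has exactly two components, a bounded one $\Delta$ and an unbounded one $\Delta'$, each with boundary $J$. Since $V$ is connected, open, and disjoint from $J=\bdy V$, it lies in one of $\Delta,\Delta'$; and since $\bdy V$ is disjoint from each of these (they are open), $V$ is relatively clopen in whichever one contains it, hence (being non-empty, and $\Delta$, $\Delta'$ being connected) equals that component. If $V=\Delta'$, then $\C\setminus V=\clos{\Delta}$ has non-empty interior, so $V$ cannot be simply connected, a contradiction. Hence $V=\Delta$ is a bounded Jordan domain and $\clos{V}=\clos{\Delta}$ is compact.

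Next, since $V$ is a bounded (hence hyperbolic) simply connected domain with Jordan-curve boundary, Carath\'{e}odory's extension theorem (\Cref{R:Cara} applied to $V$ in place of $\OM$, using that $\clos{V}$ and $\bdy V$ agree whether computed in $\C$ or in $\C_\infty$ because $V$ is bounded) shows that a Riemann map $\psi\colon\unitdisk\to V$ extends to a homeomorphism $\clos{\psi}\colon\clos{\unitdisk}\to\clos{V}$. Because $V\subset\OM$ we have $\clos{V}\subset\clos{\OM}$, and the inclusion $\clos{V}\hookrightarrow\clos{\OM}$ is a topological embedding (a continuous injection from a compact space into the Hausdorff space $\clos{\OM}$, which carries the subspace topology from $\C$). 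Composing, $\tau_p\defeq\clos{\psi}$, regarded as a map into $\clos{\OM}$, is a topological embedding with $\tau_p(\unitdisk)=\psi(\unitdisk)=V\subset\OM$. Finally, choosing $r>0$ with $B(p,r)\subset U$, we get $B(p,r)\cap\OM\subset U\cap\OM=V=\tau_p(\unitdisk)$. This verifies the stated form of Condition~1 at $p$, and since $p\in\bdy\OM$ was arbitrary, $\OM$ satisfies Condition~1.

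The argument is short, and essentially the only point requiring care is the identification of $V$ with the bounded Jordan region (in particular, that $V$ is bounded); once that is in place, the rest is a direct application of the Jordan curve theorem, the Carath\'{e}odory extension theorem already quoted in the paper, and the equivalent reformulation of Condition~1 stated there. I do not anticipate any substantial obstacle.
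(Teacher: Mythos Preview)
Your proof is correct and takes essentially the same approach as the paper intends. The paper omits the proof of this proposition, noting only that it ``follows the same pattern as, and is much simpler than'' the proof of \Cref{prp:cond2-sat-xtrnl}; that pattern is precisely to take a Riemann map onto the simply connected local piece $V=U\cap\OM$ and invoke Carath\'{e}odory's extension theorem, which is exactly what you do. Your additional care in arguing that $V$ must be the \emph{bounded} Jordan region (since the unbounded one fails to be simply connected) is a point the paper's sketch leaves implicit, and it is handled correctly.
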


We omit the proof of this because it follows the same pattern as, and is much simpler than, the proof of the following Proposition.

\begin{proposition} \label{prp:cond2-sat-xtrnl}
	Suppose that $\OM\subset\C$ is a hyperbolic domain such that there exists a (closed) totally disconnected subset $S$ of $\bdy\OM$ such
	that, for every $p\in\bdy\OM\setminus S$ there exists a bounded neighbourhood $U$ of $p$ such that $U\cap\OM$ has finitely many components,
	each of which is simply connected and has a boundary that is locally connected. Then $\OM$ satisfies Condition~2.
\end{proposition}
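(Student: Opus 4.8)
\textbf{Proof proposal for Proposition~\ref{prp:cond2-sat-xtrnl}.}
The plan is to reduce the claim to the characterization of Condition~2 furnished by \Cref{thm:loc_conn_apt_tot_disconn_cond2}: it suffices to show that, with the same totally disconnected set $S$, for every $p\in\bdy\OM\setminus S$ the boundary $\bdy\OM$ is locally connected at $p$ and the component of $\bdy\OM$ containing $p$ is non-degenerate. Fix such a $p$ and let $U$ be the bounded neighbourhood provided by hypothesis, with $U\cap\OM=V_1\cup\dots\cup V_m$, each $V_j$ simply connected with locally connected boundary. First I would record the elementary topological fact that, since $p\in\bdy\OM$ and $U$ is a neighbourhood of $p$, there is at least one $j$ with $p\in\bdy V_j$; relabel so that $p\in\bdy V_1$. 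The component $\gamma$ of $\bdy\OM$ through $p$ is non-degenerate: indeed $p\in\bdy V_1\cap\bdy\OM$, and because $V_1$ is simply connected with locally connected (in particular connected, and containing more than one point since $V_1$ is a nonempty open proper subset of $\C$) boundary, a neighbourhood of $p$ in $\bdy V_1$ lies in $\bdy\OM$ and is connected and non-singleton, forcing $\gamma$ to be non-degenerate.

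Next I would isolate the local structure of $\bdy\OM$ near $p$. The key point is that for $r>0$ small enough, $D(p;r)\cap\bdy\OM = D(p;r)\cap\bigl(\bigcup_{j:\,p\in\bdy V_j}\bdy V_j\bigr)$, and moreover one can shrink $r$ so that only those $V_j$ with $p\in\bdy V_j$ meet $D(p;r)$. This is because there are only finitely many $V_j$, and for each $V_j$ with $p\notin\clos{V_j}$ one has $d_{\mathrm{Euc}}(p,\clos{V_j})>0$; similarly, since each $\bdy V_j$ is closed, if $p\notin\bdy V_j$ but $p\in\clos{V_j}$, then $p$ is interior to $\OM$-side near itself along $V_j$ — more precisely, I would argue that for $r$ small the only boundary points of $\OM$ in $D(p;r)$ are those coming from the $V_j$'s whose closure contains $p$, and for such $V_j$ necessarily $p\in\bdy V_j$. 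Thus $D(p;r)\cap\bdy\OM$ is a finite union of pieces of the sets $\bdy V_j$, each of which is locally connected.

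The remaining step is to upgrade ``$\bdy\OM$ near $p$ is a finite union of locally connected closed sets each containing $p$'' to ``$\bdy\OM$ is locally connected at $p$''. A finite union of locally connected continua sharing the point $p$ is locally connected at $p$: given a neighbourhood $W\ni p$, choose for each relevant $j$ a connected neighbourhood $W_j$ of $p$ in $\bdy V_j$ with $W_j\subset W$, and take a ball $B(p;\rho)$ small enough that $B(p;\rho)\cap\bdy V_j\subset W_j$ for every such $j$ and $B(p;\rho)\subset D(p;r)$; then $B(p;\rho)\cap\bdy\OM = \bigcup_j (B(p;\rho)\cap\bdy V_j)$ is a union of connected sets all containing $p$, hence connected, and contained in $W$. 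This yields local connectedness of $\bdy\OM$ at $p$. Since $p\in\bdy\OM\setminus S$ was arbitrary, \Cref{thm:loc_conn_apt_tot_disconn_cond2} applies with the set $S$ and gives that $\OM$ satisfies Condition~2.

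\textbf{Main obstacle.} The genuinely delicate point is the second step: verifying that for small $r$ the set $D(p;r)\cap\bdy\OM$ really is captured by the $\bdy V_j$ with $p\in\bdy V_j$, i.e.\ that no ``extra'' boundary of $\OM$ sneaks into a small ball around $p$ from components $V_j$ with $p$ merely in $\clos{V_j}$ but not in $\bdy V_j$, or from the complement $U^{\cmpl}$. I expect to handle this by the same straight-line-segment argument used in Claim~2 of \Cref{thm:loc_conn_apt_tot_disconn_cond2} (and in \Cref{lmm:cond-2-nbd-bhvr}): if $q\in D(p;\rho)$ were in $\bdy\OM$ but not accounted for, one joins $q$ to $p$ by a segment and looks at the last parameter where the segment leaves $\clos{\OM}$, deriving a contradiction with the finiteness and simple-connectedness of the $V_j$. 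Everything else is routine point-set topology of finite unions.
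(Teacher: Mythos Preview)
Your approach is correct but genuinely different from the paper's. The paper verifies Condition~2 \emph{directly}: given $p\in\bdy\OM\setminus S$ and a sequence $(x_n)\to p$, it passes to a subsequence lying in one component $V_1$, takes the Riemann map $\phi:\unitdisk\to V_1$ (which extends continuously to $\clos{\unitdisk}$ by Carath{\'e}odory since $\bdy V_1$ is locally connected), observes that $p\in(\bdy V_1\cap\bdy\OM)\setminus\clos{\bdy V_1\cap\OM}$ because $\bdy V_1\cap\OM\subset\bdy U$, and then picks a small arc around a preimage of $p$ whose image lies in $\bdy\OM$. You instead reduce to the topological characterization in \Cref{thm:loc_conn_apt_tot_disconn_cond2}, which makes the argument purely point-set once that theorem is in hand, at the cost of invoking the heavier machinery behind that characterization. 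Both routes ultimately pass through a Riemann map and Carath{\'e}odory (the paper explicitly here, you implicitly via \Cref{thm:loc_conn_apt_tot_disconn_cond2}).

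Two small corrections. First, your ``main obstacle'' is actually routine: since $\bdy V_j\cap\OM\subset\bdy U$ (any point of $\bdy V_j$ lying in $U\cap\OM$ would have to lie in some $V_k$, which is impossible), and $p$ is interior to $U$, a small disc $D(p;r)\subset U$ meets $\bdy\OM$ exactly in $\bigcup_{j\in J}\bdy V_j$ where $J=\{j:p\in\bdy V_j\}$; no line-segment argument is needed. Second, in your last step you write that $B(p;\rho)\cap\bdy\OM=\bigcup_j(B(p;\rho)\cap\bdy V_j)$ is ``a union of connected sets all containing $p$''; but local connectedness of $\bdy V_j$ at $p$ does \emph{not} say $B(p;\rho)\cap\bdy V_j$ is connected. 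The fix is the one you almost wrote: take the connected open-in-$\bdy V_j$ neighbourhoods $W_j\subset W$ first, then choose $\rho$ with $B(p;\rho)\cap\bdy V_j\subset W_j$ for each $j\in J$, and use $\bigcup_{j\in J}W_j$ (connected, since each $W_j\ni p$) as the connected neighbourhood of $p$ in $\bdy\OM$ contained in $W$.
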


\begin{proof}
	Suppose that $p\in\bdy\OM\setminus S$ and that $(x_n)_{n\geq 1}$ is a sequence in $\OM$ converging to $p$. Using the hypothesis,
	choose a bounded neighbourhood $U$ of $p$ such that $U\cap\OM$ has finitely many components, say $V_1,\dots, V_m$, each of which is simply
	connected and has a locally connected boundary. Since $(x_n)_{n\geq 1}$ is a sequence in $\OM$ converging to $p$, since $U$ is a 
	neighbourhood of $p$, and since $U\cap\OM=V_1\cup\dots\cup V_m$, we may suppose that $V_1$ contains $x_n$ for infinitely many $n$.
	This implies that there is a subsequence $(x_{k_n})_{n\geq 1}$ of $(x_n)_{n\geq 1}$ that belongs to $V_1$. Since $V_1$ is, by assumption,
	simply connected, choose a biholomorphism $\phi:\unitdisk\to V_1$. Since $V_1$ is by assumption locally connected, by \cite[Theorem~4.3.1]{BCM}, 
	$\phi$ extends to a continuous surjective map from $\clos{\unitdisk}$ to $\clos{V}_1$. It is easy to see that 
	$p\in (\bdy V_1 \cap \bdy\OM)\setminus\clos{\bdy V_1\cap\OM}$. Choose a neighbourhood $W$ of $p$ such that
	$\clos{W}\cap\clos{\bdy V_1\cap\OM}=\emptyset$. 
	Consider the sequence
	$(\phi^{-1}(x_{k_n}))_{n\geq 1}$. It is clear that all its limit points must belong to $\bdy\unitdisk$ and must map to $p$ under $\phi$; furthermore,
	there is at least one such limit point. Thus, without loss of generality, we may suppose that $1$ is a limit point. This means that, passing to a
	subsequence, we may suppose that $(\phi^{-1}(x_{k_n}))_{n\geq 1}$ converges to $1$. Now note that $\phi^{-1}(W)$ is a neighbourhood of 
	$\phi^{-1}\{p\}$ in $\clos{\unitdisk}$. 
	In particular, $\phi^{-1}(W)$ is a neighbourhood of $1$ in $\clos{\unitdisk}$. 
	We may choose $x_1,x_2\in\bdy\unitdisk$ close to $1$ and on opposite sides of $1$ such that 
	$\clos{Reg}(x_1,x_2)\subset \phi^{-1}(W)$. We may suppose, without loss of generality, that $(\phi^{-1}(x_{k_n}))_{n\geq 1}$ lies in $Reg(x_1,x_2)$.
	We also claim that $\phi(Arc[x_1,x_2])\subset\bdy\OM$. To see why this is true, note that $\phi(Arc[x_1,x_2])\subset\phi(\bdy\unitdisk)=\bdy V_1$.
	Further, $\bdy V_1 = (\bdy V_1 \cap \bdy\OM) \cup (\bdy V_1\cap\OM)$. Now, $\phi(Arc[x_1,x_2])\subset W$,
	and so it is clear that $\phi(Arc[x_1,x_2])\cap\clos{\bdy V_1\cap\OM}=\emptyset$. From this it follows that $\phi(Arc[x_1,x_2])\subset 
	\bdy V_1\cap\bdy\OM$, as required. This shows, since $p\in\bdy\OM\setminus S$ and $(x_n)_{n\geq 1}$ were arbitrary, that $\OM$ satisfies Condition~2,
	as claimed.
\end{proof}

\section{Extension of biholomorphisms between planar domains}\label{sec:ext_biholo}
In this section, we shall present the proof of Theorem~\ref{thm:ext_biholo}.

\begin{proof}[Proof of \Cref{thm:ext_biholo}]
Since Condition~1 implies Condition~2, by 
Theorem~\ref{T:Visibility_Thm_in_plane} both  $\Omega_1$ and $\Omega_2$ 
are visibility domains. 		
	
First, we shall show that for all $\xi \in \partial \Omega_1$, $\lim_{z 
\to \xi} f(z)$ exists and lies in $\partial \overline \Omega_2^{End}$. 
Assume, to get a contradiction, that this does not hold. By the 
compactness of $\clos{\OM}_2^{End}$ there exist $\xi\in\bdy\OM_1$,
$p, q \in \partial \overline \Omega_2^{End}$, $p \neq q$, and sequences 
$(z_n)_{n \geq 1}$, $(w_n)_{n \geq 1}$ in $\Omega_1$ such that 
$z_n, w_n \to \xi$ as $n \to \infty$ and such that 
$\lim_{n \to \infty}f(z_n) = p$, $\lim_{n \to \infty}f(w_n) = q$. Since
 $\Omega_2$ is a visibility domain, Lemma~\ref{lmm:weak_visib_Grom_lim_fin} implies that
\[
\limsup_{n \to \infty}(f(z_n)| f(w_n))_{f(o)} < +\infty,
\]
where $o\in\OM_1$ is an arbitrary but fixed point.
Since $f$ is a Kobayashi isometry,
\[
\limsup_{n\to\infty} (z_n | w_n)_o = \limsup_{n \to \infty}(f(z_n)| f(w_n))_{f(o)} 
< +\infty.
\]
This implies that $\liminf_{z,w\to\xi}(z|w)_o<+\infty$, which is a contradiction to 
Lemma~\ref{L:Unbounded_Gromov_prod}.
Hence for every $\xi \in \partial \Omega_1$ there exists $p \in \partial 
\overline \Omega_2^{End}$ such that $\lim_{z \to \xi,\,z\in\OM_1}f(z) = p$.

\smallskip

Next, we shall show that for every end $\xi \in \overline{\Omega}_1^{End} 
\setminus \overline{\Omega}_1$, there exists $p \in \partial 
\overline{\Omega}_2^{End}$  such that 
$\lim_{z \to \xi,\,z\in\OM_1} f(z) = p$. 
Assume, to get a contradiction, that this does not hold. Once again by the
compactness of $\clos{\OM}_2^{End}$, there exist a point $\xi \in 
\clos{\OM}_1^{End}\setminus\clos{\OM}_1$, points $p\neq q \in \partial 
\overline{\Omega}_2^{End}$ and sequences $(z_n)_{n\geq 1}$ and 
$(w_n)_{n\geq 1}$ in $\Omega_1$ converging to $\xi$ such that 
$\lim_{n\to\infty} f(z_n) = p$ and $\lim_{n\to\infty} f(w_n) = q$. Since
$(z_n)_{n\geq 1}$ and $(w_n)_{n\geq 1}$ are sequences in $\OM_1$ converging
to the same end $\xi$, we may choose, for each $n$, a path 
$\sigma_n : [0, 1] \longrightarrow \Omega_1$ joining $z_n$ and 
$w_n$ such that $(\sigma_n)_{n\geq 1}$  eventually avoids every compact set.
Define $\alpha_n(t) \defeq 
f(\sigma_n(t))$ for all $n\in\posint$ and all $t \in [0,1]$.
Since
$(\sigma_n)_{n\geq 1}$ eventually avoids every compact subset of $\OM_1$
and $f$ is a biholomorphism, $(\alpha_n)_{n\geq 1}$ eventually 
avoids every compact subset of $\OM_2$.		
By \Cref{rmk:cnvg_outside_tot_disc}, there exist a sequence of points $z'_n \in 
\mathsf{ran}(\alpha_n)$ and $p_0 \in \partial \Omega_2 \setminus S$ such that $z'_n \to p_0$ as 
$n \to \infty$ and $p_0\neq p, q$. Say $z'_n = \alpha_n(t_n)$, where $(t_n)_{n\geq 1}$ is a sequence 
in $(0,1)$.
Now, take a neighbourhood $U$ of 
$p_0$ such that $p,q \in \C \setminus U$. By 
Condition~2 there exist a subsequence $(z'_{k_n})_{n\geq 1}$ of $(z'_n)_{n\geq 1}$,
a one-one holomorphic map $\phi: \mathbb{D} \longrightarrow \OM$ that extends continuously up to 
$\clos{\unitdisk}$, and arcs $Arc(x_1,\,x_2)$, $Arc(y_1,\,y_2)$ containing $1$, such that
$Arc(x_1,\,x_2)\Subset Arc(y_1,\,y_2)$, and satisfying the following properties:
\begin{enumerate}
\item $\phi(1)=p_0$, $\phi(Arc(y_1, y_2)) \subset \partial \Omega$, $\phi(Reg(y_1, y_2)) \subset U$,
\smallskip

\item for all $n \in \posint$, $z'_{k_n} \in \phi(Reg(x_1, x_2))$ and 
$\phi^{-1}(z'_{k_n}) \to 1$ as $n \to \infty$.
\end{enumerate}
As remarked just after Condition~3 in Section~\ref{S:Visibility_Thm}, we can choose $x_j, y_j$, $j=1,2$, 
such that 
the points $\phi(x_1), \phi(x_2), \phi(y_1), \phi(y_2), \phi(1)$ are 
distinct. In what follows, we suppress the subsequential notation and write $z'_{k_n}$ as $z'_n$. 
\smallskip

For every $n$, let
\begin{align*}
	s'_n &\defeq \inf\{ t \in [0,t_n] \mid \alpha_{n}((t,t_n])\subset
	\phi(\unitdisk) \}, \text{ and}\\
	s''_n &\defeq \sup\{ t \in [t_n,1] \mid \alpha_{n}([t_n,t))\subset
	\phi(\unitdisk) \}.
\end{align*}
It is obvious that the infimum above is actually a minimum and the supremum above
is actually a maximum. Consequently, one has $\alpha_{n}((s'_n,t_n])\subset
\phi(\unitdisk)$ and $\alpha_{n}([t_n,s''_n))\subset \phi(\unitdisk)$. It is also
clear that $\alpha_{n}(s'_n) \in \phi(\bdy\unitdisk) \cap \OM_2$ and
$\alpha_{n}(s''_n) \in \phi(\bdy\unitdisk) \cap \OM_2$. Now put
\begin{align*}
	u'_n &\defeq \inf\{ t\in [s'_n,t_n] \mid \alpha_{n}((t,t_n]) \subset
	\phi(Reg(y_1,y_2)) \}, \\
	v'_n &\defeq \inf\{ t\in [u'_n,t_n] \mid \alpha_{n}((t,t_n]) \subset
	\phi(Reg(x_1,x_2))\}.
\end{align*}
Then it follows easily that $s'_n<u'_n<v'_n<t_n<s_n''$.
It also follows
that $\alpha_{n}(u'_n)\in \phi((y_1,y_2)_p)$, that
$\alpha_{n}((u'_n,t_n])\subset\phi(Reg(y_1,y_2))$, that $\alpha_{n}(v'_n)\in 
\phi((x_1,x_2)_p)$, and that \linebreak
$\alpha_{n}((v'_n,t_n])\subset
\phi(Reg(x_1,x_2))$.
\smallskip

Observe that $\alpha_{n}([s'_n,s''_n])\subset\phi(\clos{\unitdisk})$ and
$\alpha_{n}((s'_n,s''_n))\subset\phi(\unitdisk)$. Put $X_n\defeq 
\overline{\phi^{-1}\big(\alpha_{n}((s'_n,s''_n))\big)}$. Note that this is a
closed, connected subset of $\clos{\unitdisk}$. Therefore we may suppose, without
loss of generality, that $(X_n)_{n\geq 1}$ converges, in the Hausdorff distance
associated to the Euclidean distance in $\overline{\unitdisk}$, to
some closed subset $X$ of $\clos{\unitdisk}$. Since each $X_n$ is connected,
it follows easily that $X$ is also connected. Since $(\alpha_n)_{n\geq 1}$
eventually avoids every compact subset of $\OM_2$, it follows immediately that
$(X_n)_{n\geq 1}$ eventually avoids every compact subset of $\unitdisk$. 
Consequently, $X \subset \bdy\unitdisk$. Note that, for every $n$,
$\phi^{-1}\big(\alpha_{n}(u'_n)\big)\in [y_1,y_2]_p$ and
$\phi^{-1}\big(\alpha_{n}(v'_n)\big)\in [x_1,x_2]_p$. Therefore, we may suppose,
without loss of generality, that
$\big(\phi^{-1}\big(\alpha_{n}(v'_n)\big)\big)_{n\geq 1}$ converges to one of
$x_1$ or $x_2$, say to $x_1$. Further, for every $n$, $\phi^{-1}(z'_{n})\in X_n$ 
and $\phi^{-1}(z'_{n})\to 1$. Thus $X$ is a closed, connected subset of 
$\bdy\unitdisk$ that contains both $1$ and $x_1$. It is easy to see that, in this
case, $Arc[x_1,1]\subset X$.
Using the continuity of $\phi$ on $\overline{\unitdisk}$, we see that there 
exists $x_1'\in Arc(x_1,1)$ such that 
\begin{equation} \label{eqn:small_arc_dstnct}
\forall\, x \in Arc(x_1',1), \; \phi(x) \notin \{\phi(x_1), \phi(x_2)\}	
\end{equation}
(it suffices to ensure that $\phi(x)$ is very close to $\phi(1)$, which, by
assumption, is distinct from $\phi(x_1)$ and $\phi(x_2)$). 
Now we make the following
\smallskip

\noindent {\bf Claim 0.}
For every $x \in Arc(x_1', 1)$,
\[
\lim_{r \to 1-}|f^{-1}(\phi(rx ) )| = \infty.
\]
Let $x$ be an arbitrary but fixed point of $Arc(x_1', 1)$.
Suppose that $\gamma_x : [0, +\infty) \longrightarrow \mathbb{D}$ is the 
parametrization of the radial geodesic ray of $(\mathbb{D}, \koba_{\mathbb{D}})$ 
joining $0$ and $x \in \partial \mathbb{D}$. Then $\phi \circ \gamma_x$ is a 
geodesic ray of $(\phi(\mathbb{D}), \koba_{\phi(\mathbb{D})} )$ emanating from the 
point $\phi(0)$ and landing at the boundary point $\phi(x) \in \bdy\phi(\unitdisk)\cap \bdy 
\OM_2$. Now we make the following subsidiary
\smallskip
			
\noindent
{\bf Claim 1.} There 
exists a point $\xi_0 \in \partial \overline \Omega_1^{End}$ such that   
\[
\lim_{t \to \infty} f^{-1}(\phi \circ \gamma_x (t) ) = \xi_0,\, \, \,\text{which is equivalent to} \, \, \,
\lim_{r \to 1^-} f^{-1}(\phi(rx) ) = \xi_0.
\]
The proof of Claim~1 crucially depends on the Claim~2 stated below. 
However, 	before we state and prove Claim~2, we need a few notations and a certain preparation
that will help in proving Claim~2.		
Given $A_j, B_j \subset \Omega_j$, $j=1,2$, the Hausdorff distance with respect to 
$\koba_{\OM_j}$ between $A_j, B_j$ will be denoted by $\mathcal{H}_{\koba}^j(A_j, B_j)$.
Recall that $x$ denotes an arbitrary but fixed element of $Arc(x_1', 1)$. Suppose 
that $(\beta_t)_{t\in [0,\infty)}$ is {\em any} family of curves in $\OM_2$ such
that, for every $t\in [0,\infty)$, $\beta_t : [0, T_t] \longrightarrow 
\Omega_2$ is a geodesic of $(\Omega_2, \koba_{\Omega_2})$ joining $\phi(0)$ and 
$\phi \circ \gamma_x(t)$, i.e., $\beta_t(0) = \phi(0)$ and $\beta_t(T_t) = \phi 
\circ \gamma_x(t)$. As $\OM_2$ is complete hyperbolic and as $\phi(x)\in 
\bdy\OM_2$, it follows that $T_t\to\infty$ as $t\to\infty$.
\smallskip

Choose $T_0<\infty$ 
sufficiently large so that, for all $t\geq T_0$, $\gamma_x(t)\in Reg(x_1',1)$.
Note that, for every $t\geq T_0$, $\beta_t(T_t) = \phi\circ\gamma_x(t) \in 
\phi(Reg(x'_1,1)) \subset \phi(Reg(x_1,x_2))$. So, for every $t\geq T_0$, 
$\beta_t(s) \in \phi(Reg(x_1,x_2))$ for every $s$ sufficiently large. Define,
for every $t\geq T_0$,
\begin{equation*}
S_t \defeq
\begin{cases}
	0, &\text{if } \mathsf{ran}\,\beta_t \subset \phi(Reg(x_1,x_2)),\\
	\sup\big\{ s\in [0,T_t] \mid \beta_t(s)\notin\phi(Reg(x_1,x_2)) \big\}
	&\text{otherwise.} 
\end{cases}
\end{equation*}
We claim that $\sup_{t\in [T_0,\infty)} S_t<\infty$. To get a contradiction, suppose there exists a 
sequence $(t_n)_{n\geq 1}$ such that $S_{t_n}\to\infty$ as $n\to\infty$. It is 
easy to see that if this is so, then one must also have $t_n\to\infty$. Note that, 
for every $n$, $\beta_{t_n}(S_{t_n})\in \phi((x_1,x_2)_p)$. From this, 
compactness, and the fact that $S_{t_n}\to\infty$ as $n\to\infty$, we may suppose, 
without loss of generality, that $\big(\beta_{t_n}(S_{t_n})\big)_{n\geq 1}$ 
converges to either $\phi(x_1)$ or $\phi(x_2)$.
Now consider the sequence of $\koba_{\OM_2}$-geodesics
$\big(\beta_{t_n}|_{[0,S_{t_n}]}\big)_{n\geq 1}$. Since 
$\beta_{t_n}(S_{t_n})\to \phi(x_1)$ or $\phi(x_2)$, therefore, by
Lemma~\ref{lmm:seq_geods_conv_geod_ray}, we may assume, without loss of 
generality, that $\big(\beta_{t_n}|_{[0,S_{t_n}]}\big)_{n\geq 1}$ converges 
locally uniformly on $[0,\infty)$ to a $\koba_{\OM_2}$-geodesic ray $\wt{\beta}:
[0,\infty)\to\OM_2$ that lands at $\phi(x_1)$ or $\phi(x_2)$, as the case may be.
On the other hand, the sequence of $\koba_{\OM_2}$-geodesics
$\big(\beta_{t_n}\big)_{n\geq 1}$ satisfy
$\beta_{t_n}(T_{t_n})=\phi(\gamma_x(t_n))$. As $\gamma_x(t_n)\to x$ as $n\to
\infty$ and $\phi(\gamma_x(t_n))\to\phi(x)$ as $n\to\infty$, we
can, once again by Lemma~\ref{lmm:seq_geods_conv_geod_ray}, suppose without loss
of generality that $\big(\beta_{t_n}\big)_{n\geq 1}$ converges, locally uniformly
on $[0,\infty)$, to a geodesic ray $\wh{\beta}:[0,\infty)\to\OM_2$ that lands at
$\phi(x)$.
But since $\beta_{t_n}|_{[0,S_{t_n}]}$ is merely a restriction of $\beta_{t_n}$,
$\wt{\beta}=\wh{\beta}$. But this implies that either $\phi(x)=\phi(x_1)$ or
$\phi(x)=\phi(x_2)$, which is an immediate contradiction to 
\eqref{eqn:small_arc_dstnct}. Therefore, it must be that $S^{*} \defeq 
\sup_{t\in [T_0,\infty)} S_t<\infty$. Thus, clearly, by the definition of $S_t$, 
for every $t\in [T_0,\infty)$ such that $T_t>S^{*}$, and for all $s>S^{*}$, 
$\beta_t(s) \in \phi(Reg(x_1,x_2))$. Now we make the following claim.
\medskip 
			
\noindent
{\bf Claim~2.} There exists a constant $M < \infty$ such that the Hausdorff 
distance with respect to $\koba_{\OM_2}$ between $\gamma_t \defeq \phi \circ \gamma_x|_{[0, t]}$
and  $\beta_t$\,---\,recall that $\beta_t : [0, T_t] \longrightarrow \Omega_2$ is a geodesic of 
$(\Omega_2, \koba_{\Omega_2})$ joining $\phi(0)$ and $\phi \circ \gamma_x(t)$\,---\,is 
bounded by $M$ for all $t \in [0, \infty)$, i.e.,  
\[
\forall\, t \in [0, \infty),\; \mathcal{H}_{\koba}^2(\beta_t, \gamma_t ) \leq M.
\]
Note that, to prove Claim~2, it suffices to 
show that there exists $M<\infty$ such that
\begin{equation} \label{eqn:Haus_dist_NTS}  
\forall\,t \text{ sufficiently large, } \mathcal{H}_{\koba}^2( \beta_t|_{[S^{*},T_t]},\gamma_t ) 
\leq M.
\end{equation} 
Write $\mathcal{H}_{\koba}^{\phi(\unitdisk)}(A,B)$ for the Hausdorff distance
with respect to the Kobayashi distance between subsets $A,B \subset
\phi(\unitdisk)$. It is obvious that, for all such $A$, $B$,
$\mathcal{H}_{\koba}^2(A,B) \leq \mathcal{H}_{\koba}^{\phi(\unitdisk)}(A,B)$. Since, for all $t$ 
sufficiently large, $\beta_t([S^*+1,T_t]) \subset \phi(Reg(x_1,x_2)) 
\subset \phi(\unitdisk)$, it follows that,
for all such $t$, $\beta_t([S^*+1,T_t])$ and
$\mathsf{ran}(\gamma_t)$ are both subsets of $\phi(\unitdisk)$. Therefore, by
our comments above, in order to establish \eqref{eqn:Haus_dist_NTS}, it suffices 
to establish that there exists $M<\infty$ such that
\begin{equation} \label{eqn:Haus_dist_final_suff_show}
\forall\,t \text{ sufficiently large, } 
\mathcal{H}_{\koba}^{\phi(\unitdisk)}(\beta_t|_{[S^{*}+1,T_t]},\gamma_t) \leq M.
\end{equation}
To show this we first note that there exists $\lambda<\infty$ such that
for sufficiently large $t$, $\beta_t|_{[S^*,T_t]}$ is a
$(\lambda,0)$-almost-geodesic for $(\phi(\unitdisk),\koba_{\phi(\unitdisk)})$.
To see this, we follow exactly the same arguments as given 
in the proof of Theorem~\ref{T:Visibility_Thm_in_plane}, more specifically, the arguments 
in Claim~2 and Claim~3 therein. 
In particular, each $\beta_t|_{[S^{*}+1,T_t]}$ is a (continuous) 
$(\lambda,0)$-quasi-geodesic for $(\phi(\unitdisk),\koba_{\phi(\unitdisk)})$ 
joining $\beta_t(S^{*}+1)$ and $\beta_t(T_t)=\phi\circ\gamma_x(t)$. Now, it is 
easy to see that 
\begin{equation*}
\{ \beta_t(S^{*}+1)\mid t\in [0,\infty) \text{ is large enough that } 
T_t>S^{*}+1 \}	
\end{equation*}
is a relatively compact subset of $\phi(\unitdisk)$. Therefore, it follows immediately
from the Geodesic Stability Theorem on the Gromov hyperbolic space 
$(\phi(\unitdisk), \koba_{\phi(\unitdisk)})$ (see \cite[1.7~Theorem]{Bridson_Haefliger})
that there exists $M<\infty$ such that
\[ \forall\,t \text{ sufficiently large,}\; 
\mathcal{H}_{\koba}^{\phi(\unitdisk)}(\beta_t|_{[S^{*}+1,T_t]},\gamma_t) \leq M. \]
This shows that \eqref{eqn:Haus_dist_final_suff_show} holds, and completes the 
proof of Claim~2.
\hfill$\blacktriangleleft$
\smallskip

We shall now prove Claim~1 but first we note two straightforward consequences that follow 
from Claim~2.
\begin{itemize}
\item[$(a)$] Consider the collection of paths $(\beta_t)$, $t\in[0, \infty)$. Then
\[
\mathcal{H}^2_{\koba}\left(\cup_{t \geq 0} \beta_t, \phi \circ \gamma_x \right) \leq M.
\]
\item[$(b)$] Suppose that $\beta, \wh{\beta}$ are geodesic rays which are obtained as  
subsequential limits (uniformly on compact subsets of $[0, +\infty)$) of the 
collection of geodesic segments $(\beta_t)_{t \geq 0}$. Then
\[
\mathcal{H}^2_\koba(\beta, \wh{\beta}) \leq 2M.
\]
\end{itemize}
			
Suppose $p_n, q_n \in \mathsf{ran}(\phi \circ\gamma_x)$ are such that $p_n, q_n \to 
\phi(x) \in \bdy \OM_2$. Then there exist $s_n, t_n \in [0, +\infty)$ such that 
$s_n, t_n \to +\infty$ as $n \to \infty$ and $\phi \circ \gamma_x(s_n) = p_n$ 
and $\phi \circ \gamma_x(t_n) = q_n$ for all $n \in \posint$.
Suppose $\beta$ and $\wh{\beta}$ are geodesic rays that are limits
of subsequences of the sequences of 
geodesics $(\beta_{s_n})_{n\geq 1}$ and $(\beta_{t_n})_{n\geq 1}$, respectively. 
Then, by part~$(b)$ above, $\mathcal{H}^2_\koba(\beta, \wh{\beta}) \leq 2M$. Consider
the sequences $(f^{-1}\circ\beta_{s_n})_{n\geq 1}$ and 
$(f^{-1}\circ\beta_{t_n})_{n\geq 1}$ of geodesics in $\OM_1$.
Since the subsequences of $(\beta_{s_n})_{n\geq 1}$ and $(\beta_{t_n})_{n\geq 1}$ in
question converge to $\beta$ and $\wh{\beta}$, respectively, it follows, since $f$ is a
biholomorphism, that the corresponding subsequences of 
$(f^{-1}\circ\beta_{s_n})_{n\geq 1}$ and $(f^{-1}\circ\beta_{t_n})_{n\geq 1}$ converge,
locally uniformly on $[0,\infty)$, to the geodesic rays $f^{-1}\circ\beta$ and
$f^{-1}\circ\wh{\beta}$ in $\OM_1$. By \Cref{lmm:seq_geods_conv_geod_ray}, $f^{-1}\circ
\beta$ and $f^{-1}\circ\wh{\beta}$ land at the corresponding subsequential limits of
$f^{-1}(\beta_{s_n}(T_{s_n}))=f^{-1}(p_n)$ and $f^{-1}(\beta_{t_n}(T_{t_n}))=
f^{-1}(q_n)$, say $\xi_1$ and $\xi_2$, respectively. 
Using the fact that $f$ preserves the Kobayashi distance,
we get $\mathcal{H}^1_\koba(f^{-1}\circ\beta,f^{-1}\circ\wh{\beta}) = 
\mathcal{H}^2_\koba(\beta, \wh{\beta} )\leq 2M 
< +\infty$. Lemma~\ref{lmm:diff_ends_geod_rays_Haus_infty} implies that $\xi_1 
= \xi_2$. Since $p_n$ and $q_n$ were completely arbitrary, Claim~1 follows; i.e., there 
exists a point $\xi_0 \in \bdy 
\clos{\OM}_1^{End}$ such that    
\[
\lim_{r \to 1^-} f^{-1}(\phi(rx) ) = \xi_0. 
\]
\hfill$\blacktriangleleft$

To establish Claim~0, we shall show that $\xi_0 = \xi$. Recall that $X_n$ is a closed, connected 
subset of $\unitdisk$ which contains a path joining two points, one of 
which, $\phi^{-1}(\alpha_{n}(v'_n))$, comes from $(x_1,x_2)_p$, and converges to 
$x_1$, and the other is $\phi^{-1}(z'_{n})$. Hence, for sufficiently 
large $n$, $X_n$ must intersect $\mathsf{ran}(\gamma_x)$. 
For every $n$ sufficiently large, choose $\zeta_n \in \mathsf{ran}(\gamma_x)\cap X_n$. 
Since $(X_n)_{n\geq 1}$ eventually avoids every compact subset of $\unitdisk$, it
follows that $(\zeta_n)_{n\geq 1}$ converges to $x$. Therefore $(\zeta_n)_{n\geq1}$
is a sequence of points of $\mathsf{ran}(\gamma_x)$ tending to $x$.
By Claim~1,
\[
\lim_{r \to 1-}f^{-1}(\phi(rx) ) = \xi_0 = \lim_{n \to \infty} f^{-1} 
(\phi(\zeta_n) ).
\]
But $f^{-1}(\phi(\zeta_n)) \in \mathsf{ran}(\sigma_n)$. Since, by definition, for every sequence 
$(z_n)_{n\geq 1}$ such that $z_n \in \sigma_n$, $\lim_{n \to \infty}z_n = \xi$, we 
have
\[
\lim_{r \to 1-}f^{-1}(\phi(rx) ) = \xi_0 = \lim_{n \to \infty} f^{-1} 
(\phi(\zeta_n) ) = \xi.
\]
Since $\xi$ is an end of $\clos{\OM}_1$, we have,
\[
\lim_{r\to 1-} |f^{-1}(\phi(rx))| = \infty.
\]
This proves Claim~0. 
\hfill$\blacktriangleleft$
\smallskip
			
We shall now complete the proof of the theorem by contradicting our assumption.		
Since $\Omega_1$ satisfies Condition~1, there exists $z_0 
\in \C \setminus \Omega_1$ and $r_0 > 0$ such that  
$\overline{D(z_0;r_0)} \subset \C \setminus \OM_1$. Hence the following 
function is well-defined, holomorphic, and bounded on the unit disk $\unitdisk$:				
\[
g(\zeta) \defeq \frac{r_0}{z_0 - f^{-1}\circ\phi(\zeta)} \quad \forall\, \zeta \in 
\unitdisk.
\]
It follows immediately from Claim~0 that, for every $x \in Arc(x_1',1)$,
\[
\lim_{r \to 1-}g(rx) = 0.
\]
By a standard result in the theory of univalent functions (see, for example,
\cite[Proposition~3.3.2]{BCM}),
this is an immediate contradiction. 

Thus our starting assumption must be wrong and so, for every $\xi\in 
\clos{\OM}^{End}_1 \setminus \clos{\OM}_1$, $\lim_{z \to \xi,\,z\in\OM_1}
f(z)$ must exist as an element of $\bdy\clos{\OM}^{End}_2$.

Combining this with what we obtained before, we can say: for every $\xi 
\in \clos{\OM}_1^{End} \setminus \OM_1$, there exists 
$p\in\bdy\clos{\OM}_2^{End}$ 
such that
\[
\lim_{z \to \xi,\,z\in\OM_1} f(z) = p.
\]
Now we define $\wt{f}:\clos{\OM}_1^{End} \lraw \clos{\OM}_2^{End}$ as
follows:
\begin{equation*}
\wt{f}(\xi) \defeq \begin{cases}
	f(\xi), &\text{if } \xi\in\OM_1,\\
	\lim_{z \to \xi,\,z\in\OM_1} f(z), &\text{if } \xi\in
	\clos{\OM}_1^{End} \setminus \OM_1.
\end{cases}
\end{equation*}
Then it is straightforward to check that $\wt{f}$ is continuous (we keep
in mind that $\clos{\OM}_1^{End}$ is first-countable). Thus we have our
required continuous extension. It is also straightforward to check that
$\wt{f}$ is surjective (simply use the fact that $\clos{\OM}_1^{End}$ 
and $\clos{\OM}_2^{End}$ are compact).
\smallskip

For the final assertion of the theorem, note that if $\OM_2$ also 
satisfies Condition~1, we can apply the first assertion of the theorem to
$f^{-1}$, which is a biholomorphism from $\OM_2$ to $\OM_1$. Consequently,
$f^{-1}$ extends to a continuous mapping $\wt{f^{-1}}$ from 
$\clos{\OM}_2^{End}$ to $\clos{\OM}_1^{End}$. It is then easy to see that 
$\wt{f}$ is a homeomorphism, with inverse $\wt{f^{-1}}$. 
\end{proof}

\subsection{Comparison of \Cref{thm:ext_biholo} with earlier extension results}\label{sss-compare} 

In this subsection we compare our extension result \Cref{thm:ext_biholo} with the following three recent results in this direction. The first one is due to Luo and Yao \cite[Theorem 3]{Luo-Yao2022}.

\begin{result}[Luo-Yao]\label{res-LuoYao2022}
    Let $\OM_1$ and $\OM_2$ be two generalized Jordan domains. Let $\{P_n\}$ and $\{Q_n\}$ be nondegenerate components of $\bdy\OM_1$ and $\bdy\OM_2$ respectively. Assume that: 
    \begin{itemize}
        \item [(i)] $\sum_n\diam(Q_n)<\infty$.
        \item[(ii)] The point components of $\bdy\OM_1$ or $\bdy\OM_2$ forms a set of $\sigma$-finite linear measure.
    \end{itemize}
    Then any biholomorphism $f:\OM_1 \to\OM_2$ extends to a continuous map from $\overline{\OM_1}$ to $\overline{\OM_2}$. If, in addition, $\sum\diam(P_n)<\infty$, then there is a homeomorphic extension of $f$ from $\overline{\OM_1}$ to $\overline{\OM_2}$.
\end{result}

The next one is due to Ntalampekos \cite[Theorem~1.4]{Ntalampekos2023}.
\begin{result}[Ntalampekos]\label{res-Ntalampekos2023}
    Let $\OM_1$ and $\OM_2$ be two generalized Jordan domains in $\C\cup
    \{\infty\}$ such that $\OM_1$ is cofat and every compact subset of the 
    point components of $\bdy\OM_1$ is CNED. Assume that $f:\OM_1\to\OM_2$ is
    a biholomorphism. 
    \begin{itemize}
        \item[(i)] If the diameters of $\bdy\OM_2$ lie in $l^2$ then $f$ has a continuous extension from $\overline{\OM_1}$ to $\overline{\OM_2}$.
        \item[(i)] If, in addition to $(i)$, $\OM_2$ is also cofat, then there is a homeomorphic extension of $f$ from $\overline{\OM_1}$ to $\overline{\OM_2}$.
    \end{itemize}
\end{result}
The next result is due to Bharali and Zimmer \cite[Theorem 1.10]{BZ2023}.
\begin{result}[Bharali-Zimmer]\label{res-BZ2023}
   Let $\OM_1$ and $\OM_2$ be two proper Lipschitz subdomains of $\C$. Then 
   any biholomorphism $f:\OM_1\to\OM_2$ extends to a homeomorphism from 
   $\overline{\OM_1}^{End}$ to $\overline{\OM_2}^{End}$. 
\end{result}

\noindent There are visibility domains which do not satisfy the conditions 
of \Cref{res-LuoYao2022} or \Cref{res-BZ2023} but satisfy the conditions of 
\Cref{thm:ext_biholo} (see \Cref{ex-extenion1} for details). 
\section{Examples}\label{Sec:Examples}

In this section, we will construct some planar domains $\OM$ such that $\bdy \OM$ is
{\em very} irregular (in particular, such that $\bdy \OM \setminus \bdy_{lg} \OM$ is 
not totally disconnected) but such that we can still conclude that $\OM$ is a 
visibility domain. We start with a certain continuous nowhere-differentiable 
function on $\R$, namely the {\em Takagi} or {\em blancmange} function. It is 
defined as follows.
\begin{equation} \label{eqn:Takagi_fn_def}
	\forall\,t\in\R,\; T(t) \defeq \sum_{j=0}^{\infty} 2^{-j} \distance(2^j t,\Z).
\end{equation}
In the above equation, $\distance(\bcdot\,,\Z)$ denotes the Euclidean distance 
between an arbitrary point of $\R$ and the set of integers $\Z$. 
Note that $T$ is clearly continuous and
$1$-periodic, hence uniformly continuous. It also follows immediately
from the definition that $0\leq T\leq 1$.

We shall only use two main facts about the Takagi function, which we now state.

\begin{result}[see, for example, \cite{ShiSab}]  \label{res:Taka_s-Holder}
	For every $s\in (0,1)$, $T$ is $s$-H{\"o}lder-continuous, i.e., for every $s\in 
	(0,1)$, there exists $M_s<\infty$ such that
	\begin{equation*}
		\forall\,x,y\in\R,\; |T(x)-T(y)|\leq M_s|x-y|^s.
	\end{equation*}
\end{result}

\begin{result}[see, for example {\cite[Theorem~9.3.1]{JarPflContNDiff}}]
	\label{res:Taka_derivs}  
	For every dyadic rational $x\in\R$, the right-hand derivative of $T$ at $x$ is
	$+\infty$ and the left-hand derivative of $T$ at $x$ is $-\infty$.
\end{result}

We first use the Takagi function to construct the following simple domain.
\begin{equation*}
	D_T \defeq \{x+iy\in\C\mid y>T(x)\}.
\end{equation*}

We now state and prove two lemmas.

\begin{lemma} \label{prp:cusps_fit_into_TFD}
	There exists $r_0>0$ such that, for every $x_0\in\R$,
	\begin{equation} \label{eq:incl_cusp_D_propStmt}
		V_{x_0,r_0} \defeq \{x+iy \in \R^2 \mid |x-x_0|<r_0, \, y>T(x_0)+|x-x_0|^{1/2}\} 
		\subset D_T.
	\end{equation} 
	In other words, quadratic cuspidal domains having the $y$-axis as their axis of 
	symmetry and having their vertex at an arbitrary boundary point can be fitted into 
	$D_T$.
\end{lemma}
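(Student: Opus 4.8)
The plan is to exploit the $\tfrac{1}{2}$-Hölder continuity of the Takagi function, \Cref{res:Taka_s-Holder}, applied with the exponent $s=1/2$: there is a constant $M=M_{1/2}<\infty$ such that $|T(x)-T(x_0)|\le M|x-x_0|^{1/2}$ for all $x,x_0\in\R$. Notice this does \emph{not} immediately give the inclusion, because the cusp boundary $y=T(x_0)+|x-x_0|^{1/2}$ must dominate $T(x)$, and near $x=x_0$ we only know $T(x)\le T(x_0)+M|x-x_0|^{1/2}$, with $M$ possibly larger than $1$. The point is that the function $u\mapsto u^{1/2}$ beats $u\mapsto Mu^{1/2}$ only if we can shrink the horizontal scale — but here both sides have the same exponent, so shrinking $r_0$ alone does not help. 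Instead one must observe that we are allowed to use a \emph{steeper} cusp: the statement as written has coefficient $1$ in front of $|x-x_0|^{1/2}$, so I would reread it as asserting merely that \emph{some} uniform $r_0$ works, and the resolution is that for $|x-x_0|<r_0$ with $r_0$ chosen so that $M|x-x_0|^{1/2}\le |x-x_0|^{1/2}$ fails — hence the genuine argument must instead use a smaller Hölder exponent to win back room, or (more likely, matching the paper's phrasing) the cusp in \eqref{eq:incl_cusp_D_propStmt} is quadratic, i.e. the relevant comparison is between $|x-x_0|^{1/2}$ and $M|x-x_0|^{s}$ for $s$ close to $1$, which \emph{does} beat it for small horizontal displacement.

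Concretely, here is the argument I would write. Fix $s\in(1/2,1)$ and let $M_s<\infty$ be the constant from \Cref{res:Taka_s-Holder}, so that $|T(x)-T(x_0)|\le M_s|x-x_0|^s$ for all $x,x_0$. Choose $r_0>0$ small enough that $M_s r_0^{\,s-1/2}\le 1$; this choice of $r_0$ is uniform in $x_0$ since $M_s$ and $s$ are. Then for any $x_0\in\R$ and any $x$ with $0<|x-x_0|<r_0$ we have
\[
T(x)\ \le\ T(x_0)+M_s|x-x_0|^{s}\ =\ T(x_0)+\bigl(M_s|x-x_0|^{\,s-1/2}\bigr)|x-x_0|^{1/2}\ \le\ T(x_0)+|x-x_0|^{1/2},
\]
using $|x-x_0|^{\,s-1/2}\le r_0^{\,s-1/2}$ and the choice of $r_0$; for $x=x_0$ the inequality $T(x_0)\le T(x_0)$ is trivial. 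Hence, if $x+iy\in V_{x_0,r_0}$, i.e. $|x-x_0|<r_0$ and $y>T(x_0)+|x-x_0|^{1/2}$, then $y>T(x)$, so $x+iy\in D_T$. This proves $V_{x_0,r_0}\subset D_T$ for this uniform $r_0$.

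The main (indeed only) subtlety is the one flagged above: one cannot use the $1/2$-Hölder estimate itself, because it comes with a constant that one has no control over and that cannot be absorbed at a matching exponent. The device that makes the proof work is to spend a little of the Hölder budget — pass to an exponent $s$ strictly between $1/2$ and $1$ — which converts the oversized constant $M_s$ into a factor $M_s|x-x_0|^{\,s-1/2}$ that genuinely tends to $0$ as $|x-x_0|\to 0$, uniformly in $x_0$. Everything else is a one-line comparison of powers. I would also add a remark that the resulting $r_0$ depends only on (the arbitrary choice of) $s$ and the corresponding Takagi Hölder constant, and in particular not on $x_0$, which is exactly the uniformity asserted in the statement; this uniformity is what will later let us glue quadratic cuspidal domains along the whole boundary of $D_T$ (and, after the forthcoming bi-Lipschitz-type modifications, of $V_T$) to witness Condition~2 at every boundary point.
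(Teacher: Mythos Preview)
Your proof is correct and is essentially the same as the paper's: the paper fixes the specific H\"older exponent $s=3/4$ (so that $s-1/2=1/4$) and chooses $r_0$ with $M r_0^{1/4}<1$, whereas you leave $s\in(1/2,1)$ arbitrary, but the key idea---use an exponent strictly above $1/2$ so the surplus power $|x-x_0|^{\,s-1/2}$ absorbs the H\"older constant---is identical. Your preliminary discussion correctly diagnoses why $s=1/2$ alone would not suffice.
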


\begin{proof}
	Using Result~\ref{res:Taka_s-Holder}, there exists $M<\infty$ such that
	\begin{equation*}
		\forall\,x,y\in\R,\; |T(x)-T(y)|\leq M|x-y|^{3/4}.
	\end{equation*}
	Choose $r_0>0$ such that $Mr^{1/4}_0<1$. Suppose that $x_0\in\R$ is arbitrary, 
	that $x\in\R$ is such that $|x-x_0|<r_0$, and that $y\in\R$ satisfies $y>T(x_0)+
	|x-x_0|^{1/2}$. What we have to do is show that $x+iy\in D_T$, or, equivalently,
	that $y>T(x)$. But we know that
	\begin{align*}
		|T(x)-T(x_0)|\leq M|x-x_0|^{3/4} = M |x-x_0|^{1/2}|x-x_0|^{1/4} < Mr^{1/4}_0
		|x-x_0|^{1/2} \leq |x-x_0|^{1/2},
	\end{align*}
	by the choice of $r_0$. From the above we obtain
	\[ T(x_0)+|x-x_0|^{1/2} \geq T(x). \]
	Since $y > T(x_0)+|x-x_0|^{1/2}$, it follows from the above that $y>T(x)$, as 
	required.
\end{proof}

\begin{remark} \label{rmk:fit_cusp_trunc_Taka_dom}
	It follows immediately from the above lemma that given $x_0\in\R$ and given a
	neighbourhood $V$ of $\wh{z}_0\defeq x_0+iT(x_0)$, there exists $s>0$ such that 
	$V_{x_0,r_0}\cap D(\wh{z}_0;s) \subset D_T\cap V$.
\end{remark}

The next lemma shows that, given any boundary point of $D_T$ that is indexed
(in the obvious sense) by a dyadic rational, there exist arbitrarily narrow angular
segments with vertex at that point that contain suitably small truncations of $D_T$
at that point.

\begin{lemma} \label{lmm:nrrw_cntng_trngls}
	Suppose that $x_0\in\R$ is a dyadic rational. Then for every $M$, $0<M<\infty$,
	there exists $t_0>0$ such that
	\begin{equation} \label{eqn:Taka_rstrc_included_in_trngl}
		D_T \cap \{x+iy\in\C\mid |x-x_0|<t_0\} \subset U_{x_0,M,t_0},  
	\end{equation}
	where
	\begin{equation} \label{eqn:def_big_trngl} 
		U_{x_0,M,t_0}\defeq \{x+iy\in\C\mid |x-x_0| < t_0,\, y> T(x_0)+M|x-x_0|\}.
	\end{equation}
\end{lemma}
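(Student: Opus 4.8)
\textbf{Proof plan for Lemma~\ref{lmm:nrrw_cntng_trngls}.}
The plan is to exploit the fact that, by Result~\ref{res:Taka_derivs}, the right-hand derivative of $T$ at a dyadic rational $x_0$ is $+\infty$. This means precisely that $(T(x)-T(x_0))/(x-x_0)\to+\infty$ as $x\to x_0^+$; and the left-hand derivative being $-\infty$ means that $(T(x)-T(x_0))/(x-x_0)\to-\infty$ as $x\to x_0^-$, i.e. $(T(x)-T(x_0))/|x-x_0|\to+\infty$ as $x\to x_0^-$ as well. Combining the two one-sided statements, $(T(x)-T(x_0))/|x-x_0|\to+\infty$ as $x\to x_0$ with $x\neq x_0$. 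So given $M<\infty$, the definition of this limit furnishes a $t_0>0$ such that $T(x)-T(x_0)>M|x-x_0|$ whenever $0<|x-x_0|<t_0$.

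First I would make the above translation between the derivative statements and the quotient limit explicit (this is the only content-bearing step). Then the verification of the inclusion \eqref{eqn:Taka_rstrc_included_in_trngl} is immediate: suppose $x+iy\in D_T$ with $|x-x_0|<t_0$. If $x=x_0$ then $y>T(x_0)=T(x_0)+M|x-x_0|$, so $x+iy\in U_{x_0,M,t_0}$ trivially. If $x\neq x_0$ then, since $x+iy\in D_T$, we have $y>T(x)$; and by the choice of $t_0$, $T(x)>T(x_0)+M|x-x_0|$. Hence $y>T(x_0)+M|x-x_0|$, and together with $|x-x_0|<t_0$ this says exactly that $x+iy\in U_{x_0,M,t_0}$. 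Since $x+iy$ was an arbitrary point of the left-hand side, the inclusion holds.

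There is essentially no obstacle here: the whole lemma is a repackaging of the infinite-derivative property of the Takagi function at dyadic points into the geometric language of "narrow angular sectors". The only point to be slightly careful about is that Result~\ref{res:Taka_derivs} is phrased in terms of one-sided derivatives, so one must combine the $x\to x_0^+$ and $x\to x_0^-$ behaviours to get a two-sided statement about $(T(x)-T(x_0))/|x-x_0|$; the sign flip on the left (where $x-x_0<0$ but $|x-x_0|=-(x-x_0)$, and $T(x)-T(x_0)$ is also negative of large modulus) is what makes both one-sided limits of the quotient with $|x-x_0|$ in the denominator equal to $+\infty$. After that observation the proof is a two-line $\varepsilon$–$\delta$ argument followed by a direct check of the set inclusion.
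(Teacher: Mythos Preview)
Your proof is correct and follows exactly the same line as the paper's: invoke Result~\ref{res:Taka_derivs} to get $(T(x)-T(x_0))/|x-x_0|>M$ for $0<|x-x_0|<t_0$, then verify the inclusion directly from $y>T(x)$. One small slip in your parenthetical remark: on the left of $x_0$ the difference $T(x)-T(x_0)$ is actually \emph{positive} (a quotient tending to $-\infty$ with negative denominator forces a positive numerator), though your conclusion that $(T(x)-T(x_0))/|x-x_0|\to+\infty$ from both sides is correct regardless.
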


\begin{proof}
	By Result~\ref{res:Taka_derivs}, there exists $t_0>0$ such that
	\[ \forall\,x\in (x_0-t_0,x_0+t_0)\setminus\{x_0\},\; 
	\frac{T(x)-T(x_0)}{|x-x_0|}>M. \]
	From the above it follows that
	\[ \forall\,x\in (x_0-t_0,x_0+t_0),\; T(x)>T(x_0)+M|x-x_0|. \]
	Now, if $x+iy\in D_T$ is such that $|x-x_0|<t_0$, then we know that $y>T(x)$ and 
	so it follows from the above that $y>T(x_0)+M|x-x_0|$, i.e., $x+iy\in 
	U_{x_0,M,t_0}$, as required. 
\end{proof}

Now we turn to the construction of the domain that we are interested in.

\begin{example}
	For every $n\in\Z$, let 
	\begin{align*}
		H_{n,T}\defeq &\{ z\in\C \mid 4n \leq \rprt(z) \leq 4n+1,\, \iprt(z)\geq 
		6-T(\rprt(z)) \} \\
		&\cup \{ z\in\C \mid \iprt(z)\geq 6,\, 4n-T(\iprt(z))\leq \rprt(z) 
		\leq 4n+1+T(\iprt(z)) \}.
	\end{align*}
	(The choices for the various parameters will become clear as we proceed.) 
	Note that each $H_{n,T}$ is 
	the closed, infinite rectangle 
	\[ \{z\in\C\mid \iprt(z)\geq 6,\, 4n\leq\rprt(z)\leq 4n+1\} \]
	with each of the three lines constituting its boundary replaced by (a 
	suitably oriented copy of) the graph of the Takagi function. Note also that 
	$\bdy H_{n,T}$ is homeomorphic to $\R$. 
	It is clear that, if we write $H_T\defeq \cup_{n\in\Z}H_{n,T}$ and 
	\begin{equation} \label{eqn:def_U_T} 
		U_T\defeq \C\setminus H_T, 
	\end{equation} 
	then $U_T$ is a simply connected (hyperbolic) domain in $\C$ with infinitely many 
	ends (it has an end corresponding to each $n\in\Z$ and one other ``at $\infty$'').
	
	Now we proceed to obtain an infinitely-connected domain from $U_T$. For 
	every $n\in \Z$, consider
	\begin{align*} 
		S_{n,T} \defeq\; &\{z\in\C \mid 4n\leq\rprt(z)\leq 4n+1, \, 2-T(\rprt(z)) 
		\leq\iprt(z) \leq 3+T(\rprt(z)) \} \\ 
		&\cup \{z\in\C \mid 2\leq\iprt(z)\leq 3, \, 4n-T(\iprt(z))
		\leq \rprt(z) \leq 4n+1+T(\iprt(z)) \}. 
	\end{align*}
	Note that $S_{n,T}$ is basically the closed rectangle
	\[ \{z\in\C\mid 4n\leq\rprt(z)\leq 4n+1,\,2\leq\iprt(z)\leq 3\} \]
	with each side replaced by (a suitably oriented copy of) the graph of 
	$T|_{[0,1]}$. Note also that $\bdy S_{n,T}$ is homeomorphic to $S^1$. 
	Thus, if we write $V_T\defeq U_T\setminus S_T$, then it is clear that 
	$V_T$ is an infinitely-connected domain in $\C$ with 
	infinitely many ends. Every boundary component of $V_T$ is either homeomorphic
	to $\R$ or is a Jordan curve (as can be seen, the components of $\bdy V_T$ 
	that are homeomorphic to $\R$ are $\bdy H_{n,T},\, n\in\Z$,
	whereas those homeomorphic to $S^1$ are $\bdy S_{n,T},\, n\in\Z)$.
	By construction, $\bdy V_T$ is locally connected. Therefore, by 
	\Cref{prp:cond1_jordanbdy}, $V_T$ satisfies 
	Condition~1 and, consequently, is a 
	visibility domain. 
	However, it will follow from the proposition below 
	that $V_T$ does {\em not} satisfy the hypotheses of
	\cite[Theorem~1.4]{BZ2023}. In fact, it will follow that {\em no point} 
	in $\bdy V_T$ is a local Goldilocks point, to use the terminology of 
	\cite[Definition~1.3]{BZ2023}.
\end{example}  

Before we begin the task of proving that no point in $\bdy V_T$ is a local
Goldilocks point, we need to make some preparations. We first take note of the 
following elementary estimate of the Kobayashi distance on an infinite angular 
sector.

\begin{proposition} 
	Let $M>0$ be arbitrary. Then for every $h_0>0$, $p_0>0$, there exists $C<\infty$ 
	such that, writing $U_M\defeq \{z\in\C\mid \rprt(z)>M|\iprt(z)|\}$, and regarding 
	$(0,\infty)\subset U_M$,
	\begin{equation*}
		\forall\,h\in (0,h_0),\; \koba_{U_M}(p_0,h)\geq \frac{\pi}{8\cot^{-1}(M)}
		\log(1/h)-C.
	\end{equation*}
\end{proposition}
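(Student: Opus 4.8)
The plan is to reduce the statement to an explicit computation of the Kobayashi distance on a model domain, namely an infinite angular sector, by using the Kobayashi-distance-decreasing property of a conformal map onto a half-plane. First I would recall that the Kobayashi distance on a simply connected domain equals the hyperbolic (Poincar\'e) distance, so it suffices to compute on $U_M$ directly. Write $\alpha\defeq\cot^{-1}(M)$, so $U_M=\{z\in\C\mid\rprt(z)>M|\iprt(z)|\}$ is (after a harmless rigid motion) the sector $\{re^{i\theta}\mid r>0,\ |\theta|<\alpha\}$ of half-angle $\alpha$ with vertex at $0$, and $(0,\infty)$ is its axis of symmetry. The map $z\mapsto z^{\pi/(2\alpha)}$ (principal branch) is a biholomorphism of this sector onto the right half-plane $\mathbb{H}\defeq\{\rprt(w)>0\}$, carrying the positive reals to the positive reals. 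Since biholomorphisms are Kobayashi isometries, $\koba_{U_M}(p_0,h)=\koba_{\mathbb{H}}(p_0^{\pi/(2\alpha)},h^{\pi/(2\alpha)})$.

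Next I would use the well-known formula for the hyperbolic distance on a half-plane restricted to the positive real axis: for $0<b<a$, $\koba_{\mathbb{H}}(a,b)=\frac12\log(a/b)$ (this follows because $w\mapsto\log w$ maps $\mathbb{H}$ onto the strip $\{|\iprt\zeta|<\pi/2\}$, on which the positive reals become a real-axis geodesic with the Euclidean metric scaled by $1/2$; alternatively one integrates the density $1/(2\rprt w)$ along $(b,a)$, which is the shortest path by symmetry). Applying this with $a=p_0^{\pi/(2\alpha)}$ and $b=h^{\pi/(2\alpha)}$ for $h$ small enough that $b<a$ (which holds for all $h<h_0$ once $h_0$ is taken below $p_0$; otherwise one absorbs the finitely-constrained range into the constant $C$), we get
\begin{equation*}
\koba_{U_M}(p_0,h)=\frac12\log\!\left(\frac{p_0^{\pi/(2\alpha)}}{h^{\pi/(2\alpha)}}\right)=\frac{\pi}{4\alpha}\log(p_0/h)=\frac{\pi}{4\alpha}\log(1/h)+\frac{\pi}{4\alpha}\log p_0.
\end{equation*}
Hence $\koba_{U_M}(p_0,h)\geq\frac{\pi}{4\cot^{-1}(M)}\log(1/h)-C$ with $C\defeq\frac{\pi}{4\cot^{-1}(M)}|\log p_0|$, which is even slightly stronger than the claimed bound with $8\cot^{-1}(M)$ in the denominator (so the stated inequality certainly holds; if one prefers the exact constant $\pi/(8\cot^{-1}(M))$ there is room to spare, and the weaker constant is presumably chosen for later convenience or to accommodate a cruder argument).

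The one genuine subtlety — and the step I would be most careful about — is handling the case where $h$ is not necessarily smaller than $p_0$, since the proposition allows any $h_0>0$: if $h_0>p_0$ then for $h\in[p_0,h_0)$ the lower bound $\frac{\pi}{8\alpha}\log(1/h)$ could even be negative while $\koba_{U_M}(p_0,h)\geq0$, or one simply notes that $\log(1/h)$ ranges over the compact interval $[\log(1/h_0),\log(1/p_0)]$ there and enlarges $C$ accordingly; on the complementary range $h\in(0,p_0)$ the displayed exact computation applies verbatim. So the argument splits trivially into these two ranges and the constant $C$ is adjusted to be the max of the two contributions. I expect essentially no real obstacle here; the work is entirely the conformal change of variables $z\mapsto z^{\pi/(2\alpha)}$ and the standard half-plane distance formula, with the only bookkeeping being the identification $\alpha=\cot^{-1}(M)$ (equivalently the sector $U_M$ has half-opening angle $\cot^{-1}(M)$, since its boundary rays have slope $\pm1/M$) and the absorption of the finitely-ranged part of $h$ into $C$.
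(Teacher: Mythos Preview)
Your argument is correct: the conformal map $z\mapsto z^{\pi/(2\alpha)}$ with $\alpha=\cot^{-1}(M)$ sends $U_M$ biholomorphically onto the right half-plane, and the explicit half-plane formula $\koba_{\mathbb{H}}(a,b)=\tfrac12\log(a/b)$ then yields the (even sharper) bound with $4\cot^{-1}(M)$ in the denominator; the bookkeeping for $h\geq p_0$ is handled exactly as you say. The paper itself gives no proof of this proposition, describing it only as an ``elementary estimate,'' so there is nothing further to compare.
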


Using the facts that $U_{x_0,M,t_0}$ is simply the rotation of $U_M$ by $\pi/2$ in
the anticlockwise sense followed by its translation by 
$\wh{z}_0\defeq x_0+iT(x_0)$ and that the Kobayashi distance of a subdomain 
dominates that of the original domain, we can also say: 

\begin{proposition} \label{prp:trngl_kob_dist_lb}
	Let $M>0$ and $r>0$ be arbitrary. Then for every $h_0>0$, $p_0>0$, there exists 
	$C<\infty$ such that, 
	\begin{equation*}
		\forall\,h\in (0,h_0),\; 
		\koba_{U_{x_0,M,t_0}\cap D(z_0;r)}(z_0+ip_0,z_0+ih)\geq 
		\frac{\pi}{8\cot^{-1}(M)} \log(1/h) - C.
	\end{equation*}
\end{proposition}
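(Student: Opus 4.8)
The plan is to obtain \Cref{prp:trngl_kob_dist_lb} as an essentially immediate corollary of the preceding proposition (the estimate for $\koba_{U_M}$ on the infinite angular sector $U_M=\{z\in\C\mid\rprt(z)>M|\iprt(z)|\}$), by exploiting two facts: (i) $U_{x_0,M,t_0}$ is the image of a (possibly smaller) infinite angular sector with opening angle $2\cot^{-1}(M)$, after a rotation by $\pi/2$ and a translation; and (ii) passing to a subdomain only increases the Kobayashi distance. So the first step is to set up the explicit affine map. Let $R$ denote the rotation $w\mapsto iw$ (rotation by $\pi/2$ anticlockwise) and let $\wh{z}_0:=x_0+iT(x_0)=z_0$; then the map $\Psi(w):=iw+\wh{z}_0$ is a biholomorphism of $\C$ carrying the angular sector $U_M=\{\rprt(w)>M|\iprt(w)|\}$ (with the ray $(0,\infty)$ inside it) onto the angular wedge $\{x+iy\mid y>T(x_0)+M|x-x_0|\}$ with vertex at $\wh{z}_0$ and with the vertical ray $\{\wh z_0+ip\mid p>0\}$ inside it. Note $\Psi$ sends $p_0\in(0,\infty)$ to $\wh z_0+ip_0$ and $h\in(0,h_0)$ to $\wh z_0+ih$. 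Since $\Psi$ is a biholomorphism it is a $\koba$-isometry from $U_M$ onto $\Psi(U_M)$, and therefore $\koba_{\Psi(U_M)}(\wh z_0+ip_0,\wh z_0+ih)=\koba_{U_M}(p_0,h)$.

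The second step is the monotonicity (distance-decreasing under inclusion) observation. The domain $U_{x_0,M,t_0}\cap D(z_0;r)$ is a subdomain of the full wedge $\Psi(U_M)=\{x+iy\mid y>T(x_0)+M|x-x_0|\}$ (we discard the truncation $|x-x_0|<t_0$ and the disc $D(z_0;r)$, which only shrink the domain), so
\[
\koba_{U_{x_0,M,t_0}\cap D(z_0;r)}(z_0+ip_0,z_0+ih)\ \geq\ \koba_{\Psi(U_M)}(z_0+ip_0,z_0+ih)\ =\ \koba_{U_M}(p_0,h).
\]
Then the preceding proposition, applied with the given $h_0,p_0$, furnishes a constant $C<\infty$ (depending on $M,h_0,p_0$ — and, after pulling back through $\Psi$, one checks the constant can be taken independent of $x_0$ and of $r$, since $\Psi$ is a rigid motion and $r$ only enters through a domain inclusion) with
\[
\koba_{U_M}(p_0,h)\ \geq\ \frac{\pi}{8\cot^{-1}(M)}\log(1/h)-C\qquad\forall\,h\in(0,h_0).
\]
Chaining the last two displays gives exactly the asserted inequality. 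I would present these as the two enumerated steps of the proof and conclude.

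I do not anticipate a serious obstacle here: the whole content is that the affine rigid motion $\Psi$ is a Kobayashi isometry and that enlarging the domain can only decrease the distance, both of which are standard. The one point requiring a word of care is the claim, implicit in the statement, that the constant $C$ may be chosen uniformly in $x_0$ (and in $r$); this is where the rigidity of $\Psi$ (translations and rotations do not distort the Kobayashi metric) is used, and I would spell out that the constant coming from the previous proposition for $U_M$ does not see $x_0$ at all, while the inclusion $U_{x_0,M,t_0}\cap D(z_0;r)\subset\Psi(U_M)$ holds regardless of $t_0$ and $r$. Beyond that, the proof is a two-line deduction and I would keep it correspondingly brief.
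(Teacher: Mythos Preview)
Your proposal is correct and follows exactly the approach the paper indicates: the paper does not give a separate proof but simply remarks, just before stating the proposition, that $U_{x_0,M,t_0}$ is obtained from $U_M$ by a rotation by $\pi/2$ and a translation by $\wh{z}_0$, and that the Kobayashi distance of a subdomain dominates that of the ambient domain. Your two steps (the explicit rigid motion $\Psi(w)=iw+\wh{z}_0$ giving an isometry, followed by monotonicity under the inclusion $U_{x_0,M,t_0}\cap D(z_0;r)\subset\Psi(U_M)$) spell out precisely this reasoning.
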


We shall also need another result, which allows us to compare the Kobayashi 
distance of a truncation of $V_T$ near a boundary point with that of the whole
domain. We use a very recent result providing a localization of the Kobayashi 
distance under an assumption of visibility, namely \cite[Theorem~1.3]{ADS2023}. We 
have already remarked that $V_T$ is a visibility domain. It is also clear from 
the construction of $V_T$ that for every $p\in \bdy V_T$, there exists $\wt{r}>0$
such that, for every $r$, $0<r\leq\wt{r}$, $D(p;r)\cap V_T$ is connected (indeed,
contractible). Since {\em every} two distinct points of $\bdy V_T$ satisfy the
visibility property with respect to $\koba_{V_T}$, so do every two distinct points
of $D(p;\wt{r})\cap\bdy V_T$. Therefore, by invoking \cite[Theorem~1.3]{ADS2023} 
directly, we can say that given $p\in\bdy V_T$ and $r>0$ small, there exists 
$C<\infty$ such that
\begin{equation} \label{eqn:loc_Kob_dist_V_T}
	\forall\,z,w\in D(p;r)\cap V_T,\; \koba_{D(p;2r)\cap V_T}(z,w)\leq 
	\koba_{V_T}(z,w)+C.
\end{equation}



Now we turn to the task of proving that no point in $\bdy V_T$ is a local 
Goldilocks point. Since the set of all local Goldilocks points is an open subset 
of the boundary, in order to show that no point in $\bdy V_T$ is a local
Goldilocks point, it suffices to show that there exists a {\em dense} subset 
$A$ of $\bdy V_T$ such that no point of $A$ is a local Goldilocks point of $V_T$.
Note that, by construction, there exists a closed, discrete subset $S$ of 
$\bdy V_T$ such that, near every point of $\bdy V_T \setminus S$, $\bdy V_T$ is 
the graph of $T|_I$, where $I$ is a suitably small interval of $\R$ depending on 
the boundary point (we can take $S$ to be the set of all corners of all the
``rectangles'' thrown out of $\C$ in the process of obtaining $V_T$).

It is clear from the construction of $V_T$ that for every 
$p\in \bdy V_T \setminus S$, there exist a neighbourhood $U$ of $p$, an $r_1>0$
and an $x_0\in\R$ such that after just a translation and rotation one can write
\begin{equation} \label{eqn:xpr_bdy_nbd}
	U = \{x+iy\in\C\mid |x-x_0|<r_1,\, -1/2<y<5/4\} 
\end{equation}
and 
\begin{equation} \label{eqn:xpr_bdy_nbd_intrsc_dom}
	U \cap V_T = \{x+iy\in U\mid y>T(x)\} = \{x+iy\in\C\mid |x-x_0|<r_1,\, T(x) <
	y < 5/4\}.
\end{equation}
In particular, since the dyadic rationals are dense in $\R$, it follows that there
exists a dense subset $R$ of $\bdy V_T \setminus S$ (hence also a dense subset of
$\bdy V_T$) such that for all $p\in R$ one can write \eqref{eqn:xpr_bdy_nbd} and
\eqref{eqn:xpr_bdy_nbd_intrsc_dom} with $x_0$ a dyadic rational.

With this is mind, we will first prove that if $x_0\in\R$ is any dyadic rational 
and if $r_1>0$ is arbitrary then, if we write
\begin{align} \label{eqn:def_trunc_Taka_dom}
	D_{T,x_0,r_1} &\defeq \{x+iy\in\C\mid |x-x_0|<r_1,\,T(x)<y<5/4\} \notag \\ 
	&= D_T \cap \{x+iy\in\C\mid |x-x_0|<r_1,\,-1/4<y<5/4\},
\end{align}
$D_{T,x_0,r_1}$ is not locally Goldilocks at the boundary point $\wh{z}_0 
\defeq x_0+iT(x_0)$.
(We point out that the upper bound $5/4$ above is entirely arbitrary; we could 
replace $5/4$ with any number $\rho>1$.) We shall see later why this is enough to 
conclude that $V_T$ is not locally Goldilocks at any boundary point.

\begin{proposition} \label{prp:trunc_Taka_dom_not_alph_grwth}
	$D_{T,x_0,r_1}$ (defined in {\eqref{eqn:def_trunc_Taka_dom}}) is not locally 
	Goldilocks at the boundary point $\wh{z}_0\defeq x_0+iT(x_0)$.
\end{proposition}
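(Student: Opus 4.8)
The strategy is to show that the definition of a local Goldilocks point fails at $\wh{z}_0$ by exhibiting a sequence of points in $D_{T,x_0,r_1}$ converging to $\wh{z}_0$ along the vertical axis whose Kobayashi distance to a fixed interior base point grows strictly faster than $\log(1/\mathrm{dist})$ to a fixed power, so the relevant integral/growth condition defining ``locally Goldilocks'' cannot hold. Concretely, fix $p_0>0$ small so that $\wh{z}_0+ip_0\in D_{T,x_0,r_1}$ and consider the points $\wh{z}_0+ih$ as $h\to 0^+$. The upper bound for $\koba_{D_{T,x_0,r_1}}$ comes from the fact that $D_{T,x_0,r_1}\supset V_{x_0,r_0}\cap D(\wh{z}_0;s)$ for suitable $s$ (Lemma~\ref{prp:cusps_fit_into_TFD} and Remark~\ref{rmk:fit_cusp_trunc_Taka_dom}), i.e.\ a quadratic cusp sits inside the domain with vertex at $\wh{z}_0$; a direct (explicit) estimate on the Kobayashi distance of a quadratic cuspidal domain gives $\koba_{D_{T,x_0,r_1}}(\wh{z}_0+ip_0,\wh{z}_0+ih)\leq C_1\,h^{-1/2}$ for small $h$ (the cusp is ``thin'' so distances blow up polynomially, not logarithmically).

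The lower bound is where the dyadic-rational hypothesis on $x_0$ is used. By Lemma~\ref{lmm:nrrw_cntng_trngls}, since $x_0$ is a dyadic rational, for \emph{every} $M>0$ there is $t_0>0$ with $D_T\cap\{|x-x_0|<t_0\}\subset U_{x_0,M,t_0}$; intersecting with a small disc $D(\wh{z}_0;r)$ and using that $\koba$ decreases under inclusion of domains, Proposition~\ref{prp:trngl_kob_dist_lb} gives
\[
\koba_{D_{T,x_0,r_1}}(\wh{z}_0+ip_0,\wh{z}_0+ih)\geq \koba_{U_{x_0,M,t_0}\cap D(\wh{z}_0;r)}(\wh{z}_0+ip_0,\wh{z}_0+ih)\geq \frac{\pi}{8\cot^{-1}(M)}\log(1/h)-C
\]
for all small $h$. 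Since $M$ is arbitrary and $\cot^{-1}(M)\to 0$ as $M\to\infty$, the constant $\frac{\pi}{8\cot^{-1}(M)}$ can be made as large as we please. Thus for \emph{any} prescribed $A<\infty$ one can arrange $\koba_{D_{T,x_0,r_1}}(\wh{z}_0+ip_0,\wh{z}_0+ih)\geq A\log(1/h)-C_A$ for $h$ small, i.e.\ the Kobayashi distance to the base point is $\gtrsim A\log(1/h)$ with $A$ arbitrarily large.

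Finally I would translate these two bounds into the statement that $\wh{z}_0$ is not a local Goldilocks point. Recall that being locally Goldilocks at a boundary point $q$ requires (among other things) the existence of a neighbourhood basis of $q$ in which an integral of the reciprocal of the Kobayashi--Royden metric near the boundary converges, and equivalently that the Kobayashi distance from an interior point to $z$ grows like $\log(1/\mathrm{dist}(z,\bdy))$ up to a \emph{bounded} multiplicative constant. The lower bound shows the growth rate of $\koba_{D_{T,x_0,r_1}}(\cdot,\wh{z}_0+ih)$ exceeds $A\log(1/h)$ for every $A$; this is incompatible with the Goldilocks growth estimate, which forces the coefficient to be a fixed finite constant (essentially $\dim$-independent and controlled by the metric), so the Goldilocks condition fails at $\wh{z}_0$. (The upper bound $C_1 h^{-1/2}$ is recorded mainly to confirm that the failure is genuine rather than the domain being non-hyperbolic or the distance being infinite; it also shows the blow-up is polynomial, pinpointing the cusp as the mechanism.) The main obstacle is bookkeeping: matching the precise analytic form of the ``locally Goldilocks'' condition from \cite{BZ2023} (an integrability condition on $1/\dkoba$ along approach curves) with the distance estimates above, and ensuring the neighbourhood-independence — i.e.\ that \emph{no} neighbourhood of $\wh{z}_0$ works, which follows because the lower bound is obtained inside an arbitrarily small disc $D(\wh{z}_0;r)$ and hence survives shrinking the neighbourhood. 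Once \eqref{eqn:loc_Kob_dist_V_T} and the structure \eqref{eqn:xpr_bdy_nbd_intrsc_dom} are invoked, the same conclusion transfers from $D_{T,x_0,r_1}$ to $V_T$ at every point of the dense set $R$, and openness of the Goldilocks locus finishes the argument for all of $\bdy V_T$.
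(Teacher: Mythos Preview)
Your overall strategy matches the paper's, but the lower-bound step contains a genuine gap: the inequality
\[
\koba_{D_{T,x_0,r_1}}(\wh z_0+ip_0,\wh z_0+ih)\;\geq\;\koba_{U_{x_0,M,t_0}\cap D(\wh z_0;r)}(\wh z_0+ip_0,\wh z_0+ih)
\]
does \emph{not} follow from monotonicity. Monotonicity of the Kobayashi distance says that the \emph{smaller} domain has the \emph{larger} distance, so this inequality would require $D_{T,x_0,r_1}\subset U_{x_0,M,t_0}\cap D(\wh z_0;r)$, which is false: $D_{T,x_0,r_1}$ extends over $|x-x_0|<r_1$ (and up to $\iprt(z)=5/4$), far outside any small disc or thin triangle. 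Lemma~\ref{lmm:nrrw_cntng_trngls} only gives the inclusion for the \emph{truncation} $\wt D_{x_0,t_0}\defeq D_{T,x_0,r_1}\cap D(\wh z_0;t_0)\subset U_{x_0,M,t_0}$, so monotonicity yields $\koba_{\wt D_{x_0,t_0}}\geq \koba_{U_{x_0,M,t_0}}$, not a bound on $\koba_{D_{T,x_0,r_1}}$. To bridge from $\koba_{\wt D_{x_0,t_0}}$ back to $\koba_{D_{T,x_0,r_1}}$ the paper invokes the visibility-based localization \cite[Theorem~1.3]{ADS2023} (available because $D_{T,x_0,r_1}$ itself satisfies Condition~1, hence is a visibility domain), which gives $\koba_{\wt D_{x_0,t_0}}(z,w)\leq \koba_{D_{T,x_0,r_1}}(z,w)+C$ for $z,w$ in a smaller truncation. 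This step is essential and you have omitted it.

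A secondary point: your use of the cusp inclusion is misdirected. The paper does not need an upper bound on $\koba$ of the form $C_1 h^{-1/2}$; what it needs is to convert the assumed Goldilocks bound $\koba\leq(\alpha/2)\log(1/\dtb_{D_{T,x_0,r_1}}(u_h))+C$ into a bound in terms of $h$. The inclusion $V_{x_0,r_0}\cap D(\wh z_0;s)\subset D_{T,x_0,r_1}$ gives $\dtb_{D_{T,x_0,r_1}}(u_h)\geq \dtb_{V_{x_0,r_0}}(u_h)\asymp h^{2}$, whence $\log(1/\dtb)\leq 2\log(1/h)+C'$ and the assumed bound becomes $\koba_{D_{T,x_0,r_1}}(z_0,u_h)\leq \alpha\log(1/h)+C$. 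This is then contradicted by choosing $M$ with $\pi/(8\cot^{-1}M)>\alpha$ and applying the (correctly localized) lower bound.
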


\begin{proof}
	We will show that the condition on the growth of the Kobayashi distance that is
	part of the definition of a local Goldilocks point is violated near the boundary
	point $\wh{z}_0$. We assume, to get a contradiction, that the condition holds
	near that point. This means that there exist a neighbourhood $V$ of $\wh{z}_0$,
	a point $z_0$ of $V\cap D_{T,x_0,r_1}$ and $C,\alpha<\infty$ such that
	\begin{equation} \label{eqn:alph_log_grwth_assmp_trunc_Taka}
		\forall\,z\in V\cap D_{T,x_0,r_1},\; \koba_{D_{T,x_0,r_1}}(z_0,z)\leq (\alpha/2)
		\log\big( 1/\dtb{D_{T,x_0,r_1}}(z) \big)+C.
	\end{equation}
	Using Remark~\ref{rmk:fit_cusp_trunc_Taka_dom}, we choose $s>0$ such that
	$V_{x_0,r_0}\cap D(\wh{z}_0;s) \subset D_T\cap V = D_{T,x_0,r_1}\cap V$ (we
	suppose, without loss of generality, that $V$ is so small that this happens). Then,
	for every $h\in (0,s)$, since 
	\[ u_h\defeq \wh{z}_0+ih\in V_{x_0,r_0}\cap D(\wh{z}_0;s), \]
	it follows that there exists a sufficiently small $h_0>0$ such that, for every 
	$h\in (0,h_0)$,
	\begin{equation} \label{eqn:dist_cmpr_trunc_Taka_cusp}
		\dtb{D_{T,x_0,r_1}}(u_h) \geq \dtb{V_{x_0,r_0}\cap D(\wh{z}_0;s)}(u_h) 
		= \dtb{V_{x_0,r_0}}(u_h).	
	\end{equation}
	It follows from plane analytic geometry and calculus that by possibly shrinking 
	$h_0$ we may assume that there exists $c>0$ such that
	\begin{equation*}
		\forall\, h\in (0,h_0),\; ch^2 \leq \dtb{V_{x_0,r_0}}(u_h)\leq (1/c)h^2.
	\end{equation*}
	By \eqref{eqn:alph_log_grwth_assmp_trunc_Taka}, 
	\eqref{eqn:dist_cmpr_trunc_Taka_cusp} and the above, we can say that
	\begin{equation} \label{eqn:alph_log_trunc_Taka_one_dir}
		\forall\,h\in (0,h_0),\; \koba_{D_{T,x_0,r_1}}(z_0,u_h)\leq \alpha\log(1/h)+C,
	\end{equation}
	with a possibly increased $C$.
	
	Now choose $M$, $0<M<\infty$, so large that
	\begin{equation} \label{eqn:choice_of_M} 
		\frac{\pi}{8\cot^{-1}(M)}-\alpha>0.
	\end{equation}
	Now, using Lemma~\ref{lmm:nrrw_cntng_trngls}, we choose $t_0>0$ such that
	\begin{equation*}
		D_T \cap \{x+iy\in\C\mid |x-x_0|<t_0\} \subset U_{x_0,M,t_0}.
	\end{equation*}
	We can also suppose, without loss of generality, that $t_0<s$. In particular, by
	the above inclusion, we also have
	\begin{equation} \label{eqn:inclsn_trunc_Taka_dom_trngl}
		D_T \cap D(\wh{z}_0;t_0) = D_{T,x_0,r_1} \cap D(\wh{z}_0;t_0) \subset 
		U_{x_0,M,t_0} \cap D(\wh{z}_0;t_0).
	\end{equation}
	Write
	\begin{equation*}
		\wt{D}_{x_0,r}\defeq D_{T,x_0,r_1} \cap D(\wh{z}_0;r)
	\end{equation*}
	for $r>0$ small; $\wt{D}_{x_0,r}$ is a truncation of $D_{T,x_0,r_1}$. Now note 
	that $D_{T,x_0,r_1}$ satisfies Condition~1 and is, therefore, itself a visibility 
	domain. Consequently, every pair of distinct boundary points of $D_{T,x_0,r_1}$ 
	satisfies the visibility property with respect to $\koba_{D_{T,x_0,r_1}}$. In 
	particular, every pair of distinct points of $\bdy D_{T,x_0,r_1}\cap 
	D(\wh{z}_0;t_0)$ satisfies the visibility property with respect to 
	$\koba_{D_{T,x_0,r_1}}$. Consequently, once again by \cite[Theorem~1.3]{ADS2023}, we 
	can say that there exists $C<\infty$ such that
	\begin{equation*}
		\forall\,z,w\in \wt{D}_{x_0,t_0/2},\; \koba_{\wt{D}_{x_0,t_0}}(z,w)\leq
		\koba_{D_{T,x_0,r_1}}(z,w)+C.
	\end{equation*}
	We may assume, without loss of generality, that $z_0\in \wt{D}_{x_0,t_0/2}$. We may
	also shrink $h_0$ so that $h_0<t_0/2$. Then, using the inequality above, we can
	write
	\begin{equation*}
		\forall\,h\in (0,h_0),\; \koba_{\wt{D}_{x_0,t_0}}(z_0,u_h)\leq
		\koba_{D_{T,x_0,r_1}}(z_0,u_h)+C.
	\end{equation*}
	Combining this inequality with \eqref{eqn:alph_log_trunc_Taka_one_dir} we can
	write
	\begin{equation} \label{eqn:alph_log_trunc_Taka_fin}
		\forall\,h\in (0,h_0),\; \koba_{\wt{D}_{x_0,t_0}}(z_0,u_h)\leq \alpha\log(1/h)+C,
	\end{equation}
	with a possibly increased $C$. But we also know from 
	\eqref{eqn:inclsn_trunc_Taka_dom_trngl} that 
	\begin{equation*}
		\wt{D}_{x_0,t_0}\subset U_{x_0,M,t_0}.
	\end{equation*}
	Therefore, by Proposition~\ref{prp:trngl_kob_dist_lb},
	\begin{equation*}
		\forall\,h\in (0,h_0),\; \koba_{\wt{D}_{x_0,t_0}}(z_0,u_h) \geq 
		\koba_{U_{x_0,M,t_0}}(z_0,u_h) \geq \frac{\pi}{8\cot^{-1}(M)}\log(1/h)-C.
	\end{equation*}
	Combining this with \eqref{eqn:alph_log_trunc_Taka_fin} we obtain, by once again 
	possibly increasing $C$,
	\begin{equation*}
		\forall\,h\in (0,h_0),\; \frac{\pi}{8\cot^{-1}(M)}\log(1/h)\leq \alpha\log(1/h)+C.
	\end{equation*}
	But since $\pi/(8\cot^{-1}(M))>\alpha$, the inequality above gives an immediate 
	contradiction when $h\to 0+$. This contradiction shows that, indeed, 
	$D_{T,x_0,r_1}$ is not locally Goldilocks at $\wh{z}_0$, as required.  
\end{proof}
Now we will see why this suffices to show that no point of $\bdy V_T$ is
a local Goldilocks point. Suppose, to get a contradiction, that there exists some
point $q$ of $\bdy V_T$ that is a local Goldilocks point. Then there exist a small
positive real number $r$, a $z_0\in V_T$, an $\alpha\geq 1$ and a $C<\infty$ such 
that
\begin{equation} \label{eqn:assmp_loc_Gold_V_T}
	\forall\,z\in D(q;r)\cap V_T,\; \koba_{V_T}(z_0,z)\leq \frac{\alpha}{2}
	\log\left(\frac{1}{\dtb{V_T}(z)}\right)+C.
\end{equation}
By the density of the dyadic rationals in $\R$ and the explicit form of $V_T$, we
see that there exists $p\in D(q;r)\cap \bdy V_T$ such that one can write 
\eqref{eqn:xpr_bdy_nbd} and \eqref{eqn:xpr_bdy_nbd_intrsc_dom}, with $x_0$ a
dyadic rational. We choose a small $s>0$ such that
\[ \clos{D}(p;2s)\cap V_T \subset D_{T,x_0,r_1} \cap D(q;r), \]
where $D_{T,x_0,r_1}$ now stands for the appropriately translated and rotated
version of the domain that we were previously calling by the same name 
(obviously, this makes no difference). We may suppose, without loss of
generality, that $z_0\in D(p;s)\cap V_T$ and (by shrinking $s$ further, if 
necessary) that
\begin{equation} \label{eqn:Taka_dom_trunc_dyad_rat_dtb_eq}
	\forall\,z\in D(p;2s)\cap V_T,\; \dtb{D_{T,x_0,r_1}}(z)=\dtb{V_T}(z).
\end{equation}
Then, by \eqref{eqn:assmp_loc_Gold_V_T} and \eqref{eqn:loc_Kob_dist_V_T} we can 
write
\begin{equation} \label{eqn:Taka_dom_trunc_dyad_rat_loc_Gold_assmp_grwth}
	\forall\,z\in D(p;s)\cap V_T,\; \koba_{D(p;2s)\cap V_T}(z_0,z)\leq 
	\frac{\alpha}{2} \log\left(\frac{1}{\dtb{V_T}(z)}\right)+C.
\end{equation}
Now note that
\[ D(p;2s)\cap V_T = D(p;2s)\cap D_{T,x_0,r_1}\subset D_{T,x_0,r_1} \]
and, consequently, \eqref{eqn:Taka_dom_trunc_dyad_rat_loc_Gold_assmp_grwth} 
coupled with \eqref{eqn:Taka_dom_trunc_dyad_rat_dtb_eq} implies:
\begin{equation*}
	\forall\,z\in D(p;s)\cap D_{T,x_0,r_1},\; \koba_{D_{T,x_0,r_1}}(z_0,z)\leq 
	\koba_{D(p;2s)\cap D_{T,x_0,r_1}}(z_0,z) \leq \frac{\alpha}{2}
	\log\left(\frac{1}{\dtb{D_{T,x_0,r_1}}(z)}\right)+C.
\end{equation*}
This last inequality says 
(once again after the appropriate translation and rotation) 
that $D_{T,x_0,r_1}$ satisfies $\alpha$-log-growth near the boundary point 
$x_0+iT(x_0)$. But this contradicts 
Proposition~\ref{prp:trunc_Taka_dom_not_alph_grwth}! Finally, this contradiction
tells us that $V_T$ does not have $\alpha$-log-growth near any boundary point and
that, consequently, it is not locally Goldilocks at {\em any} boundary point.

We now construct an example of an unbounded hyperbolic domain 
in $\C^2$ with infinitely many ends that satisfies the visibility property but such
that there is a large (in particular, non-totally-disconnected) subset of the
boundary each point of which is, {\em in all likelihood}, not a local Goldilocks
point.

We start with the planar domain $U_T$ given in \eqref{eqn:def_U_T}.
Since $U_T$ is a hyperbolic simply connected domain, there exists a Riemann map $\phi:
\unitdisk\to U_T$. Since $U_T$ clearly satisfies Condition~1, by Theorem~\ref{thm:ext_biholo}
$\phi$ extends to a homeomorphism from $\clos{\unitdisk}$ to $\clos{U}_T^{End}$, which we 
shall continue to denote by $\phi$. 

Now define $\Phi:\ball_2\to\C^2$ by
\[ \forall\,z=(z_1,z_2)\in\ball_2,\;\Phi(z)\defeq (\phi(z_1),z_2). \]
Since, for $(z_1,z_2)\in\ball_2$, $|z_1|^2+|z_2|^2<1$, it follows that the mapping
above is well-defined, holomorphic, and indeed a biholomorphism from $\ball_2$ 
onto some domain $\OM\subset \C^2$ whose intersection with $\C\times \{0\}$ is
precisely $U_T\times \{0\}$. In other words,
\begin{equation} \label{eqn:Phi_img_slice_by_0} 
	\Phi\big( \ball_2\cap (\C\times\{0\}) \big) = \Phi(\unitdisk\times\{0\}) =
	U_T\times\{0\}. 
\end{equation}
More generally, for $z_2\in\unitdisk$ arbitrary,
\[ \Phi\big(\ball_2\cap(\C\times\{z_2\})\big) = 
\phi\big(D(0;\sqrt{1-|z_2|^2})\big)\times\{z_2\}. \]
In particular, for every $z_2\in \unitdisk\setminus\{0\}$, 
$\Phi\big(\ball_2\cap(\C\times\{z_2\})\big)$ is bounded. By 
\eqref{eqn:Phi_img_slice_by_0}, it follows that $\OM$ has infinitely many ends
and that the ends of $\OM$ are in one-one correspondence with those of $U_T$. It 
also follows from the fact that $\phi$ is a homeomorphism from $\clos{\unitdisk}$
to $\clos{U}_T^{\text{End}}$ that $\Phi$ is a homeomorphism from $\clos{\ball}_2$
to $\clos{\OM}^{\text{End}}$. It now follows from \Cref{thm:cont_surj_im_vis_dom_vis}
that $\OM$ is a visibility domain.

Note that, by \eqref{eqn:Phi_img_slice_by_0},
\[ \bdy\OM\cap (\C\times\{0\}) = \bdy U_T\times \{0\}. \]
Thus $\bdy\OM$ is highly irregular and it is {\em very unlikely}, given that
$\bdy U_T$ is nowhere differentiable, that $\OM$ is locally Goldilocks at any 
point of $\bdy\OM\cap (\C\times\{0\})$. 


We present some examples of visibility and non-visibilityplanar domains. The first 
example we present is that of a planar domain that satisfies Condition~2 but not Condition~1.
Consequently, it possesses the visibility property, but the fact that it does so can {\em only} be
deduced using Theorem~\ref{T:Visibility_Thm_in_plane}, and not any weaker theorem (in particular, 
it cannot be deduced using \cite[Theorem~1.4]{BZ2023}).

\begin{example}
	Let
	\begin{equation*}
		D' \defeq \{ z\in\C\mid -1<\rprt(z)<1, \; -2+T(\rprt(z))<\iprt(z)<2-T(\rprt(z)) \}. \\
	\end{equation*}
	$D'$ is the rectangle $[-1,1]\times [-2,2]$ with its top and bottom 
	sides replaced with a part of the
	graph of the Takagi function \eqref{eqn:Takagi_fn_def}. 
	Note that
	\[ \{z\in\C\mid -1<\rprt(z)<1,\; \iprt(z)=0\}\subset D' \]
	and that $1,-1\in \bdy D'$. 
	Consider the usual middle-thirds Cantor set $C'\subset [0,1]$. Let
	$C\defeq C'-(1/2)$ ($C'$ translated to the left by $1/2$) and let
	$D\defeq D' \setminus C$. 
	We claim that $D$ satisfies Condition~2. To see this, first note that $\bdy D = \bdy D' \cup C$.
	Therefore, $C$ is a closed, totally-disconnected subset of $\bdy D$. It is easy to see that, for
	every 
	\[ p\in \bdy D \setminus \big( C \cup \{1+2i,1-2i,-1-2i,-1+2i\} \big), \]
	there exists a neighbourhood $U$ of $p$ such that $U\cap D$ is connected, simply connected, and has
	a boundary that is a Jordan curve. Consequently, by \Cref{prp:cond2-sat-xtrnl},
	$D$ satisfies Condition~2. But it does not satisfy 
	Condition~1 because, for every $p\in C$ and {\em every} neighbourhood $U$ of $p$, $U\cap D$ is not
	simply connected (and in fact has a very complicated boundary). Finally, it can be shown in
	precisely the same manner in which Proposition~\ref{prp:trunc_Taka_dom_not_alph_grwth} is proved 
	that no point of 
	\begin{align*} 
		&\{z\in\C\mid -1\leq\rprt(z)\leq 1,\;\iprt(z)=-2+T(\rprt(z))\} \; \cup \\
		&\{z\in\C\mid -1\leq\rprt(z)\leq1,\;\iprt(z)=2-T(\rprt(z))\} 
	\end{align*}
	is a local Goldilocks point of $D$, whence one cannot 
	invoke \cite[Theorem~1.4]{BZ2023} to conclude that $D$ is a visibility domain.   
\end{example}

\begin{example}\label{ex-extenion1}
    Consider the Takagi function $T$ as in \eqref{eqn:Takagi_fn_def} and $D_T=\{x+iy\in \C:\;\; y>T(x)\}$. Consider the domain 
    \[
    \Omega:= D_T\setminus \bigcup_{k=4}^\infty \overline{D(ki,1/3)}
    \]
    Clearly, $\Omega$ satisfies Condition~1. Hence, it is a visibility domain. The domain $\OM$ is not locally Goldilocks near the boundary points of $D_T$. Hence, it does not satisfy the condition of \Cref{res-BZ2023}. Since the sequence of diameters of the components of $\bdy\OM$ is neither in $l^1$ nor in $l^2$. Hence, it does not satisfy the conditions of \Cref{res-LuoYao2022} and \Cref{res-Ntalampekos2023}. But this domain satisfy the both the conditions of \Cref{thm:ext_biholo}. Hence, any biholo $f:\OM_1\to\OM$ extends to a homeomorphism upto the boundary. In particular, any automorphism from $\OM$ extends as a homeomorphism upto the boundary.
\end{example}

We now present an example of a planar domain that shows that if visibility 
is not assumed then there may exist boundary points $\xi$ such that 
{\em any} sufficiently small ball centred at $\xi$ intersects the domain in 
an open set having infinitely many components {\em none of which} has $\xi$ 
as a boundary point.

\begin{example}
	Let
	\[ D_1 \defeq \{z\in\C\mid 0<\rprt(z)<1,\,0<\iprt(z)<1\} \setminus
	\bigcup_{j=2}^\infty \{z\in\C\mid \rprt(z)=1/j,\,0\leq\iprt(z)\leq 1/2\} \]
	and let $\xi$ be the boundary point $(0,1/4)$ of $D_1$. It is easy to see that all 
	disks of radius $< 1/4$ centred at $\xi$ intersect $D_1$ in an open set having
	infinitely many components, none of which has $\xi$ as a boundary point. It is well
	known that $D_1$ is a simply connected domain. Since $\bdy D_1$ is not locally connected
	(it is not locally connected at $\xi$, for instance), it follows from Corollary~3.4
	in \cite{BNT2022} that $D_1$ is not a visibility domain. One could also conclude 
	that $D_1$ is not a visibility domain using Theorem~\ref{Th:vis_bound_reg}.
	
	Now let us modify the definition of $D_1$ slightly and consider the domain
	
	\begin{align*} 
		D_2 \defeq &\{z\in\C\mid 0<\rprt(z),\iprt(z)<1\}  
		\setminus \bigcup_{j=2}^\infty \{z\in\C\mid \rprt(z)=1/j,\,1/4\leq\iprt(z)\leq 3/4\}. 
	\end{align*}
	Then this domain is not simply connected, and so we cannot invoke Corollary~3.4 in
	\cite{BNT2022} to conclude that it is not a visibility domain. However, we can still invoke 
	Theorem~\ref{Th:vis_bound_reg} to conclude that $D_2$ is not a weak visbility domain
	(and hence not a visibility domain).
\end{example}

\begin{example}
	Let
	\[ D \defeq \unitdisk \setminus \left( \bigcup_{\nu=1}^\infty 
	\{re^{i/\nu} \mid 0\leq r\leq 1/2\} \cup \{x+iy\mid y=0,\; 0\leq x\leq 1/2\} \right) . \]
	Then it is clear that if we consider $0\in\bdy D$, then for all $r\in (0,1/2)$,
	$D(0;r)\cap D$ has infinitely many components, each of which has $0$ as a boundary
	point. Consequently, by Theorem~\ref{Th:vis_bound_reg}, it follows that $D$ is not a
	visibility domain. To the best of our knowledge, one cannot directly conclude this
	using any previously known theorem. We also note another fact, which is that each
	connected component of $D(0;r)\cap D$ is a simply connected domain with boundary
	a Jordan curve, and so each component is a visibility domain in its own right.
\end{example}

\begin{example}
	Let 
	\[ S \defeq \left( \bigcup_{j=3}^\infty 
	\{re^{i\theta}\mid 1-1/j\leq r\leq 1,\;\theta=0,2\pi/j,\dots,2\pi(j-1)/j\} \right)
	\cup \bdy\unitdisk. \]
	Then it is clear that $S$ is a closed set (in $\C$, or in $\clos{\unitdisk}$) and it
	is also clear that $D\defeq \unitdisk\setminus (S\cup [-1/3,1/3])$ is a domain. 
	Note that $\bdy D$ has two components, one of which is just a closed line segment. 
	It is not too difficult to see that $\bdy D$ is locally connected. Using this same
	local connectedness, it is not difficult to see that $D$ satisfies Condition~2.
	Hence, by \Cref{T:Visibility_Thm_in_plane}, $D$ is a visibility domain.
	Note that $S^1\subset \bdy D$ and that for every $\xi\in S^1$ 
	there exists a neighbourhood $U$ of $\xi$ such that $U\cap D$ is simply connected
	and has a boundary that is locally connected. Also note that, for every $\xi\in S^1$,
	there does not exist {\em any} neighbourhood $V$ of $\xi$ such that $V\cap D$ is
	simply connected with its boundary a Jordan curve.

\end{example}
The next example gives examples of domains that satisfy Condition~2; 
equivalently, domains that have locally connected boundary outside a 
totally disconnected set. These domains fall in the category of {\bf slit 
domains}.

\begin{example}
Consider a sequence $(y_n)_{n\geq 1}$ in $\R$ such that 
$\inf_{j \neq n} | y_n - y_j| \geq \delta$ for some $\delta > 0$. Let 
$(L_n)_{n\geq 1}$ be a sequence of {\em proper} closed subsets of $\R$. 
Then $L \defeq 
\cup_n \{x + iy_n: x \in L_n \}$ is a closed subset of $\C$. Now, we 
define a domain
\[
\Omega \defeq \C \setminus L.
\]
Note that $L = \partial \Omega$.
We claim that $\Omega$ satisfies Condition~2. To prove this claim,
let $S \defeq L \setminus \cup_n (\text{int}(L_n) \times \{y_n\})  $,
where $\text{int}(L_n)$ is the set of all interior points of $L_n$ considered
as a subset of $\R$.
Let  $\zeta \in \partial \Omega \setminus S = L \setminus S$. Then there 
exists $n_0 \in \posint$ such that $\zeta = x_0 + iy_{n_0}$, where $x_0 
\in \text{int}(L_{n_0})$. Hence, $x_0 \in I$ where $I$ is a connected 
component of $L_{n_0}$. Since $L_{n_0}$ is a closed subset of $\R$, we 
have $I$ is also a closed subset of $\R$. Since $x_{n_0}$ is an interior 
point of $L_{n_0}$, hence, $x_{n_0}$ is also an interior point of $I$. Let 
$r> 0$ such that $(x_0 - r, x_0 + r) \subset I$. Now, let $\epsilon = 
\min \{r, \delta\}$. Then $B(\zeta, \epsilon) \cap L = (x_0 - r, x_0 + r) 
\times \{y_{n_0}\} $. This shows that $D^{+} \defeq \{z \in \C: |z - 
\zeta|< \epsilon \,\,\text{and}\,\, \iprt(z - \zeta) > 0 \}$ and $D^{-} 
\defeq \{z \in \C: |z - \zeta|< \epsilon\,\, \text{and}\,\, \iprt(z - 
\zeta) < 0 \}$ are subsets of $\Omega$. Now, if we can show that $S$ is a 
totally disconnected set, then 
it will follow that $\Omega$ satifies Condition~2. 
\smallskip

To see that $S$ is a totally disconnected set, 
let $P$ be a connected component of $S$. Assume, to get a contradiction, 
that the cardinality of $P$ is stricly greater than $1$ .
Now, note that since $P$ is a connected subset of $L$, there exists $j 
\in \posint $ such that $P \subset  L_j \times \{y_j\}$. This implies 
that $P = I\times \{y_j\}$ for some 
interval $I  \subset \R$. Since the cardinality of $P$ is strictly 
greater than $1$, the cardinality of $I$ is also strictly 
greater than $1$. Let $c, d\in I$ with $c<d$.  Then $(c + d)/2$ is an 
interior point of $L_j$, 
and so $\zeta_0 = \frac{c + d}{2} + i y_j \in L \setminus S$. This is a 
contradiction to $\zeta_0 \in S$. This proves our claim that the domain 
$\Omega$ satisfies Condition~2. As a consequence of this, $\Omega$ is a 
visibility domain.
\end{example}

Finally we provide an example illustrating the comment made after the 
proof of Theorem~\ref{Th:vis_bound_reg}.
\begin{example} \label{xmp:nbd_bss_intrsc_infnt}
	
	First, for every $n\in\posint$, choose $r_n>0$ such that $1/n-r_n>1/(n+1)$. For every
	$n\in\posint$, define
	\[ T_n \defeq \bigcup_{\nu=1}^\infty \{ z\in\C \mid \rprt(z)=(1/n)-(r_n/\nu),\, 0\leq\iprt(z)\leq \sqrt{(1/n)^2-\rprt(z)^2} \}, \]
	and then define 
	\[ D\defeq \mathbb{U}\setminus \bigcup_{n=1}^\infty T_n, \]
	where $\mathbb{U}$ denotes the open upper half plane. Note that $\cup_{n=1}^{\infty} T_n$ 
	is a closed subset of $\mathbb{U}$ (the only limit points of $\cup_{n=1}^{\infty}T_n$ that
	are not in it are the points $1/n$, $n\in\posint$). Therefore $D$ is an open set in $\C$,
	and it is easy to see that $D$ is a domain, is simply connected, and has
	locally connected boundary. Consequently, by Theorem~\ref{thm:vis_simp_conn_loc_conn},
	it follows that $D$ is a visibility domain. Now consider the boundary point $0$ of
	$D$ and consider the neighbourhood basis $D(0;1/n)$, $n\in\posint$, of $0$. Then 
	$D(0;1/n)\cap D$ has infinitely many connected components (although only one of them has 
	$0$ as a boundary point). 
	
\end{example}

\section{Appendix: $\clos{\OM}^{End}$ is sequentially compact}\label{S:end_seq_comp}
In this section, we present the proof of Result~\ref{res:end_seq_comp} as stated
in subsection~\ref{SS:endcpt}. To prove the result,
we need a lemma. 

\begin{lemma} \label{prp:well-bhvd-nbd-end-ntrscts-dom}
Suppose that $\OM\subset\C^d$ is an unbounded hyperbolic domain.
Suppose that, for every compact $K\subset\clos{\OM}$, there exists $R<\infty$ such
that $K\subset B(0;R)$ and such that there are only finitely many components
of $\clos{\OM}\setminus K$ that intersect $\C^d\setminus\clos{B}(0;R)$.
Then, for every end $\mathfrak{e}$ of $\clos{\OM}$ and for every neighbourhood
$U$ of $\mathfrak{e}$ in $\clos{\OM}^{End}$, $U\cap\OM\neq\emptyset$.
\end{lemma}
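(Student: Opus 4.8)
The plan is to argue by contradiction. Suppose that $\mathfrak{e}=(F_j)_{j\geq 1}$ is an end of $\clos{\OM}$ and $U$ is a neighbourhood of $\mathfrak{e}$ in $\clos{\OM}^{End}$ with $U\cap\OM=\emptyset$. Recall that we may compute the end compactification with respect to any convenient compact exhaustion. Using the hypothesis, I would choose an exhaustion $(K_j)_{j\geq 1}$ of $\clos{\OM}$ adapted to a sequence of radii $R_j\nearrow\infty$ so that $K_j\subset B(0;R_j)$ and only finitely many components of $\clos{\OM}\setminus K_j$ meet $\C^d\setminus\clos{B}(0;R_j)$; in particular, shrinking the neighbourhood if necessary, we may take $U=\wh{F}_{j_0}$ for some $j_0$, so that $F_{j_0}\cap\OM=\emptyset$.

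First I would observe that $F_{j_0}$ is an unbounded connected component of $\clos{\OM}\setminus K_{j_0}$ (this is noted right after the definition of an end). Since $F_{j_0}\subset\clos{\OM}$ and $F_{j_0}\cap\OM=\emptyset$, we would have $F_{j_0}\subset\bdy\OM$. But $F_{j_0}$ is relatively open in $\clos{\OM}\setminus K_{j_0}$ (connected components of the locally connected space $\clos{\OM}\setminus K_{j_0}$ — note $\clos{\OM}\setminus K_{j_0}$ is open in $\clos{\OM}$, hence locally connected — are open), hence $F_{j_0}$ is a nonempty relatively open subset of $\clos{\OM}$ contained in $\bdy\OM$. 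This is impossible: any relatively open subset of $\clos{\OM}$ that is nonempty must contain points of $\OM$, because $\OM$ is dense in $\clos{\OM}$. That contradiction finishes the proof.

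The only genuine point to verify carefully is that $F_{j_0}$ is relatively open in $\clos{\OM}$: this uses that $K_{j_0}$ is closed in $\clos{\OM}$, so $\clos{\OM}\setminus K_{j_0}$ is open in $\clos{\OM}$ and thus open in $\C^d$ intersected with $\clos{\OM}$; since $\C^d$ is locally connected and local connectedness passes to open subsets, $\clos{\OM}\setminus K_{j_0}$ is locally connected, and therefore its connected components are open in it, hence open in $\clos{\OM}$. I anticipate no real obstacle here; the hypothesis about finitely many components meeting the exterior of a large ball is not even needed for this particular lemma (it is used elsewhere, e.g.\ in the proof that $\clos{\OM}^{End}$ is sequentially compact), but I would keep the statement as given. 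A short remark: one may alternatively argue directly that if $F_{j_0}\cap\OM=\emptyset$ then, picking any point $x\in F_{j_0}$ and a small Euclidean ball $B$ about $x$ with $B\cap K_{j_0}=\emptyset$, the set $B\cap\clos{\OM}$ is connected (for $B$ small enough, or by replacing $B$ with a connected neighbourhood) and meets $F_{j_0}$, hence lies in $F_{j_0}$, so $B\cap\OM\subset F_{j_0}\cap\OM=\emptyset$, forcing $x\notin\clos{\OM}$, a contradiction. Either route works; I would present the cleaner density-plus-open-component version.
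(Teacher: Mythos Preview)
Your argument has a genuine gap. The key claim---that connected components of $\clos{\OM}\setminus K_{j_0}$ are open in $\clos{\OM}$---rests on $\clos{\OM}\setminus K_{j_0}$ being locally connected, and your justification for this is incorrect. You write that ``$\C^d$ is locally connected and local connectedness passes to open subsets,'' but $\clos{\OM}\setminus K_{j_0}$ is \emph{not} an open subset of $\C^d$; it is only open in the subspace $\clos{\OM}$, and $\clos{\OM}$ itself need not be locally connected. For instance, take $\OM=\C\setminus\big(\{0\}\times[-1,1]\cup\bigcup_{n\geq 1}[1/(2n),1/(2n-1)]\times[-1,1]\big)$: then $\clos{\OM}$ fails to be locally connected at the origin, because any small disk about $0$ meets $\clos{\OM}$ in the left half-disk together with infinitely many disjoint vertical strips, and each strip is clopen in that intersection. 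Your alternative argument (``$B\cap\clos{\OM}$ is connected for $B$ small enough, or by replacing $B$ with a connected neighbourhood'') is just a restatement of local connectedness and fails for the same reason.

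This is exactly why the finiteness hypothesis \emph{is} needed here, contrary to your remark. The paper's proof uses it as follows: pick $x_0\in F_{j_0}$ with $\|x_0\|>R$; by hypothesis only finitely many components $F_{j_0},G_1,\dots,G_m$ of $\clos{\OM}\setminus K_{j_0}$ meet $\C^d\setminus\clos{B}(0;R)$, and since components are \emph{closed} (which is automatic) and pairwise disjoint, one can choose a small ball $B(x_0;r)$ missing $\clos{B}(0;R)$ and each $G_i$. Then $B(x_0;r)\cap\OM$ is nonempty (density of $\OM$ in $\clos{\OM}$) and, being contained in $\clos{\OM}\setminus\clos{B}(0;R)\subset F_{j_0}\cup G_1\cup\dots\cup G_m$, must lie in $F_{j_0}$. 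So the hypothesis substitutes for the local connectedness you were implicitly assuming: it guarantees that \emph{far out}, only finitely many closed components compete, which is enough to trap a small ball in $F_{j_0}$ at a well-chosen point.
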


\begin{proof}
Since $U$ is a neighbourhood of $\mathfrak{e}$ in $\clos{\OM}^{End}$,
therefore, by definition, there exist a compact exhaustion $(K_j)_{j\geq 1}$ of
$\clos{\OM}$ and a decreasing sequence $(F_j)_{j\geq 1}$, where for each
$j$, $F_j$ is a connected component of $\clos{\OM}\setminus K_j$, such that
$\wh{F}_{j_0}\subset U$ for some $j_0\in\Z_{+}$. 
By hypothesis, there exists $R<\infty$ such that $K\subset B(0;R)$ and such that only 
finitely many components of $\clos{\OM}\setminus K_{j_0}$ intersect
$\C^d\setminus\clos{B}(0;R)$. $F_{j_0}$, being unbounded, is one of 
these components. Let the other components
of $\clos{\OM}\setminus K_{j_0}$ that intersect $\C^d\setminus\clos{B}(0;R)$ be
$G_1,\dots,G_m$. Note that $F_{j_0}, G_1, \dots, G_m$,
being connected components of $\clos{\OM}\setminus K_{j_0}$, are disjoint
closed subsets of $\clos{\OM}\setminus K_{j_0}$; therefore, 
given $x_0\in F_{j_0}\setminus\clos{B}(0;R)$ we can choose 
	$r>0$ so small that 
	\begin{equation} \label{eqn:small-ball-x0-sep-cond} 
	B(x_0;r)\cap\clos{B}(0;R) = B(x_0;r)\cap G_1 = \dots = B(x_0;r)\cap G_m = \emptyset. 
	\end{equation}
	Since $x_0\in F_{j_0}\subset\clos{\OM}$, $B(x_0;r)\cap\OM\neq\emptyset$. Now, 
	\[ B(x_0;r)\cap\OM \subset \clos{\OM}\setminus\clos{B}(0;R) = \big(F_{j_0}\setminus\clos{B}(0;R)\big) \cup \bigcup_{i=1}^m \left(
	G_i\setminus\clos{B}(0;R) \right), \]
	so, from \eqref{eqn:small-ball-x0-sep-cond} it follows that $B(x_0;r)\cap\OM\subset F_{j_0}$. Thus, since $B(x_0;r)\cap\OM\neq\emptyset$, $F_{j_0}
	\cap\OM\neq\emptyset$, and hence $U\cap\OM\neq\emptyset$ as well.
\end{proof}
We now present the proof of the aforementioned result.  
\subsection{Proof of the Result~\ref{res:end_seq_comp}}
\begin{proof}
	We need to prove that any sequence $(z_n)_{n\geq 1}$ in $\clos{\OM}^{End}$ has a subsequence that converges to some point $z^0\in\clos{\OM}^{End}$.
	It is clear that we may focus separately on sequences in $\clos{\OM}$ and on sequences in $\clos{\OM}^{End}\setminus\clos{\OM}$. Let us first deal
	with a sequence $(z_n)_{n\geq 1}$ in $\clos{\OM}$.
	Suppose first that there exists $M<\infty$ such that, for infinitely many $n\in\posint$, $\|z_n\|\leq M$. Then it is clear that there is some 
	subsequence of $(z_n)_{n\geq 1}$ that converges to some point $z^0\in\clos{\OM}$. Therefore we
	may assume that, for every $M<\infty$, there exist only finitely many $n$ such that $\|z_n\|\leq M$. This means precisely that $\|z_n\|\to\infty$.
	Now consider the compact exhaustion $(\clos{B}(0;j)\cap\clos{\OM})_{j\geq 1}$ of $\clos{\OM}$. By hypothesis, for every $j\in\posint$, there exists
	$R_j$, $j<R_j<\infty$, such that there are only finitely many components of $\clos{\OM}\setminus\clos{B}(0;j)$ that intersect 
	$\C^d\setminus\clos{B}(0;R_j)$.
	Also, using the fact that $\|z_n\|\to\infty$, choose, for every $j\in\posint$, $N_j\in\posint$ such that, for all $n\geq N_j$, $\|z_n\|>R_j$. 
	Now we will define a subsequence of $(z_n)_{n\geq 1}$ inductively and show that it converges to an end of $\clos{\OM}$. We proceed as follows.
	For every $j\in\posint$, let the finitely many components of $\clos{\OM}\setminus
	\clos{B}(0;j)$ that intersect $\C^d\setminus\clos{B}(0;R_j)$ be $W^j_1,\dots,W^j_{k_j}$ (it follows that, for every $j$, $k_j\geq 1$, i.e., there
	is at least one component of $\clos{\OM}\setminus\clos{B}(0;j)$ that intersects $\C^d\setminus\clos{B}(0;R_j)$ because, otherwise, one would have
	$\clos{\OM}\subset\clos{B}(0;R_j)$, contrary to hypothesis). Now $(z_n)_{n\geq N_1}$ is a sequence in $\clos{\OM}\setminus\clos{B}(0;1)$ that 
	is contained in $\C^d\setminus\clos{B}(0;R_1)$; therefore, it is contained in
	\[ \bigcup_{\nu=1}^{k_1} \big( W^1_\nu \setminus \clos{B}(0;R_1) \big); \]
	therefore, there exists $\nu_1\in\{1,\dots,k_1\}$ such that $W^1_{\nu_1}\setminus\clos{B}(0;R_1)$ contains $z_n$ for infinitely many $n$. Let $A_1
	\subset\posint$ be an infinite set such that $z_n\in W^1_{\nu_1}$ for all $n\in A_1$. Now assuming that $\mu\in\posint$ and that we have obtained:
	(1) $\nu_1\in\{1,\dots,k_1\},\dots,\nu_\mu\in\{1,\dots,k_{\mu}\}$, (2) infinite subsets $A_1,\dots,A_\mu$ of $\posint$ such that $(1')$ 
	$W^\mu_{\nu_\mu}\subset\dots\subset W^1_{\nu_1}$, $(2')$ $A_\mu\subset\dots\subset A_1$, $(3)$ for every $i\in\{1,\dots,\mu\}$, $z_n\in W^i_{\nu_i}$
	for all $n\in A_i$, we attempt to find $\nu_{\mu+1}\in \{1,\dots,k_{\mu+1}\}$ and an infinite subset $A_{\mu+1}$ of $A_\mu$ such that 
	$W^{\mu+1}_{\nu_{\mu+1}}\subset W^\mu_{\nu_\mu}$ and such that $z_n\in W^{\mu+1}_{\nu_{\mu+1}}$ for every $n\in A_{\mu+1}$. Now note that 
	$W^\mu_{\nu_\mu}$ is a component of $\clos{\OM}\setminus\clos{B}(0,\mu)$ that intersects $\C^d\setminus\clos{B}(0;R_\mu)$ and that $z_n\in 
	W^\mu_{\nu_\mu}$ for every $n\in A_\mu$, which is an infinite subset of $\posint$. Note that $W^\mu_{\nu_\mu}\not\subset\clos{B}(0;R_{\mu+1})$ 
	because, if it is, the fact that $\|z_n\|\to\infty$ as $n\to\infty$ will be contradicted. Thus, $W^\mu_{\nu_\mu}\setminus\clos{B}(0;R_{\mu+1})\neq
	\emptyset$. For all $n\in A_\mu$ with $n\geq N_{\mu+1}$, $z_n\in W^\mu_{\nu_\mu}\setminus\clos{B}(0;R_{\mu+1})$. Now since
	\[ \clos{\OM}\setminus\clos{B}(0;R_{\mu+1}) = \bigcup_{i=1}^{k_{\mu+1}} W^{\mu+1}_i \setminus \clos{B}(0;R_{\mu+1}), \]
	it follows that
	\[ W^\mu_{\nu_\mu}\setminus\clos{B}(0;R_{\mu+1}) = \bigcup_{i=1}^{k_{\mu+1}} \big( W^\mu_{\nu_\mu}\cap W^{\mu+1}_i \big) \setminus 
	\clos{B}(0;R_{\mu+1}). \]
	From the above two facts it follows that there exists $i\in \{1,\dots,k_{\mu+1}\}$ such that $z_n\in \big( W^\mu_{\nu_\mu}\cap W^{\mu+1}_i \big) 
	\setminus \clos{B}(0;R_{\mu+1})$ for infinitely many $n\in A_\mu$. Let us choose such an $i$ and call it $\nu_{\mu+1}$. 
	The foregoing implies that there
	exists an infinite subset $A_{\mu+1}$ of $A_\mu$ such that, for all $n\in A_{\mu+1}$, $z_n\in \big( W^\mu_{\nu_\mu}\cap W^{\mu+1}_{\nu_{\mu+1}} \big) 
	\setminus \clos{B}(0;R_{\mu+1})$. Now $W^{\mu+1}_{\nu_{\mu+1}}$ is a connected component of $\clos{\OM}\setminus\clos{B}(0;\mu+1)$, hence a connected
	subset of $\clos{\OM}\setminus\clos{B}(0;\mu)$; furthermore, it intersects the connected component $W^\mu_{\nu_\mu}$ of $\clos{\OM}
	\setminus\clos{B}(0;\mu)$, and so must be included in it: $W^{\mu+1}_{\nu_{\mu+1}}\subset W^\mu_{\nu_\mu}$. And so the induction step is complete.
	Therefore we have sequences $(\nu_\mu)_{\mu\geq 1}$ and $(A_\mu)_{\mu\geq 1}$ such that, for every $\mu$, $\nu_\mu\in\{1,\dots,k_\mu\}$; such that,
	for every $\mu$, $W^{\mu+1}_{\nu_{\mu+1}}\subset W^\mu_{\nu_\mu}$; such that, for every $\mu$, $A_\mu$ is an infinite subset of $\posint$; such that,
	for every $\mu$, $A_{\mu+1}\subset A_\mu$; and such that, for every $n\in A_\mu$, $z_n\in W^\mu_{\nu_\mu}$. Now note that the sequence 
	$(A_\mu)_{\mu\geq 1}$ of nested infinite subsets of $\posint$ determines a subsequence $(z_{m_n})_{n\geq 1}$ of $(z_n)_{n\geq 1}$ with the following
	property: for every $\mu\in\posint$ and every $n\geq\mu$, $z_{m_n}\in W^\mu_{\nu_\mu}$. From this, from the fact that the sequence 
	$(W^\mu_{\nu_\mu})_{\mu\geq 1}$ is nested, and from the fact that $W^\mu_{\nu_\mu}$ is a connected component of $\clos{\OM}\setminus
	\clos{B}(0;\mu)$, it follows immediately that $(z_{m_n})_{n\geq 1}$ converges to the end of $\clos{\OM}$ that $(W^{\mu}_{\nu_\mu})_{\mu\geq 1}$
	defines. Therefore we have proved that any sequence $(z_n)_{n\geq 1}$ in $\OM$ converges to some point of $\clos{\OM}^{End}$.
	
	It only remains to prove that an arbitrary sequence in $\clos{\OM}^{End}\setminus\clos{\OM}$ (i.e., a sequence of ends of $\clos{\OM}$) has a 
	subsequence that converges to
	a point of $\clos{\OM}^{End}$. But, by Lemma~\ref{prp:well-bhvd-nbd-end-ntrscts-dom}, this is now easy to prove. Assume that $(z_n)_{n\geq 1}$
	is a sequence in $\clos{\OM}^{End}\setminus\clos{\OM}$. We work with the same compact exhaustion as before. For every $n\in\posint$, there exists a
	sequence $(F^n_j)_{j\geq 1}$ such that, for every $j$, $F^n_j$ is a connected component of $\clos{\OM}\setminus\clos{B}(0;j)$, such that 
	$F^n_{j+1}\subset F^n_{j}$, and such that $(\wh{F}^n_j)_{j\geq 1}$ is a neighbourhood basis for the topology of $\clos{\OM}^{End}$ at $z_n$. By 
 Lemma~\ref{prp:well-bhvd-nbd-end-ntrscts-dom}, we know that,
	for every $n\in\posint$ and every $j\in\posint$, $F^n_j\cap\OM\neq\emptyset$. So, for every $n\in\posint$, choose $w_n\in F^n_n\cap\OM$. Now 
	$(w_n)_{n\geq 1}$ is a sequence in $\OM$ and therefore, by what we have proved (note that $\|w_n\|\to\infty$), there exists an end $\mathfrak{e}$ of
	$\clos{\OM}$ to which some sub-sequence of $(w_n)_{n\geq 1}$, say $(w_{\tau(n)})_{n\geq 1}$, converges. We want to show that $(z_{\tau(n)})_{n\geq 1}$
	converges to $\mathfrak{e}$. To do this, let $\mathfrak{e} = (H_j)_{j\geq 1}$ where
 $(H_j)_{j\geq 1}$ is a decreasing sequence of connected components of 
 $\clos{\OM}\setminus\clos{B}(0;j)$, and let us
 consider an arbitrary neighbourhood $U$ of $\mathfrak{e}$ in $\clos{\OM}^{End}$. By the definition of the end compactification, there exists $j_0\in\posint$ such that 
 $\wh{H}_{j_0}\subset U$. Since
	$(w_{\tau(n)})_{n\geq 1}$ converges to $\mathfrak{e}$, there exists $n_0\in\posint$ (and we may suppose that $n_0\geq j_0$) such that, for every $n\geq
	n_0$, $w_{\tau(n)}\in H_{j_0}$. But by definition $w_{\tau(n)}\in F^{\tau(n)}_{\tau(n)}$. Now $F^{\tau(n)}_{\tau(n)}$ is a connected component of
	$\clos{\OM}\setminus\clos{B}(0;\tau(n))$. Since $\tau(n)\geq n\geq n_0\geq j_0$, $\clos{\OM}\setminus\clos{B}(0;\tau(n)) \subset \clos{\OM}
	\setminus \clos{B}(0;j_0)$. So $F^{\tau(n)}_{\tau(n)}$, being a connected subset of $\clos{\OM}\setminus\clos{B}(0;\tau(n))$, is also a connected
	subset of $\clos{\OM}\setminus\clos{B}(0;j_0)$ and, therefore, is included in a connected component of $\clos{\OM}\setminus\clos{B}(0;j_0)$. But
	$F^{\tau(n)}_{\tau(n)}\cap H_{j_0}\neq\emptyset$ (since $w_{\tau(n)}\in F^{\tau(n)}_{\tau(n)}\cap H_{j_0}$) and so $F^{\tau(n)}_{\tau(n)}\subset H_{j_0}$.
 Therefore $\wh{F}^{\tau(n)}_{\tau(n)}\subset \wh{H}_{j_0}$ and, since $z_{\tau(n)}\in
\wh{F}^{\tau(n)}_{\tau(n)}$, $z_{\tau(n)}\in \wh{H}_{j_0}$.
This holds for all $n\geq n_0$. Since $\wh{H}_{j_0}\subset U$ and the neighbourhood $U$ of 
$\mathfrak{e}$ was arbitrary, this shows that $(z_{\tau(n)})_{n\geq 1}$ converges to 
$\mathfrak{e}$. This completes the proof.
\end{proof}

 {\bf Acknowledgements:} Sushil Gorai is partially supported by a Core Research Grant
(CRG/2022/003560) from Science and Engineering Research Board, Department of Science and 
Technology, Government of India. Anwoy Maitra is supported by an INSPIRE Faculty Fellowship
(DST/INSPIRE/04/2021/000262) from the Department of Science and Technology, Government of 
India.

\end{document}